\pgfplotsset{compat=1.15}
\theoremstyle{plain}
	\newtheorem{Theo}{Theorem}[section] 
	\newtheorem{Prop}[Theo]{Proposition}
	\newtheorem{Lem}[Theo]{Lemma}
	\newtheorem{Cor}[Theo]{Corollary}
	\newtheorem{Conj}[Theo]{Conjecture}
\theoremstyle{definition}
	\newtheorem{Def}[Theo]{Definition}
	\newtheorem{Nota}[Theo]{Notation}
\theoremstyle{remark}
	\newtheorem{Rema}[Theo]{Remark}
\def\NN{{\mathbb N}}    
\def\ZZ{{\mathbb Z}}     
\def\RR{{\mathbb R}}    
\def\QQ{{\mathbb Q}}    
\def\CC{{\mathbb C}}    
\def\HH{{\mathbb H}^2}    
\def\AA{{\mathbb P}}     
\def\DD{{\mathbb D}^2} 
\def\DH{{\partial {\mathbb H}^2}}
\def\Int{{\mathrm{Int}}}
\newcommand{\Vol}{\mbox{Vol}}
\newcommand{\KVol}{\mbox{KVol}}
\def\autredir{{\cfrac{1+2 \cos(\pi/n)\cos(\pi/m)}{2\sin(\pi/n) \cos(\pi/m)}}}
\def\Tmn{{\mathcal{T}_{m,n}}}
\def\GG{{\mathfrak G}}
\def\KK{{\mathcal{K}}}
\def\Xstar{{(i,j) \xrightarrow[]{\bigstar} (i',j')}}
\newcommand{\polydec}{\textit{polygonal decomposition}}
\newcommand{\coslo}{consistent slope }
\newcommand{\coslos}{consistent slopes }
\newcommand{\HTU}{\eqref{H31} }
\newcommand{\HTD}{\eqref{H32} }
\title{Algebraic intersections on Bouw-M\"oller surfaces and more general convex polygons
}
\author{Julien Boulanger and Irene Pasquinelli}
\date{May 2026}
\begin{document}

\maketitle

\begin{abstract}
This paper focuses on intersection of closed curves on translation surfaces. 
Namely, we investigate the question of determining the intersection of two closed curves of a given length on such surfaces.
This question has been investigated in \cite{MM}, \cite{CKM}, \cite{CKMcras}, \cite{BLM22} and \cite{Bou23} and this paper complements the work of \cite{BLM22} done for double regular polygons, and extends the results to a large family of surfaces which includes in particular Bouw-M\"oller surfaces. Namely, we give an estimate for KVol on surfaces based on geometric constraints (angles and indentifications of sides). This estimate is sharp in the case of Bouw-M\"oller surfaces with a unique singularity, and it allows to compute KVol on the $SL_2(\RR)-$orbit of such surfaces.
\end{abstract}

\section{Introduction}

Given any closed oriented surface $X$ endowed with a Riemannian metric $g$ (possibly with singularities), define:

\begin{equation}
\label{eq:KVol}
\KVol(X): = \Vol(X,g)\cdot \sup_{\alpha,\beta} \frac{\Int (\alpha,\beta)}{l_g (\alpha) l_g (\beta)},
\end{equation}
where the supremum ranges over all piecewise smooth closed curves $\alpha$ and $\beta$ in $X$, $\Int(\cdot, \cdot)$ denotes the algebraic intersection number, and $l_g(\cdot)$ denotes the length with respect to the Riemannian metric $g$ (it is readily seen that multiplying by the volume Vol$(X,g)$ makes the quantity invariant by rescaling the metric). This quantity can be thought of as a way of measuring the length required for having an intersection on the surface $X$. 
The study of KVol originated in the work of Massart \cite{these_massart} (see also \cite{MM}), where it is shown that this quantity is finite. 
There, KVol$(X)$ is also shown to be the comparison constant between two norms in the homology $H_1(X,\RR)$ of an oriented closed Riemannian surface (namely the $L^2$-norm and the stable norm, see also \cite{MM} for more information). 
Furthermore, for any Riemannian surface $(X,g)$ one has that $\KVol(X,g) \geq 1$ with equality if and only if $X$ is a torus and $g$ is flat \cite{MM}. 
Although it is relatively easy to show that $\KVol(X) = 1$ on a flat torus, the question of computing explicitly KVol for a given surface turns out to be a difficult question.

The study of KVol has been extended in recent years, and 
in particular several authors (see below for a more detailed account) have studied KVol for translation surfaces, which are instances of flat surfaces with finitely many conical singularities. 
In this paper we continue the study of KVol on surfaces of this type.
Translation surfaces may be defined in several different ways (see, for example, \cite{Masur}) and in this paper we will define them in terms of Euclidean polygons, which allows us to get estimates of both intersections and lengths. 
Namely, translation surfaces can be seen as surfaces arising from a collection of Euclidean polygons by identifying pairs of parallel sides of the same length by translations satisfying certain properties (see Section \ref{sec:preliminaries_BM}).

More precisely, here is a short summary of what is known, to the best of our knowledge, for $\KVol$ on translation surfaces. 
Cheboui, Kessi and Massart \cite{CKM} study KVol on the Teichm\"uller disk associated to a family of square-tiled (or \emph{arithmetic}) translation surfaces.
The same authors also show that the infimum of KVol on the stratum $\mathcal{H}(2)$ of translation surfaces of genus two with a single conical singularity of angle $6\pi$ is bounded from above by two, by giving an explicit construction.
This construction is generalised to any genus (and any connected component of the minimal stratum) by the first author in \cite{Bou23b}.
Finally, \cite{BLM22} and \cite{Bou23} study KVol on the Teichm\"uller disks associated respectively to the double regular $n$-gon (for odd $n \geq 5$) and the regular $n$-gon (for even $n \geq 8$).


A brief mention should be given also of the case of hyperbolic surfaces.
Massart and Muetzel \cite{MM} first described the behaviour of KVol as the homological systolic length goes to zero, and this has been extended recently in \cite{Jiang_Pan}. 
Furthermore, recent work on the minimal length product over homology bases allows to get a lower bound for KVol on closed hyperbolic surfaces of a given genus, see \cite[Theorem 3.12]{BKP21}. An alternative proof has also been given in \cite{Jiang_Pan}. 
We should also mention the related work of Torkaman \cite{Torkaman} on a similar quantity in the case where the algebraic intersection has been replaced by the geometric intersection.


Going back to translation surfaces, the main goal of this paper is to calculate the value of KVol for Bouw-M\"oller surfaces, which is a class of translation surfaces whose Veech group is a triangle group \cite{BM10, Hooper}. 
For every pair of integers $m$ and $n$, with $m,n \geq 2$ and $mn > 6$, there is an associated Bouw-M\"oller surface $S_{m,n}$ constructed by gluing sides of $m$ polygons which are either regular $n$-gons or semi-regular $2n$-gons (see Section \ref{sec:preliminaries_BM}).
This class also contains double regular polygons (when $m=2$).
More precisely, we have:

\begin{Cor}\label{cor:KVol_Smn}
Let $m,n \geq 2$, $mn > 6$. Then :
\begin{equation}\label{eq:KVol_Smn}
KVol(S_{m,n}) \leq \frac{\Vol(S_{m,n})}{\sin(\pi/m)^2}
\end{equation}
with equality if $m$ and $n$ are coprime.
\end{Cor}

\begin{Rema}
If $m$ and $n$ are not coprime, the surface has several singularities and there is no pair of closed curves achieving the upper bound in \eqref{eq:KVol_Smn} (see Theorem \ref{thm:mainBM}).
 This does not directly imply that the inequality of Corollary \ref{cor:KVol_Smn} is strict, as in principle the supremum in the definition of KVol may not be a maximum. 
However, we believe that KVol is always a maximum for Veech surfaces of genus at least two (see also Remark 1.6 of \cite{BLM22}).
\end{Rema}

The strategy for proving this is to estimate the quantity 
\begin{equation}\label{eq:ratioIntLen} \frac{\Int(\gamma,\delta)}{l(\gamma)l(\delta)} \end{equation}
for a given pair of (simple) closed geodesics $(\gamma,\delta)$. In fact, our estimate of this quantity holds for a much larger class of surfaces than just Bouw-M\"oller surfaces. 
More precisely, we consider a translation surface $X$ constructed from polygons $(P_i)_{i \in I}$ (if $I$ is not finite, see Section \ref{sec:comments_Thm_convex}). We assume that: 
\begin{itemize}
\item[(P1)] each polygon is convex with obtuse or right angles,
\item[(P2)] two sides of the same polygon are never identified together. 
\end{itemize}

We denote by $l_0$ the length of the smallest sides of the polygons. We show:

\begin{Theo}\label{theo:mainCONVEX}
For a translation surface with a polygonal representation satisfying (P1) and (P2), for any two closed curves $\gamma$ and $\delta$ on $X$, we have:
\[ \frac{\Int(\gamma,\delta)}{l(\gamma)l(\delta)} \leq \frac{1}{l_0^2}. \]

Further, if the angles are all strictly obtuse, equality holds if and only if $\gamma$ and $\delta$ are sides of length $l_0$ intersecting once. If there are right angles, equality may also hold only in the following cases: 
\begin{itemize}
    \item $\gamma$ and $\delta$ are two closed curves, both of length $l_0$ and intersecting once at a singularity, 
    \item $\gamma$ and $\delta$ are both diagonals of length $\sqrt{2} l_0$ intersecting twice (once at a singularity and once outside the singularities, with the same sign),
    \item up to swapping them, $\gamma$ is a side or a diagonal of length $l_0$ and $\delta$ is a geodesic of length $2l_0$, contained in the union of exactly two polygons, and intersecting $\gamma$ twice.
\end{itemize}
\end{Theo}

The main idea in the proof of Theorem \ref{theo:mainCONVEX} is to decompose pairs of closed curves into smaller segments whose lengths, as well as the intersection of pairs of segments, can be controlled in the following way.
First, the combinatorial condition (P2) on the gluing of the polygons allows us to make estimates on the algebraic intersections of closed curves. 
Meanwhile, the assumption on the shape of the polygons (P1) allows us to bound from below the lengths of saddle connections depending on the sides of the polygons they cross. 
This will be made precise with the notion of \emph{polygonal decomposition} described in Section \ref{sec:KVol_convex}. 
Moreover, the same result holds for half-translation surfaces with the same assumptions and a similar proof. It also generalises to infinite (half-)translation surfaces. \newline

From Theorem \ref{theo:mainCONVEX}, we obtain that information on the ratio in Equation \ref{eq:ratioIntLen} provides geometric constraints on the shape of a polygonal decomposition.
In a translation surface $X$ such that there exists a pair of closed curves with $\frac{\Int(\gamma,\delta)}{l(\gamma)l(\delta)} > \frac{1}{l_0^2}$, there are no polygonal representations of $X$ satisfying both (P1) and (P2). This is for example the case on the $L$-shaped surface made of six equilateral triangles (or, more generally, on \emph{equilateral staircases}), for which there exists a pair of closed curves $\gamma,\delta$ with $\frac{\Int(\gamma,\delta)}{l(\gamma)l(\delta)} = \frac{2}{\sqrt{3} l_0^2}$.

We make additional comments on this result in \S\ref{sec:comments_Thm_convex}, but let us first discuss the special case of Bouw-M\"oller surfaces.

\subsection{KVol on Bouw-M\"oller surfaces} 
As already said, the initial motivation for proving this result was to be able to deal with the case of Bouw-M\"oller surfaces. The Bouw-M\"oller surface $S_{m,n}$ can be described gluing a collection of polygons $P(0), \dots, P(m-1)$ which are semi-regular, which means that the internal angles are all equal, but the sides have two alternating lengths. The surface $S_{m,n}$ satisfies assumptions (P1) and (P2) as soon as $n \geq 4$. 
For $n=2$ we get back to (the staircase models of) double regular polygons and this has been investigated in \cite{BLM22}\footnote{although they only deal with the case of odd $m$, the cylinder decomposition argument they use is also valid for $S_{m,2}$ for even $m$}. For $n=3$ (and $m \geq 3$), the surface $S_{m,n}$ does not satisfy (P1) but we can prove the same estimates with a slightly different proof, leading to the following result:

\begin{Theo}\label{thm:mainBM}
Let $m,n \geq 2$ with $mn > 6$ and let $S_{m,n}$ be the corresponding Bouw-M\"oller surface. Then, for any pair $(\alpha, \beta)$ of closed curves on $S_{m,n}$, we have
\[
\frac{\Int(\alpha,\beta)}{l(\alpha)l(\beta)} \leq \frac{1}{l_0^2}
\]
where $l_0 = \sin(\pi / m)$ is the length of the smallest side of the polygons forming $S_{m,n}$. Moreover, equality holds if and only if 
\begin{itemize}
    \item $m$ and $n$ are coprime and $\alpha$ and $\beta$ are intersecting systoles (hence of length $l_0$),
    \item or $n=4$ and $m \equiv 3 \mod 4$ and $\alpha$ and $\beta$ are diagonals of $P(0)$ (resp. $P(m-1)$), intersecting twice and having length $\sqrt{2}l_0$.
\end{itemize}
\end{Theo}

\begin{Rema}
    For $mn=6$ the resulting Bouw-M\"oller surface is a flat torus, and KVol is equal to 1. One sees that Theorem \ref{thm:mainBM} does not hold for $(m,n)=(2,3)$ (although it holds for $(m,n)=(3,2)$ as $S_{3,2}$ is the flat square torus).
\end{Rema}

In the case where the resulting Bouw-M\"oller surface has a single singularity (i.e. for $S_{m,n}$ with $m$ and $n$ coprime), we show that the obtained inequality is in fact an equality by showing that there are intersecting systoles, thus obtaining the value of KVol (see Corollary \ref{cor:KVol_Smn}).

\paragraph{Extension to the Teichm\"uller disk.}

In light of Theorem \ref{thm:mainBM}, it is interesting to wonder how KVol varies, as a function on the Teichm\"uller disk associated with $S_{m,n}$, that is, its $SL_2(\RR)-$orbit.
This question has been investigated in the case of the double regular $n$-gon $S_{2,n}$ for $n$ odd in \cite{BLM22}, and for the other families of Veech surfaces mentioned above in \cite{CKM, Bou23}. The second goal of this paper is to study this question for Bouw-M\"oller surfaces. 
The Teichm\"uller curve associated to $S_{m,n}$ (the quotient of the Teichm\"uller disk by the Veech group) can be identified with an hyperbolic orbifold with one cusp and two elliptic points of angles $\frac{\pi}{n}$ and $\frac{\pi}{m}$. 
A fundamental domain $\Tmn$ for the action of the Veech group on the Teichm\"uller disk is described in \S\ref{sec:Teichmuller_disk_BM}. 
We give an explicit expression for KVol on $\Tmn$ in the case where $S_{m,n}$ has a unique singularity:

\begin{Theo}\label{theo:extension_teich}
Let $m,n \geq 2$ coprime with $mn>6$. Then for any $X \in \Tmn$, we have:
\begin{equation*}
\KVol(X) = K_{m,n} \cdot \frac{1}{\cosh d_{\HH}(X, \gamma_{\infty, \pm \cot(\pi/n)})}
\end{equation*}
where $d_{\HH}$ denotes the hyperbolic distance and $\gamma_{\infty, \pm \cot(\pi/n)}$ is the union of the two hyperbolic geodesics of respective endpoints $(\infty, \cot(\pi/n))$, and $(\infty, - \cot(\pi/n))$ \footnote{The geodesics of endpoints $(\infty, \cot(\pi/n))$ and $(\infty, - \cot(\pi/n))$ are the same if $n=2$.}, and $K_{m,n} > 0$ is an explicit constant.
\end{Theo}

The case where $m$ and $n$ are not coprime is much more complicated, as there are several distinct singularities. 
In this case we do not obtain an explicit formula for KVol on the $SL_2(\RR)-$orbit of $S_{m,n}$, but we can still show that KVol is bounded. This is done using Theorem 1.5 of \cite{Bou23}. The following result is also stated as Corollary \ref{cor:boundedness}:

\begin{Theo}\label{theo:boundedness}
KVol is bounded on the $SL_2(\RR)-$orbit of $S_{m,n}$.
\end{Theo}

In fact, we describe in Section~\ref{sec:intersection_horizontal} a general approach for showing boundedness on $SL_2(\RR)-$orbits of Veech surfaces by investigating separatrix diagrams in every periodic direction, getting the following:
\begin{Theo}\label{thm:boundedness_planar_intro}
KVol is bounded on the $SL_2(\RR)-$orbit of a Veech surface if and only if the separatrix diagram associated to every periodic direction is planar.
\end{Theo}
We give a definition of separatrix diagrams in Section \ref{sec:intersection_horizontal}, following \cite{KZ03}. These are ribbon graphs encoding the intersections of closed curves made of saddle connections in a given periodic direction. In the case of Bouw-M\"oller surfaces, it turns out to be much easier to study the dual (ribbon) graph.

\subsection{Additional comments}\label{sec:comments_Thm_convex}

We continue with a few comments on Theorem \ref{theo:mainCONVEX}.

\paragraph{On the assumptions (P1) and (P2).} Let us first point out that Theorem \ref{theo:mainCONVEX} does not hold without the assumption on the polygons being convex, nor without the assumption on the angles being right or obtuse. A counter-example is given in Figure \ref{fig:counterexamples1}. We discuss further the obstruction given by the angle condition in \S\ref{sec:additional_remarks}.

On the contrary, we believe that Theorem \ref{theo:mainCONVEX} holds without the assumption (P2). Nevertheless, our proof requires a very careful count of the intersections which can not be performed in the case where there are self-identifications on the polygons. One way to bypass the problem would be to make a case-by-case analysis of the problematic cases (i.e. try to obtain better estimates of the lengths when we have too many intersections), as it is done in \cite{Bou23} in the case of the regular $2n$-gon ($n \geq 4$), but it seems unlikely that this case-by-case analysis could be performed in general.

\begin{figure}[h]
\center
\definecolor{qqqqff}{rgb}{0,0,1}
\definecolor{zzttqq}{rgb}{1,0.33,0}
\definecolor{qqwuqq}{rgb}{0,0.39215686274509803,0}
\definecolor{ccqqqq}{rgb}{0.8,0,0}
\begin{tikzpicture}[line cap=round,line join=round,>=triangle 45,x=1cm,y=1cm,scale=0.5]
\clip(-6.5,-6.5) rectangle (6.5,6.5);
\draw [shift={(-4,0)},line width=1pt,fill=black,fill opacity=0.3] (0,0) -- (-26.56505117707799:0.7643236313035815) arc (-26.56505117707799:26.56505117707799:0.7643236313035815) -- cycle;
\draw [line width=1pt] (0,2)-- (-2,6);
\draw [line width=1pt] (-2,6)-- (-6,4);
\draw [line width=1pt] (-6,4)-- (-4,0);
\draw [line width=1pt] (4,0)-- (6,4);
\draw [line width=1pt] (6,4)-- (2,6);
\draw [line width=1pt] (2,6)-- (0,2);
\draw [line width=1pt] (-4,0)-- (-6,-4);
\draw [line width=1pt] (-6,-4)-- (-2,-6);
\draw [line width=1pt] (-2,-6)-- (0,-2);
\draw [line width=1pt] (0,-2)-- (2,-6);
\draw [line width=1pt] (2,-6)-- (6,-4);
\draw [line width=1pt] (6,-4)-- (4,0);
\draw [line width=1pt,dash pattern=on 3pt off 3pt] (-4,0)-- (0,2);
\draw [line width=1pt,dash pattern=on 3pt off 3pt] (0,2)-- (4,0);
\draw [line width=1pt,dash pattern=on 3pt off 3pt] (4,0)-- (0,-2);
\draw [line width=1pt, dash pattern=on 3pt off 3pt] (0,-2)-- (-4,0);
\draw [line width=1pt,color=ccqqqq,dash pattern=on 3pt off 3pt] (-4,0)-- (4,0);
\draw [line width=1pt,color=qqwuqq,dash pattern=on 3pt off 3pt] (0,-2)-- (0,2);
\draw [color=ccqqqq](1,0.1) node[anchor=north west] {$\alpha$};
\draw [color=qqwuqq](0,1.5) node[anchor=north west] {$\beta$};
\draw (-2.65,0.15) node[anchor=north west] {$\theta$};
\draw (-4.8,-2.5) node[anchor=south east] {$F$};
\draw (-5,-6) node[anchor=south west] {$E$};
\draw (-1.1,-3.7) node[anchor=north west] {$D$};
\draw (-4.8,2.5) node[anchor=north east] {$A$};
\draw (-5,6) node[anchor=north west] {$B$};
\draw (-1.1,3.7) node[anchor=south west] {$C$};
\draw (4.8,-2.5) node[anchor=south west] {$A$};
\draw (5,-6) node[anchor=south east] {$B$};
\draw (1.1,-3.7) node[anchor=north east] {$C$};
\draw (4.8,2.5) node[anchor=north west] {$F$};
\draw (5,6) node[anchor=north east] {$E$};
\draw (1.1,3.7) node[anchor=south east] {$D$};
\begin{scriptsize}
\draw [color=black] (-4,0)-- ++(-2.5pt,-2.5pt) -- ++(5pt,5pt) ++(-5pt,0) -- ++(5pt,-5pt);
\draw [color=black] (0,2)-- ++(-2.5pt,-2.5pt) -- ++(5pt,5pt) ++(-5pt,0) -- ++(5pt,-5pt);
\draw [color=black] (4,0)-- ++(-2.5pt,-2.5pt) -- ++(5pt,5pt) ++(-5pt,0) -- ++(5pt,-5pt);
\draw [color=black] (0,-2)-- ++(-2.5pt,-2.5pt) -- ++(5pt,5pt) ++(-5pt,0) -- ++(5pt,-5pt);
\draw [color=black] (-2,6)-- ++(-2.5pt,-2.5pt) -- ++(5pt,5pt) ++(-5pt,0) -- ++(5pt,-5pt);
\draw [color=black] (-6,4)-- ++(-2.5pt,-2.5pt) -- ++(5pt,5pt) ++(-5pt,0) -- ++(5pt,-5pt);
\draw [color=black] (6,4)-- ++(-2.5pt,-2.5pt) -- ++(5pt,5pt) ++(-5pt,0) -- ++(5pt,-5pt);
\draw [color=black] (2,6)-- ++(-2.5pt,-2.5pt) -- ++(5pt,5pt) ++(-5pt,0) -- ++(5pt,-5pt);
\draw [color=black] (-6,-4)-- ++(-2.5pt,-2.5pt) -- ++(5pt,5pt) ++(-5pt,0) -- ++(5pt,-5pt);
\draw [color=black] (-2,-6)-- ++(-2.5pt,-2.5pt) -- ++(5pt,5pt) ++(-5pt,0) -- ++(5pt,-5pt);
\draw [color=black] (2,-6)-- ++(-2.5pt,-2.5pt) -- ++(5pt,5pt) ++(-5pt,0) -- ++(5pt,-5pt);
\draw [color=black] (6,-4)-- ++(-2.5pt,-2.5pt) -- ++(5pt,5pt) ++(-5pt,0) -- ++(5pt,-5pt);
\end{scriptsize}
\end{tikzpicture}
\caption{Example of a translation surface for which we can construct a decomposition into five convex polygons (four squares and a diamond) satisfying (P2), but where the diamond has an acute angle $\theta$, or a decomposition into two polygons (cutting along $\alpha$), satisfying (P2) and such that the angles are all obtuse or right, but the polygons are not convex. One checks that the curves $\alpha$ and $\beta$ intersect twice (once at the singularity and once outside the singularity with the same sign), and we have $\Int(\alpha,\beta)/l(\alpha)l(\beta) = 1/(l_0^2 \sin \theta) > 1/l_0^2$.}
\label{fig:counterexamples1}
\end{figure}


\paragraph{Non-Veech surfaces and Infinite translation surfaces.} Let us also highlight the fact that Theorem \ref{theo:mainCONVEX} allows to compute KVol on examples of translation surfaces which are not Veech. All the previous exact computations of KVol used the symmetries of the surfaces and were done for specific Veech surfaces (surfaces in the $SL_2(\RR)$-orbit of staircases \cite{CKM} and regular polygons \cite{BLM22,Bou23}). Two examples are given in Figure \ref{fig:example_2}. 

We should also mention that Theorem \ref{theo:mainCONVEX} holds for infinite translation surfaces. Of course, in this latter case $l_0$ should be replaced by the infimum of the lengths of the sides of polygons, and it could hence be zero, in which case Theorem \ref{theo:mainCONVEX} does not give any relevant inequality, but it is not surprising as the supremum of the ratio $\mathrm{Int}(\alpha,\beta)/l(\alpha)l(\beta)$ could be infinite for infinite translation surfaces.

\paragraph{More general flat surfaces.}
Further, although Theorem \ref{theo:mainCONVEX} is stated for translation surfaces, it is in fact not needed in our count of the intersections for the identified sides to be parallel of the same length. In particular, the same result is still true for half-translation surfaces (for which our estimates on the length of saddle connections also hold), and Theorem \ref{theo:mainCONVEX} may generalize to the case where the transition maps are affine maps instead of only translations. However, this would require a few modifications in the proof 
which are discussed in \S \ref{sec:additional_remarks} (see also the first author's Thesis \cite[\S 4.5.4]{these_boulanger}).

\begin{figure}[h]
\center
\definecolor{uuuuuu}{rgb}{0.26666666666666666,0.26666666666666666,0.26666666666666666}
\definecolor{qqwuqq}{rgb}{0,0.39215686274509803,0}
\definecolor{ccqqqq}{rgb}{0.8,0,0}
\definecolor{ffvvqq}{rgb}{1,0.3333333333333333,0}
\begin{tikzpicture}[line cap=round,line join=round,>=triangle 45,x=1cm,y=1cm, scale = 0.7]
\clip(-3.2,-3.5) rectangle (3.7,2.5);
\draw [line width=1pt] (0,-1)-- (1.9833112346864756,-1.2578304605092419);
\draw [line width=1pt] (1.9833112346864756,-1.2578304605092419)-- (3,0);
\draw [line width=1pt, dash pattern = on 3pt off 3pt] (0,-1)--(0,1);
\draw [line width=1pt] (3,0)-- (1.9833112346864756,1.2578304605092419);
\draw [line width=1pt] (1.9833112346864756,1.2578304605092419)-- (0,1);
\draw [line width=1pt] (0,1)-- (-1.9833112346864754,1.257830460509242);
\draw [line width=1pt] (-1.9833112346864754,1.257830460509242)-- (-3,0);
\draw [line width=1pt] (-3,0)-- (-1.9833112346864756,-1.2578304605092419);
\draw [line width=1pt] (-1.9833112346864756,-1.2578304605092419)-- (0,-1);
\draw (0,0) node[right] {$0$};
\draw (-2.5,0.5) node[anchor=south east] {$1$};
\draw (-1,1.2) node[above] {$2$};
\draw (1,1.2) node[above] {$3$};
\draw (2.5,0.5) node[anchor=south west] {$4$};
\draw (-2.5,-0.5) node[anchor=north east] {$4$};
\draw (-1,-1.2) node[below] {$3$};
\draw (1,-1.2) node[below] {$2$};
\draw (2.5,-0.5) node[anchor=north west] {$1$};
\begin{scriptsize}
\draw [fill=black] (0,1) -- ++(-2.5pt,-2.5pt) -- ++(5pt,5pt) ++(-5pt,0) -- ++(5pt,-5pt);
\draw [fill=black] (0,-1) -- ++(-2.5pt,-2.5pt) -- ++(5pt,5pt) ++(-5pt,0) -- ++(5pt,-5pt);
\draw [fill=black] (1.9833112346864756,1.2578304605092419) -- ++(-2.5pt,-2.5pt) -- ++(5pt,5pt) ++(-5pt,0) -- ++(5pt,-5pt);
\draw [fill=black] (1.9833112346864756,-1.2578304605092419) -- ++(-2.5pt,-2.5pt) -- ++(5pt,5pt) ++(-5pt,0) -- ++(5pt,-5pt);
\draw [fill=black] (-1.9833112346864754,1.257830460509242) -- ++(-2.5pt,-2.5pt) -- ++(5pt,5pt) ++(-5pt,0) -- ++(5pt,-5pt);
\draw [fill=black] (-1.9833112346864756,-1.2578304605092419) -- ++(-2.5pt,-2.5pt) -- ++(5pt,5pt) ++(-5pt,0) -- ++(5pt,-5pt);
\draw [fill=black] (3,0) -- ++(-2.5pt,-2.5pt) -- ++(5pt,5pt) ++(-5pt,0) -- ++(5pt,-5pt);
\draw [fill=black] (-3,0) -- ++(-2.5pt,-2.5pt) -- ++(5pt,5pt) ++(-5pt,0) -- ++(5pt,-5pt);
\end{scriptsize}
\end{tikzpicture}
\begin{tikzpicture}[line cap=round,line join=round,>=triangle 45,x=1cm,y=1cm, scale=0.65]
\clip(-7,-0.75) rectangle (7,8);
\draw [line width=1pt] (0,0)-- (2,0);
\draw [line width=1pt] (2,0)-- (3.414213562373095,1.414213562373095);
\draw [line width=1pt] (3.414213562373095,1.414213562373095)-- (3.414213562373095,3.4142135623730945);
\draw [line width=1pt,dash pattern=on 3pt off 3pt] (3.414213562373095,3.4142135623730945)-- (2,4.82842712474619);
\draw [line width=1pt] (2,4.82842712474619)-- (0,4.82842712474619);
\draw [line width=1pt] (0,4.82842712474619)-- (-1.414213562373095,3.4142135623730954);
\draw [line width=1pt] (-1.4142135623730954,1.4142135623730956)-- (0,0);
\draw [line width=1pt,dash pattern=on 3pt off 3pt] (-1.4142135623730954,1.4142135623730956)-- (-1.414213562373095,3.4142135623730954);
\draw [line width=1pt] (-1.414213562373095,3.4142135623730954)-- (-3.4142135623730945,3.4142135623730954);
\draw [line width=1pt,dash pattern=on 3pt off 3pt] (-3.4142135623730945,3.4142135623730954)-- (-3.414213562373095,1.414213562373096);
\draw [line width=1pt] (-3.414213562373095,1.414213562373096)-- (-1.4142135623730954,1.4142135623730956);
\draw [line width=1pt] (3.414213562373095,3.4142135623730945)-- (4.82842712474619,4.828427124746188);
\draw [line width=1pt,dash pattern=on 3pt off 3pt] (4.82842712474619,4.828427124746188)-- (3.4142135623730963,6.242640687119284);
\draw [line width=1pt] (3.4142135623730963,6.242640687119284)-- (2,4.82842712474619);
\draw [line width=1pt] (-3.4142135623730945,3.4142135623730954)-- (-5.414213562373093,3.414213562373096);
\draw [line width=1pt] (-5.414213562373093,3.414213562373096)-- (-5.414213562373094,1.414213562373097);
\draw [line width=1pt] (-5.414213562373094,1.414213562373097)-- (-3.414213562373095,1.414213562373096);
\draw [line width=1pt] (4.82842712474619,4.828427124746188)-- (6.242640687119286,6.242640687119282);
\draw [line width=1pt] (6.242640687119286,6.242640687119282)-- (4.828427124746192,7.656854249492378);
\draw [line width=1pt] (4.828427124746192,7.656854249492378)-- (3.4142135623730963,6.242640687119284);
\draw (-4.5,3.3) node[above] {$A$};
\draw (-2.5,3.3) node[above] {$B$};
\draw (-4.5,1.5) node[below] {$B$};
\draw (-2.5,1.4) node[below] {$C$};
\draw (1,4.75) node[above] {$C$};
\draw (-1.4,4.8) node[anchor=north west] {$D$};
\draw (4.1,4.3) node[anchor=north west] {$D$};
\draw (2,6.25) node[anchor=north west] {$E$};
\draw (5.5,5.75) node[anchor=north west] {$E$};
\draw (3.6,7.75) node[anchor=north west] {$F$};
\draw (2.6,0.9) node[anchor=north west] {$F$};
\draw (0.7,0.1) node[anchor=north west] {$A$};
\draw (-1.25,0.8) node[anchor=north west] {$G$};
\draw (5.5,7.55) node[anchor=north west] {$G$};
\draw (-6.3,2.85) node[anchor=north west] {$H$};
\draw (3.4,2.85) node[anchor=north west] {$H$};
\begin{scriptsize}
\draw [color=black] (0,0)-- ++(-2.5pt,-2.5pt) -- ++(5pt,5pt) ++(-5pt,0) -- ++(5pt,-5pt);
\draw [color=black] (2,0)-- ++(-2.5pt,-2.5pt) -- ++(5pt,5pt) ++(-5pt,0) -- ++(5pt,-5pt);
\draw [color=black] (3.414213562373095,1.414213562373095)-- ++(-2.5pt,-2.5pt) -- ++(5pt,5pt) ++(-5pt,0) -- ++(5pt,-5pt);
\draw [color=black] (3.414213562373095,3.4142135623730945)-- ++(-2.5pt,-2.5pt) -- ++(5pt,5pt) ++(-5pt,0) -- ++(5pt,-5pt);
\draw [color=black] (2,4.82842712474619)-- ++(-2.5pt,-2.5pt) -- ++(5pt,5pt) ++(-5pt,0) -- ++(5pt,-5pt);
\draw [color=black] (0,4.82842712474619)-- ++(-2.5pt,-2.5pt) -- ++(5pt,5pt) ++(-5pt,0) -- ++(5pt,-5pt);
\draw [color=black] (-1.414213562373095,3.4142135623730954)-- ++(-2.5pt,-2.5pt) -- ++(5pt,5pt) ++(-5pt,0) -- ++(5pt,-5pt);
\draw [color=black] (-1.4142135623730954,1.4142135623730956)-- ++(-2.5pt,-2.5pt) -- ++(5pt,5pt) ++(-5pt,0) -- ++(5pt,-5pt);
\draw [color=black] (-3.4142135623730945,3.4142135623730954)-- ++(-2.5pt,-2.5pt) -- ++(5pt,5pt) ++(-5pt,0) -- ++(5pt,-5pt);
\draw [color=black] (-3.414213562373095,1.414213562373096)-- ++(-2.5pt,-2.5pt) -- ++(5pt,5pt) ++(-5pt,0) -- ++(5pt,-5pt);
\draw [color=black] (4.82842712474619,4.828427124746188)-- ++(-2.5pt,-2.5pt) -- ++(5pt,5pt) ++(-5pt,0) -- ++(5pt,-5pt);
\draw [color=black] (3.4142135623730963,6.242640687119284)-- ++(-2.5pt,-2.5pt) -- ++(5pt,5pt) ++(-5pt,0) -- ++(5pt,-5pt);
\end{scriptsize}
\end{tikzpicture}
\caption{Examples of translation surfaces (with a single singularity) satisfying (P1) and (P2) and for which the inequality in Theorem \ref{theo:mainCONVEX} is achieved. On the left, the sides $1$ and $4$ are shorter than the others and correspond to closed curves intersecting once. On the right, there are twelve systoles and for example the closed curves corresponding to the sides $B$ and $D$ are intersecting, achieving the best possible ratio $\frac{1}{l_0^2}$. 
}
\label{fig:example_2}
\end{figure}


\paragraph{KVol and the systolic volume.} 
Finally, recall that the (homological) systolic length of a Riemannian surface $X$ is the length of a shortest non-homologically trivial loop on $X$. The systolic volume is then the volume of $X$ divided by the square of the systolic length. In other words:
\[ \mathrm{SysVol}(X) := \Vol(X) \cdot \sup_{\begin{scriptsize}
    \begin{array}{c}
\alpha \text{ closed curve}, \\
\left[ \alpha \right] \neq 0
\end{array}
\end{scriptsize}} \frac{1}{l(\alpha)^2}. \]

In particular, KVol can be thought of as a cousin of the systolic volume, twisted by the algebraic intersection, and it is natural to compare them. 

An interesting case in Theorem \ref{theo:mainCONVEX} is when all the vertices of the polygons are identified to the same point. It is for example the case on the Bouw-M\"oller surface $S_{m,n}$ when $m$ and $n$ are coprime. In this case there is only one singularity and the sides of the polygons represent closed curves on the surface. In particular, $l_0$ is also the systolic length and we have (if the surface has finite volume):

\begin{equation}\label{eq:KVol_Sysvol}
\KVol(X) \leq \mathrm{SysVol}(X).
\end{equation}

This result can be compared with Theorem 1.1 of \cite{MM}, which states that in the general setting of Riemannian surfaces, one has $\KVol(X) \leq 9 \mathrm{SysVol}(X)$. Of course, Equation~\eqref{eq:KVol_Sysvol} does not hold for every translation surface (as we have seen in the example of Figure \ref{fig:counterexamples1}, or for the equilateral L), but we conjecture that:

\begin{Conj}
For any translation surface $X$ with a single singularity, we have
\begin{equation*}
\KVol(X) \leq \frac{2}{\sqrt{3}} \mathrm{SysVol}(X).
\end{equation*}
\end{Conj}


It would be interesting to know which translation surfaces have a polygonal representation satisfying (P1) and (P2) (and whose vertices are all identified to a single singularity). From the above discussion, we see that the translation surfaces with a single singularity which do not satisfy $\KVol(X) \leq \mathrm{SysVol}(X)$ do not have such a polygonal representation. Translation surfaces with no convex representation have been studied by Lelièvre and Weiss \cite{Lelievre_Weiss_convex} but they were interested in representations with a single polygon and they had no restriction on the angles.

\subsection{Organisation of the paper}
We start in Section \ref{sec:preliminaries_BM} with some background on translation surfaces and review useful properties of Bouw-M\"oller surfaces. In Section \ref{subsec:singular_intersections_on_BM_surfaces}, we compute the algebraic intersection of some pairs of closed curves on Bouw-M\"oller with a single singularity, that will turn out to achieve the supremum in the definition of KVol.\newline
Then, we prove Theorem \ref{theo:mainCONVEX} in Section \ref{sec:KVol_convex}. We study the case of Bouw-M\"oller surfaces $S_{m,n}$ for $n=3$ in Section \ref{sec:Bouw-M\"oller}, thus obtaining Theorem \ref{thm:mainBM}.

The remaining sections are then devoted to the study of KVol on the $SL_2(\RR)-$orbit of Bouw-M\"oller surfaces. We start with the proof of Theorem \ref{thm:boundedness_planar_intro} in Section \ref{sec:intersection_horizontal}, and we study the case of Bouw-M\"oller surfaces, obtaining Theorem \ref{theo:boundedness}. In Section \ref{sec:extention_teichmuller}, we gather some facts from \cite{BLM22} on the computation of KVol on $SL_2(\RR)-$orbits of Veech surfaces and we state Theorem \ref{theo:big_statement} which we prove in Section \ref{sec:sinus_comparison} and which describes a general method to compute KVol. In Section \ref{sec:study_cylinder} we consider cylinder decompositions on Bouw-M\"oller surfaces in order to show that the assumptions of Theorem \ref{theo:big_statement} are satisfied, from which we will finally derive Theorem \ref{theo:extension_teich}.

\subsection{Acknowledgements}

The authors would like to thank Erwan Lanneau and Daniel Massart for useful conversations and comments throughout the writing of this paper. 
The second author is supported by a Leverhulme Early Career Fellowship awarded by the Leverhulme Trust.

\section{Preliminaries on Bouw-M\"oller surfaces}\label{sec:preliminaries_BM}
In this section we will define Bouw-M\"oller surfaces, which are examples of translation surfaces and the main setting of our discussion. 
We will review definitions and well-known facts about translation surfaces in general and Bouw-M\"oller surfaces in particular. In Section \ref{subsec:singular_intersections_on_BM_surfaces} we study intersections of saddle connections in Bouw-M\"oller surfaces with a unique singularity. 
More details on translation surfaces can be found in \cite{ survey_Wright, Masur, survey_Massart}. See also \cite{Hooper} for more details on Bouw-M\"oller surfaces.

\subsection{Translation surfaces}
A translation surface is a topological surface $X$ with an atlas of charts on the surface minus a finite set $\Sigma$ of singularities such that transition functions are translations. 
These surfaces can also be described as the surfaces obtained by gluing parallel opposite sides of a collection of euclidean polygons by translations, and one further requires the translations to map a normal vector of the side pointing outwards with respect to the polygon to a normal vector of the identified side pointing inwards with respect to the other polygon, so that the resulting surface is orientable. 
As such, translation surfaces are instances of flat surfaces with conical singularities. 
Furthermore, because of the translation property, the cone angle around each singularity is a multiple of $2\pi$, so it is of the form $2(k+1)\pi$, where $k$ is referred to as the order of the singularity. 
The discrete version of Gauss-Bonnet formula guarantees that if a surface $S$ has $r$ singularities of order $k_i$, $1 \leq i \leq r$ and genus $g$, then $\sum 2 \pi (k_i+1)= 2\pi(2g-2)$. 
The moduli space of all translation surfaces of genus $g$ naturally carries a stratified structure, the stratification being given by the order of the singularities.
One usually denotes by $\mathcal{H}(k_1, \cdots, k_r)$ the stratum of translation surfaces of genus $g = \frac{1}{2}\left(\sum_i k_i+1\right) +1$ with $r$ singularities of respective order $k_1, \dots, k_r$.

Since the euclidean metric on the polygons defines a flat metric on the surface, where the vertices of the polygons are identified to singularities (although note that some may correspond to singularities of order zero, that is, regular points), geodesics on translation surfaces are piecewise straight lines, which are straight lines inside the polygons.
Furthermore, geodesics can only change direction at a singularity. 
A \emph{saddle connection} is a geodesic line between singularities, that is a line going from a vertex of a polygon to a vertex of a polygon, containing no singularities in its interior. 
In fact, a closed geodesic on a translation surface is always  homologous to a union of saddle connections because any non-singular closed geodesic comes with a cylinder of homologous saddle geodesics, bounded by saddle connections.\newline

The moduli space of translation surfaces carries a natural $SL_2(\RR)-$action and the stabiliser of a given surface $X$ is called the \emph{Veech group} of $X$, denoted $SL(X)$. Veech \cite{Veech} showed that the Veech group of a translation surface is always a Fuchsian group, that is, a discrete subgroup of $SL_2(\RR)$.
In particular, the $SL_2(\RR)-$orbit of a translation surface in the moduli space can be identified with $SL(X) \backslash SL_2(\RR)$. Closed $SL_2(\RR)-$orbits under this action are called \emph{Teichm\"uller curves}, and correspond to the case where $SL(X)$ is a lattice. We often quotient on the right by the action of the rotations, which gives an identification of the Teichm\"uller curve with $SL(X) \backslash SL_2(\RR)/SO_2(\RR) = SL(X) \backslash \HH$. 
Although every Veech group is a Fuchsian group, it is not true that every Fuchsian group can be realised as a Veech group, see for example \cite{HS_survey}. 
The question of determining the Fuschian groups that can be realised as Veech groups of translation surfaces is a difficult problem, and Bouw and M\"oller \cite{BM10} classified all the translation surfaces having a triangular Veech group. 
These surfaces are the Bouw-M\"oller surfaces we mentioned previously, whose polygonal description has been given by Hooper \cite{Hooper}.

\subsection{Bouw-M\"oller surfaces}
Given $m,n \geq 2$ with $mn \geq 6$, the Bouw-M\"oller surface $S_{m,n}$ is a translation surface made with $m$ semi-regular polygons, each having a symmetry of order $n$.
In this section we will explain how to describe the surface $S_{m,n}$ using polygons, following \cite{Hooper} and we will mention some useful facts that will be needed later on.

\paragraph{Definition of Bouw-M\"oller surfaces.} 
Given $m,n \geq 2$ with $mn \geq 6$, the Bouw-M\"oller surface $S_{m,n}$ is a translation surface obtained by identifying the sides of a collection of $m$ semi-regular polygons $P(0), \dots, P(m-1)$.
More precisely, for $n \geq 2$ and $a,b \in \RR^+$, let $P_n(a,b)$ be the semi regular polygon having all angles equal to $\frac{(n-1)\pi}{n}$ and sides of length alternating between $a$ and $b$, as in Figure \ref{fig:example_P_n(a,b)}. 
The edges $(v_j)_{j = 0, \dots, 2n-1 }$ of $P_n(a,b)$ are given by the vectors:
\begin{equation*}
    v_j = \left\{ 
\begin{array}{ll}
a\left(\cos\left(\frac{j\pi}{n}\right), \sin\left(\frac{j\pi}{n}\right)\right) \text{ if $j$ is even} \\
b\left(\cos\left(\frac{j\pi}{n}\right), \sin\left(\frac{j\pi}{n}\right)\right) \text{ if $j$ is odd}
\end{array}
    \right.
\end{equation*}
In the case where $a=0$ (resp. $b=0$), $P_n(a,b)$ is a regular $n$-gon of side length $b$ (resp. $a$).


\begin{figure}
\center
\begin{tikzpicture}[line cap=round,line join=round,>=triangle 45,x=0.6cm,y=0.6cm]
\clip(-2,-1) rectangle (5,6.8);
\draw [line width=1pt] (-1.8772758664047853,1.9446152422706642)-- (-0.754551732809569,0);
\draw [line width=1pt] (3.7545517328095688,0)-- (4.8772758664047835,1.9446152422706624);
\draw [line width=1pt] (0.3772758664047853,5.849613391789288)-- (2.6227241335952143,5.84961339178929);
\draw [line width=1pt] (-1.8772758664047853,1.9446152422706642)-- (0.3772758664047853,5.849613391789288);
\draw [line width=1pt] (2.6227241335952143,5.84961339178929)-- (4.8772758664047835,1.9446152422706624);
\draw [line width=1pt] (3.7545517328095688,0)-- (-0.754551732809569,0);
\draw (4.2,1.2) node[anchor=north west] {$b$};
\draw (4.1,4.5) node[anchor=north west] {$a$};
\draw (1.1,6.8) node[anchor=north west] {$b$};
\draw (-1.5,4.5) node[anchor=north west] {$a$};
\draw (1.1,0) node[anchor=north west] {$a$};
\draw (-2,1.42) node[anchor=north west] {$b$};
\end{tikzpicture}
\caption{The polygon $P_3(a,b)$}
\label{fig:example_P_n(a,b)}
\end{figure}

Now, if $n$ is odd, then define \[
P\left(i\right)=P_n\left(\sin\left(\frac{\left(i+1\right)\pi}{m}\right),\sin\left(\frac{i\pi}{m}\right)\right).
\]
If $n$ is even, define 
\begin{align*}
    P\left(i\right)&=
    \begin{cases}
        P_n\left(\sin\left(\frac{\left(i+1\right)\pi}{m}\right),\sin\left(\frac{i\pi}{m}\right)\right) & \text{if $i$ is even,} \\
        P_n\left(\sin\left(\frac{i\pi}{m}\right),\sin\left(\frac{\left(i+1\right)\pi}{m}\right)\right) & \text{if $i$ is odd,}
    \end{cases}
\end{align*}

Finally, the Bouw-M\"oller surface $S_{m,n}$ is obtained by identifying sides of $P(i)$, $i = 1, \dots m-2$ to parallel sides of either $P(i-1)$ or $P(i+1)$ respecting the rules for translation surfaces.

Note that $P(0)$ and $P(m-1)$ are regular $n$-gons and hence for them, this means that the sides are glued to the parallel sides of $P(1)$ and $P(m-2)$ respectively. 
The examples of $S_{3,4}$ and $S_{4,3}$ are represented in Figure \ref{fig:examples_S34_S43}.

\begin{Rema}\label{rk:alternation}
Note that given two adjacent sides of the polygon $P(i)$, $0 < i < m-1$, one is paired to a side in $P(i-1)$ while the other is paired to a side in $P(i+1)$.    
\end{Rema}

\begin{figure}[h]
\centering
\definecolor{ccqqqq}{rgb}{0.8,0,0}
\definecolor{qqqqff}{rgb}{0,0,1}
\definecolor{ffvvqq}{rgb}{1,0.3333333333333333,0}
\definecolor{qqwuqq}{rgb}{0,0.39215686274509803,0}
\definecolor{ffdxqq}{rgb}{1,0.8431372549019608,0}
\definecolor{wwwwww}{rgb}{0.4,0.4,0.4}
\begin{tikzpicture}[line cap=round,line join=round,>=triangle 45,x=1.3cm,y=1.3cm]
\clip(-1.5,-1.5) rectangle (3.5,3);
\draw [line width=1pt,color=wwwwww] (0,1)-- (-1,1);
\draw (-0.5,1) node[above] {$C$};  
\draw [line width=1pt,color=ffdxqq] (-1,1)-- (-1,0);
\draw (-1,0.5) node[left] {$B$};  
\draw [line width=1pt,color=qqwuqq] (-1,0)-- (0,0);
\draw (-0.5,0) node[below] {$A$};  
\draw [line width=1pt,dash pattern=on 3pt off 3pt] (0,1)-- (0,0);
\draw [line width=1pt,color=ffvvqq] (0,0)-- (0.7071067811865475,-0.7071067811865475);
\draw (0.35,-0.35) node[anchor=north east] {$D$};  
\draw [line width=1pt,color=wwwwww] (0.7071067811865475,-0.7071067811865475)-- (1.7071067811865472,-0.7071067811865475);
\draw (1.2,-0.7) node[below] {$C$};  
\draw [line width=1pt,color=qqqqff] (1.7071067811865472,-0.7071067811865475)-- (2.414213562373095,0);
\draw (2.05,-0.35) node[anchor=north west] {$E$};  
\draw [line width=1pt,color=ffdxqq] (2.414213562373095,0)-- (2.414213562373095,1);
\draw (2.41,0.5) node[right] {$B$};  
\draw [line width=1pt,dash pattern=on 3pt off 3pt] (2.414213562373095,1)-- (1.7071067811865477,1.7071067811865475);
\draw [line width=1pt,color=qqwuqq] (1.7071067811865477,1.7071067811865475)-- (0.7071067811865478,1.7071067811865477);
\draw (1.2,1.7) node[above] {$A$};  
\draw [line width=1pt,color=ccqqqq] (0.7071067811865478,1.7071067811865477)-- (0,1);
\draw (0.35,1.35) node[anchor=south east] {$F$};  
\draw [line width=1pt,color=ccqqqq] (2.414213562373095,1)-- (3.121320343559643,1.7071067811865466);
\draw (2.76,1.35) node[anchor=north west] {$F$};  
\draw [line width=1pt,color=ffvvqq] (3.121320343559643,1.7071067811865466)-- (2.414213562373096,2.414213562373095);
\draw (2.76,2.05) node[anchor=south west] {$D$};  
\draw [line width=1pt,color=qqqqff] (2.414213562373096,2.414213562373095)-- (1.7071067811865477,1.7071067811865475);
\draw (2.05,2.05) node[anchor=south east] {$E$};  
\draw (-0.96,0.8) node[anchor=north west] {$P(0)$};
\draw (0.7,0.9) node[anchor=north west] {$P(1)$};
\draw (2,1.95) node[anchor=north west] {$P(2)$};
\end{tikzpicture}
\begin{tikzpicture}[line cap=round,line join=round,>=triangle 45,x=1.3cm,y=1.3cm]
\clip(2.5,-1.5) rectangle (8.5,3);
\draw [line width=1pt,color=ffdxqq] (3,-0.8)-- (4,-0.8);
\draw (3.5,-0.8) node[below] {$A$};
\draw [line width=1pt,dash pattern=on 3pt off 3pt] (4,-0.8)-- (3.5,0.06602540378443866);
\draw [line width=1pt,color=wwwwww] (3.5,0.06602540378443866)-- (3,-0.8);
\draw (3.25,-0.37) node[anchor= east] {$B$};
\draw [line width=1pt,color=ffvvqq] (3.5,0.06602540378443866)-- (4.240044641670906,1.3478203230275505);
\draw (3.85,0.64) node[anchor= east] {$C$};
\draw [line width=1pt,color=ffdxqq] (4.240044641670906,1.3478203230275505)-- (5.240044641670906,1.3478203230275505);
\draw (4.74,1.34) node[above] {$A$};
\draw [line width=1pt,dash pattern=on 3pt off 3pt] (5.240044641670906,1.3478203230275505)-- (5.980089283341812,0.06602540378443866);
\draw [line width=1pt,color=wwwwww] (5.980089283341812,0.06602540378443866)-- (5.480089283341811,-0.8);
\draw (5.68,-0.37) node[anchor= west] {$B$};
\draw [line width=1pt,color=qqwuqq] (5.480089283341811,-0.8)-- (4,-0.8);
\draw (4.74,-0.8) node[below] {$D$};
\draw [line width=1pt,color=ccqqqq] (5.240044641670906,1.3478203230275505)-- (5.740044641670906,2.2138457268119893);
\draw (5.49,1.8) node[anchor=east] {$E$};
\draw [line width=1pt,color=qqwuqq] (5.740044641670906,2.2138457268119893)-- (7.220133925012717,2.2138457268119893);
\draw (6.48,2.21) node[above] {$D$};
\draw [line width=1pt,dash pattern=on 3pt off 3pt] (7.220133925012717,2.2138457268119893)-- (7.720133925012716,1.3478203230275505);
\draw [line width=1pt,color=qqqqff] (7.220133925012717,2.2138457268119893)-- (8.220133925012718,2.2138457268119893);
\draw (7.72,2.21) node[above] {$F$};
\draw [line width=1pt,color=ccqqqq] (7.720133925012716,1.3478203230275505)-- (8.220133925012718,2.2138457268119893);
\draw (7.97,1.8) node[anchor=west] {$E$};
\draw [line width=1pt,color=ffvvqq] (7.720133925012716,1.3478203230275505)-- (6.980089283341812,0.06602540378443866);
\draw (7.35,0.7) node[anchor= west] {$C$};
\draw [line width=1pt,color=qqqqff] (5.980089283341812,0.06602540378443866)-- (6.980089283341812,0.06602540378443866);
\draw (6.48,0.06) node[below] {$F$};
\begin{scriptsize}
\draw (3.17,-0.3) node[anchor=north west] {$P(0)$};
\draw (7.4,2.1) node[anchor=north west] {$P(3)$};
\end{scriptsize}
\draw (6,1.5) node[anchor=north west] {$P(2)$};
\draw (4.2,0.6) node[anchor=north west] {$P(1)$};  
\end{tikzpicture}
\caption{The surfaces $S_{3,4}$ and $S_{4,3}$.}
\label{fig:examples_S34_S43}
\end{figure}


\begin{Rema}\label{rk:v0horizontal}
With this construction, the side $v_0$ of $P(0)$ is always horizontal. This convention will be used later.
\end{Rema}

\paragraph{Properties of Bouw-M\"oller surfaces.}
Given $m,n \geq 2$, with $mn \geq 6$, the surface $S_{m,n}$ is a translation surface of genus $(mn-m-n-\gamma)/2 +1$ and $\gamma=\gcd(m,n)$ singularities. More precisely, one has
\begin{Prop}[Proposition 24 of \cite{Hooper}]
Let $\gamma=\gcd(m,n)$. There are $\gamma$ equivalence classes of vertices in the decomposition into polygons. 
In particular, $S_{m,n}$ will have $\gamma$ cone singularities.
Each of these singularities has cone angle $2\pi(mn-m-n)/ \gamma$.
\end{Prop}
In particular $S_{m,n}$ has only one singularity if and only if $m$ and $n$ are coprime. Notice that in this case saddle connections are closed curves, and hence we have

\begin{Prop}\label{prop:systoles}
If $m$ and $n$ are coprime, then the systoles of $S_{m,n}$ are exactly the sides of $P(0)$ and $P(m-1)$.
\end{Prop}
\begin{proof}
Any closed curve on $S_{m,n}$ has length at least the length of the shortest side of the polygons defining $S_{m,n}$. Since the sides have length $\sin \frac{k\pi}{m}$, the shortest sides are for the case $k=1$ or $k=m-1$, which corresponds to the length of the sides of $P(0)$ and $P(m-1)$.
\end{proof}

\subsection{Veech group and Fundamental domain}\label{sec:Teichmuller_disk_BM}
We now recall results about the Veech group of $S_{m,n}$ and give a model for the Teichm\"uller curve associated to $S_{m,n}$. 

\begin{Theo}[\cite{BM10}, \cite{Hooper}]
The Veech group of $S_{m,n}$ is the triangle group $\Delta^+(m,n,\infty)$. Further, $S_{m,n}$ and $S_{n,m}$ are affinely equivalent.
\end{Theo}

Then, the Teichm\"uller curve of $S_{m,n}$ can be identified with two copies of a hyperbolic triangle of angles $(0, \frac{\pi}{n}, \frac{\pi}{m})$. We parametrize the $SL_2(\RR)-$orbit of $S_{m,n}$ as follows:

\begin{Def}[Parametrization of the $SL_2(\RR)-$orbit]\label{def:teichmuller}
Given $M = \begin{pmatrix} a & b \\ c & d \end{pmatrix} \in SL_2(\RR)$, we send the surface $M \cdot S_{m,n}$ to an element of $\HH$ using the identification:
\[ \Psi : M \cdot S_{m,n}
\mapsto \frac{di+b}{ci+a} \in \HH. \]
\end{Def}

Note that with this identification, the surface $S_{m,n}$ is identified with $i \in \HH$.
We can describe precisely the shape of a fundamental domain $\Tmn$ for the $SL_2(\RR)-$orbit of $S_{m,n}$ using the following result:

\begin{Theo}[\cite{Hooper}, see also \cite{DPU}]
The horizontal cylinders of $S_{m,n}$ all have the same moduli, namely:
\[ s := 2 \cot\left(\frac{\pi}{n}\right) + \frac{2 \cos\left(\frac{\pi}{m}\right)}{\sin\left(\frac{\pi}{n}\right)} = \frac{2(\cos\left(\frac{\pi}{n}\right) +\cos\left(\frac{\pi}{m}\right))}{\sin\left(\frac{\pi}{n}\right)}.\]
\end{Theo}

\begin{Rema}

One could easily see from the geometry of the polygons that horizontal cylinders are contained in the union of exactly two polygons.
\end{Rema}

In particular, the element $\begin{pmatrix} 1 & s \\ 0 & 1 \end{pmatrix}$ belongs to the Veech group of $S_{m,n}$ and a fundamental domain consists of the union of two $(\infty, m, n)$-triangles as in Figure \ref{fig:fond_domain_0}.
We choose the fundamental domain so that it is bounded by two vertical lines, one of them passing through $i$, which is the point of order $m$. Then $S_{m,n}$ corresponds to the point of coordinates $i$ and is stabilised by the action of the rotation of angle $\frac{2\pi}{n}$ while the other corners (which are identified) correspond to the surface $S_{n,m}$, which lies in the same Teichm\"uller curve (up to normalizing the area) and is stabilised by the action of the rotation of angle $\frac{2\pi}{m}$. 

\begin{figure}[h]
\center
\definecolor{qqqqff}{rgb}{0,0,1}
\definecolor{qqwuqq}{rgb}{0,0.39215686274509803,0}
\definecolor{ccqqqq}{rgb}{0.8,0,0}
\definecolor{uuuuuu}{rgb}{0.26666666666666666,0.26666666666666666,0.26666666666666666}
\begin{tikzpicture}[line cap=round,line join=round,>=triangle 45,x=1cm,y=1cm,scale=1.2]
\clip(-6,-1) rectangle (5.5,6);
\fill[line width=0.5pt,color=uuuuuu,fill=uuuuuu,fill opacity=0.10000000149011612] (-4.528276847984907,0) -- (4.528276847984907,0) -- (4.528276847984907,9) -- (-4.528276847984907,9) -- cycle;
\draw [shift={(2.414213562373095,0)},line width=1pt,color=white,fill=white,fill opacity=1]  (0,0) --  plot[domain=0:3.141592653589793,variable=\t]({1*2.613125929752753*cos(\t r)+0*2.613125929752753*sin(\t r)},{0*2.613125929752753*cos(\t r)+1*2.613125929752753*sin(\t r)}) -- cycle ;
\draw [shift={(-2.414213562373095,0)},line width=1pt,color=white,fill=white,fill opacity=1]  (0,0) --  plot[domain=0:3.141592653589793,variable=\t]({1*2.613125929752753*cos(\t r)+0*2.613125929752753*sin(\t r)},{0*2.613125929752753*cos(\t r)+1*2.613125929752753*sin(\t r)}) -- cycle ;
\draw [shift={(4.528276847984907,1.5359568838917255)},line width=1pt,color=qqqqff,fill=qqqqff,fill opacity=0.1] (0,0) -- (90:0.4) arc (90:126:0.4) -- cycle;
\draw [shift={(0,1)},line width=1pt,color=qqwuqq,fill=qqwuqq,fill opacity=0.10000000149011612] (0,0) -- (67.5:0.4) arc (67.5:112.5:0.4) -- cycle;
\draw [shift={(-4.528276847984907,1.5359568838917255)},line width=1pt,color=qqqqff,fill=qqqqff,fill opacity=0.1] (0,0) -- (54:0.4) arc (54:90:0.4) -- cycle;
\draw [shift={(2.414213562373095,0)},line width=1pt,color=qqwuqq,fill=qqwuqq,fill opacity=0.10000000149011612] (0,0) -- (157.5:0.4) arc (157.5:180:0.4) -- cycle;
\draw [shift={(2.414213562373095,0)},line width=1pt,color=qqqqff,fill=qqqqff,fill opacity=0.1] (0,0) -- (0:0.4) arc (0:36:0.4) -- cycle;
\draw [shift={(-2.414213562373095,0)},line width=1pt,color=qqwuqq,fill=qqwuqq,fill opacity=0.10000000149011612] (0,0) -- (0:0.4) arc (0:22.5:0.4) -- cycle;
\draw [shift={(-2.414213562373095,0)},line width=1pt,color=qqqqff,fill=qqqqff,fill opacity=0.1] (0,0) -- (144:0.4) arc (144:180:0.4) -- cycle;
\draw [line width=1pt] (4.528276847984907,0) -- (4.528276847984907,6.126666666666644);
\draw [line width=1pt] (-4.528276847984907,0) -- (-4.528276847984907,6.126666666666644);
\draw [shift={(-2.414213562373095,0)},line width=1pt]  plot[domain=0.3926990816987242:2.5132741228718345,variable=\t]({1*2.613125929752753*cos(\t r)+0*2.613125929752753*sin(\t r)},{0*2.613125929752753*cos(\t r)+1*2.613125929752753*sin(\t r)});
\draw [shift={(2.414213562373095,0)},line width=1pt]  plot[domain=0.6283185307179587:2.748893571891069,variable=\t]({1*2.613125929752753*cos(\t r)+0*2.613125929752753*sin(\t r)},{0*2.613125929752753*cos(\t r)+1*2.613125929752753*sin(\t r)});
\draw [line width=1pt,domain=-5.411111111111112:6.10888888888889] plot(\x,{(-0-0*\x)/6.942490410358002});
\draw (0,0) node[below] {$0$};
\draw (0.2555555555555551,1.5) node[anchor=north west] {$S_{m,n}$};
\draw (4.6,2) node[anchor=north west] {$S_{n,m}$};
\draw (-5.4,2) node[anchor=north west] {$S_{n,m}$};
\draw (1.7,0) node[anchor=north west] {$x = \cot(\frac{\pi}{n})$};
\draw (4.3,-0.08666666666666249) node[anchor=north west] {$\frac{s}{2}$};
\draw (-3,0.05) node[anchor=north west] {$-\cot(\frac{\pi}{n})$};
\draw (-4.9,0.033333333333336976) node[anchor=north west] {$-\frac{s}{2}$};
\draw [line width=1pt,dash pattern=on 3pt off 3pt] (2.414213562373095,0)-- (4.528276847984907,1.5359568838917255);
\draw [line width=1pt,dash pattern=on 3pt off 3pt] (2.414213562373095,0)-- (0,1);
\draw [line width=1pt,dash pattern=on 3pt off 3pt] (0,1)-- (-2.414213562373095,0);
\draw [line width=1pt,dash pattern=on 3pt off 3pt] (-2.414213562373095,0)-- (-4.528276847984907,1.5359568838917255);
\draw (-4.5,2.8) node[anchor=north west] {$\frac{\pi}{m}$};
\draw (-0.5,2.3) node[anchor=north west] {$\frac{\pi}{n}$};
\draw (4,2.8) node[anchor=north west] {$\frac{\pi}{m}$};
\draw (3,0.6) node[anchor=north west] {$\frac{\pi}{m}$};
\draw (0.9,0.55) node[anchor=north west] {$\frac{\pi}{n}$};
\draw (-1.5,0.55) node[anchor=north west] {$\frac{\pi}{n}$};
\draw (-3.5,0.6) node[anchor=north west] {$\frac{\pi}{m}$};
\draw [line width=1pt,dash pattern=on 3pt off 3pt] (0,1) -- (0,6.126666666666644);
\draw (0,2.3) node[anchor=north west] {$\frac{\pi}{n}$};
\draw (2.2955555555555556,1.8066666666666626) node[anchor=north west] {$r = \frac{1}{sin( \pi /n)}$};
\begin{scriptsize}
\draw [fill=black] (0,1) circle (2.5pt);
\draw [color=uuuuuu] (2.414213562373095,0)-- ++(-2.5pt,0 pt) -- ++(5pt,0 pt) ++(-2.5pt,-2.5pt) -- ++(0 pt,5pt);
\draw [fill=black] (4.528276847984907,1.5359568838917255) circle (2.5pt);
\draw [color=uuuuuu] (-2.414213562373095,0)-- ++(-2.5pt,0 pt) -- ++(5pt,0 pt) ++(-2.5pt,-2.5pt) -- ++(0 pt,5pt);
\draw [fill=black] (-4.528276847984907,1.5359568838917255) circle (2.5pt);
\draw [color=black] (0,0)-- ++(-2.5pt,0 pt) -- ++(5pt,0 pt) ++(-2.5pt,-2.5pt) -- ++(0 pt,5pt);
\end{scriptsize}
\end{tikzpicture}
\caption{The fundamental domain $\Tmn$ of the Teichm\"uller curve of $S_{m,n}$.}
\label{fig:fond_domain_0}
\end{figure}


Using the data of the angles and the coordinates of $S_{m,n}$, one can compute all other parameters that are useful for the following sections. 
In particular, we will denote by $x$ the abscissa of the center of the circle which defines the side of the fundamental domain connecting the two vertices of order $m$ and $n$ so, the line connecting $S_{m,n}$ and $S_{n,m}$ and by $r$ its radius. 
We then have $x = \cot\left(\frac{\pi}{n}\right)$, $r = \frac{1}{\sin\left(\frac{\pi}{n}\right)}$ and that $S_{n,m}$ has coordinates $\pm \frac{s}{2} + \frac{\sin\left(\frac{\pi}{m}\right)}{\sin\left(\frac{\pi}{n}\right)}$.


Finally, notice that since $S_{m,n}$ and $S_{n,m}$ belong to the same Teichm\"uller curve (up to normalising the area), and this gives two distinct parametrisations of the $SL_2(\RR)-$orbit. In the following, we will use the parametrisation given by $S_{m,n}$ with $n<m$.\newline

\subsection{Singular intersections}\label{subsec:singular_intersections_on_BM_surfaces}

We end this section with the computation of the algebraic intersections of several pairs of closed curves in Bouw-M\"oller surfaces, which will be useful later.

Given two (simple) closed oriented curves $\alpha$ and $\beta$ on a translation surface, the algebraic intersection is given by the sum of each intersection point with its sign.
For a transverse intersection point $P$, we have $\Int_P (\alpha,\beta) = +1$ if the tangent vectors at $P$ of $\alpha$ and $\beta$ form an ordered basis with the normal to the sheet of paper, and $\Int_P (\alpha,\beta) = -1$ otherwise.

For closed saddle connections, one looks at the incoming and outgoing pieces of $\alpha$ and $\beta$ in a neighbourhood of the singularity, as in Figure \ref{fig:explanation_singular_intersection}. 
Then, the intersection will be determined by the circular order of these, in the sense that two saddle connections will intersect if and only if the two entries of one saddle connection alternate with the two entries of the other saddle connection.
Intuitively, this means that one cannot pull the two curves apart at the singularity to obtain two curves that do not intersect. 

\begin{figure}
\center
\definecolor{uuuuuu}{rgb}{0.26666666666666666,0.26666666666666666,0.26666666666666666}
\definecolor{qqqqff}{rgb}{0.4,0,0.8}
\definecolor{fuqqzz}{rgb}{0.8,0.2,0.4}
\definecolor{ffvvqq}{rgb}{0.8,0.4,0.2}
\definecolor{qqwuzz}{rgb}{0.4,0.6,0.6}
\definecolor{qqwuqq}{rgb}{0,0.5,0}
\definecolor{ccqqqq}{rgb}{0.8,0,0}
\begin{tikzpicture}[line cap=round,line join=round,>=triangle 45,x=2cm,y=2cm]
\clip(-2,-0.3) rectangle (5,3.5);
\draw [shift={(0,2.414213562373095)},line width=1pt,color=qqwuzz,fill=qqwuzz,fill opacity=0.55] (0,0) -- (-31.498117425671257:0.2666666666666664) arc (-31.498117425671257:0:0.2666666666666664) -- cycle;
\draw [shift={(-1.7071067811865475,0.707106781186548)},line width=1pt,color=ffvvqq,fill=ffvvqq,fill opacity=0.5] (0,0) -- (23.03190315777632:0.2666666666666664) arc (23.03190315777632:90:0.2666666666666664) -- cycle;
\draw [shift={(1.7071067811865475,0.7071067811865475)},line width=1pt,color=ffvvqq,fill=ffvvqq,fill opacity=0.5] (0,0) -- (90:0.2666666666666664) arc (90:225:0.2666666666666664) -- cycle;
\draw [shift={(1.7071067811865481,3.121320343559642)},line width=1pt,color=ffvvqq,fill=ffvvqq,fill opacity=0.5] (0,0) -- (-135:0.2666666666666664) arc (-135:-45:0.2666666666666664) -- cycle;
\draw [shift={(-0.7071067811865477,0.7071067811865478)},line width=1pt,color=ffvvqq,fill=ffvvqq,fill opacity=0.5] (0,0) -- (-45:0.2666666666666664) arc (-45:180:0.2666666666666664) -- cycle;
\draw [shift={(1,2.414213562373095)},line width=1pt,color=ffvvqq,fill=ffvvqq,fill opacity=0.5] (0,0) -- (-180:0.2666666666666664) arc (-180:45:0.2666666666666664) -- cycle;
\draw [shift={(1,0)},line width=1pt,color=ffvvqq,fill=ffvvqq,fill opacity=0.5] (0,0) -- (45:0.2666666666666664) arc (45:146.17024373125392:0.2666666666666664) -- cycle;
\draw [shift={(1,0)},line width=1pt,color=fuqqzz,fill=fuqqzz,fill opacity=0.5] (0,0) -- (146.17024373125392:0.2666666666666664) arc (146.17024373125392:180:0.2666666666666664) -- cycle;
\draw [shift={(-0.7071067811865475,1.7071067811865477)},line width=1pt,color=fuqqzz,fill=fuqqzz,fill opacity=0.5] (0,0) -- (-180:0.2666666666666664) arc (-180:45:0.2666666666666664) -- cycle;
\draw [shift={(1.7071067811865475,1.7071067811865472)},line width=1pt,color=fuqqzz,fill=fuqqzz,fill opacity=0.5] (0,0) -- (45:0.2666666666666664) arc (45:270:0.2666666666666664) -- cycle;
\draw [shift={(-1.7071067811865472,1.7071067811865477)},line width=1pt,color=fuqqzz,fill=fuqqzz,fill opacity=0.5] (0,0) -- (-90:0.2666666666666664) arc (-90:0:0.2666666666666664) -- cycle;
\draw [shift={(0,0)},line width=1pt,color=fuqqzz,fill=fuqqzz,fill opacity=0.5] (0,0) -- (0:0.2666666666666664) arc (0:135:0.2666666666666664) -- cycle;
\draw [shift={(2.414213562373095,2.414213562373094)},line width=1pt,color=fuqqzz,fill=fuqqzz,fill opacity=0.5] (0,0) -- (135:0.2666666666666664) arc (135:203.3185547154144:0.2666666666666664) -- cycle;
\draw [shift={(2.414213562373095,2.414213562373094)},line width=1pt,color=qqqqff,fill=qqqqff,fill opacity=0.5] (0,0) -- (-156.6814452845856:0.2666666666666664) arc (-156.6814452845856:-135:0.2666666666666664) -- cycle;
\draw [shift={(0,2.414213562373095)},line width=1pt,color=qqqqff,fill=qqqqff,fill opacity=0.5] (0,0) -- (-135:0.2666666666666664) arc (-135:-31.498117425671264:0.2666666666666664) -- cycle;
\draw [shift={(-1.7071067811865475,0.707106781186548)},line width=1pt,color=qqwuzz,fill=qqwuzz,fill opacity=0.55] (0,0) -- (0:0.2666666666666664) arc (0:23.03190315777632:0.2666666666666664) -- cycle;
\draw [shift={(3.6873208740633436,1.5452588533948983)},line width=1pt,color=fuqqzz,fill=fuqqzz,fill opacity=0.55] (0,0) -- (152.15242174021193:0.2666666666666664) arc (152.15242174021193:206.69438680280095:0.2666666666666664) -- cycle;
\draw [shift={(3.6873208740633436,1.5452588533948983)},line width=1pt,color=ffvvqq,fill=ffvvqq,fill opacity=0.5] (0,0) -- (26.694386802800977:0.2666666666666664) arc (26.694386802800977:152.1524217402119:0.2666666666666664) -- cycle;
\draw [shift={(3.6873208740633436,1.5452588533948983)},line width=1pt,color=qqwuzz,fill=qqwuzz,fill opacity=0.5] (0,0) -- (-27.847578259788147:0.2666666666666664) arc (-27.847578259788147:26.694386802800977:0.2666666666666664) -- cycle;
\draw [shift={(3.6873208740633436,1.5452588533948983)},line width=1pt,color=qqqqff,fill=qqqqff,fill opacity=0.5] (0,0) -- (-153.30561319719905:0.2666666666666664) arc (-153.30561319719905:-27.847578259788126:0.2666666666666664) -- cycle;
\draw [line width=1pt] (0,0)-- (1,0);
\draw (0.5,0) node[below] {$C$};
\draw [line width=1pt] (1,0)-- (1.7071067811865475,0.7071067811865475);
\draw (1.35,0.35) node[anchor = north west] {$E$};
\draw [line width=1pt] (1.7071067811865475,0.7071067811865475)-- (1.7071067811865475,1.7071067811865472);
\draw (1.7,1.2) node[right] {$B$};
\draw [line width=1pt] (1.7071067811865475,1.7071067811865472)-- (1,2.414213562373095);
\draw [line width=1pt] (1,2.414213562373095)-- (0,2.414213562373095);
\draw (0.5,2.41) node[above] {$A$};
\draw [line width=1pt] (0,2.414213562373095)-- (-0.7071067811865475,1.7071067811865477);
\draw (-0.35,2.05) node[anchor = south east] {$F$};
\draw [line width=1pt] (-0.7071067811865475,1.7071067811865477)-- (-0.7071067811865477,0.7071067811865478);
\draw [line width=1pt] (-0.7071067811865477,0.7071067811865478)-- (0,0);
\draw (-0.35,0.35) node[anchor=north east] {$D$};
\draw [line width=1pt] (-0.7071067811865475,1.7071067811865477)-- (-1.7071067811865472,1.7071067811865477);
\draw (-1.2,1.7) node[above] {$C$};
\draw [line width=1pt] (-1.7071067811865472,1.7071067811865477)-- (-1.7071067811865475,0.707106781186548);
\draw (-1.7,1.2) node[left] {$B$};
\draw [line width=1pt] (-1.7071067811865475,0.707106781186548)-- (-0.7071067811865477,0.7071067811865478);
\draw (-1.2,0.7) node[below] {$A$};
\draw [line width=1pt] (1.7071067811865475,1.7071067811865472)-- (2.414213562373095,2.414213562373094);
\draw (2.05,2.05) node[anchor = north west] {$F$};
\draw [line width=1pt] (2.414213562373095,2.414213562373094)-- (1.7071067811865481,3.121320343559642);
\draw (2.05,2.75) node[anchor = south west] {$D$};
\draw [line width=1pt] (1.7071067811865481,3.121320343559642)-- (1,2.414213562373095);
\draw (1.35,2.75) node[anchor = south east] {$E$};
\draw [line width=1pt,color=ccqqqq] (0,2.414213562373095)-- (0.4065615329494075,2.1650907090516776);
\draw [line width=1pt,color=ccqqqq] (1,0)-- (0.5455778630852526,0.3045509912303418);
\draw [line width=1pt,color=qqwuqq] (-1.7071067811865475,0.707106781186548)-- (-1.2404437666218397,0.9055002146743686);
\draw [line width=1pt,color=qqwuqq] (1.8520171119035886,2.1718776588779605)-- (2.414213562373095,2.414213562373094);
\draw [line width=1pt,dash pattern=on 3pt off 3pt,color=ccqqqq] (0.16,0.5629629629629622)-- (0.5455778630852526,0.3045509912303418);
\draw [line width=1pt,dash pattern=on 3pt off 3pt,color=ccqqqq] (0.4065615329494075,2.1650907090516776)-- (0.8355555555555525,1.9022222222222196);
\draw [line width=1pt,dash pattern=on 3pt off 3pt,color=qqwuqq] (1.5288888888888854,2.03259259259259)-- (1.8520171119035886,2.1718776588779605);
\draw [line width=1pt,dash pattern=on 3pt off 3pt,color=qqwuqq] (-1.2404437666218397,0.9055002146743686)-- (-0.8474074074074097,1.0725925925925912);
\draw [line width=1pt,color=ccqqqq] (0.24799871403520124,2.2622509632253305)-- (0.26074074074073794,2.1274074074074045);
\draw [line width=1pt,color=ccqqqq] (0.24799871403520124,2.2622509632253305)-- (0.37925925925925635,2.2874074074074042);
\draw [line width=1pt,color=ccqqqq] (0.5682218725919673,0.28937511184312564)-- (0.5807407407407378,0.16);
\draw [line width=1pt,color=ccqqqq] (0.5682218725919673,0.28937511184312564)-- (0.7229629629629599,0.2962962962962959);
\draw [line width=1pt,color=qqwuqq] (1.955555555555552,2.311111111111108)-- (2.0626645088947946,2.262677640318845);
\draw [line width=1pt,color=qqwuqq] (2.0626645088947946,2.262677640318845)-- (2.026666666666663,2.1451851851851824);
\draw [line width=1pt,color=qqwuqq] (-1.3451851851851873,0.9540740740740727)-- (-1.240526898666709,0.905464872572467);
\draw [line width=1pt,color=qqwuqq] (-1.240526898666709,0.905464872572467)-- (-1.28,0.788148148148147);
\draw [line width=1pt,color=ccqqqq] (4.761481481481473,0.9777777777777737)-- (2.877037037037031,1.973333333333326);
\draw [line width=1pt,color=qqwuqq] (2.859259259259253,1.1288888888888844)-- (4.432592592592585,1.92);
\draw [line width=1pt,color=qqwuqq] (4.244353781552244,1.8253488464260421)-- (4.094814814814807,1.8666666666666598);
\draw [line width=1pt,color=qqwuqq] (4.244353781552244,1.8253488464260421)-- (4.174814814814807,1.68);
\draw [line width=1pt,color=qqwuqq] (3.2925773059683836,1.3467719745222904)-- (3.0903703703703638,1.36);
\draw [line width=1pt,color=qqwuqq] (3.2925773059683836,1.3467719745222904)-- (3.179259259259253,1.1822222222222176);
\draw [line width=1pt,color=ccqqqq] (4.189712944134083,1.2798441748669604)-- (4.281481481481474,1.12);
\draw [line width=1pt,color=ccqqqq] (4.189712944134083,1.2798441748669604)-- (4.38814814814814,1.2888888888888839);
\draw [line width=1pt,color=ccqqqq] (3.0534435536440943,1.8801374377673303)-- (3.1259259259259196,1.7422222222222157);
\draw [line width=1pt,color=ccqqqq] (3.0534435536440943,1.8801374377673303)-- (3.214814814814808,1.8933333333333264);
\draw [color=ccqqqq](4.352592592592584,1.1) node[anchor=north west] {$\mathbf{\alpha}$};
\draw [color=qqwuqq](2.9,1.2) node[anchor=north west] {$\mathbf{\beta}$};
\draw (3,0.7) node[anchor=north west] {$\Int(\alpha,\beta)=-1$};
\begin{scriptsize}
\draw [fill=uuuuuu] (3.6873208740633436,1.5452588533948983) circle (2pt);
\end{scriptsize}
\end{tikzpicture}
\caption{In the example of this picture, the curves $\alpha$ and $\beta$ intersect once at the singularity. Furthermore, the sign is given by $\Int(\alpha,\beta) = -1$.}
\label{fig:explanation_singular_intersection}
\end{figure}
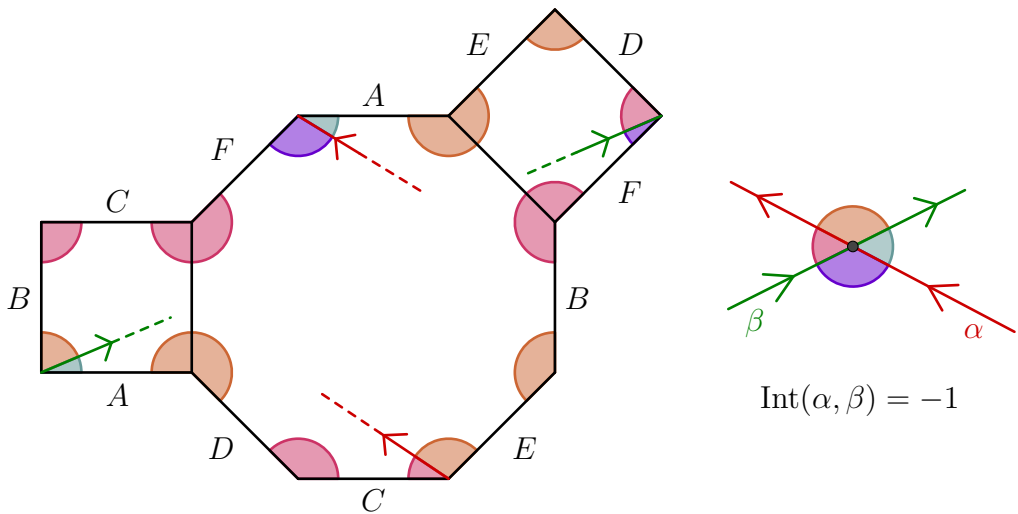

In the case of Bouw-M\"oller surfaces, this allows to prove two useful results about how some saddle connections intersect. 

\begin{Prop}\label{prop:intersection_systoles}
Assume $m$ and $n$ are coprime. Then there is a pair of intersecting systoles in $S_{m,n}$. More precisely, the horizontal systole $\alpha$ corresponding to the side $v_0$ of $P(0)$ intersects at least one systole having a direction making an angle $\frac{\pi}{n}$ with the horizontal.
\end{Prop}


\begin{proof}
As mentioned, the systoles of $S_{m,n}$ are exactly the sides of $P(0)$ and $P(m-1)$ (Proposition \ref{prop:systoles}), and in particular they cannot intersect outside the singularity.
In order to compute the algebraic intersection of systoles at the singularity, we need to understand in which order we encounter the sides of $P(0)$ and $P(m-1)$ while turning around the singularity.

The strategy of the proof is the following. 
Denote as $\alpha$ the side $v_0$ of the polygon $P(0)$, which is horizontal by Remark \ref{rk:v0horizontal} and it is a systole.
Since there is only one singularity, when looking at all the sides of polygons in a neighbourhood of the singularity, we will see two occurrences of each side corresponding to the two endpoints of the side. 
Now choose the endpoint of $\alpha$ on the left and start turning around the singularity counter-clockwise. 
If we look at the two occurrences of $\alpha$, they will be separated by a certain number of other sides and we will define a certain $\beta$ to be the side that we encounter when we are in some sense half way from meeting $\alpha$ again. 
In the following we will prove that $\alpha$ and $\beta$ are two systoles which intersect and which form an angle of $\frac{\pi}{n}$.

Recall that the polygons $P(i)$ for $i \neq 0, m-1$ are semi-regular $2n$-gons, so their internal angles are all equal to $\frac{(n-1)\pi}{n}$, while $P(0)$ and $P(m-1)$ are both regular $n$-gons so their internal angles are all $\frac{(n-2)\pi}{n}$. 

Now start turning counter-clockwise around the singularity from the left endpoint of the horizontal systole $\alpha$. 
The next side we encounter is the consecutive side in $P(0)$ (which is identified to a side in $P(1)$) having the left endpoint of $\alpha$ as an endpoint, and which we encounter after having turned by an angle $\frac{(n-2)\pi}{n}$. 
Then, by Remark \ref{rk:alternation}, we cross an angular sector in $P(1)$, then one in $P(2)$, and so on until $P(m-2)$.
In the sector contained in $P(m-2)$, if we start from a common side of $P(m-2)$ and $P(m-3)$, then the next side we encounter going around the singularity is a common side of $P(m-2)$ and $P(m-1)$.
This side is also a systole and we encounter it after having turned by a total angle $\frac{(n-2)\pi}{n} + (m-2) \frac{(n-1)\pi}{n}$, where the first term comes from the sector in $P(0)$ and the second term comes from the $m-2$ sectors in $P(1)$ to $P(m-2)$. 
Next, following the same reasoning, we encounter a second side of $P(m-1)$ after having turned by an angle $(m-2) \frac{(n-1)\pi}{n} + 2\frac{(n-2)\pi}{n}$ in total (so, an extra $\frac{(n-2)\pi}{n}$ from the first systole we encounter in $P(m-1)$).
We can then continue turning around the singularity and meeting sectors in $P(m-2)$ to $P(1)$ until we are back in $P(0)$ after an angle of $\frac{2(n-2)\pi}{n} + 2(m-2) \frac{(n-1)\pi}{n}$. 
\newline

Repeating this argument, we can see that after having turned by a total angle of $2l\frac{(n-2)\pi}{n} + 2l(m-2) \frac{(n-1)\pi}{n}$, with $l$ integer, we are back to a systole which is a side of $P(0)$ and if we add one more angular sector in $P(0)$, that is after a total angle $(2l+1)\frac{(n-2)\pi}{n} + 2l(m-2) \frac{(n-1)\pi}{n}$, we encounter another side of $P(0)$ (which is also a systole).
Moreover, when we are halfway and we turn by $l\frac{(n-2)\pi}{n} + l(m-2) \frac{(n-1)\pi}{n}$, we encounter a systole (which is either a side of $P(0)$ or a side of $P(m-1)$ depending on the parity of $l$).

Now, the next occurrence of $\alpha$ in the neighbourhood of the singularity must be at the end of a sector in $P(0)$, that is after an angle of the form $(2l_0+1)\frac{(n-2)\pi}{n} + 2l_0(m-2) \frac{(n-1)\pi}{n}$ for a certain integer $l_0$. In fact, $l_0$ must satisfy
\[ 2l_0(m-2) \frac{(n-1)\pi}{n} + (2l_0+1) \frac{(n-2)\pi}{n} \equiv \pi \mod 2\pi \] 
because, using the translation structure, one needs to turn around an angle which is an odd multiple of $\pi$ in order to come back to the same saddle connection, but at the other endpoint. 

This implies that
\[ 2l_0 (m-2) \frac{(n-1)\pi}{n} + 2l_0 \frac{(n-2)\pi}{n} \equiv \pi-\frac{(n-2)\pi}{n} \mod 2\pi .\]
and hence
\begin{equation}\label{eq:pi/n}
 l_0(m-2) \frac{(n-1)\pi}{n} + l_0 \frac{(n-2)\pi}{n} \equiv \frac{\pi}{n} \mod \pi. 
\end{equation}
Now, we define $\beta$ to be the systole that we will be encountering after an angle of $l_0\frac{(n-2)\pi}{n} + l_0(m-2) \frac{(n-1)\pi}{n}$.
In particular, Equation \eqref{eq:pi/n} gives that $\beta$ makes an angle $\frac{\pi}{n}$ with the horizontal. 

Furthermore, the above argument applied to $\beta$ tells by symmetry that the two occurences of $\beta$ in the neighbourhood of the singularity are separated by an angle $2l_0 (m-2) \frac{(n-1)\pi}{n} + (2l_0+1) \frac{(n-2)\pi}{n}$. 
This implies that turning around the singularity, we first see $\alpha$, then $\beta$ (after an angle $l_0 (m-2) \frac{(n-1)\pi}{n} + l_0 \frac{(n-2)\pi}{n}$), then $\alpha$ (after an angle $2l_0 (m-2) \frac{(n-1)\pi}{n} + (2l_0+1) \frac{(n-2)\pi}{n}$) and finally again $\beta$ (after an angle $3l_0 (m-2) \frac{(n-1)\pi}{n} + (3l_0+1) \frac{(n-2)\pi}{n}$). 
We conclude that $\Int(\alpha,\beta)=\pm 1$ (depending on the choice of the orientation of both $\alpha$ and $\beta$).
\end{proof}

Similarly, we can compute the algebraic intersection of the closed saddle connections given by the diagonals of the square $P(0)$ in the surfaces $S_{m,4}$ for $m$ odd.

\begin{Prop}\label{prop:intersection_diagonals}
Let $m \geq 3$ odd. On the surface $S_{m,4}$, the diagonals of the square $P(0)$ have algebraic intersection $2$ if $m \equiv 3 \mod 4$ and $0$ otherwise. 
\end{Prop}
\begin{proof}
As in Figure \ref{fig:intersection_diagonals}, we denote the two diagonals as $\alpha$ (positive slope) and $\beta$ (negative slope) and four sectors $\Sigma_i$ for $i=1,2,3,4$ as being a neighbourhood of the singularity intersecting the square $P(0)$ in the corners, moving counter-clockwise starting from the top-right corner.
In order to compute the algebraic intersection number at the singularity, we need to understand in which order one would encounter the sectors $\Sigma_i$ around the singularity. 
This would determine the order of occurrences of $\alpha$ and $\beta$ in the separatrix diagram and hence their intersection at the singularity. 
One would then just need to add the (positive) intersection in the centre of the square. 

The polygons $P(i)$ for $i \neq 0, m-1$ are semi-regular octagons, so the internal angles are all equal to $\frac{3\pi}{4}$, while $P(m-1)$ is a square, so its internal angles are $\frac{\pi}{2}$. 
In order to detect in which order we would see the $\Sigma_i$s around the singularity, we need to determine what is the angle of a sector sitting in between two of them. 
This, plus the gluing rules of translation surfaces, will tell us uniquely which sector follows a given $\Sigma_i$. 
Recalling Remark \ref{rk:alternation}, one can easily see that going around the singularity and starting from $P(0)$, we will encounter one vertex in each polygon $P(i)$ for $i=1$ to $m-2$, then one vertex in $P(m-1)$ and then again one vertex in each $P(i)$ for $i=m-2$ to $1$ until getting back to $P(0)$. 
This means that between each pair $\Sigma_i$, $\Sigma_{i+1}$, there is a cone angle of $2(m-2)\cdot \frac{3\pi}{4} + \frac{\pi}{2}$.

In particular, writing $m=4k+r$, we turn around the singularity by an angle $2(4k+r-2)\cdot\frac{3\pi}{4} + \frac{\pi}{2} = (3k-2)\cdot 2\pi + (3r+3)\frac{\pi}{2}$. 
Recalling that $r \neq 0,2$ because $m$ is odd, this gives an integer number of turns for $r=3$, and a half turn for $r=1$. 
Hence the order of the sectors is: 
\begin{align*}
    &\Sigma_1,\Sigma_2,\Sigma_3,\Sigma_4 & &\textit{for} &r&=3, \\ 
    &\Sigma_1,\Sigma_4,\Sigma_3,\Sigma_2 & &\textit{for} &r&=1.
\end{align*}
This would then give a positive intersection at the singularity in the first case and a negative intersection at the singularity in the second case.
\end{proof}

\begin{figure}
\centering

\definecolor{wwwwww}{rgb}{0.4,0.4,0.4}
\definecolor{ffvvqq}{rgb}{1,0.3333333333333333,0}
\definecolor{qqqqff}{rgb}{0,0,1}
\definecolor{qqwuqq}{rgb}{0,0.39215686274509803,0}
\definecolor{ccqqqq}{rgb}{0.8,0,0}
\definecolor{cqcqcq}{rgb}{0.7529411764705882,0.7529411764705882,0.7529411764705882}
\begin{tikzpicture}[line cap=round,line join=round,>=triangle 45,x=1cm,y=1cm]
\clip(-0.5,-1) rectangle (5.5,4);
\draw [shift={(3,3)},line width=1pt,color=qqqqff,fill=qqqqff,fill opacity=0.1] (0,0) -- (180:0.48185603807257593) arc (180:270:0.48185603807257593) -- cycle;
\draw [shift={(3,0)},line width=1pt,color=ccqqqq,fill=ccqqqq,fill opacity=0.1] (0,0) -- (90:0.48185603807257593) arc (90:180:0.48185603807257593) -- cycle;
\draw [shift={(0,0)},line width=1pt,color=ffvvqq,fill=ffvvqq,fill opacity=0.1] (0,0) -- (0:0.48185603807257593) arc (0:90:0.48185603807257593) -- cycle;
\draw [shift={(0,3)},line width=1pt,color=qqwuqq,fill=qqwuqq,fill opacity=0.10000000149011612] (0,0) -- (-90:0.48185603807257593) arc (-90:0:0.48185603807257593) -- cycle;
\draw [shift={(3,3)},line width=1pt,dash pattern=on 3pt off 3pt,color=wwwwww,fill=wwwwww,fill opacity=0.1] (0,0) -- (-90:0.48185603807257593) arc (-90:45:0.48185603807257593) -- cycle;
\draw [shift={(3,0)},line width=1pt,dash pattern=on 3pt off 3pt,color=wwwwww,fill=wwwwww,fill opacity=0.1] (0,0) -- (-45:0.48185603807257593) arc (-45:90:0.48185603807257593) -- cycle;
\draw [line width=1pt] (0,0)-- (3,0);
\draw [line width=1pt] (3,0)-- (3,3);
\draw [line width=1pt] (3,3)-- (0,3);
\draw [line width=1pt] (0,3)-- (0,0);
\draw [line width=1pt,dash pattern=on 3pt off 3pt] (3,3)-- (4,4);
\draw [line width=1pt,dash pattern=on 3pt off 3pt] (3,0)-- (4,-1);
\draw [line width=1pt,color=ccqqqq] (0,0)-- (3,3);
\draw [line width=1pt,color=qqwuqq] (3,0)-- (0,3);
\draw [color=qqqqff](2.3,2.55) node[anchor=north west] {$\Sigma_1$};
\draw [color=qqwuqq](0,2.55) node[anchor=north west] {$\Sigma_2$};
\draw [color=ffvvqq](0.5,0.65) node[anchor=north west] {$\Sigma_3$};
\draw [color=ccqqqq](2.3,1.2) node[anchor=north west] {$\Sigma_4$};
\draw [line width=1pt,color=qqwuqq] (0.8,2.2)-- (0.9,1.9);
\draw [line width=1pt,color=qqwuqq] (0.8,2.2)-- (1.1,2.1);
\draw [line width=1pt,color=ccqqqq] (2.2,2.2)-- (1.9,2.1);
\draw [line width=1pt,color=ccqqqq] (2.2,2.2)-- (2.1,1.9);
\draw [color=qqwuqq](0.9,2.6) node[anchor=north west] {$\beta$};
\draw [color=ccqqqq](2.059381320642478,2.1) node[anchor=north west] {$\alpha$};
\draw (3.5,1.8) node[anchor=north west] {$P(1)$};
\draw (4.5,1.7) node[anchor=north west] {$\cdots$};
\end{tikzpicture}
\caption{If $n=4$ and $m$ odd, the cyclic ordering of $\Sigma_1, \Sigma_2, \Sigma_3$ and $\Sigma_4$ determines the intersection at the singularity between the diagonals $\alpha$ and $\beta$ of $P(0)$.}
\label{fig:intersection_diagonals}
\end{figure}
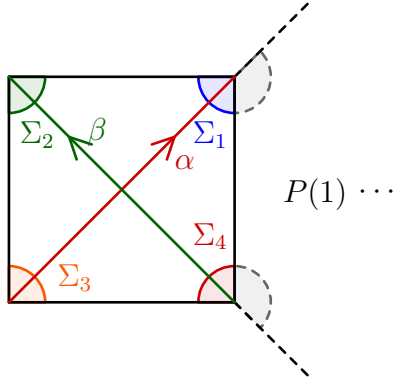

\section{KVol and convex polygons with obtuse angles}
\label{sec:KVol_convex}

The purpose of this section is to prove Theorem \ref{theo:mainCONVEX}: in the following we consider a translation surface $X$ constructed from a collection of polygons $(P_i)_{i \in I}$ satisfying (P1) (the polygons are convex with obtuse --or right-- angles) and whose identifications of sides satisfy (P2) (sides of the same polygon are not identified together). We recall that $l_0$ denotes the length of the smallest sides of the polygons. The purpose of this section is to show that for any two closed curves $\gamma$ and $\delta$ on $X$, we have

\[\frac{\mathrm{Int}(\gamma,\delta)}{l(\gamma)l(\delta)} \leq \frac{1}{l_0^2},\]
and investigate the equality cases.

\subsection{Outline of the proof}  As the algebraic intersection does not change in a homology class, we first notice that it suffices to prove this result for closed geodesics, and, since closed geodesics on translation surfaces are homologous to unions of saddle connections (with at most the same length), it suffices to prove this result for closed curves which are union of saddle connections. We will hence deal with closed geodesics made of saddle connections\footnote{In fact, we could perform the same proof for simple closed geodesics, but it will be more convenient to deal with saddle connections here.}.
In general, saddle connections are not closed, so we cannot define the algebraic intersection of two saddle connections, as we cannot calculate whether they intersect of not at the singularities.
Nonetheless, we can still show:

\begin{Theo}\label{theo:genSC}
Under the hypotheses (P1) and (P2), for any two saddle connections $\alpha$ and $\beta$ on $X$, we have:
\[ \frac{|\alpha \cap \beta| +1}{l(\alpha) l(\beta)} \leq \frac{1}{l_0^2} \]
where $| \alpha \cap \beta|$ denotes the number of non-singular intersections points if $\alpha \neq \beta$, and is set to be $0$ if $\alpha = \beta$.

Further, if the angles are all strictly obtuse, equality holds if and only if $\alpha$ and $\beta$ are sides of the polygons both having length $l_0$. If there are right angles, equality holds if and only if:
\begin{itemize}
    \item $\alpha$ and $\beta$ are sides or also diagonals of the polygons, having both length $l_0$,
    \item $\alpha$ and $\beta$ are both diagonals of length $\sqrt{2} l_0$ and such that $|\alpha \cap \beta| = 1$,
    \item up to symmetry, $\alpha$ is a side or a diagonal of length $l_0$ and $\beta$ is a geodesic of length $2l_0$, contained the union of exactly two polygons, and $|\alpha \cap \beta | = 1$.
\end{itemize}

\end{Theo}

\begin{proof}[Proof of Theorem \ref{theo:mainCONVEX} using Theorem \ref{theo:genSC}]
Let $\gamma=\gamma_1 \cup \dots \cup \gamma_k$ and $\delta=\delta_1 \cup \dots \cup \delta_l$ be two closed curves decomposed as a union of saddle connections.
Note that $k$ (resp. $l$) is also the number of singularities that $\gamma$ (resp. $\delta$) crosses.
Denote by $|\gamma_i \cap \delta_j|$ the number of intersections of the two saddle connections $\gamma_i$ and $\delta_j$ outside of the singularity.
Then the intersection number of $\gamma$ and $\delta$ at the singularities can be at most $\min(k,l)$ and 
\begin{align}\label{eq:intersection_kl_singularities}
\Int (\gamma,\delta) &\leq \sum_{\substack{1 \leq i \leq k \\1 \leq j \leq l}}
|\gamma_i \cap \delta_j|+\min(k,l)  \\
& \leq \sum_{\substack{1 \leq i \leq k \\1 \leq j \leq l}}
|\gamma_i \cap \delta_j|+kl
= \sum_{\substack{1 \leq i \leq k \\1 \leq j \leq l}}
\left(|\gamma_i \cap \delta_j|+1\right). \label{eq:intersection_kl_singularities2}
\end{align}
with equality only if $k=l=1$. Hence, by Theorem \ref{theo:genSC},
\[ 
\Int(\gamma,\delta) \leq \sum_{\substack{1 \leq i \leq k \\1 \leq j \leq l}}
\frac{l(\gamma_i) l(\delta_j)}{l_0^2}
= \frac{1}{l_0^2}\left( \sum_{\substack{1 \leq i \leq k}} l(\gamma_i) \right)  
\left( \sum_{\substack{1 \leq j \leq l}} l(\delta_j) \right) 
= \frac{l(\gamma)l(\delta)}{l_0^2}.
\]
Further, equality holds only if we have equality in both \eqref{eq:intersection_kl_singularities2} and in Theorem \ref{theo:genSC}. 
(The condition $k=l=1$ just tells us that $\gamma$ and $\delta$ need to be (closed) saddle connections.) 
\end{proof}

We are now left to prove Theorem \ref{theo:genSC}. Given two saddle connections $\alpha$ and $\beta$ on $X$, we define the \emph{polygonal decomposition} of $\alpha$ (resp. $\beta$) by cutting $\alpha$ (resp. $\beta$) each time it goes from a polygon to another. This gives a decomposition $\alpha = \alpha_1 \cup \cdots \cup \alpha_k$ (resp. $\beta= \beta_1 \cup \cdots \cup \beta_l$) into smaller (non-closed) segments. 
We will see that this decomposition allows to estimate simultaneously the number of intersections and the lengths of the segments. 
To estimate the length of the segments we will distinguish two kinds of segments in the polygonal decomposition:

\begin{Def}
\begin{enumerate}[label=(\roman*)]
\item a \emph{non-adjacent segment} is a segment going from a side of a polygon $P$ to a non-adjacent side of $P$, or a segment having one of its endpoints as a vertex of a polygon.
\item an \emph{adjacent segment} is a segment going from the interior of a side $e$ of a polygon $P$ to the interior of a side of $P$ adjacent to $e$.
\end{enumerate}
\end{Def}
Remark that, by definition, the segments $\alpha_1$ and $\alpha_k$ (resp. $\beta_1$ and $\beta_l$) are non-adjacent segments. Let us also notice that if $k=1$, then $\alpha$ is either a side or a diagonal of a polygon: in this case we have a single non-adjacent segment according to the definition, and we will deal with this case separately.\newline

We will proceed to the proof of Theorem \ref{theo:genSC} as follows.
First, we will assume that the saddle connection in question is not a side or a diagonal of one of the polygons. 
This will be the case in the next sections until it is explicitly mentioned. 
In Section \ref{sec:length_adjacent_na} we study the length of pieces of saddle connections using its polygonal decomposition. 
Then, in Section \ref{sec:consecutive_adjacent_ppties} we study some properties of sequences of consecutive adjacent segments, as this will allow us to study the intersections of pieces of saddle connections using their polygonal decomposition in Section \ref{sec:study_intersections}. Finally, in Section \ref{sec:proofgenSC} we conclude the proof of Theorem \ref{theo:genSC} and we deal with the case where one of the saddle connections is a side or a diagonal of a polygon (Section \ref{sec:diagANDsides}).\newline

From now on and until the end of Section \ref{sec:proofgenSC}, we consider a translation surface $X$ satisfying hypotheses (P1) and (P2). We consider two saddle connections $\alpha$ and $\beta$ on $X$ and denote by $\alpha_{1} \cup \cdots \cup \alpha_{k}$ (resp. $\beta_1 \cup \cdots \cup \beta_l$) the polygonal decomposition of $\alpha$ (resp. $\beta$). We use the following:

\begin{Nota}
Given a segment $\alpha_i$ in the polygonal decomposition of the (oriented) saddle connection $\alpha$, we will denote $\alpha_{i}^-$ and $\alpha_{i}^+$ the endpoints of $\alpha_i$, so that the orientation of $\alpha$ takes us from $\alpha_{i}^-$ to $\alpha_{i}^+$.
\end{Nota}

\subsection{Study of the lengths}\label{sec:length_adjacent_na}
\subsubsection{Length of adjacent and non-adjacent segments}
In this paragraph, we use the hypothesis on the polygons (which are convex with obtuse or right angles) to obtain a first estimate on the length of the segments, namely:

\begin{Lem}[Length of adjacent and non-adjacent segments]\label{lem:length_adjacent_na}
We have:
\begin{enumerate}
\item The length of a non-adjacent segment is at least $l_0$.
\item The length of a pair of consecutive adjacent segments is greater than $l_0$.
\end{enumerate}
\end{Lem}

The Lemma will follow from elementary convex geometry. More precisely, we start by showing:
\begin{Lem}\label{lem:convex_length}
Let $P$ be a convex polygon whose angles are all obtuse or right and denote by $l_0$ the length of the smallest side of $P$. 
Let $A$ and $B$ be two points on the boundary of the polygon which do not lie on the same side of the polygon or on adjacent sides. 
Then the distance between $A$ and $B$ is at least $l_0$.
\end{Lem}

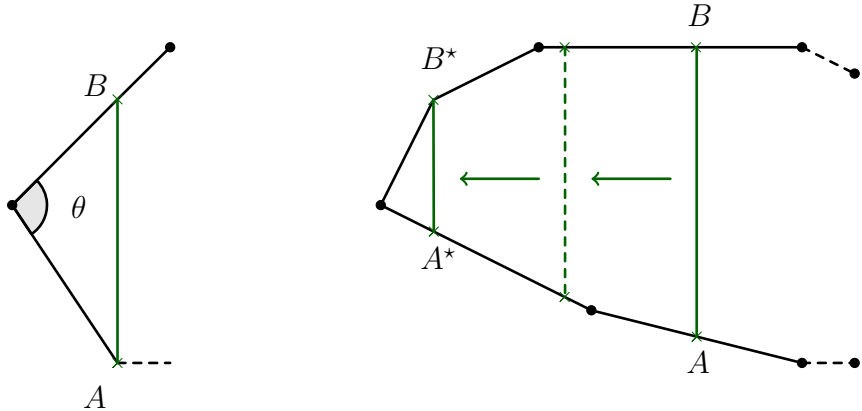
\begin{figure}
\center
\definecolor{qqwuqq}{rgb}{0,0.39215686274509803,0}
\begin{tikzpicture}[line cap=round,line join=round,>=triangle 45,x=1cm,y=1cm, scale=0.7]
\clip(-1.384839954435098,-1) rectangle (16.5,7);
\draw [shift={(-1,3)},line width=1pt,fill=black,fill opacity=0.1] (0,0) -- (-56.309932474020215:0.6609417338842879) arc (-56.309932474020215:45.1377739674746:0.6609417338842879) -- cycle;
\draw [line width=1pt] (1,0)-- (-1,3);
\draw [line width=1pt] (-1,3)-- (1,5.009641640385679);
\draw [line width=1pt] (1,5.009641640385679)-- (2,6);
\draw [line width=1pt,dash pattern=on 3pt off 3pt] (1,0)-- (2,0);
\draw [line width=1pt] (9,6)-- (14,6);
\draw [line width=1pt] (10,1)-- (14,0);
\draw [line width=1pt] (10,1)-- (6,3);
\draw [line width=1pt] (9,6)-- (7,5);
\draw [line width=1pt] (7,5)-- (6,3);
\draw [line width=1pt,color=qqwuqq] (12,6)-- (12,0.5);
\draw [line width=1pt,dash pattern=on 3pt off 3pt,color=qqwuqq] (9.5,6)-- (9.5,1.2542727253021386);
\draw [line width=1pt,color=qqwuqq] (7,5)-- (7.007002801120449,2.4964985994397755);
\draw [line width=1pt,dash pattern=on 3pt off 3pt] (14,6)-- (15,5.5);
\draw [line width=1pt,dash pattern=on 3pt off 3pt] (14,0)-- (15,0);
\draw [line width=1pt,color=qqwuqq] (1,5.009641640385679)-- (1,0);
\draw [line width=1pt,color=qqwuqq,-to] (11.5,3.5) -- (10,3.5);
\draw [line width=1pt,color=qqwuqq,to-] (7.5,3.5) -- (9,3.5);
\draw (-0.0849878777959985,3.3793186968044298) node[anchor=north west] {$\theta$};
\draw (0.14,-0.23382944842969045) node[anchor=north west] {$A$};
\draw (0.14,5.7) node[anchor=north west] {$B$};
\draw (11.61,7.014498233168027) node[anchor=north west] {$B$};
\draw (11.6,0.4271122854545999) node[anchor=north west] {$A$};
\draw (6.55,2.4) node[anchor=north west] {$A^{\star}$};
\draw (6.55,6.2) node[anchor=north west] {$B^{\star}$};
\begin{scriptsize}
\draw [color=qqwuqq] (1,0)-- ++(-2.5pt,-2.5pt) -- ++(5pt,5pt) ++(-5pt,0) -- ++(5pt,-5pt);
\draw [fill=black] (-1,3) circle (2.5pt);
\draw [color=qqwuqq] (1,5.009641640385679)-- ++(-2.5pt,-2.5pt) -- ++(5pt,5pt) ++(-5pt,0) -- ++(5pt,-5pt);
\draw [fill=black] (2,6) circle (2.5pt);
\draw [fill=black] (9,6) circle (2.5pt);
\draw [fill=black] (14,6) circle (2.5pt);
\draw [fill=black] (10,1) circle (2.5pt);
\draw [fill=black] (14,0) circle (2.5pt);
\draw [fill=black] (6,3) circle (2.5pt);
\draw [color=qqwuqq] (7,5)-- ++(-2.5pt,-2.5pt) -- ++(5pt,5pt) ++(-5pt,0) -- ++(5pt,-5pt);
\draw [color=qqwuqq] (11.984615384615385,6)-- ++(-2.5pt,-2.5pt) -- ++(5pt,5pt) ++(-5pt,0) -- ++(5pt,-5pt);
\draw [color=qqwuqq] (12,0.5)-- ++(-2.5pt,-2.5pt) -- ++(5pt,5pt) ++(-5pt,0) -- ++(5pt,-5pt);
\draw [color=qqwuqq] (9.492657342657346,6)-- ++(-2.5pt,-2.5pt) -- ++(5pt,5pt) ++(-5pt,0) -- ++(5pt,-5pt);
\draw [color=qqwuqq] (9.491454549395723,1.2542727253021386)-- ++(-2.5pt,-2.5pt) -- ++(5pt,5pt) ++(-5pt,0) -- ++(5pt,-5pt);
\draw [color=qqwuqq] (7.007002801120449,2.4964985994397755)-- ++(-2.5pt,-2.5pt) -- ++(5pt,5pt) ++(-5pt,0) -- ++(5pt,-5pt);
\draw [fill=black] (15,5.5) circle (2.5pt);
\draw [fill=black] (15,0) circle (2.5pt);
\end{scriptsize}
\end{tikzpicture}
\caption{The pushing process of Lemma \ref{lem:convex_length}.}
\label{fig:convex_polygon}
\end{figure}

\begin{proof}[Proof of Lemma \ref{lem:convex_length}]
\begin{enumerate}[label=(\roman*)]
\item Let us start with the case where one of the points (say $A$) is a vertex of the polygon and the other point $B$ lies on one of the sides adjacent to the sides containing $A$.
This means that there is a side $e$ which has $A$ as an endpoint and is adjacent to the side $e'$ containing $B$ in its interior (see the left hand side of Figure \ref{fig:convex_polygon}). 
As the angle $\theta$ between $e$ and $e'$ is obtuse or right, the distance between $A$ and $B$ is at least the length of the side $e$.
By definition, that is at least $l_0$.
\item In the general case, given two points $A$ and $B$ on the boundary of the polygon which do not lie in the same side of the polygon or in adjacent sides, we get back to case $(i)$ by ``pushing'' the points while decreasing the length. One way to do this is to draw the parallel lines to $AB$. 
Since the polygon is convex, there is at least one ''pushing'' direction for which we decrease the length, as in the right of Figure \ref{fig:convex_polygon}. This process allows to get points $A^{\star}$ and $B^{\star}$ in the configuration of case $(i)$ such that:
\[ d(A,B) \geq d(A^{\star},B^{\star}) \geq l_0. \]
\end{enumerate}
\end{proof}

\begin{proof}[Proof of Lemma \ref{lem:length_adjacent_na}]
The first part of the Lemma follows directly from Lemma \ref{lem:convex_length} applied to the polygon containing the non-adjacent segment. 

For the second part of Lemma \ref{lem:length_adjacent_na}, consider two consecutive adjacent segments $\alpha_i$ and $\alpha_{i+1}$ connected on a side $e$ shared by two polygons $P$ and $P'$. 
Since all the internal angles of polygons forming the surface $X$ are obtuse or right by hypothesis, we are in a configuration as in Figure \ref{fig:22long}, and hence the length $l(\alpha_i \cup \alpha_{i+1})$ will be greater than the length of the side $e$, which must be at least $l_0$. 
\end{proof}

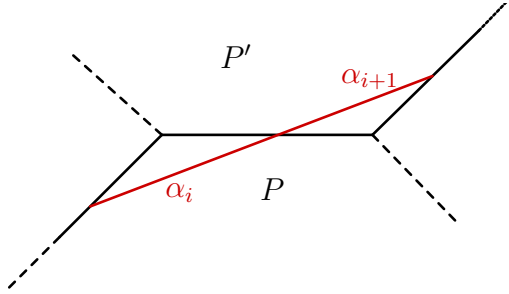
\begin{figure}[h]
\center
\definecolor{ccqqqq}{rgb}{0.8,0,0}
\begin{tikzpicture}[line cap=round,line join=round,>=triangle 45,x=1cm,y=1cm, scale =0.7]
\clip(-5.5,-3) rectangle (5,2.5);
\draw [line width=1pt] (-2,0)-- (2,0);
\draw [line width=1pt] (2,0)-- (4.02,1.98);
\draw [line width=1pt,dash pattern=on 1pt off 1pt] (4.02,1.98)-- (5,3);
\draw [line width=1pt] (-2,0)-- (-3.94,-1.94);
\draw [line width=1pt,dash pattern=on 3pt off 3pt] (-3.94,-1.94)-- (-5,-3);
\draw [line width=1pt,dash pattern=on 3pt off 3pt] (-2,0)-- (-3.76,1.58);
\draw [line width=1pt,dash pattern=on 3pt off 3pt] (2,0)-- (3.64,-1.7);
\draw [line width=1pt,color=ccqqqq] (-3.36,-1.36)-- (3.1414888511148886,1.1188851114888514);
\draw [color=ccqqqq](-2.14,-0.72) node[anchor=north west] {$\alpha_i$};
\draw [color=ccqqqq](1.2,1.46) node[anchor=north west] {$\alpha_{i+1}$};
\draw (-0.34,-0.62) node[anchor=north west] {$P$};
\draw (-1.1,1.92) node[anchor=north west] {$P'$};
\end{tikzpicture}
\caption{Two consecutive adjacent segment have length at least $l_0$.}
\label{fig:22long}
\end{figure}

\subsubsection{Lengths of saddle connections}
Now that, in the previous section, we studied the length of pieces of the polygonal decomposition, we want to use these results to estimate the length of the whole saddle connection.
As suggested by Lemma \ref{lem:length_adjacent_na}, we will do this by grouping consecutive adjacent segment by pairs. This motivates the following
\begin{Nota}
Given a saddle connection $\alpha$, we denote by $p_{\alpha}$ the number of non-adjacent segments in the decomposition of $\alpha$ and $q_{\alpha}$ the maximal number of \emph{pairs} of consecutive adjacent segments we can form in the decomposition of $\alpha$ (see Figure \ref{fig:examples_segments}) 
\end{Nota}
Using this notation, it follows directly from Lemma \ref{lem:length_adjacent_na} that
\begin{equation}\label{eq:length_alpha}
l(\alpha)  \geq (p_{\alpha} + q_{\alpha}) l_0.
\end{equation}

\begin{figure}
\center
\definecolor{qqwuqq}{rgb}{0,0.39215686274509803,0}
\definecolor{ccqqqq}{rgb}{0.8,0,0}
\begin{tikzpicture}[line cap=round,line join=round,>=triangle 45,x=1cm,y=1cm, scale=1.1]
\clip(-4,-0.6) rectangle (5.5,6.5);
\draw [line width=1pt] (0,0)-- (2,0);
\draw [line width=1pt] (2,0)-- (3.414213562373095,1.414213562373095);
\draw [line width=1pt] (3.414213562373095,1.414213562373095)-- (3.414213562373095,3.4142135623730945);
\draw [line width=1pt,dash pattern=on 3pt off 3pt] (3.414213562373095,3.4142135623730945)-- (2,4.82842712474619);
\draw [line width=1pt] (2,4.82842712474619)-- (0,4.82842712474619);
\draw [line width=1pt] (0,4.82842712474619)-- (-1.414213562373095,3.4142135623730954);
\draw [line width=1pt] (-1.4142135623730954,1.4142135623730956)-- (0,0);
\draw [line width=1pt,dash pattern=on 3pt off 3pt] (-1.4142135623730954,1.4142135623730956)-- (-1.414213562373095,3.4142135623730954);
\draw [line width=1pt] (-1.414213562373095,3.4142135623730954)-- (-3.4142135623730945,3.4142135623730954);
\draw [line width=1pt] (-3.4142135623730945,3.4142135623730954)-- (-3.414213562373095,1.414213562373096);
\draw [line width=1pt] (-3.414213562373095,1.414213562373096)-- (-1.4142135623730954,1.4142135623730956);
\draw [line width=1pt] (3.414213562373095,3.4142135623730945)-- (4.82842712474619,4.828427124746188);
\draw [line width=1pt] (4.82842712474619,4.828427124746188)-- (3.4142135623730963,6.242640687119284);
\draw [line width=1pt] (3.4142135623730963,6.242640687119284)-- (2,4.82842712474619);
\draw [color=ccqqqq](1.777037037037039,2.3) node[anchor=north west] {$\alpha_1$};
\draw [color=ccqqqq](-3,2.1) node[anchor=north west] {$\alpha_2$};
\draw [color=ccqqqq](1.4,4.75) node[anchor=north west] {$\alpha_3$};
\draw [color=ccqqqq](3.5,4.5) node[anchor=north west] {$\alpha_4$};
\draw [color=ccqqqq](1.8,3.1) node[anchor=north west] {$\alpha_5$};
\draw [color=ccqqqq](3.5,5.6) node[anchor=north west] {$\alpha_8$};
\draw [color=ccqqqq](-2.5,2.7) node[anchor=north west] {$\alpha_6$};
\draw [color=ccqqqq](1,1.6) node[anchor=north west] {$\alpha_7$};
\draw [color=qqwuqq](2,3.8) node[anchor=north west] {$\beta_1$};
\draw [color=qqwuqq](3.1,4) node[anchor=north west] {$\beta_2$};
\draw [color=qqwuqq](0.75,4.25) node[anchor=north west] {$\beta_3$};
\draw [color=qqwuqq](3.3,5.1) node[anchor=north west] {$\beta_4$};
\draw [color=qqwuqq](0,4.7) node[anchor=north west] {$\beta_5$};
\draw [line width=1pt,color=qqwuqq] (-1.414213562373095,3.4142135623730954)-- (4,4);
\draw [line width=1pt,color=qqwuqq] (-0.828427124746189,4)-- (4.585786437626907,4.585786437626906);
\draw [line width=1pt,color=qqwuqq] (-0.242640687119283,4.585786437626906)-- (2,4.82842712474619);
\draw (-4.116296296296299,2.65) node[anchor=north west] {$A$};
\draw (-2.75,4) node[anchor=north west] {$B$};
\draw (0.7,0) node[anchor=north west] {$B$};
\draw (-2.75,1.4) node[anchor=north west] {$C$};
\draw (0.7,5.5) node[anchor=north west] {$C$};
\draw (-1.4362962962962966,4.657530864197529) node[anchor=north west] {$D$};
\draw (4.123703703703708,4.270864197530862) node[anchor=north west] {$D$};
\draw (2.257037037037039,6.190864197530861) node[anchor=north west] {$E$};
\draw (2.6570370370370395,0.8308641975308648) node[anchor=north west] {$E$};
\draw (4.203703703703708,6.297530864197528) node[anchor=north west] {$F$};
\draw (-1.25,0.75) node[anchor=north west] {$F$};
\draw (3.45,2.65) node[anchor=north west] {$ A$};
\draw (5.577037037037042,7.697530864197527) node[anchor=north west] {$G$};
\draw [line width=1pt,color=ccqqqq] (-1.414213562373095,3.4142135623730954)-- (3.414213562373095,1.86);
\draw [line width=1pt,color=ccqqqq] (2.6643782628889747,5.492805387635165)-- (4.82842712474619,4.828427124746188);
\draw [line width=1pt,color=ccqqqq] (-3.414213562373095,1.86)-- (-2.0293026470448305,1.4142135623730956);
\draw [line width=1pt,color=ccqqqq] (-0.8385180769661109,3.9899090477800785)-- (3.4142135623730945,2.621005050633886);
\draw [line width=1pt,color=ccqqqq] (1.3849109153282648,4.82842712474619)-- (3.989909047780079,3.9899090477800776);
\draw [line width=1pt,color=ccqqqq] (-3.4142135623730954,2.6210050506338867)-- (2.6643782628889747,0.664378262888975);
\draw [line width=1pt,color=ccqqqq] (1.4568028020026245,2.4900673640306286)-- (1.088148148148148,2.817530864197532);
\draw [line width=1pt,color=ccqqqq] (1.4568028020026245,2.4900673640306286)-- (0.9488888888888889,2.4511111111111115);
\draw [line width=1pt,color=qqwuqq] (1.7755555555555569,3.5582716049382754)-- (2.1256213637360566,3.7972031262801216);
\draw [line width=1pt,color=qqwuqq] (2.1256213637360566,3.7972031262801216)-- (1.7320987654321,3.9079012345679067);
\begin{scriptsize}
\draw [color=black] (0,0)-- ++(-2.5pt,-2.5pt) -- ++(5pt,5pt) ++(-5pt,0) -- ++(5pt,-5pt);
\draw [color=black] (2,0)-- ++(-2.5pt,-2.5pt) -- ++(5pt,5pt) ++(-5pt,0) -- ++(5pt,-5pt);
\draw [color=black] (3.414213562373095,1.414213562373095)-- ++(-2.5pt,-2.5pt) -- ++(5pt,5pt) ++(-5pt,0) -- ++(5pt,-5pt);
\draw [color=black] (3.414213562373095,3.4142135623730945)-- ++(-2.5pt,-2.5pt) -- ++(5pt,5pt) ++(-5pt,0) -- ++(5pt,-5pt);
\draw [color=black] (2,4.82842712474619)-- ++(-2.5pt,-2.5pt) -- ++(5pt,5pt) ++(-5pt,0) -- ++(5pt,-5pt);
\draw [color=black] (0,4.82842712474619)-- ++(-2.5pt,-2.5pt) -- ++(5pt,5pt) ++(-5pt,0) -- ++(5pt,-5pt);
\draw [color=black] (-1.414213562373095,3.4142135623730954)-- ++(-2.5pt,-2.5pt) -- ++(5pt,5pt) ++(-5pt,0) -- ++(5pt,-5pt);
\draw [color=black] (-1.4142135623730954,1.4142135623730956)-- ++(-2.5pt,-2.5pt) -- ++(5pt,5pt) ++(-5pt,0) -- ++(5pt,-5pt);
\draw [color=black] (-3.4142135623730945,3.4142135623730954)-- ++(-2.5pt,-2.5pt) -- ++(5pt,5pt) ++(-5pt,0) -- ++(5pt,-5pt);
\draw [color=black] (-3.414213562373095,1.414213562373096)-- ++(-2.5pt,-2.5pt) -- ++(5pt,5pt) ++(-5pt,0) -- ++(5pt,-5pt);
\draw [color=black] (4.82842712474619,4.828427124746188)-- ++(-2.5pt,-2.5pt) -- ++(5pt,5pt) ++(-5pt,0) -- ++(5pt,-5pt);
\draw [color=black] (3.4142135623730963,6.242640687119284)-- ++(-2.5pt,-2.5pt) -- ++(5pt,5pt) ++(-5pt,0) -- ++(5pt,-5pt);
\draw [fill=qqwuqq] (4,4) circle (2pt);
\draw [fill=qqwuqq] (-0.828427124746189,4) circle (2pt);
\draw [fill=qqwuqq] (4.585786437626907,4.585786437626906) circle (2pt);
\draw [fill=qqwuqq] (-0.242640687119283,4.585786437626906) circle (2pt);
\draw [fill=ccqqqq] (3.414213562373095,1.86) circle (2.5pt);
\draw [fill=ccqqqq] (-3.414213562373095,1.86) circle (2pt);
\draw [fill=ccqqqq] (-2.0293026470448305,1.4142135623730956) circle (2pt);
\draw [fill=ccqqqq] (1.3849109153282648,4.82842712474619) circle (2pt);
\draw [fill=ccqqqq] (3.989909047780079,3.9899090477800776) circle (2pt);
\draw [fill=ccqqqq] (-0.8385180769661109,3.9899090477800785) circle (2pt);
\draw [fill=ccqqqq] (3.4142135623730945,2.621005050633886) circle (2pt);
\draw [fill=ccqqqq] (-3.4142135623730954,2.6210050506338867) circle (2pt);
\draw [fill=ccqqqq] (2.6643782628889747,0.664378262888975) circle (2pt);
\draw [fill=ccqqqq] (2.6643782628889747,5.492805387635165) circle (2pt);
\end{scriptsize}
\end{tikzpicture}
\caption{Examples of saddle connections and their polygonal decomposition. Here, the segments $\alpha_2, \alpha_3, \alpha_4, \beta_2$ and $\beta_4$ are adjacent segments while all other segments are non-adjacent, and hence $p_{\alpha} = 5, q_{\alpha} = 1$, $p_{\beta}=3$ and $q_{\beta}=0$.}
\label{fig:examples_segments}
\end{figure}
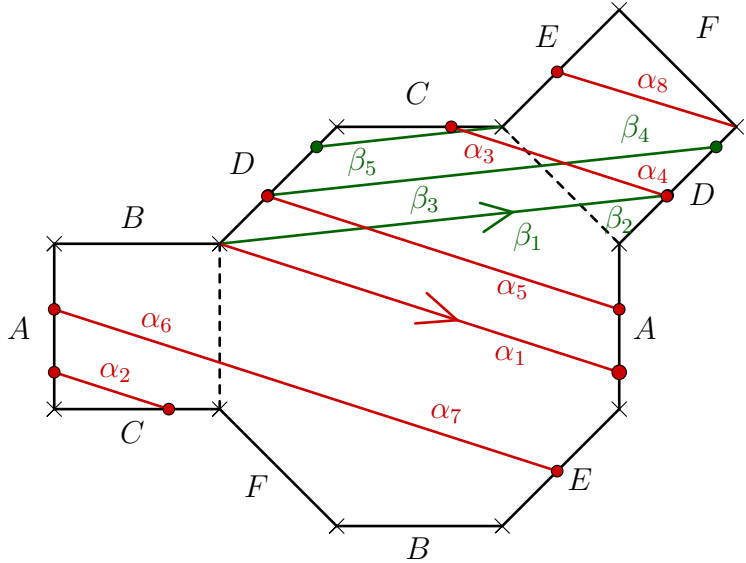

Further, one notices that the cases where this estimate is far from being sharp are the cases where there are odd sequences of consecutive adjacent segments, as then it is not possible to group all adjacent segments by pairs.

In fact, we can slightly improve the estimate of the length in this case:

\begin{Lem}\label{lem:better_lengths}
\begin{enumerate}
\item The total length of three consecutive adjacent segments is more than $\sqrt{2}l_0 = (1 + (\sqrt{2}-1))l_0$.
\item The total length of three consecutive segments $\alpha_{i-1}, \alpha_{i}$ and $\alpha_{i+1}$, being respectively non-adjacent, adjacent and non-adjacent, is more than $(1 + \sqrt{2}) l_0$.
\end{enumerate}
\end{Lem}

We will need the following elementary geometry lemma:
\begin{Lem}\label{lem:elementary_lengths} 
Let $\Delta$ and $\Delta'$ be two half lines from a point $S$ and making an angle $\theta \in ]0, \pi[$. Let $B$ be the point at distance $l_0 > 0$ from both $\Delta$ and $\Delta'$. Then, for any $P \in \Delta$ and $Q \in \Delta'$ such that the segment $[PQ]$ passes through $B$, we have $l(PQ) \geq \frac{2 l_0}{\cos \frac{\theta}{2}}$. 
\end{Lem}
\begin{proof}
Let $H_{\Delta}$ (resp. $H_{\Delta'}$) be the orthogonal projection of $B$ on $\Delta$ (resp. $\Delta'$) and let $\theta_1$ (resp. $\theta_2$) be the angle at $B$ between the segments $[BP]$
and $[BH_{\Delta}]$ (resp. $[BH_{\Delta'}]$ and $[BQ]$), see Figure \ref{lem:elementary_lengths}.
Note that if $\theta \in (0,\pi)$, then $\theta_1$ and $\theta_2$ are oriented angles, i.e. if $P$ (resp. $Q$) is between $S$ and $H_\Delta$ (resp. $H_{\Delta'}$), then $\theta_1$ (resp. $\theta_2$) is negative. Further, $\theta_1 \in (\theta - \frac{\pi}{2}, \frac{\pi}{2})$ (and similarly, $\theta_2 \in (\theta - \frac{\pi}{2}, \frac{\pi}{2})$).

Elementary geometry entails $\theta_1 + \theta_2 = \theta$. Further, $l(PQ) = l(PB) + l(BQ) = \frac{l_0}{\cos \theta_1} + \frac{l_0}{\cos \theta_2}$. Given that $\theta_2 = \theta - \theta_1$, one easily checks that this quantity is minimal for $\theta_1 = \theta_2 = \frac{\theta}{2}$.

\end{proof}

\begin{figure}
\center
\definecolor{qqwuqq}{rgb}{0,0.39215686274509803,0}
\begin{tikzpicture}[line cap=round,line join=round,>=triangle 45,x=1cm,y=1cm, scale=0.7]
\clip(-7.953333333333336,-0.89) rectangle (3.7533333333333347,6.816666666666663);
\draw[line width=1pt,fill=black,fill opacity=0.1] (-1.2067446096434562,0) -- (-1.2067446096434562,0.18856180831641273) -- (-1.3953064179598689,0.18856180831641273) -- (-1.3953064179598689,0) -- cycle; 
\draw[line width=1pt,fill=black,fill opacity=0.1] (0.45534519145768637,1.3323274042711568) -- (0.3710177871865295,1.1636725957288432) -- (0.5396725957288432,1.0793451914576864) -- (0.624,1.248) -- cycle; 
\draw [shift={(-1.3953064179598689,2.2576532089799346)},line width=1pt,color=qqwuqq,fill=qqwuqq,fill opacity=0.10000000149011612] (0,0) -- (-150.7097393870819:0.4) arc (-150.7097393870819:-90:0.4) -- cycle;
\draw [shift={(-1.3953064179598689,2.2576532089799346)},line width=1pt,color=qqwuqq,fill=qqwuqq,fill opacity=0.10000000149011612] (0,0) -- (-26.565051177077994:0.4) arc (-26.565051177077994:29.41946126452276:0.4) -- cycle;
\draw [shift={(0,0)},line width=1pt,color=qqwuqq,fill=qqwuqq,fill opacity=0.10000000149011612] (0,0) -- (63.43494882292201:0.4) arc (63.43494882292201:180:0.4) -- cycle;
\draw [line width=1pt] (0,0)-- (-8.48,0);
\draw [line width=1pt] (0,0)-- (4,8);
\draw [line width=1pt,dash pattern=on 3pt off 3pt] (-1.3953064179598689,2.2576532089799346)-- (-1.3953064179598689,0);
\draw [line width=1pt,dash pattern=on 3pt off 3pt] (-1.3953064179598689,2.2576532089799346)-- (0.624,1.248);
\draw [line width=1pt] (-5.42,0)-- (2.12,4.24);
\draw (-7.5,0) node[anchor=north west] {$\Delta$};
\draw (2.8,6.4) node[anchor=north west] {$\Delta'$};
\draw (2.45,4.4) node[anchor=north west] {$Q$};
\draw (-5.8,0) node[anchor=north west] {$P$};
\draw (-1.75,0) node[anchor=north west] {$H_{\Delta}$};
\draw (0.78,1.45) node[anchor=north west] {$H_{\Delta'}$};
\draw (0.19333333333333338,-0.0633333333333333) node[anchor=north west] {$S$};
\draw (-2,3.2) node[anchor=north west] {$B$};
\draw (-0.6066666666666669,1.136666666666666) node[anchor=north west] {$\theta$};
\draw (-2.2066666666666674,1.9366666666666654) node[anchor=north west] {$\theta_1$};
\draw (-0.6466666666666668,2.776666666666665) node[anchor=north west] {$\theta_2$};
\begin{scriptsize}
\draw [fill=black] (0,0) circle (2.5pt);
\draw [fill=black] (0.624,1.248) circle (2.5pt);
\draw [fill=black] (-1.3953064179598689,0) circle (2.5pt);
\draw [fill=black] (-1.3953064179598689,2.2576532089799346) circle (2.5pt);
\draw [fill=black] (-5.42,0) circle (2.5pt);
\draw [fill=black] (2.12,4.24) circle (2.5pt);
\end{scriptsize}
\end{tikzpicture}
\caption{The setting of Lemma \ref{lem:elementary_lengths}.}
\label{fig:elementary_lengths}
\end{figure}
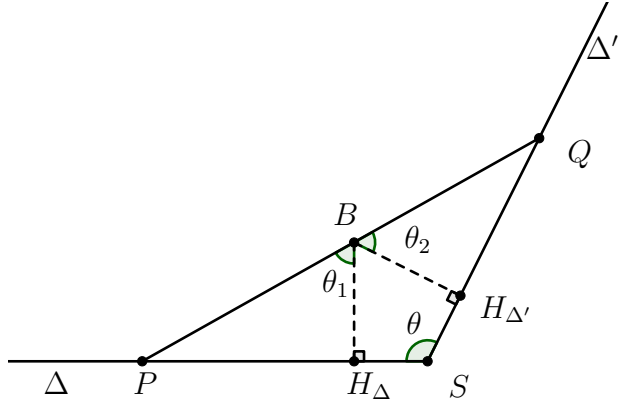

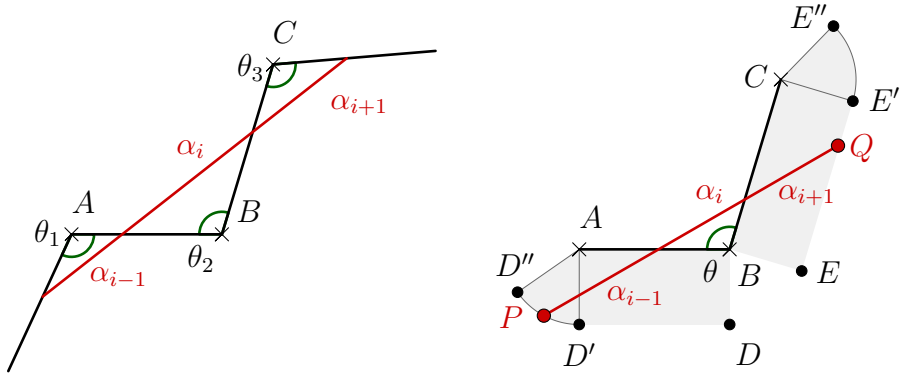
\begin{figure}[h]
\center
\definecolor{qqwuqq}{rgb}{0,0.39215686274509803,0}
\definecolor{ccqqqq}{rgb}{0.8,0,0}
\begin{tikzpicture}[line cap=round,line join=round,>=triangle 45,x=1cm,y=1cm]
\clip(-3,-2) rectangle (3,3);
\draw [shift={(-2,0)},line width=1pt,color=qqwuqq] (0,0) -- (-114.77514056883193:0.3) arc (-114.77514056883193:0:0.3) -- cycle;
\draw [shift={(0,0)},line width=1pt,color=qqwuqq] (0,0) -- (73.2542494616799:0.3) arc (73.2542494616799:180:0.3) -- cycle;
\draw [shift={(0.68,2.26)},line width=1pt,color=qqwuqq] (0,0) -- (-106.74575053832011:0.3) arc (-106.74575053832011:4.763641690726179:0.3) -- cycle;
\draw [line width=1pt] (-2,0)-- (0,0);
\draw [line width=1pt] (0,0)-- (0.68,2.26);
\draw [line width=1pt] (0.68,2.26)-- (2.84,2.44);
\draw [line width=1pt] (-2,0)-- (-2.84,-1.82);
\draw [line width=1pt,color=ccqqqq] (-2.382829268292683,-0.8294634146341464)-- (1.658206896551724,2.3415172413793104);
\draw (-2.64,0.34) node[anchor=north west] {$\theta_1$};
\draw (-0.6,0) node[anchor=north west] {$\theta_2$};
\draw (0.06,2.54) node[anchor=north west] {$\theta_3$};
\draw [color=ccqqqq](-1.9,-0.3) node[anchor=north west] {$\alpha_{i-1}$};
\draw [color=ccqqqq](-0.75,1.4) node[anchor=north west] {$\alpha_i$};
\draw [color=ccqqqq](1.3,2) node[anchor=north west] {$\alpha_{i+1}$};
\draw[color=black] (-1.84,0.43) node {$A$};
\draw[color=black] (0.36,0.31) node {$B$};
\draw[color=black] (0.84,2.69) node {$C$};
\begin{scriptsize}
\draw [color=black] (-2,0)-- ++(-2.5pt,-2.5pt) -- ++(5pt,5pt) ++(-5pt,0) -- ++(5pt,-5pt);
\draw [color=black] (0,0)-- ++(-2.5pt,-2.5pt) -- ++(5pt,5pt) ++(-5pt,0) -- ++(5pt,-5pt);
\draw [color=black] (0.68,2.26)-- ++(-2.5pt,-2.5pt) -- ++(5pt,5pt) ++(-5pt,0) -- ++(5pt,-5pt);
\end{scriptsize}
\end{tikzpicture}
\definecolor{wwwwww}{rgb}{0.4,0.4,0.4}
\definecolor{ccqqqq}{rgb}{0.8,0,0}
\definecolor{qqwuqq}{rgb}{0,0.39215686274509803,0}
\begin{tikzpicture}[line cap=round,line join=round,>=triangle 45,x=1cm,y=1cm]
\clip(-3.2,-1.8) rectangle (2.5,3.8);
\draw [shift={(0,0)},line width=1pt,color=qqwuqq] (0,0) -- (73.2542494616799:0.3) arc (73.2542494616799:180:0.3) -- cycle;
\fill[line width=1pt,color=wwwwww,fill=wwwwww,fill opacity=0.1] (-2,0) -- (-2,-1) -- (0,-1) -- (0,0) -- cycle;
\fill[line width=2pt,color=wwwwww,fill=wwwwww,fill opacity=0.1] (0,0) -- (0.957592732851901,-0.28812524705278486) -- (1.637592732851901,1.971874752947215) -- (0.68,2.26) -- cycle;
\draw [line width=1pt] (-2,0)-- (0,0);
\draw [line width=1pt] (0,0)-- (0.68,2.26);
\draw (-0.5,0) node[anchor=north west] {$\theta$};
\draw [color=ccqqqq](-1.78,-0.35) node[anchor=north west] {$\alpha_{i-1}$};
\draw [color=ccqqqq](-0.6,1) node[anchor=north west] {$\alpha_i$};
\draw [color=ccqqqq](0.5,1) node[anchor=north west] {$\alpha_{i+1}$};
\draw [shift={(0.68,2.26)},line width=0.1pt,color=wwwwww,fill=wwwwww,fill opacity=0.1]  (0,0) --  plot[domain=-0.29226848261129845:0.8,variable=\t]({1*1*cos(\t r)+0*1*sin(\t r)},{0*1*cos(\t r)+1*1*sin(\t r)}) -- cycle ;
\draw [shift={(-2,0)},line width=0.1pt,color=wwwwww,fill=wwwwww,fill opacity=0.1]  (0,0) --  plot[domain=3.75:4.71238898038469,variable=\t]({1*1*cos(\t r)+0*1*sin(\t r)},{0*1*cos(\t r)+1*1*sin(\t r)}) -- cycle ;
\draw [line width=1pt,color=ccqqqq] (-2.4705882352941178,-0.8823529411764706)-- (1.44,1.38);
\draw[color=black] (-1.84,0.43) node {$A$};
\draw[color=black] (-0.05,-0.05) node[anchor=north west] {$B$};
\draw[color=black] (0.35,2.3) node {$C$};
\draw[color=black] (0.25,-1.1) node[below] {$D$};
\draw[color=black] (-2,-1.1) node[below] {$D'$};
\draw[color=black] (-2.85,-0.2) node {$D''$};
\draw[color=black] (1.7,2) node[right] {$E'$};
\draw[color=black] (1,-0.3) node[right] {$E$};
\draw[color=black] (1.05,3.1) node {$E''$};
\draw[color=ccqqqq] (-2.6,-0.9) node[left] {$P$};
\draw[color=ccqqqq] (1.45,1.35) node[right] {$Q$};
\begin{scriptsize}
\draw [color=black] (-2,0)-- ++(-2.5pt,-2.5pt) -- ++(5pt,5pt) ++(-5pt,0) -- ++(5pt,-5pt);
\draw [color=black] (0,0)-- ++(-2.5pt,-2.5pt) -- ++(5pt,5pt) ++(-5pt,0) -- ++(5pt,-5pt);
\draw [color=black] (0.68,2.26)-- ++(-2.5pt,-2.5pt) -- ++(5pt,5pt) ++(-5pt,0) -- ++(5pt,-5pt);
\draw [fill=black] (0,-1) circle (2pt);
\draw [fill=black] (-2,-1) circle (2pt);
\draw [fill=black] (-2.82,-0.57) circle (2pt);
\draw [fill=black] (1.637592732851901,1.971874752947215) circle (2pt);
\draw [fill=black] (0.957592732851901,-0.28812524705278486) circle (2pt);
\draw [fill=black] (1.38,2.97) circle (2pt);
\draw [fill=ccqqqq] (-2.4705882352941178,-0.8823529411764706) circle (2.5pt);
\draw [fill=ccqqqq] (1.44,1.38) circle (2.5pt);
\end{scriptsize}
\end{tikzpicture}
\caption{The two cases in Lemma~\ref{lem:better_lengths}.}
\label{fig:better_lengths}
\end{figure}

\begin{proof}[Proof of Lemma \ref{lem:better_lengths}]
\begin{enumerate}
\item Given three consecutive adjacent segments $\alpha_{i-1}, \alpha_i$ and $\alpha_{i+1}$, we can unfold the trajectory to get a picture like Figure \ref{fig:better_lengths}. 
Since the angles $\theta_1, \theta_2$ and $\theta_3$ are comprised between $\frac{\pi}{2}$ and $\pi$, we directly deduce that the length of $\alpha_{i-1} \cup \alpha_i \cup \alpha_{i+1}$ is greater than the length of the segment $AC$, which is at least $\sqrt{2} l_0$.

\item Similarly, we can draw a picture like the right hand part of Figure \ref{fig:better_lengths}. By Lemma \ref{lem:convex_length}, any point on a side which is non-adjacent to the segment $AB$ (resp. $BC$) must be at a distance at least $l_0$ away from the side $AB$ (resp. $BC$). In particular, the endpoints $\alpha_{i-1}^-$ and $\alpha_{i+1}^+$ are outside the gray zone of Figure \ref{fig:better_lengths}, which is the set of points of the polygon containing $\alpha_{i-1}$ (resp. $\alpha_{i+1}$) at distance less than $l_0$ from $AB$ (resp. $BC$). The boundary of this region contains a segment $[D',D]$ (resp. $[E,E']$) parallel to $AB$ (resp. $BC$) at a distance $l_0$, and two arcs of circles centred at $B$ and $A$ (resp. $B$ and $C$), of radius $l_0$. 
Moreover, we denote by $D''$ (resp. $E''$) the point on the side adjacent to $AB$ (resp. $BC$) having $A$ (resp. $C$) as an endpoint, which is at distance $l_0$ from $AB$ (resp. $BC$). 

Now, let $P$ (resp. $Q$) be the point of $\alpha_{i-1}$ (resp. $\alpha_{i+1}$) on the boundary of the gray zone. 
We have $l(\alpha_{i-1} \cup \alpha_i \cup \alpha_{i+1}) \geq l(PQ)$. 
Let us now denote by $P'$ and $Q'$ the two points on the boundary of the grey area around $AB$ and $BC$ respectively, such that the segment $P'Q'$ is parallel to $PQ$ and passes through $B$.
Since $l(P'Q') \leq l(PQ)$, we can assume that $P=P'$ and $Q=Q'$, or in other words that $PQ$ passes through $B$.
Now,
\begin{enumerate}
\item If $P$ lies in the arc of circle $[D',D'']$, then the length of the segment $PB$ is at least $\sqrt{2} l_0$. Since the length of $BQ$ is by construction at least $l_0$, we conclude that 
\[ l(PQ) = l(PB) + l(BQ) \geq (\sqrt{2}+1)l_0.\]
By symmetry, the same holds if $Q$ lies in the arc of circle $[E',E'']$. 
\item Else, $P \in [D,D']$ and $Q \in [E,E']$, then $PQ$ has minimal length when it goes through $B$, and we can use Lemma \ref{lem:elementary_lengths} to conclude that $l(PQ) \geq 2 \sqrt{2} l_0$.
\end{enumerate}
\end{enumerate}
\end{proof}

In the remaining part of this section, we will use the following
\begin{Def}[Odd saddle connection]\label{def:oddSC}
Given a finite sequence of elements of $\{1,2\}$, we will say that the sequence is $odd$ if it starts and ends with 1, the 1s are isolated and the blocks of 2s contain an odd number of elements. 
Similarly, given a saddle connection with its polygonal decomposition, we will say that $\alpha$ is $odd$ if the non-adjacent segments are isolated (i.e. there are no consecutive pairs of pieces of non-adjacent segments) and between two isolated non-adjacent segments there is an odd number of adjacent segments.
In other words, $\alpha$ is $odd$ if the sequence given by the types of the pieces of its polygonal decomposition is $odd$ with the rule $1 =$ non-adjacent and $2=$ adjacent.
Note also that the number of pieces in the polygonal decomposition of an odd saddle connection is odd. 
\end{Def}

The first reason for this definition is that for an odd saddle connection (recall that we are excluding sides and diagonals), there is always either three consecutive adjacent segments or a sequence of three consecutive segment being respectively non-adjacent, adjacent and non-adjacent, so that we deduce from Lemma \ref{lem:better_lengths} that:
\begin{Lem}\label{lem:length_odd_sc}
If $\alpha$ is an odd saddle connection, then:
\[ l(\alpha) \geq (p_{\alpha}+q_{\alpha} + \sqrt{2}-1)l_0. \]
\end{Lem}

In fact, the notion of odd saddle connection will also turn out to be particularly useful in the study of the intersections. The main reason for this is because they form the equality case in the following
\begin{Lem}\label{lem:integer_part}
Given a saddle connection with its polygonal decomposition $\alpha = \alpha_1 \cup \cdots \cup \alpha_k$, we have
\[ \left\lceil \frac{k}{2} \right\rceil \leq p_{\alpha} + q_{\alpha} \]
with equality if and only if $\alpha$ is odd.
\end{Lem}
\begin{proof}
Let $N = p_{\alpha}$ and $k_i$, $1 \leq i \leq N-1$ be the number (possibly zero) of adjacent segments between the $i^{th}$ and the $(i+1)^{th}$ non-adjacent segment. 
Remembering that $\alpha_1$ and $\alpha_k$ are non-adjacent, we then have
\[ k = 1 + k_1 + 1 + \cdots + 1+ k_{N-1} +1 = N + \sum_{i=1}^{N-1} k_i  \]
so that 
\[ \frac{k}{2} = \frac{ N + \sum_{i=1}^{N-1} k_i}{2} = \frac{ 2N-1 + \sum_{i=1}^{N-1} (k_i-1)}{2} = N - \frac{1}{2} + \sum_{i=1}^{N-1} \frac{k_i -1}{2}. \]

Hence
\begin{align*}
\left\lceil \frac{k}{2} \right\rceil & \leq N + \sum_{i=1}^{N-1} \frac{k_i -1}{2} \\
& \leq N + \sum_{i=1}^{N-1} \left\lfloor \frac{k_i}{2} \right\rfloor = p_{\alpha} + q_{\alpha}. 
\end{align*}
The first inequality is actually strict when $k$ is even and an equality when $k$ is odd. 
The second inequality is an equality if and only if for all $i$, $\frac{k_i -1}{2} = \left\lfloor \frac{k_i}{2} \right\rfloor$, that is $k_i$ is odd and so $\alpha$ is odd.
Since for odd saddle connections $k$ is necessarily odd, equality overall is achieved exactly on odd saddle connections.
\end{proof}

\subsection{Properties of consecutive adjacent segments}\label{sec:consecutive_adjacent_ppties}
In this section, we give several properties of adjacent segments which we will use for the estimation of the intersections between two saddle connections. We start with a few definitions.

\begin{Def}[Type of a segment]
Let $\alpha_{i}$ be a segment going from the interior of a side $e$ to the interior of a side $e'$. We will say that $\alpha_i$ is of type $e \to e'$. If it goes from a vertex to the interior of a side $e'$ (resp. from the interior of a side $e$ to a vertex), we will say that $\alpha_i$ is of type $*\to e'$ (resp. $e \to *$).
\end{Def}

\subsubsection{Sector of adjacency}
In this paragraph, we consider a convex polygon $P$ whose sides are labeled in cyclic (clockwise) order $e_1, \cdots, e_N$, and a direction $\theta \in [0, 2\pi[$ which represent the direction of a saddle connection.

\begin{Lem}\label{lem:sector_adjacency}[and Definition]
There exist two sides $e_{u_-}$ and $e_{u_+}$ such that for any adjacent segment $\alpha_i$ contained in $P$ and having direction $\theta$, the type of $\alpha_i$ is either:
\begin{itemize}
\item $e_{u_+} \to e_{u_+ + 1}$,
\item or $e_{u_-} \to e_{u_- - 1}$.
\end{itemize}
In the first case, we say that $\alpha_i$ has positive sign and in the second case we say that the segment has negative sign.
\end{Lem}

Roughly speaking, we are saying that if we fix a direction, then an adjacent segment in that direction has only two possibilities for the edges that it can start from and end into.
Moreover, the cyclic ordering of the sides of the polygon determine which pair of adjacent sides the segment touches.

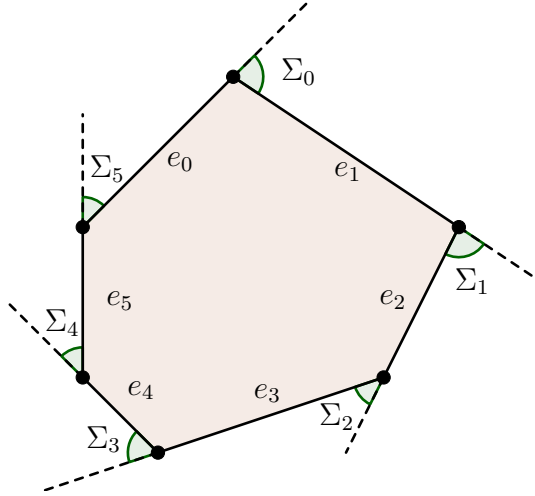
\begin{figure}
\center
\definecolor{zzttqq}{rgb}{0.6,0.2,0}
\definecolor{qqwuqq}{rgb}{0,0.39215686274509803,0}
\begin{tikzpicture}[line cap=round,line join=round,>=triangle 45,x=1cm,y=1cm]
\clip(-1,-0.5) rectangle (6,6);
\draw [shift={(0,3)},line width=1pt,color=qqwuqq,fill=qqwuqq,fill opacity=0.10000000149011612] (0,0) -- (45:0.4) arc (45:90:0.4) -- cycle;
\draw [shift={(2,5)},line width=1pt,color=qqwuqq,fill=qqwuqq,fill opacity=0.10000000149011612] (0,0) -- (-33.690067525979785:0.4) arc (-33.690067525979785:45:0.4) -- cycle;
\draw [shift={(5,3)},line width=1pt,color=qqwuqq,fill=qqwuqq,fill opacity=0.10000000149011612] (0,0) -- (-116.56505117707799:0.4) arc (-116.56505117707799:-33.69006752597978:0.4) -- cycle;
\draw [shift={(4,1)},line width=1pt,color=qqwuqq,fill=qqwuqq,fill opacity=0.10000000149011612] (0,0) -- (-161.565051177078:0.4) arc (-161.565051177078:-116.56505117707799:0.4) -- cycle;
\draw [shift={(1,0)},line width=1pt,color=qqwuqq,fill=qqwuqq,fill opacity=0.10000000149011612] (0,0) -- (135:0.4) arc (135:198.434948822922:0.4) -- cycle;
\draw [shift={(0,1)},line width=1pt,color=qqwuqq,fill=qqwuqq,fill opacity=0.10000000149011612] (0,0) -- (90:0.4) arc (90:135:0.4) -- cycle;
\fill[line width=1pt,color=zzttqq,fill=zzttqq,fill opacity=0.1] (0,1) -- (0,3) -- (2,5) -- (5,3) -- (4,1) -- (1,0) -- cycle;
\draw [line width=1pt] (1,0)-- (0,1);
\draw [line width=1pt] (0,1)-- (0,3);
\draw [line width=1pt] (0,3)-- (2,5);
\draw [line width=1pt] (2,5)-- (5,3);
\draw [line width=1pt] (5,3)-- (4,1);
\draw [line width=1pt] (1,0)-- (4,1);
\draw [line width=1pt,dash pattern=on 3pt off 3pt] (0,3)-- (0,4.5);
\draw [line width=1pt,dash pattern=on 3pt off 3pt] (0,1)-- (-1,2);
\draw [line width=1pt,dash pattern=on 3pt off 3pt] (2,5)-- (3,6);
\draw [line width=1pt,dash pattern=on 3pt off 3pt] (5,3)-- (6.2,2.2);
\draw [line width=1pt,dash pattern=on 3pt off 3pt] (4,1)-- (3.5,0);
\draw [line width=1pt,dash pattern=on 3pt off 3pt] (1,0)-- (-0.5,-0.5);
\draw (0.9718518518518443,4.1390123456789825) node[anchor=north west] {$e_0$};
\draw (3.2,4) node[anchor=north west] {$e_1$};
\draw (3.8,2.3) node[anchor=north west] {$e_2$};
\draw (2.1318518518518412,1.0590123456790064) node[anchor=north west] {$e_3$};
\draw (0.4518518518518457,1.0990123456790062) node[anchor=north west] {$e_4$};
\draw (0.17185185185184643,2.2856790123456636) node[anchor=north west] {$e_5$};
\draw (2.49185185185184,5.3923456790123065) node[anchor=north west] {$\Sigma_0$};
\draw (4.8,2.6) node[anchor=north west] {$\Sigma_1$};
\draw (3,0.8) node[anchor=north west] {$\Sigma_2$};
\draw (-0.1,0.5) node[anchor=north west] {$\Sigma_3$};
\draw (-0.65,2.05) node[anchor=north west] {$\Sigma_4$};
\draw (-0.05,4.1) node[anchor=north west] {$\Sigma_5$};
\begin{scriptsize}
\draw [fill=black] (1,0) circle (2.5pt);
\draw [fill=black] (0,1) circle (2.5pt);
\draw [fill=black] (0,3) circle (2.5pt);
\draw [fill=black] (2,5) circle (2.5pt);
\draw [fill=black] (5,3) circle (2.5pt);
\draw [fill=black] (4,1) circle (2.5pt);
\end{scriptsize}
\end{tikzpicture}
\caption{An adjacent segment from $e_{i}$ to $e_{i+1}$ has direction in the sector $\Sigma_i$ defined by the directions of the sides $e_{i}$ and $e_{i+1}$.}
\label{fig:convex_sectors}
\end{figure}

\begin{proof}
Fixing two sides $e_u$ and $e_{u+1}$, an (oriented) adjacent segment from $e_u$ to $e_{u+1}$ must have a direction within an angular sector determined by the directions of $e_u$ and $e_{u+1}$. 
We will call this the \emph{admissible sector for $e_u$ and $e_{u+1}$}.
For a convex polygon, the admissible sectors for each pair of adjacent sides form a partition of $[0, 2\pi[$. In particular, (if the direction of $\theta$ is not the direction of a side) there is only one pair of sides $e_{u_+}$ and $e_{u_++1}$ such that the direction $\theta$ lie in the admissible sector for $e_{u_+}$ and $e_{u_+ +1}$ (in this order). 

Similarly, there is only one pair of sides $e_{u_-}$ and $e_{u_- -1}$ such that the direction $\theta$ lie in the admissible sector for $e_{u_-}$ and $e_{u_-  -1}$ (in this order). 
\end{proof}

\subsubsection{Sequences of consecutive adjacent segments}\label{sec:adjacent_segments}
We now describe several properties of sequences of adjacent segments that will be used in the next section to estimate the intersections between two saddle connections using their polygonal decomposition.

\begin{Rema}\label{rk:alternates} 
In a sequence of consecutive adjacent segments, the sign of adjacent segments alternate. This is because two consecutive adjacent segments are as in Figure \ref{fig:opposite_signs}, so they must have opposite signs.
\end{Rema}

\begin{Rema}\label{rk:hypothesis_P1'_rk1}
It should be noted that Remark \ref{rk:alternates} does not in fact use the condition that the angles of the polygons are obtuse or right but it uses the weaker condition that the sum of two consecutive angles at a vertex is at least $\pi$. In fact, all the results in this section (\S\ref{sec:consecutive_adjacent_ppties}) and the next one (\S\ref{sec:study_intersections}) hold if we weaken the assumption (P1) to:
\begin{itemize}
    \item[(P1')] The polygons are convex and the sum of two consecutive angle at a vertex is at least $\pi$.
\end{itemize}
This will turn out to be useful later on, as (P1') is satisfied on Bouw-M\"oller surfaces $S_{m,n}$ for $n=3$ while (P1) is not. 

\end{Rema}
\begin{figure}[h]
\center
\definecolor{ccqqqq}{rgb}{0.8,0,0}
\begin{tikzpicture}[line cap=round,line join=round,>=triangle 45,x=1cm,y=1cm, scale =0.6]
\clip(-5.5,-3) rectangle (5,2.5);
\draw [line width=1pt] (-2,0)-- (2,0);
\draw [line width=1pt] (2,0)-- (4.02,1.98);
\draw [line width=1pt,dash pattern=on 1pt off 1pt] (4.02,1.98)-- (5,3);
\draw [line width=1pt] (-2,0)-- (-3.94,-1.94);
\draw [line width=1pt,dash pattern=on 3pt off 3pt] (-3.94,-1.94)-- (-5,-3);
\draw [line width=1pt,dash pattern=on 3pt off 3pt] (-2,0)-- (-3.76,1.58);
\draw [line width=1pt,dash pattern=on 3pt off 3pt] (2,0)-- (3.64,-1.7);
\draw [line width=1pt,color=ccqqqq] (-3.36,-1.36)-- (3.1414888511148886,1.1188851114888514);
\draw [color=ccqqqq](-2.14,-0.72) node[anchor=north west] {$\alpha_i$};
\draw [color=ccqqqq](1,1.6) node[anchor=north west] {$\alpha_{i+1}$};
\draw (-0.34,-0.62) node[anchor=north west] {$P$};
\draw (-1.1,1.92) node[anchor=north west] {$P'$};
\end{tikzpicture}
\caption{Two consecutive adjacent segment have opposite signs.}
\label{fig:opposite_signs}
\end{figure}
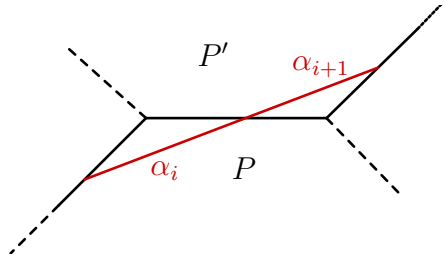

We can then use this remark to show:
\begin{Lem}\label{lem:maximal_even}
We consider a maximal sequence of adjacent segments, and we assume that there are two adjacent segments inside the same polygon having the same sign. Then there is an even number of adjacent segments in the sequence.
\end{Lem}

\begin{proof}
Let $\alpha_{i}$ and $\alpha_{i'}$ be two segments of the sequence inside the same polygon and having the same sign. Up to a change of orientation of $\alpha$, we can assume that they both have positive sign, and that we chose the indices $i$ and $i'$ so that we are in the configuration in Figure \ref{fig:same_type} (i.e. that $\alpha_i$ is closer than $\alpha_{i'}$ to the corner of $P$). We distinguish the cases $i<i'$ and $i'<i$.

\begin{figure}[h]
\center
\definecolor{ccqqqq}{rgb}{0.8,0,0}
\begin{tikzpicture}[line cap=round,line join=round,>=triangle 45,x=1cm,y=1cm, scale =0.6]
\clip(-5.5,-3) rectangle (5,2.5);
\draw [line width=1pt] (-2,0)-- (2,0);
\draw [line width=1pt] (2,0)-- (4.02,1.98);
\draw [line width=1pt,dash pattern=on 1pt off 1pt] (4.02,1.98)-- (5,3);
\draw [line width=1pt] (-2,0)-- (-3.94,-1.94);
\draw [line width=1pt,dash pattern=on 3pt off 3pt] (-3.94,-1.94)-- (-5,-3);
\draw [line width=1pt,dash pattern=on 3pt off 3pt] (-2,0)-- (-3.76,1.58);
\draw [line width=1pt,dash pattern=on 3pt off 3pt] (2,0)-- (3.64,-1.7);
\draw [line width=1pt,color=ccqqqq] (-4,-2)--(1,0);
\draw [line width=1pt, color=ccqqqq, dash pattern=on 3pt off 3pt] (1,0)--(2.6,0.6);
\draw [line width=1pt,color=ccqqqq] (-2.6,-0.6)--(-1,0);
\draw [line width=1pt, color=ccqqqq, dash pattern=on 3pt off 3pt] (-1,0)--(4,2);
\draw [color=ccqqqq](-3.6,0) node[anchor=north west] {$\alpha_i$};
\draw [color=ccqqqq](1,2.3) node[anchor=north west] {$\alpha_{i+1}$};
\draw [color=ccqqqq](-2.6,-1.2) node[anchor=north west] {$\alpha_{i'}$};
\draw [color=ccqqqq](2.5,1) node[anchor=north west] {$\alpha_{i'+1}$};
\draw (-0.34,-0.62) node[anchor=north west] {$P$};
\draw (-1.1,1.92) node[anchor=north west] {$P'$};
\end{tikzpicture}
\caption{The segments $\alpha_{i}$ and $\alpha_{i'}$ are positive adjacent segments of the same type.}
\label{fig:same_type}
\end{figure}
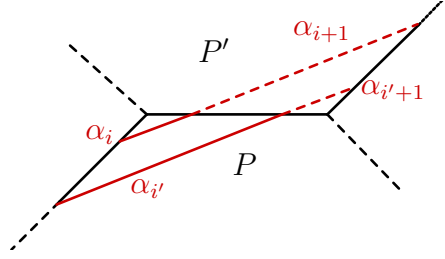

\begin{itemize}
    \item Assume $i<i'$. All the segments between $\alpha_i$ and $\alpha_{i'}$ are adjacent segments. First, we will show that the last adjacent segment of a sequence has negative sign.
    Since $\alpha_{i+1}$ is an adjacent segment parallel to $\alpha_{i'+1}$, we directly deduce from Figure \ref{fig:same_type} that $\alpha_{i'+1}$ must be an adjacent segment (and has negative sign). Now, if $\alpha_{i'+2}$ is non-adjacent, then $\alpha_{i'+1}$ is the last adjacent segment and we saw it has negative sign. Otherwise, if $\alpha_{i'+2}$ is also adjacent, then $\alpha_{i+2}$ and $\alpha_{i'+2}$ have the same type and are in the same configuration as $\alpha_{i}$ and $\alpha_{i'}$, and hence $\alpha_{i'+3}$ is also adjacent. Repeating this argument, we obtain that the last adjacent segment of the sequence must be of negative sign.\newline
    
    Now, we will prove that the first segment of the sequence has positive sign.
    Either $\alpha_{i}$ is the first adjacent segment and we have assumed it has positive sign, or
    $\alpha_{i-1}$ is an adjacent segment and then it must have negative sign and share its type with $\alpha_{i'-1}$. Next, $\alpha_{i'-2}$ is also an adjacent segment, and we deduce from Figure \ref{fig:same_type_2} that the segment $\alpha_{i-2}$, parallel to $\alpha_{i'-2}$, must also be an adjacent segment (which then has positive sign). Then, $\alpha_{i-2}$ and $\alpha_{i'-2}$ are in the same configuration as $\alpha_{i}$ and $\alpha_{i'}$ and hence we can repeat the argument until we reach a non-adjacent segment. From this we conclude that the first segment of the sequence must be of positive sign.

    \begin{figure}[h]
\center
\definecolor{ccqqqq}{rgb}{0.8,0,0}
\begin{tikzpicture}[line cap=round,line join=round,>=triangle 45,x=1cm,y=1cm, scale =0.6]
\clip(-5.5,-3) rectangle (9.5,3.3);
\draw [line width=1pt] (-2,0)-- (2,0);
\draw [line width=1pt] (2,0)-- (4.6,2.6);
\draw [line width=1pt] (4.6,2.6)-- (8.6,2.6);
\draw [line width=1pt] (-2,0)-- (-3.94,-1.94);
\draw [line width=1pt] (-3.94,-1.94)-- (-5,-3);
\draw [line width=1pt,dash pattern=on 3pt off 3pt] (-2,0)-- (-3.76,1.58);
\draw [line width=1pt,dash pattern=on 3pt off 3pt] (8.6,2.6)-- (9.5,1.7);
\draw [line width=1pt,color=ccqqqq, dash pattern=on 3pt off 3pt] (-4,-2)--(1,0);
\draw [line width=1pt, color=ccqqqq, dash pattern=on 3pt off 3pt] (1,0)--(2.6,0.6);
\draw [line width=1pt,color=ccqqqq, dash pattern=on 3pt off 3pt] (-2.6,-0.6)--(-1,0);
\draw [line width=1pt, color=ccqqqq, dash pattern=on 3pt off 3pt] (-1,0)--(4,2);
\draw [line width=1pt, color=ccqqqq] (4,2)--(5.6,2.6);
\draw [line width=1pt, color=ccqqqq] (2.6,0.6)--(7.6,2.6);
\draw [color=ccqqqq](-4.3,0) node[anchor=north west] {$\alpha_{i-2}$};
\draw [color=ccqqqq](0,1.8) node[anchor=north west] {$\alpha_{i-1}$};
\draw [color=ccqqqq](-2.6,-1.2) node[anchor=north west] {$\alpha_{i'-2}$};
\draw [color=ccqqqq](2.5,0.7) node[anchor=north west] {$\alpha_{i'-1}$};
\draw [color=ccqqqq](4.6,3.5) node[anchor=north west] {$\alpha_{i}$};
\draw [color=ccqqqq](6,2) node[anchor=north west] {$\alpha_{i'}$};
\draw (5.7,1) node[anchor=north west] {$P$};
\end{tikzpicture}
\caption{If $\alpha_{i-1}$ is adjacent, then the segments $\alpha_{i-2}$ and $\alpha_{i'-2}$ are positive adjacent segments of the same type.}
\label{fig:same_type_2}
\end{figure}
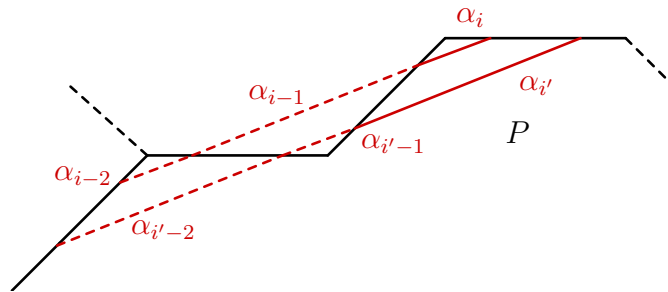

    As a conclusion, the sequence starts with a positive adjacent segment and ends with a negative adjacent segment. Since the signs are alternating, there must be an even number of segments.
    \item Similarly, if $i'<i$, the above arguments give that the first segment of the sequence of consecutive adjacent segment must have negative sign while the last segment of the sequence has positive sign. Hence, there is also an even number of segments.
\end{itemize}



\end{proof}


To study the intersections, we distinguish three types of sequences of consecutive adjacent segments:
\begin{itemize}
\item \textbf{(Isolated adjacent segments)} An adjacent segment which is preceded and followed by non-adjacent segments will be called an \emph{isolated} adjacent segment.

\item \textbf{(Sequence of consecutive adjacent segments contained inside a short cylinder)}
A sequence of consecutive adjacent segments containing at least two segments, and such that for each segment $\alpha_{i}$ of the sequence, $\alpha_{i+2}$ is either non-adjacent or has the same type as $\alpha_i$.\newline
Formally, we are in this case if there exist two sides $e$ and $e'$ of $P(0)$ (or $P(m-1)$) such that the segments of the sequence, which are alternatively contained in $P(0)$ and $P(1)$ (or $P(m-2)$ and $P(m-1)$), have type alternating\footnote{Note that if $e$ and $e'$ are two sides of $P(0)$ (resp. $P(m-1)$) the notion of segment of type $e \to e'$ also makes sense in $P(1)$ (resp. $ P(m-2)$).} between $e \to e'$ and $e' \to e$. A sequence of adjacent segments inside a short cylinder is represented in the left of Figure \ref{fig:short_cylinder}. Following the terminology of \cite{BLM22}, we say that the starting side $e$ of the sequence is the sandwiching side of the sequence of adjacent segments while the side $e'$ is the sandwiched side. By Lemma \ref{lem:maximal_even}, a sequence of consecutive adjacent segments contained inside a short cylinder must contain an even number of segments. Further, if we decompose the segment by pairs, each pair $\alpha_{i} \cup \alpha_{i+1}$ goes from $e$ to $e$, and the direction of $\alpha$ must lie in the sector defined by the direction of $e'$ and the direction of the diagonal of the cylinder.

\item \textbf{(Sequence of consecutive adjacent segments not contained inside a short cylinder)} The other sequences of consecutive adjacent segments are those which are not contained in a short cylinder. For those sequences, we have from Lemma \ref{lem:sector_adjacency} and Remark \ref{rk:alternates} that:
\begin{Cor}\label{cor:3_segments}
Given three consecutive segments in a sequence of consecutive adjacent segments which is not contained in a short cylinder, no two segments lie inside the same polygon.
\end{Cor}
\begin{proof}
The second segment cannot lie in the same polygon than either the first or the third segment because by hypothesis $(P2)$ consecutive adjacent segments do not lie in the same polygon.
By Lemma \ref{lem:sector_adjacency} and Remark \ref{rk:alternates}, the first and the third segment cannot lie in the same polygon unless they have the same type, but this is not the case by assumption of not being in a small cylinder. 
\end{proof}
\end{itemize}


\begin{figure}
\center
\definecolor{ccqqqq}{rgb}{0.8,0,0}
\begin{tikzpicture}[line cap=round,line join=round,>=triangle 45,x=1cm,y=1cm, scale=0.8]
\clip(-4.2,-3.5) rectangle (4.2,3.5);
\draw [line width=1pt] (-2,0)-- (2,0);
\draw [line width=1pt] (2,0)-- (4,3);
\draw [line width=1pt] (-2,0)-- (-4,-3);
\draw [line width=1pt,dash pattern=on 3pt off 3pt,color=ccqqqq] (0.9816035345463919,1.356290121134001)-- (3.6906621119299947,2.535993167894992);
\draw [line width=1pt,color=ccqqqq] (-2.3093378880700053,-0.4640068321050079)-- (3.3269314093167632,1.9903971139751455);
\draw [line width=1pt,color=ccqqqq] (-2.6730685906832368,-1.0096028860248545)-- (2.9632007067035326,1.444801060055299);
\draw [line width=1pt,color=ccqqqq] (-3.0367992932964674,-1.555198939944701)-- (2.5994700040903016,0.8992050061354526);
\draw [line width=1pt,color=ccqqqq] (-3.4005299959096984,-2.1007949938645476)-- (2.2357393014770706,0.3536089522156061);
\draw [line width=1pt,dash pattern=on 3pt off 3pt,color=ccqqqq] (-3.7642606985229294,-2.6463910477843937)-- (0.3686809206691537,-0.8466352724939681);
\draw (3.344258794968969,1.929581471089471) node[anchor=north west] {$e$};
\draw (-1.224699539524695,0.9914683529805202) node[anchor=north west] {$e'$};
\draw (-3.3788852181452818,-0.7284056968858895) node[anchor=north west] {$e$};
\draw [line width=1pt,dash pattern=on 3pt off 3pt] (-2,0)-- (-2.979318519691461,1.8079742150383107);
\draw [line width=1pt,dash pattern=on 3pt off 3pt] (2,0)-- (3.8828052146241174,-1.5622840240938458);
\begin{scriptsize}
\draw [color=black] (-2,0)-- ++(-2.5pt,-2.5pt) -- ++(5pt,5pt) ++(-5pt,0) -- ++(5pt,-5pt);
\draw [color=black] (2,0)-- ++(-2.5pt,-2.5pt) -- ++(5pt,5pt) ++(-5pt,0) -- ++(5pt,-5pt);
\draw [color=black] (4,3)-- ++(-2.5pt,-2.5pt) -- ++(5pt,5pt) ++(-5pt,0) -- ++(5pt,-5pt);
\draw [color=black] (-4,-3)-- ++(-2.5pt,-2.5pt) -- ++(5pt,5pt) ++(-5pt,0) -- ++(5pt,-5pt);
\draw [fill=ccqqqq,shift={(3.6906621119299947,2.535993167894992)}] (0,0) ++(0 pt,3pt) -- ++(2.598076211353316pt,-4.5pt)--++(-5.196152422706632pt,0 pt) -- ++(2.598076211353316pt,4.5pt);
\draw [fill=ccqqqq,shift={(-2.3093378880700053,-0.4640068321050079)}] (0,0) ++(0 pt,3pt) -- ++(2.598076211353316pt,-4.5pt)--++(-5.196152422706632pt,0 pt) -- ++(2.598076211353316pt,4.5pt);
\draw [color=ccqqqq] (3.3269314093167632,1.9903971139751455) ++(-2pt,0 pt) -- ++(2pt,2pt)--++(2pt,-2pt)--++(-2pt,-2pt)--++(-2pt,2pt);
\draw [color=ccqqqq] (-2.6730685906832368,-1.0096028860248545) ++(-2pt,0 pt) -- ++(2pt,2pt)--++(2pt,-2pt)--++(-2pt,-2pt)--++(-2pt,2pt);
\draw [fill=ccqqqq,shift={(2.9632007067035326,1.444801060055299)},rotate=90] (0,0) ++(0 pt,3pt) -- ++(2.598076211353316pt,-4.5pt)--++(-5.196152422706632pt,0 pt) -- ++(2.598076211353316pt,4.5pt);
\draw [fill=ccqqqq,shift={(-3.0367992932964674,-1.555198939944701)},rotate=90] (0,0) ++(0 pt,3pt) -- ++(2.598076211353316pt,-4.5pt)--++(-5.196152422706632pt,0 pt) -- ++(2.598076211353316pt,4.5pt);
\draw [fill=ccqqqq] (2.5994700040903016,0.8992050061354526) ++(-2pt,0 pt) -- ++(2pt,2pt)--++(2pt,-2pt)--++(-2pt,-2pt)--++(-2pt,2pt);
\draw [fill=ccqqqq] (-3.4005299959096984,-2.1007949938645476) ++(-2pt,0 pt) -- ++(2pt,2pt)--++(2pt,-2pt)--++(-2pt,-2pt)--++(-2pt,2pt);
\draw [fill=ccqqqq] (2.2357393014770706,0.3536089522156061) circle (2pt);
\draw [fill=ccqqqq] (-3.7642606985229294,-2.6463910477843937) circle (2pt);
\end{scriptsize}
\end{tikzpicture}
\begin{tikzpicture}[line cap=round,line join=round,>=triangle 45,x=1cm,y=1cm, scale=0.8]
\clip(-4.2,-3.5) rectangle (4.2,3.5);
\draw [line width=1pt] (-2,0)-- (2,0);
\draw [line width=1pt] (2,0)-- (4,3);
\draw [line width=1pt] (-2,0)-- (-4,-3);
\draw [line width=1pt,color=ccqqqq] (-2.6730685906832368,-1.0096028860248545)-- (2.9632007067035326,1.444801060055299);
\draw (3.344258794968969,1.929581471089471) node[anchor=north west] {$e$};
\draw (0,0) node[anchor=north west] {$e'$};
\draw (-3.3788852181452818,-0.7284056968858895) node[anchor=north west] {$e$};
\draw [line width=1pt,dash pattern=on 3pt off 3pt] (-2,0)-- (-2.979318519691461,1.8079742150383107);
\draw [line width=1pt,dash pattern=on 3pt off 3pt] (2,0)-- (3.8828052146241174,-1.5622840240938458);
\draw [line width=1pt,dash pattern=on 3pt off 3pt] (-2,0)-- (4,3);
\draw [shift={(-2,-0)},line width=1pt,color=black,fill=black,fill opacity=0.10000000149011612] (0,0) -- (0:0.8) arc (0:26:0.8) -- cycle;
\draw (0,2) node[anchor=north west] {$\Delta$};
\begin{scriptsize}
\draw [color=black] (-2,0)-- ++(-2.5pt,-2.5pt) -- ++(5pt,5pt) ++(-5pt,0) -- ++(5pt,-5pt);
\draw [color=black] (2,0)-- ++(-2.5pt,-2.5pt) -- ++(5pt,5pt) ++(-5pt,0) -- ++(5pt,-5pt);
\draw [color=black] (4,3)-- ++(-2.5pt,-2.5pt) -- ++(5pt,5pt) ++(-5pt,0) -- ++(5pt,-5pt);
\draw [color=black] (-4,-3)-- ++(-2.5pt,-2.5pt) -- ++(5pt,5pt) ++(-5pt,0) -- ++(5pt,-5pt);
\draw [fill=ccqqqq,shift={(3.6906621119299947,2.535993167894992)}] (0,0) ++(0 pt,3pt) -- ++(2.598076211353316pt,-4.5pt)--++(-5.196152422706632pt,0 pt) -- ++(2.598076211353316pt,4.5pt);
\draw [fill=ccqqqq,shift={(-2.3093378880700053,-0.4640068321050079)}] (0,0) ++(0 pt,3pt) -- ++(2.598076211353316pt,-4.5pt)--++(-5.196152422706632pt,0 pt) -- ++(2.598076211353316pt,4.5pt);
\draw [color=ccqqqq] (3.3269314093167632,1.9903971139751455) ++(-2pt,0 pt) -- ++(2pt,2pt)--++(2pt,-2pt)--++(-2pt,-2pt)--++(-2pt,2pt);
\draw [color=ccqqqq] (-2.6730685906832368,-1.0096028860248545) ++(-2pt,0 pt) -- ++(2pt,2pt)--++(2pt,-2pt)--++(-2pt,-2pt)--++(-2pt,2pt);
\draw [fill=ccqqqq,shift={(2.9632007067035326,1.444801060055299)},rotate=90] (0,0) ++(0 pt,3pt) -- ++(2.598076211353316pt,-4.5pt)--++(-5.196152422706632pt,0 pt) -- ++(2.598076211353316pt,4.5pt);
\draw [fill=ccqqqq,shift={(-3.0367992932964674,-1.555198939944701)},rotate=90] (0,0) ++(0 pt,3pt) -- ++(2.598076211353316pt,-4.5pt)--++(-5.196152422706632pt,0 pt) -- ++(2.598076211353316pt,4.5pt);
\draw [fill=ccqqqq] (2.5994700040903016,0.8992050061354526) ++(-2pt,0 pt) -- ++(2pt,2pt)--++(2pt,-2pt)--++(-2pt,-2pt)--++(-2pt,2pt);
\draw [fill=ccqqqq] (-3.4005299959096984,-2.1007949938645476) ++(-2pt,0 pt) -- ++(2pt,2pt)--++(2pt,-2pt)--++(-2pt,-2pt)--++(-2pt,2pt);
\draw [fill=ccqqqq] (2.2357393014770706,0.3536089522156061) circle (2pt);
\draw [fill=ccqqqq] (-3.7642606985229294,-2.6463910477843937) circle (2pt);
\end{scriptsize}
\end{tikzpicture}
\caption{A sequence of adjacent segments contained inside a short cylinder. On the right, we show a single pair of adjacent segments in the sequence. Its direction is contained in the sector defined by the direction of $e'$ and the direction of the diagonal $\Delta$.}
\label{fig:short_cylinder}
\end{figure}
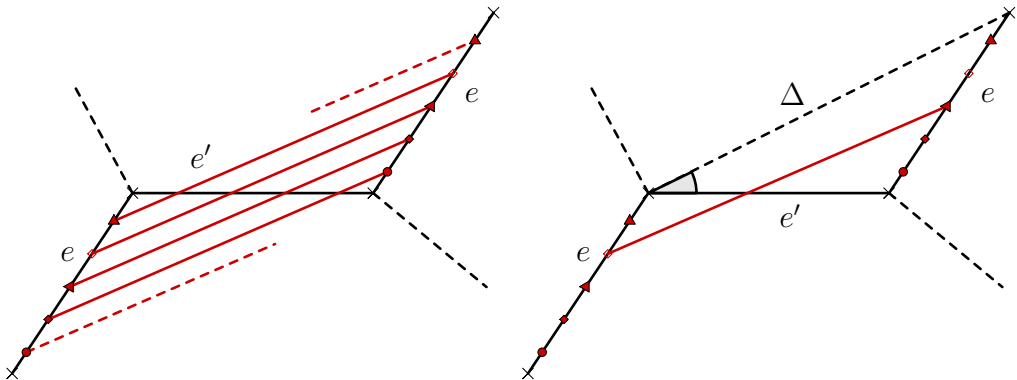

\subsection{Study of the intersections}\label{sec:study_intersections}
We now study the intersections of two saddle connections $\alpha = \alpha_1 \cup \cdots \cup \alpha_k$ and $\beta = \beta_1 \cup \cdots \cup \beta_l$ depending on their polygonal decomposition. Namely, we show:
\begin{Prop}\label{prop:total_intersections}
Let $\alpha$ and $\beta$ be two saddle connections. We have:
\[ |\alpha \cap \beta| \leq (p_{\alpha} + q_{\alpha}) (p_{\beta} + q_{\beta}) \]
Further, if equality holds then between each non-adjacent segment there is an odd number of adjacent segments (that is, both $\alpha$ and $\beta$ are odd saddle connections).
\end{Prop}

The proof of the proposition relies on the notion of \emph{configuration} $\bigstar$, which is defined in the next subsections and essentially states that an intersection involving an adjacent segment allows to remove one to the count of potential intersections. 

\begin{Rema}
The inequality of Proposition \ref{prop:total_intersections} is optimal, as given in the example of Figure \ref{fig:diagramex2}.
\end{Rema}

\begin{Rema}\label{rk:hypothesis_P1'_rk2}
As hinted in Remark \ref{rk:hypothesis_P1'_rk1} and as we will see in the proof, Proposition \ref{prop:total_intersections} is true under the weaker assumptions (P1') and (P2) on the surface $X$.
\end{Rema}

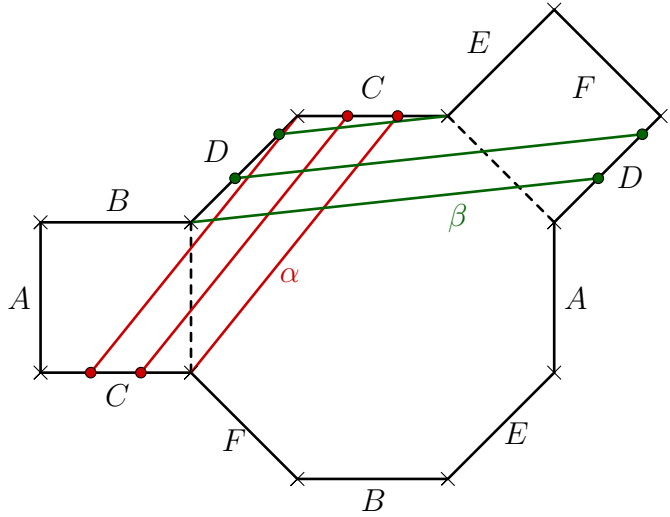
\begin{figure}
\center
\definecolor{uuuuuu}{rgb}{0.26666666666666666,0.26666666666666666,0.26666666666666666}
\definecolor{qqwuqq}{rgb}{0,0.39215686274509803,0}
\definecolor{ccqqqq}{rgb}{0.8,0,0}
\begin{tikzpicture}[line cap=round,line join=round,>=triangle 45,x=1cm,y=1cm]
\clip(-6,-0.5) rectangle (7,8);
\draw [line width=1pt] (0,0)-- (2,0);
\draw [line width=1pt] (2,0)-- (3.414213562373095,1.414213562373095);
\draw [line width=1pt] (3.414213562373095,1.414213562373095)-- (3.414213562373095,3.4142135623730945);
\draw [line width=1pt,dash pattern=on 3pt off 3pt] (3.414213562373095,3.4142135623730945)-- (2,4.82842712474619);
\draw [line width=1pt] (2,4.82842712474619)-- (0,4.82842712474619);
\draw [line width=1pt] (0,4.82842712474619)-- (-1.414213562373095,3.4142135623730954);
\draw [line width=1pt] (-1.4142135623730954,1.4142135623730956)-- (0,0);
\draw [line width=1pt,dash pattern=on 3pt off 3pt] (-1.4142135623730954,1.4142135623730956)-- (-1.414213562373095,3.4142135623730954);
\draw [line width=1pt] (-1.414213562373095,3.4142135623730954)-- (-3.4142135623730945,3.4142135623730954);
\draw [line width=1pt] (-3.4142135623730945,3.4142135623730954)-- (-3.414213562373095,1.414213562373096);
\draw [line width=1pt] (-3.414213562373095,1.414213562373096)-- (-1.4142135623730954,1.4142135623730956);
\draw [line width=1pt] (3.414213562373095,3.4142135623730945)-- (4.82842712474619,4.828427124746188);
\draw [line width=1pt] (4.82842712474619,4.828427124746188)-- (3.4142135623730963,6.242640687119284);
\draw [line width=1pt] (3.4142135623730963,6.242640687119284)-- (2,4.82842712474619);
\draw [line width=1pt,color=ccqqqq] (-1.4142135623730954,1.4142135623730956)-- (1.3333333333333335,4.82842712474619);
\draw [line width=1pt,color=ccqqqq] (-2.080880229039762,1.4142135623730958)-- (0.666666666666667,4.82842712474619);
\draw [line width=1pt,color=ccqqqq] (-2.7475468957064284,1.414213562373096)-- (0,4.82842712474619);
\draw [color=ccqqqq](-0.38,2.94) node[anchor=north west] {$\alpha$};
\draw [color=qqwuqq](1.86,3.8) node[anchor=north west] {$\beta$};
\draw [line width=1pt,color=qqwuqq] (-1.414213562373095,3.4142135623730954)-- (4,4);
\draw [line width=1pt,color=qqwuqq] (-0.828427124746189,4)-- (4.585786437626907,4.585786437626906);
\draw [line width=1pt,color=qqwuqq] (-0.242640687119283,4.585786437626906)-- (2,4.82842712474619);
\draw (-2.7,4) node[anchor=north west] {$B$};
\draw (-2.7,1.4) node[anchor=north west] {$C$};
\draw (0.7,5.5) node[anchor=north west] {$C$};
\draw (-1.4,4.65) node[anchor=north west] {$D$};
\draw (4.1,4.3) node[anchor=north west] {$D$};
\draw (2.1,6.1) node[anchor=north west] {$E$};
\draw (2.6,0.9) node[anchor=north west] {$E$};
\draw (0.7,0) node[anchor=north west] {$B$};
\draw (-1.15,0.8) node[anchor=north west] {$F$};
\draw (3.5,5.5) node[anchor=north west] {$F$};
\draw (-4,2.7) node[anchor=north west] {$A$};
\draw (3.4,2.7) node[anchor=north west] {$A$};
\begin{scriptsize}
\draw [color=black] (0,0)-- ++(-2.5pt,-2.5pt) -- ++(5pt,5pt) ++(-5pt,0) -- ++(5pt,-5pt);
\draw [color=black] (2,0)-- ++(-2.5pt,-2.5pt) -- ++(5pt,5pt) ++(-5pt,0) -- ++(5pt,-5pt);
\draw [color=black] (3.414213562373095,1.414213562373095)-- ++(-2.5pt,-2.5pt) -- ++(5pt,5pt) ++(-5pt,0) -- ++(5pt,-5pt);
\draw [color=black] (3.414213562373095,3.4142135623730945)-- ++(-2.5pt,-2.5pt) -- ++(5pt,5pt) ++(-5pt,0) -- ++(5pt,-5pt);
\draw [color=black] (2,4.82842712474619)-- ++(-2.5pt,-2.5pt) -- ++(5pt,5pt) ++(-5pt,0) -- ++(5pt,-5pt);
\draw [color=black] (0,4.82842712474619)-- ++(-2.5pt,-2.5pt) -- ++(5pt,5pt) ++(-5pt,0) -- ++(5pt,-5pt);
\draw [color=black] (-1.414213562373095,3.4142135623730954)-- ++(-2.5pt,-2.5pt) -- ++(5pt,5pt) ++(-5pt,0) -- ++(5pt,-5pt);
\draw [color=black] (-1.4142135623730954,1.4142135623730956)-- ++(-2.5pt,-2.5pt) -- ++(5pt,5pt) ++(-5pt,0) -- ++(5pt,-5pt);
\draw [color=black] (-3.4142135623730945,3.4142135623730954)-- ++(-2.5pt,-2.5pt) -- ++(5pt,5pt) ++(-5pt,0) -- ++(5pt,-5pt);
\draw [color=black] (-3.414213562373095,1.414213562373096)-- ++(-2.5pt,-2.5pt) -- ++(5pt,5pt) ++(-5pt,0) -- ++(5pt,-5pt);
\draw [color=black] (4.82842712474619,4.828427124746188)-- ++(-2.5pt,-2.5pt) -- ++(5pt,5pt) ++(-5pt,0) -- ++(5pt,-5pt);
\draw [color=black] (3.4142135623730963,6.242640687119284)-- ++(-2.5pt,-2.5pt) -- ++(5pt,5pt) ++(-5pt,0) -- ++(5pt,-5pt);
\draw [fill=ccqqqq] (-2.080880229039762,1.4142135623730958) circle (2pt);
\draw [fill=ccqqqq] (-2.7475468957064284,1.414213562373096) circle (2pt);
\draw [fill=ccqqqq] (1.3333333333333335,4.82842712474619) circle (2pt);
\draw [fill=ccqqqq] (0.666666666666667,4.82842712474619) circle (2pt);
\draw [fill=qqwuqq] (4,4) circle (2pt);
\draw [fill=qqwuqq] (-0.828427124746189,4) circle (2pt);
\draw [fill=qqwuqq] (4.585786437626907,4.585786437626906) circle (2pt);
\draw [fill=qqwuqq] (-0.242640687119283,4.585786437626906) circle (2pt);
\end{scriptsize}
\end{tikzpicture}
\caption{Example of odd saddle connections such that $|\alpha \cap \beta| = (p_{\alpha}+q_{\alpha})(p_{\beta}+q_{\beta})$.}
\label{fig:diagramex2}
\end{figure}

\subsubsection{Configurations \texorpdfstring{$\bigstar$}{bigstar}}

Roughly speaking, a configuration $\bigstar$ gives two pieces $\alpha_i$ and $\beta_j$ which intersect and have one endpoint on a shared side. In particular, we can choose to count this intersection either as an intersection between $\alpha_i$ and $\beta_j$ or deform the segments so that the intersection occur instead as an intersection between the segments of $\alpha$ and $\beta$ consecutive to $\alpha_i$ and $\beta_j$ along this shared side.

Recall that we denoted $\alpha=\alpha_1 \cup \cdots \cup \alpha_k$ and $\beta=\beta_1 \cup \cdots \cup \beta_l$ the polygonal decompositions of the saddle connection $\alpha$ and $\beta$. 

\begin{Def}
A \emph{configuration} $\bigstar$ is given by the ordered data of two pairs of indexes $((i,j) ,(i',j')) \in (\{1,\cdots,k\} \times \{ 1,\cdots, l\})^2$ such that :
\begin{enumerate}[label=(\roman*)]
\item The segments $\alpha_i$ and $\beta_j$ intersect
\item There is a side $e$ of the polygon containing both $\alpha_i$ and $\beta_j$ such that on the interior of $e$ there is one endpoint of $\alpha_i$ and one endpoint of $\beta_j$.
\item $\alpha_{i'}$ and $\beta_{j'}$ are respectively the segments consecutive to $\alpha_i$ and $\beta_j$ continued after this endpoint.
\end{enumerate} 
If $((i,j),(i',j'))$ are in a configuration $\bigstar$, we will use the notation $\Xstar$.
Moreover, we will say that $(i,j)$ \emph{induces a configuration} $\bigstar$ if there exists $(i',j')$ such that $((i,j),(i',j'))$ are in a configuration $\bigstar$.
\end{Def}

Note that $i'$ and $j'$ cannot just be any element of $\{1,\cdots,k\}$ and $\{1,\cdots,l\}$, but given $i$ and $j$, we have $i' \in \{i-1,i+1\}$ and $j \in \{j-1,j+1\}$.

\begin{figure}[h]
\center
\definecolor{ccqqqq}{rgb}{0.8,0,0}
\definecolor{qqwuqq}{rgb}{0,0.39215686274509803,0}
\begin{tikzpicture}[line cap=round,line join=round,>=triangle 45,x=1cm,y=1cm, scale =0.6]
\clip(-5.5,-3) rectangle (5,2.5);
\draw [line width=1pt] (-2,0)-- (2,0);
\draw [line width=1pt] (2,0)-- (3.6,1.58);
\draw [line width=1pt,dash pattern=on 3pt off 3pt] (3.6,1.58)-- (5,3);
\draw [line width=1pt] (-2,0)-- (-3.94,-1.94);
\draw [line width=1pt,dash pattern=on 3pt off 3pt] (-3.94,-1.94)-- (-5,-3);
\draw [line width=1pt] (-2,0)-- (-3.76,1.58);
\draw [line width=1pt,dash pattern=on 3pt off 3pt] (-5.52,3.16)-- (-3.76,1.58);
\draw [line width=1pt] (2,0)-- (3.64,-1.7);
\draw [line width=1pt, dash pattern=on 3pt off 3pt] (5.32,-3.4)-- (3.64,-1.7);
\draw [line width=1pt,color=ccqqqq] (-3.7,-3)-- (3.1414888511148886,3);
\draw [line width=1pt,color=qqwuqq] (-1,3)-- (2,-3);
\draw [color=ccqqqq](-2,-1.2) node[anchor=north west] {$\alpha_{i'}$};
\draw [color=ccqqqq](1.2,1.6) node[anchor=north west] {$\alpha_i$};
\draw [color=qqwuqq](1.5,-1.2) node[anchor=north west] {$\beta_{j'}$};
\draw [color=qqwuqq](-1.5,1.6) node[anchor=north west] {$\beta_{j}$};
\end{tikzpicture}
\begin{tikzpicture}[line cap=round,line join=round,>=triangle 45,x=1cm,y=1cm, scale =0.6]
\clip(-5.5,-3) rectangle (5,2.5);
\draw [line width=1pt] (-2,0)-- (2,0);
\draw [line width=1pt] (2,0)-- (3.6,1.58);
\draw [line width=1pt,dash pattern=on 3pt off 3pt] (3.6,1.58)-- (5,3);
\draw [line width=1pt] (-2,0)-- (-3,-3);
\draw [line width=1pt] (-2,0)-- (-3.76,1.58);
\draw [line width=1pt,dash pattern=on 3pt off 3pt] (-5.52,3.16)-- (-3.76,1.58);
\draw [line width=1pt] (2,0)-- (3.64,-1.7);
\draw [line width=1pt, dash pattern=on 3pt off 3pt] (5.32,-3.4)-- (3.64,-1.7);
\draw [line width=1pt,color=ccqqqq] (-2.9,-2.75)-- (2.25,2.75);
\draw [line width=1pt,color=qqwuqq] (-2.3,-1)-- (2.75,2.25);
\draw [color=ccqqqq](-2,-1.4) node[anchor=north west] {$\alpha_{i'}$};
\draw [color=ccqqqq](0,2.1) node[anchor=north west] {$\alpha_i$};
\draw [color=qqwuqq](-3.5,-0.5) node[anchor=north west] {$\beta_{j'}$};
\draw [color=qqwuqq](1.4,1.6) node[anchor=north west] {$\beta_{j}$};
\end{tikzpicture}

\caption{Examples of configuration $\Xstar$.}
\label{fig:configuration_star}
\end{figure}

The picture to have in mind is given by Figure \ref{fig:configuration_star}. Let us now state several properties related to configurations $\bigstar$ which will turn out to be useful to count intersections.

\begin{Lem}\label{lem:configuration_star}
\begin{enumerate}
\item If $\Xstar$, then the segments $\alpha_{i'}$ and $\beta_{j'}$ lie in the same polygon but do not intersect.
\item If given $(i',j')$, we have $\Xstar$ and $(i'',j'') \xrightarrow[]{\bigstar} (i',j')$, then $i=i''$ and $j=j''$. In other words, the data of $(i',j')$ in a configuration $\bigstar$ determines uniquely $i$ and $j$.
\item Assume $\alpha_i$ is an adjacent segment and $j \neq 1,l$ be such that $\alpha_i \cap \beta_j \neq \varnothing$. Then $(i,j)$ induces a configuration $\bigstar$.
\item Assume $\alpha_i$ is an adjacent segment, $\alpha_i \cap \beta_1 \neq \varnothing$ and $\alpha_i \cap \beta_l \neq \varnothing$, then either $(i,1)$ or $(i,l)$ induce a configuration $\bigstar$.
\end{enumerate}
\end{Lem}
\begin{proof}
\begin{enumerate}
\item Looking at the two polygons glued along the side which contains an endpoint of $\alpha_i$ and $\beta_j$, the two lines defined by $\alpha$ and $\beta$ can only intersect once, so if $\alpha_i$ and $\beta_j$ intersect, then $\alpha_{i'}$ and $\beta_{j'}$ will not (see Figure \ref{fig:configuration_star}).
\item If we have $\Xstar$ and $(i'',j'') \xrightarrow[]{\bigstar} (i',j')$, with $i \neq i''$ and $j \neq j''$, then there exist two sides $e$ and $e'$ such that the interior of $e$ contains one endpoint of $\alpha_{i'}$ and one endpoint of $\beta_{j'}$, while $e'$ contains the other endpoints of $\alpha_{i'}$ and $\beta_{j'}$. 
Moreover, $\alpha_i$ and $\beta_j$ are segments across $e$, while $\alpha_{i''}$ and $\beta_{j''}$ are segments across $e'$. 
Now, the straight lines defined by $\alpha$ and $\beta$ can either be parallel or intersect once, so we cannot have both $\alpha_i$ and $\beta_j$ and also $\alpha_{i''}$ and $\beta_{j''}$ to intersect, as required by the first part of the definition. 
\item[3. and 4.] If $\alpha_i$ is an adjacent segment, then any segment $\beta_j$ intersecting it must have one endpoint on one of the side containing an endpoint of $\alpha_i$. The only case where it does not induce a configuration $\bigstar$ is when the endpoint lies on the vertex of the polygon which is between the two adjacent sides containing an endpoint of $\alpha_i$; this case cannot happen if $j \neq 1,l$ and cannot happen simultaneously for $j=1$ and $j=l$.
\end{enumerate}
\end{proof}

\subsubsection{Counting the intersections}
We are now ready to prove Proposition \ref{prop:total_intersections}. For this purpose, we distinguish the intersections involving non-adjacent segments and adjacent segments. Namely, we define:
\begin{flalign*}
& I_1 := \{ (i,j), \alpha_i \cap \beta_j \neq \varnothing,   \alpha_i \text{ non-adjacent}  \} &&&\\
& I_{2} := \{ (i,j), \alpha_i \cap \beta_j \neq \varnothing, \alpha_i \text{ adjacent} \}
\end{flalign*}
Further, since intersecting segments must lie in the same polygon, we have:
\begin{flalign*}
& I_1 \subset \tilde{I}_{1} := \{ (i,j), \alpha_i \text{ and } \beta_j \text{ lie in the same polygon }, \alpha_i \text{ non-adjacent} \} && &
\end{flalign*}

Next, we partition $I_2$ by distinguishing pairs of indexes of $I_2$ which induce configurations $\bigstar$ and pairs of $I_2$ which do not. We also distinguish pairs of indexes in $I_2$ inducing a configuration $\bigstar$ involving another adjacent segment or a non-adjacent segment, namely:
\begin{flalign*}
& I_{2}^{(0)} := \{ (i,j) \in I_2 \text{ which do not induce a configuration }\bigstar \} && &\\
& I_{2}^{(1)} :=  \{ (i,j) \in I_2, \text{ with } \Xstar \text{ and } \alpha_{i'} \text{ is non-adjacent}\} &&& \\
& I_{2}^{(2)} :=  \{ (i,j) \in I_2 \backslash I_{2}^{(1)}, \text{ such that } \Xstar \text{ and } \alpha_{i'} \text{ is adjacent}\} &&&
\end{flalign*}
It should be noted that a pair of indexes may induce two configurations $\bigstar$ if they intersect and the endpoints of the segments on the interior of the same two sides. This is the reason why we do not take elements of $I_2^{(1)}$ in the definition of $I_2^{(2)}$, as it avoids counting an intersection twice. 
In this way, the three sets form a partition of $I_2$.\newline

The main advantage of this partition is that by construction we have 
\[\#I_2 = \# I_2^{(0)}+\# I_2^{(1)}+\# I_2^{(2)}.  \]

Moreover, given a pair $(i,j) \in I_2^{(1)}$, the configuration $\bigstar$ determines (at least) one pair $(i',j') \in \tilde{I}_1 \backslash I_1$ since by part 1. of Lemma \ref{lem:configuration_star} the segments $\alpha_{i'}$ and $\beta_{j'}$ do not intersect. 
The pairs $(i',j')$ do not overlap by part 2. of Lemma \ref{lem:configuration_star} and hence 
\begin{align*}
\# I_2^{(1)} &\leq \# \left(\tilde{I}_1 \setminus I_1 \right),  &&\text{hence} &\# I_1 &\leq \# \tilde{I}_1 - \# I_2^{(1)}.
\end{align*}

Then,
\[ |\alpha \cap \beta| = \# I_1 + \# I_2 \leq \# \tilde{I}_1 - \# I_2^{(1)} + \# I_2^{(0)}+\# I_2^{(1)}+\# I_2^{(2)} \]
so that:
\begin{equation}\label{eq:sum_intersections}
|\alpha \cap \beta| \leq \# \tilde{I}_1 +\# I_2^{(0)}+\# I_2^{(2)}
\end{equation}

There is an easy estimate for $ \# \tilde{I}_1$, as by hypothesis (P2) two consecutive segments cannot lie in the same polygon, thus for any non-adjacent segment $\alpha_i$, there is at most $\left\lceil \frac{l}{2} \right\rceil$ segments of $\beta$ in the polygon containing $\alpha_i$, so that 
\begin{equation}\label{eq:I_1_tilde}
\# \tilde{I}_1 \leq p_{\alpha} \left\lceil \frac{l}{2} \right\rceil.
\end{equation}

Next, we study $I_2^{(0)}$ and $I_2^{(2)}$. For this purpose, we distinguish isolated adjacent segments (i.e. adjacent segments which are preceded and followed by non-adjacent segments) and sequences of (at least two) consecutive adjacent segments. The reason for this distinction is that isolated adjacent segments do not contribute to $I_2^{(2)}$, since $\alpha_i$ and $\alpha_{i'}$ in a $\bigstar$ configuration are consecutive. Then, an intersection involving an isolated adjacent segment $\alpha_i$ must be either in $I_2^{(1)}$ or in $I_2^{(0)}$. 

To compute $\# I_2^{(0)}$, we will split it further into two according to whether the adjacent segment is \emph{isolated} or a sequence of \emph{consecutive} ones and define 
\begin{align*}
I_2^{(0,i)} &:= \{ (i,j) \in I_2^{(0)}, \alpha_i \text{ is an isolated adjacent segment} \}, \\
I_2^{(0,c)} &:= I_2^{(0)} \backslash I_2^{(0,i)} = \{ (i,j) \in I_2^{(0)}, \alpha_i \text{ is not an isolated adjacent segment} \}.
\end{align*}

\begin{Lem}\label{lem:lonely_adjacent}
We have:
\[ \# I_2^{(0,i)} \leq \# \{i, \alpha_i \text{ is an isolated adjacent segment} \}. \]
\end{Lem}
\begin{proof}
The quantity $\# I_2^{(0,i)}$ counts the number of pairs $(i,j)$ such that $\alpha_i \cap \beta_j \neq \varnothing$, $\alpha_i$ is adjacent and isolated and $(i,j)$ do not induce a configuration $\bigstar$. 
Now, by part 3. of Lemma \ref{lem:configuration_star} we necessarily have that $j=1$ or $j=l$, so for each $i$, the set contains at most two pairs. 
Now, from part 4. of Lemma \ref{lem:configuration_star}, we cannot have both pairs $(i,1)$ and $(i,l)$ because one of them must induce a configuration $\bigstar$. 
In other words, we are counting the number of $i$'s such that $\alpha_i$ is adjacent and isolated and $\alpha_i$ intersects one of $\beta_1$ and $\beta_l$, which is certainly smaller than the number of $i$ such that $\alpha_i$ is adjacent and isolated.

\end{proof}

We are left to study the intersections of $\beta$ with sequences of at least two consecutive adjacent segments, i.e. we want to estimate $\# I_2^{(0,c)}$. 
This is the purpose of the next section. 

\subsubsection{Intersections in sequences of consecutive adjacent segments}
In this section we consider a maximal sequence of $q$ consecutive adjacent segments $\alpha_{i_0+1} \cup \cdots \cup \alpha_{i_0+q}$ with $q \geq 1$, that is $\alpha_{i_0}$ and $\alpha_{i_0+q+1}$ are non-adjacent segments while all the in-between segments are adjacent. We show:

\begin{Lem}\label{lem:intersection_adjacent_sequence}
There are at most $\left\lfloor \frac{q}{2} \right\rfloor \left\lfloor \frac{l}{2}+1 \right\rfloor$ intersections between $\alpha_{i_0+1} \cup \cdots \cup \alpha_{i_0+q}$ and  $\beta$ which are not in $I_2^{(1)}$.
\end{Lem}

\begin{Rema}\label{rem:extension_p}
In particular, we deduce from Lemma \ref{lem:integer_part} that the number of intersections between $\alpha_{i_0+1} \cup \cdots \cup \alpha_{i_0+q}$ and $\beta$ which belong to $I_2^{(0)}$ or $I_2^{(2)}$ is bounded above by $\left\lfloor \frac{q}{2} \right\rfloor (p_{\beta}+q_{\beta})$. Indeed, if $\l$ is odd then $\left\lfloor \frac{l}{2}+1 \right\rfloor = \frac{l+1}{2} = \left\lceil \frac{l}{2} \right\rceil \leq p_{\beta}+q_{\beta}$, and if $l$ is even then $\beta$ is not an odd saddle connection and therefore the inequality is strict in Lemma \ref{lem:integer_part} and $\left\lfloor \frac{l}{2}+1 \right\rfloor = \frac{l}{2}+1 = \left\lceil \frac{l}{2} \right\rceil +1 \leq p_{\beta}+q_{\beta}$.
\end{Rema}
Note that this counts the number of intersections for the sequence of adjacent segments both in $I_2^{(2)}$ and in $I_2^{(0,c)}$. 
Hence this gives:
\begin{Cor}\label{cor:consecutive_adjacent}
\[ \# I_2^{(2)} + \# I_2^{(0,c)} 
\leq q_{\alpha}(p_{\beta} + q_{\beta}). \]
\end{Cor}

\begin{proof}[Proof of Lemma \ref{lem:intersection_adjacent_sequence}.]
We distinguish 4 cases:
\begin{enumerate}[label=(\roman*)]
\item The sequence of adjacent segments is contained inside a short cylinder.
\item $q$ is even but the sequence of adjacent segment is not contained in a short cylinder.
\item $q=3$,
\item $q \geq 5$ is odd. 
\end{enumerate}

Note that when $q$ is odd, then the sequence cannot be contained in a small cylinder by Lemma \ref{lem:maximal_even}. 

For each of the cases, the idea is as follows: we first partition the sequence of maximal segments in pairs (with one triple if $q$ is odd), and then show that 
for each of the $\left\lfloor \frac{q}{2} \right\rfloor$ pairs $\alpha_{i} \cup \alpha_{i+1}$ (or triple $\alpha_i \cup \alpha_{i+1} \cup \alpha_{i+2}$), we can pair consecutive pieces of $\beta$ in such a way that each pair of pieces of $\beta$ intersects $\alpha_{i} \cup \alpha_{i+1}$ (or $\alpha_i \cup \alpha_{i+1} \cup \alpha_{i+2}$) only once. 
We do this by associating to every piece $\beta_j$ intersecting $\alpha_{i} \cup \alpha_{i+1}$ (or $\alpha_i \cup \alpha_{i+1} \cup \alpha_{i+2}$) a consecutive piece $\beta_{j'}$ which does not. 
Up to potentially adding an extra intersection with $\beta_1$ and $\beta_l$, we get at most $\left \lfloor \frac{l}{2} + 1 \right \rfloor$ intersections. 

More precisely, for each pair $\alpha_{i} \cup \alpha_{i+1}$ (resp. triple $\alpha_i \cup \alpha_{i+1} \cup \alpha_{i+2}$):
\begin{itemize}
\item $\beta_1$ (resp. $\beta_l$) intersect the pair (resp. triple) at most once (this is because of assumption (P2) -- and Corollary \ref{cor:3_segments} for the triple --), and
\item given $j$ such that $\beta_j$ intersects one of the segments of the pair (resp. triple), and such that the intersection belongs to $I_2^{(2)}$, we can find $j' \in \{ j-1,j+1 \}$ such that $\# \left( \{ i, i+1 \} \times \{ j,j' \} \cap \left(I_2^{(0,c)} \cup I_2^{(2)}\right) \right) \leq 1$ (resp. $\# \left( \{ i, i+1, i+2 \} \times \{ j,j' \} \cap \left(I_2^{(0,c)} \cup I_2^{(2)}\right) \right) \leq 1$). Further, we can construct the (unordered) pairs $\{j,j'\}$ such that they do not overlap.
\end{itemize}
Here, $\beta_1$ and $\beta_l$ may or may not be paired. In total,
\begin{itemize}
\item[-] If neither $\beta_1$ nor $\beta_l$ are paired, there are at most $\left\lfloor \frac{l-2}{2} \right\rfloor$ pairs, so that adding the intersections with $\beta_1$ and $\beta_l$, this implies that among all intersections between $\alpha_{i} \cup \alpha_{i+1}$ and $\beta$, at most $\left\lfloor \frac{l-2}{2} \right\rfloor + 2 = \left\lfloor \frac{l}{2} +1 \right\rfloor$ intersections in $I_2^{(0)} \cup I_2^{(2)}$.
\item[-] If $\beta_1$ (resp. $\beta_l$) is paired but $\beta_l$ (resp. $\beta_1$) is not, then we can construct at most $\left\lfloor \frac{l-1}{2} \right\rfloor$ pairs. Since $\beta_1$ (resp. $\beta_l$) is already paired, we only need to add the possible intersection with $\beta_l$ (resp. $\beta_1$), and thus we get at most $\left\lfloor \frac{l-1}{2} \right\rfloor + 1 = \left\lfloor \frac{l+1}{2} \right\rfloor$ intersections for this pair.
\item[-] Finally, if both $\beta_1$ and $\beta_l$ are paired, this gives $\left\lfloor \frac{l}{2} \right\rfloor$ intersections.
\end{itemize}
As a conclusion, we get at most $\left\lfloor \frac{l}{2} +1 \right\rfloor$ intersections.\newline

Although the intersections with pairs in $I_2^{(2)}$ induce a configuration $\bigstar$, this is not sufficient to construct the pairs $\{j,j' \}$, because some pairs might overlap. Here, we will investigate all possible configurations in each case to construct the pairs and to make sure that the pairs do not overlap.

\paragraph{Case (i).} \textbf{Assume the sequence of adjacent segments is contained inside a short cylinder.} Then $q$ is even by Lemma \ref{lem:maximal_even}, and we can group the adjacent segments by pairs. Say each pair form a sandwiched segment of type $e \to e' \to e$. As explained in \S \ref{sec:adjacent_segments}, we have a picture like Figure \ref{fig:short_cylinder}. In the following, we will denote by $\alpha_i$ and $\alpha_{i+1}$ the two adjacent segments of the pair, respectively of type $e \to e'$ and of type $e' \to e$.

Now, notice that a segment $\beta_j$ intersecting $\alpha_i \cup \alpha_{i+1}$ must have an endpoint either on the side $e$ or on $e'$ (or on both). The corresponding endpoint is not on a vertex except maybe for $j=1$ or $j=l$. We then define the index $j' = j \pm 1$ which will be paired to $j$ as follows.
\begin{itemize}
\item[1.] If $\beta_j$ has one of its endpoints on $e'$, we take $j'$ such that $\beta_{j}$ and $\beta_{j'}$ have this endpoint in common.
\item[2.] Else, $\beta_j$ has one of its endpoints on $e$ but no endpoint on $e'$, and we take $j'$ such that $\beta_{j}$ and $\beta_{j'}$ have this endpoint in common.
\end{itemize}

With this construction, we have:
\begin{Lem}\label{lem:case_short_cylinders}
\begin{enumerate}
\item[(a)] If $j \in\{ 1, \cdots, l \}$ is such that $\beta_j$ intersect $\alpha_i \cup \alpha_{i+1}$, then the segment $\beta_{j'}$ constructed as above does not intersect $\alpha_i \cup \alpha_{i+1}$.
\item[(b)] Given $j_1$ and $j_2$ two distinct indices such that $\beta_{j_1}$ and $\beta_{j_2}$ intersect $\alpha_{i} \cup \alpha_{i+1}$ and such that the intersections belong to $I_2^{(2)}$, the pairs $\{j_1, j_1' \}$ and $\{j_2, j_2' \}$ constructed as above satisfy:
 \[ \{ j_1, j_1' \} \cap \{ j_2, j_2' \} = \varnothing. \]
\end{enumerate}
\end{Lem}

\begin{proof}
\begin{enumerate}
\item[(a)] We can assume that $\beta_j$ intersects the segment $\alpha_i$. That means in particular that $\beta_{j'}$ does not lie in the same polygon as $\alpha_i$, so these segments cannot intersect. Further, 
\begin{itemize}
\item[1.] if $\beta_j$ has one of its endpoints on $e'$, then it means that we have $(i,j) \xrightarrow[]{\bigstar} (i+1,j')$ and hence $\beta_{j'}$ do not intersect $\alpha_{i} \cup \alpha_{i+1}$ by part 1. of Lemma \ref{lem:configuration_star}. 
\item[2.] Else, $\beta_j$ has one of its endpoints on $e$ but no endpoint on $e'$, and that means we have a configuration like Figure \ref{fig:short_cylinder_2}. In particular, since the direction of $\alpha$ is contained in the sector defined by the direction of $e'$ and the direction of the diagonal $\Delta$, we directly deduce that $\beta_{j'}$ cannot intersect $\alpha_{i+1}$.
\end{itemize}

\item[(b)] As in the proof of $(a)$, we will assume that $\beta_{j_1}$ intersects the segment $\alpha_i$. Since $j_1$ and $j_2$ are assumed to be distinct and both intersect the pair, we have by $(a)$ that $j_2 \neq j_1'$ and $j_1 \neq j_2'$. It remains to show that $j_1' \neq j_2'$.
\begin{itemize}
\item[$1\&1$.] If $\beta_{j_1}$ and $\beta_{j_2}$ both have one endpoint on $e'$ (case 1.), then this endpoint must be shared respectively with $\beta_{j_1'}$ and $\beta_{j_2'}$. In particular we cannot have $j_1' = j_2'$ as it would then imply $j_1=j_2$.

\item[$1\&2$.] Now, if $\beta_{j_1}$ has an endpoint on $e'$ (case 1.) but $\beta_{j_2}$ does not (case 2.), the condition $j_1'=j_2'$ would imply that $\beta_{j_1}, \beta_{j_1'}=\beta_{j_2'}$ and $\beta_{j_2}$ are consecutive segments of $\beta$, and that $\beta_{j_1'}=\beta_{j_2'}$ has an endpoint on $e'$ (shared with $\beta_{j_1}$) and an endpoint on $e$ (shared with $\beta_{j_2}$). Further, since we assumed by symmetry that $\beta_{j_1}$ intersects $\alpha_i$, $\beta_{j_1}$ lies in the polygon containing $\alpha_{i}$, we have that $\beta_{j_1'}$ lies in the polygon containing $\alpha_{i+1}$, and $\beta_{j_2}$ lies in the polygon containing $\alpha_i$. This means that $\beta_{j_2}$ intersects $\alpha_{i}$, and, combined with the fact that it has an endpoint on $e$ and that its direction is comprised between the direction of $\alpha_i$ and the direction of $e$, it is easily seen to imply that $\beta_{j_2}$ has an endpoint on $e'$, which is not the case by assumption. This gives a contradiction.\newline

By symmetry, this is the same if $\beta_{j_2}$ has an endpoint on $e'$ but $\beta_{j_1}$ does not.
\item[$2\&2$.] Else, both $\beta_{j_1}$ and $\beta_{j_2}$ have no endpoint on $e'$ but one endpoint on $e$. That means that this endpoint must be shared with $\beta_{j_1'}$ (resp. $\beta_{j_2'}$) so that we cannot have $j_1' = j_2'$ unless $j_1=j_2$.
\end{itemize}

Hence, in all cases we have $\{ j_1, j_1' \} \cap \{ j_2, j_2' \} = \varnothing$.
\end{enumerate}
\end{proof}

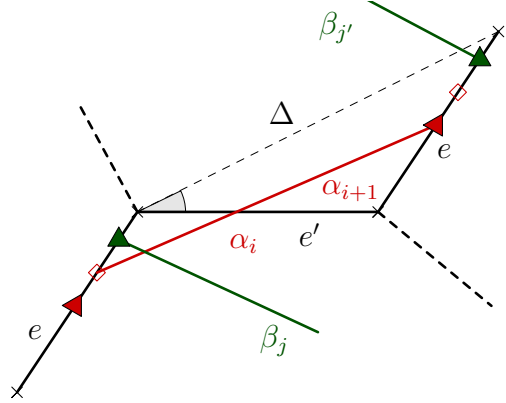
\begin{figure}[h]
\center
\definecolor{ccqqqq}{rgb}{0.8,0,0}
\definecolor{qqwuqq}{rgb}{0,0.33,0}
\begin{tikzpicture}[line cap=round,line join=round,>=triangle 45,x=1cm,y=1cm, scale=0.8]
\clip(-4.2,-3.5) rectangle (4.2,3.5);
\draw [line width=1pt] (-2,0)-- (2,0);
\draw [line width=1pt] (2,0)-- (4,3);
\draw [line width=1pt] (-2,0)-- (-4,-3);
\draw [line width=1pt,color=ccqqqq] (-2.6730685906832368,-1.0096028860248545)-- (2.9632007067035326,1.444801060055299);
\draw (2.8,1.3) node[anchor=north west] {$e$};
\draw (0.5,0) node[anchor=north west] {$e'$};
\draw (-4,-1.7) node[anchor=north west] {$e$};
\draw [line width=1pt,dash pattern=on 3pt off 3pt] (-2,0)-- (-2.979318519691461,1.8079742150383107);
\draw [line width=1pt,dash pattern=on 3pt off 3pt] (2,0)-- (3.8828052146241174,-1.5622840240938458);
\draw [line width=0.3pt,dash pattern=on 3pt off 3pt] (-2,0)-- (4,3);
\draw [shift={(-2,-0)},line width=0.3pt,color=black,fill=black,fill opacity=0.10000000149011612] (0,0) -- (0:0.8) arc (0:26:0.8) -- cycle;
\draw (0,2) node[anchor=north west] {$\Delta$};
\draw [color = qqwuqq, line width=1pt] (-2.3093378880700053,-0.4640068321050079)--(1,-2);
\draw [color = qqwuqq, line width=1pt] (3.6906621119299947,2.535993167894992)--(1.5,3.7);
\draw (0.7,-1.7) [color=qqwuqq] node[anchor=north east] {$\beta_j$};
\draw (1.8,3.5) [color=qqwuqq] node[anchor=north east] {$\beta_{j'}$};
\draw (2.2,0.7) [color=ccqqqq] node[anchor=north east] {$\alpha_{i+1}$};
\draw (-0.65,-0.2) [color=ccqqqq] node[anchor=north west] {$\alpha_{i}$};
\begin{scriptsize}
\draw [color=black] (-2,0)-- ++(-2.5pt,-2.5pt) -- ++(5pt,5pt) ++(-5pt,0) -- ++(5pt,-5pt);
\draw [color=black] (2,0)-- ++(-2.5pt,-2.5pt) -- ++(5pt,5pt) ++(-5pt,0) -- ++(5pt,-5pt);
\draw [color=black] (4,3)-- ++(-2.5pt,-2.5pt) -- ++(5pt,5pt) ++(-5pt,0) -- ++(5pt,-5pt);
\draw [color=black] (-4,-3)-- ++(-2.5pt,-2.5pt) -- ++(5pt,5pt) ++(-5pt,0) -- ++(5pt,-5pt);
\draw [fill=qqwuqq,shift={(3.6906621119299947,2.535993167894992)}] (0,0) ++(0 pt,6pt) -- ++(5.19pt,-9pt)--++(-10.39pt,0 pt) -- ++(5.19pt,9pt);
\draw [fill=qqwuqq,shift={(-2.3093378880700053,-0.4640068321050079)}] (0,0) ++(0 pt,6pt) -- ++(5.19pt,-9pt)--++(-10.39pt,0 pt) -- ++(5.19pt,9pt);
\draw [color=ccqqqq] (3.3269314093167632,1.9903971139751455) ++(-4pt,0 pt) -- ++(4pt,4pt)--++(4pt,-4pt)--++(-4pt,-4pt)--++(-4pt,4pt);
\draw [color=ccqqqq] (-2.6730685906832368,-1.0096028860248545) ++(-4pt,0 pt) -- ++(4pt,4pt)--++(4pt,-4pt)--++(-4pt,-4pt)--++(-4pt,4pt);
\draw [fill=ccqqqq,shift={(2.9632007067035326,1.444801060055299)},rotate=90] (0,0) ++(0 pt,6pt) -- ++(5.19pt,-9pt)--++(-10.39pt,0 pt) -- ++(5.19pt,9pt);
\draw [fill=ccqqqq,shift={(-3.0367992932964674,-1.555198939944701)},rotate=90](0,0) ++(0 pt,6pt) -- ++(5.19pt,-9pt)--++(-10.39pt,0 pt) -- ++(5.19pt,9pt);
\end{scriptsize}
\end{tikzpicture}
\caption{Example of pair of adjacent segments $\alpha_{i} \cup \alpha_{i+1}$ in case (i), with $\beta_{j}$ having an endpoint on $e$ but not on $e'$. We recall that the direction of $\alpha$ is contained in the sector defined by the direction of $e'$ and the direction of the diagonal $\Delta$. In particular, the segment $\beta_{j'}$ cannot intersect $\alpha_{i+1}$.}
\label{fig:short_cylinder_2}
\end{figure}

From Lemma \ref{lem:case_short_cylinders} we obtain that for each pair $\alpha_i \cup \alpha_{i+1}$, we can group segments of $\beta$ by pairs which are not overlapping and such that such that each pair intersects $\alpha_i \cup \alpha_{i+1}$ at most once. As already explained, $\beta_1$ and $\beta_l$ could remain unpaired and we conclude that $\beta$ can intersect $\alpha_{i} \cup \alpha_{i+1}$ at most $\left\lfloor \frac{l}{2}+1 \right\rfloor$ times. Since the sequence of consecutive adjacent segments is made of $\frac{q}{2}$ pairs, we get that $\beta$ intersects the sequence of adjacent segment at most $\frac{q}{2} \left\lfloor \frac{l}{2}+1 \right\rfloor$, as required.


\paragraph{Case (ii).} Now, \textbf{assume $q$ is even but the sequence of consecutive adjacent segments is not contained in a small cylinder}, that is two adjacent segment of the sequence $\alpha_{i}$ and $\alpha_{i+2}$ are never contained in the same polygon (see Corollary \ref{cor:3_segments}). Since $q$ is even we can group adjacent segments by pairs. Let $\alpha_{i} \cup \alpha_{i+1}$ be such a pair, as in Figure \ref{fig:even_adjacents}. We denote by $e$ the side containing $\alpha_{i}^-$, by $e'$ the side containing $\alpha_i^+ = \alpha_{i+1}^-$ and by $e''$ the side containing $\alpha_{i+1}^+$. The sides $e, e'$ and $e''$ are different or the sequence of consecutive adjacent segments would be contained in a short cylinder. 

Similarly to the first case, each segment $\beta_j$ intersecting the sequence of adjacent segments must have an endpoint on at least one of the sides $e,e'$ or $e''$. Assuming $j \neq 1,l$,  the corresponding endpoint is not a vertex, and hence we define an index $j' = j \pm 1$ which will be paired to $j$ as follows:
\begin{enumerate}
\item \label{case1} If $\beta_j$ has an endpoint on $e'$, then we choose $j'$ such that $\beta_{j}$ and $\beta_{j'}$ share their common endpoint on $e'$.
\item If $\beta_j$ lies in the same polygon as $\alpha_i$ and has no endpoint on $e'$ but an endpoint on $e$, then we choose $j'$ such that $\beta_j$ and $\beta_{j'}$ share their common endpoint on $e$.
\item \label{case3} Else, $\beta_j$ lies in the same polygon as $\alpha_{i+1}$ and has no endpoint on $e'$ but one endpoint on $e''$. In this case, $\beta_{j}$ intersects $\alpha_{i+1}$ and we have a configuration $\bigstar$ given by $(i+1,j) \xrightarrow[]{\bigstar} (i+2,j')$. Then,
\begin{itemize}
\item either $\alpha_{i+2}$ is non-adjacent and so $(i+1,j) \in I_2^{(1)}$, so that we do not have to count this intersection.
\item Or $\alpha_{i+2}$ is adjacent, and in this case we can perform a continuous deformation of $\beta$ which will move the intersection of $\beta_{j}$ and $\alpha_{i+1}$ to an intersection of $\alpha_{i+2}$ with $\beta_{j'}$, see Figure \ref{fig:even_adjacents}. This does not change the total number of intersections. In other words, instead of counting the intersection as an intersection between $\beta_{j}$ and $\alpha_{i+1}$, we count it as an intersection between $\beta_{j'}$ and $\alpha_{i+2}$ instead. The index paired with $j$ is still $j'$ but this procedure changes the intersection to an intersection in case \ref{case1} for the pair $\alpha_{i+2} \cup \alpha_{i+3}$.\newline

The reason why we use this argument to move the intersection to the next pair is that otherwise there could be a segment $\beta_{j}$ intersecting $\alpha_{i+1}$ and having an endpoint on $e''$, and then $\beta_{j+1}$ could have an endpoint on $e$ and $\beta_{j+2}$ intersects $\alpha_{i}$. Following the rules, $j+1$ would appear in a pair with both $j$ and $j+2$. Another way to understand this deformation argument is to notice that in the case where $j+1$ should be paired with both $j$ and $j+2$, none of the segments $\beta_j$, $\beta_{j+1}$ and $\beta_{j+2}$ intersect $\alpha_{i+2} \cup \alpha_{i+3}$: this compensates for the extra intersection of $\beta$ with the pair $\alpha_{i} \cup \alpha_{i+1}$, and the deformation argument is a way to formalize this compensation.
\end{itemize}
\end{enumerate}

\begin{figure}
\center
\definecolor{ccqqqq}{rgb}{0.8,0,0}
\definecolor{qqwuqq}{rgb}{0,0.33,0}
\begin{tikzpicture}[line cap=round,line join=round,>=triangle 45,x=1cm,y=1cm, scale=0.7]
\clip(-4.2,-3.5) rectangle (5.2,3.5);
\draw [line width=1pt] (-2,0)-- (2,0);
\draw [line width=1pt] (2,0)-- (4,3);
\draw [line width=1pt] (-2,0)-- (-4,-3);
\draw [line width=1pt,color=ccqqqq] (-2.6730685906832368,-1.0096028860248545)-- (2.9632007067035326,1.444801060055299);
\draw [line width=1pt, color=ccqqqq, dash pattern=on 3pt off 3pt] (2.96, 1.44) -- (5,2.35);
\draw (2.5,2.5) node[anchor=north west] {$e''$};
\draw (0.5,0) node[anchor=north west] {$e'$};
\draw (-3.8,-1.3) node[anchor=north west] {$e$};
\draw [line width=1pt,dash pattern=on 3pt off 3pt] (-2,0)-- (-2.979318519691461,1.8079742150383107);
\draw [line width=1pt,dash pattern=on 3pt off 3pt] (2,0)-- (3.8828052146241174,-1.5622840240938458);
\draw [color = qqwuqq, line width=1pt] (-1,3)--(5,-1);
\draw (1,3) [color=qqwuqq] node[anchor=north east] {$\beta_j$};
\draw (4.75,0.75) [color=qqwuqq] node[anchor=north east] {$\beta_{j'}$};
\draw (1,1.2) [color=ccqqqq] node[anchor=north east] {$\alpha_{i+1}$};
\draw (-1.5,-0.5) [color=ccqqqq] node[anchor=north west] {$\alpha_{i}$};
\draw (3.8,2) [color=ccqqqq] node[anchor=north west] {$\alpha_{i+2}$};
\begin{scriptsize}
\draw [color=black] (-2,0)-- ++(-2.5pt,-2.5pt) -- ++(5pt,5pt) ++(-5pt,0) -- ++(5pt,-5pt);
\draw [color=black] (2,0)-- ++(-2.5pt,-2.5pt) -- ++(5pt,5pt) ++(-5pt,0) -- ++(5pt,-5pt);
\draw [color=black] (4,3)-- ++(-2.5pt,-2.5pt) -- ++(5pt,5pt) ++(-5pt,0) -- ++(5pt,-5pt);
\draw [color=black] (-4,-3)-- ++(-2.5pt,-2.5pt) -- ++(5pt,5pt) ++(-5pt,0) -- ++(5pt,-5pt);
\end{scriptsize}
\end{tikzpicture}
\begin{tikzpicture}[line cap=round,line join=round,>=triangle 45,x=1cm,y=1cm, scale=0.7]
\clip(-4.2,-3.5) rectangle (5.2,3.5);
\draw [line width=1pt] (-2,0)-- (2,0);
\draw [line width=1pt] (2,0)-- (4,3);
\draw [line width=1pt] (-2,0)-- (-4,-3);
\draw [line width=1pt,color=ccqqqq] (-2.6730685906832368,-1.0096028860248545)-- (2.9632007067035326,1.444801060055299);
\draw [line width=1pt, color=ccqqqq, dash pattern=on 3pt off 3pt] (2.96, 1.44) -- (5,2.35);
\draw (2.5,1) node[anchor=north west] {$e''$};
\draw (0.5,0) node[anchor=north west] {$e'$};
\draw (-3.8,-1.3) node[anchor=north west] {$e$};
\draw [line width=1pt,dash pattern=on 3pt off 3pt] (-2,0)-- (-2.979318519691461,1.8079742150383107);
\draw [line width=1pt,dash pattern=on 3pt off 3pt] (2,0)-- (3.8828052146241174,-1.5622840240938458);
\draw [color = qqwuqq, line width=1pt] (-1,3)--(3.33,2);
\draw [color = qqwuqq, line width=1pt] (3.33,2)--(5,-1);
\draw (1,3.5) [color=qqwuqq] node[anchor=north east] {$\beta_j$};
\draw (4.5,0) [color=qqwuqq] node[anchor=north east] {$\beta_{j'}$};
\draw (1,1.2) [color=ccqqqq] node[anchor=north east] {$\alpha_{i+1}$};
\draw (-1.5,-0.5) [color=ccqqqq] node[anchor=north west] {$\alpha_{i}$};
\draw (3.8,2) [color=ccqqqq] node[anchor=north west] {$\alpha_{i+2}$};
\begin{scriptsize}
\draw [color=black] (-2,0)-- ++(-2.5pt,-2.5pt) -- ++(5pt,5pt) ++(-5pt,0) -- ++(5pt,-5pt);
\draw [color=black] (2,0)-- ++(-2.5pt,-2.5pt) -- ++(5pt,5pt) ++(-5pt,0) -- ++(5pt,-5pt);
\draw [color=black] (4,3)-- ++(-2.5pt,-2.5pt) -- ++(5pt,5pt) ++(-5pt,0) -- ++(5pt,-5pt);
\draw [color=black] (-4,-3)-- ++(-2.5pt,-2.5pt) -- ++(5pt,5pt) ++(-5pt,0) -- ++(5pt,-5pt);
\end{scriptsize}
\end{tikzpicture}
\caption{A pair of adjacent segments $\alpha_{i} \cup \alpha_{i+1}$ in case (ii). If a segment $\beta_j$ intersect $\alpha_{i+1}$ and has one endpoint end $e''$ but no endpoint on $e'$, we perform a small deformation of $\beta$ so that the intersection of $\beta_j$ with $\alpha_{i+1}$ moves to an intersection of $\beta_{j'}$ with $\alpha_{i+2}$. This deformation does not change the type of the segments.}
\label{fig:even_adjacents}
\end{figure}
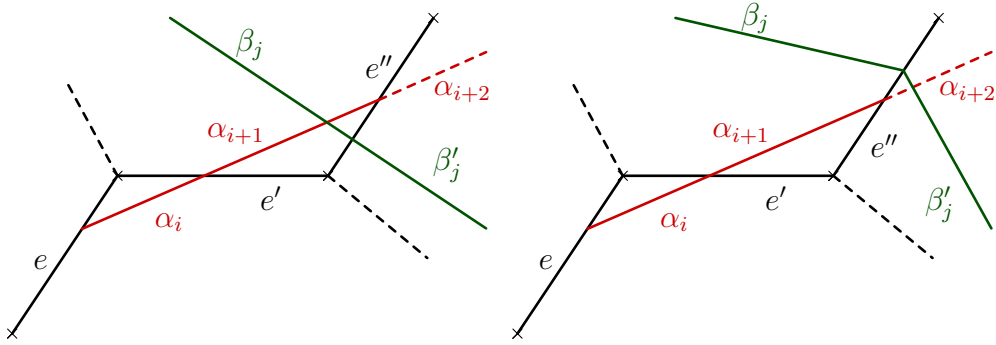

\begin{Lem}\label{lem:case_even_adjacent}
\begin{enumerate}
\item[(a)] If $j \in \{ 1, \cdots, l \}$ is such that $\beta_j$ is either in case 1 or 2, then the segment $\beta_{j'}$ constructed as above does not intersect $\alpha_i \cup \alpha_{i+1}$.
\item[(b)] Given $j_1$ and $j_2$ two distinct indices such that $\beta_{j_1}$ and $\beta_{j_2}$ are either in case 1 or 2, the pairs $\{j_1, j_1' \}$ and $\{j_2, j_2' \}$ constructed as above satisfy:
 \[ \{ j_1, j_1' \} \cap \{ j_2, j_2' \} = \varnothing. \]
\end{enumerate}
\end{Lem}

\begin{proof}
\begin{enumerate}
\item[(a)] We distinguish two cases depending whether $\beta_j$ has an endpoint on $e'$ or not.
\begin{itemize}
\item[1.] If $\beta_j$ has an endpoint on $e'$, then we have either $(i,j) \xrightarrow[]{\bigstar} (i+1,j')$ or $(i+1,j) \xrightarrow[]{\bigstar} (i,j')$ depending on whether $\alpha_i$ or $\alpha_{i+1}$ intersects $\beta_{j}$ and the result holds by 1. of Lemma \ref{lem:configuration_star}.
\item[2.] If $\beta_j$ has no endpoint on $e'$ but an endpoint on $e$, then in particular $\beta_j$ intersects $\alpha_i$, and $(i,j) \xrightarrow[]{\bigstar} (i-1,j')$. Then, either $\alpha_{i-1}$ is non-adjacent and in this case $(i,j) \in I_2^{(1)}$ and we do not have to count this intersection, or $\alpha_{i-1}$ is adjacent but by Corollary \ref{cor:3_segments} the segments $\alpha_{i-1}$ and $\alpha_{i+1}$ must lie in two different polygons. In particular, $\beta_{j'}$ and $\alpha_{i+1}$ also lie in different polygons and hence do not intersect.
\end{itemize}
\item[(b)] Similarly to the proof of $(a)$, we distinguish two cases depending whether $\beta_{j_1}$ has an endpoint on $e'$ or not. Further, as in the proof of Lemma \ref{lem:case_short_cylinders}, we know that $j_1 \neq j_2'$ and $j_1' \neq j_2$ so that we only have to prove that $j'_1 \neq j'_2$. We will proceed by contradiction:
\begin{itemize}
\item[$1\&1$.] If $\beta_{j_1}$ and $\beta_{j_2}$ both have an endpoint on $e'$, then by definition this endpoint must be shared respectively with $\beta_{j_1'}$ and $\beta_{j_2'}$, so that $j_1'=j_2'$ implies that $j_1=j_2$.
\item[$1\&2$.] If $\beta_{j_1}$ has an endpoint on $e'$ but $\beta_{j_2}$ does not (which is then in case 2., that is, it intersects $\alpha_i$ and has an endpoint on $e$, shared with $\beta_{j_2'}$), the condition $j_1'=j_2'$ implies that $\beta_{j_1'}=\beta_{j_2'}$ has one endpoint on $e'$ and one endpoint on $e$, but does not belong to the polygon containing $\alpha_i$ since $\beta_{j_2}$ already belongs to this polygon. In particular, both $\beta_{j_1}$ and $\beta_{j_2}$ belong to the polygon containing $\alpha_i$ while $\beta_{j_1'}=\beta_{j_2'}$ belongs to the polygon containing $\alpha_{i+1}$ (so that the side $e$ is also identified to a side of the polygon containing $\alpha_{i+1}$). Further, since $\beta_{j_1'}$ has its endpoints on $e'$ and $e$, the direction of $\beta$ is between the direction of $e'$ and the direction of $e$, and hence the fact that $\beta_{j_2}$ has an endpoint on $e$ combined with the fact that it intersect $\alpha_i$ implies that it must have an endpoint on $e'$, see Figure \ref{fig:argument_caseii}. This gives a contradiction.\newline

By symmetry, this is the same if $\beta_{j_2}$ has an endpoint on $e'$ and $\beta_{j_1}$ does not.
\item[$2\&2$.] Else, $\beta_{j_1}$ (resp. $\beta_{j_2}$) have no endpoint on $e'$ but one endpoint on $e$. That means this endpoint must be shared with $\beta_{j_1'}$ (resp. $\beta_{j_2'}$), and hence $j_1' = j_2'$ implies $j_1 = j_2$. 
\end{itemize}
Hence, in both cases we have $\{ j_1, j_1' \} \cap \{ j_2, j_2' \} = \varnothing$.
\end{enumerate}
\end{proof}

Similarly to case $(i)$, Lemma \ref{lem:case_even_adjacent} together with the explanation in case (ii), part 3., imply that we can group segments of $\beta$ by pairs (except $\beta_1$ and $\beta_l$ which may remain alone) such that for each pair of $\beta$ and each pair $\alpha_{i} \cup \alpha_{i+1}$, there is at most one intersection which does not belong to $I_2^{(1)}$. In total, this gives at most $\frac{q}{2} \left\lfloor \frac{l}{2}+1 \right\rfloor$ intersections, as required.

\begin{figure}
\centering
\definecolor{qqwuqq}{rgb}{0,0.39215686274509803,0}
\definecolor{ccqqqq}{rgb}{0.8,0,0}
\definecolor{ffvvqq}{rgb}{1,0.3333333333333333,0}
\begin{tikzpicture}[line cap=round,line join=round,>=triangle 45,x=1cm,y=1cm]
\clip(-3,-2) rectangle (6,5);
\draw [line width=1pt] (-2,0)-- (2,0);
\draw [line width=1pt] (2,0)-- (4,2);
\draw [line width=1pt,color=ffvvqq] (-2,0)-- (-2.636559574636737,-1.782297314914103);
\draw [line width=1pt,color=ccqqqq] (-2.2982191207248612,-0.8349809810453053)-- (2.556220772391884,0.5562207723918839);
\draw [line width=1pt,color=ffvvqq] (4.693284374828831,2.9441348300901393)-- (5.329843949465568,4.726432145004242);
\draw [line width=1pt,color=qqwuqq] (0.21479621120187709,0)-- (4.88993510998971,3.494735419914166);
\draw [line width=1pt,dash pattern=on 3pt off 3pt] (4,2)-- (4.693284374828831,2.9441348300901393);
\draw [line width=1pt,dash pattern=on 3pt off 3pt] (-2,0)-- (-2.9697342996124436,2.1848063871222445);
\draw [line width=1pt,dash pattern=on 3pt off 3pt] (2,0)-- (3.259858232491485,-1.4878638370285928);
\draw [line width=1pt,dash pattern=on 3pt off 3pt,color=qqwuqq] (0.21479621120187709,0)-- (-1.7377626421441303,-1.4258778416842748);
\draw [line width=1pt,dash pattern=on 3pt off 3pt,color=qqwuqq] (-2.439908839475858,-1.2316967250900763)-- (-0.7924762131432854,-0.12417193945359832);
\draw [color=ccqqqq](1.4390196192521394,1.0225689744162834) node[anchor=north west] {$\alpha_{i+1}$};
\draw [color=ccqqqq](0.013341726332832348,-0.21715093247007522) node[anchor=north west] {$\alpha_i$};
\draw [color=qqwuqq](-0.6065182271103446,-0.3566194219947906) node[anchor=north west] {$\beta_{j_1}$};
\draw [color=qqwuqq](2.182851563383952,2.5257293615159933) node[anchor=north west] {$\beta_{j_1'}$};
\draw [color=qqwuqq](-2.2801401014069227,-0.790521389405016) node[anchor=north west] {$\beta_{j_2}$};
\draw (-2.6520560734728287,-0.3256264243226316) node[anchor=north west] {$e$};
\draw (5.065200346894725,4.1993512358125775) node[anchor=north west] {$e$};
\draw (-0.6840007212907417,0.7436319953668528) node[anchor=north west] {$e'$};
\begin{scriptsize}
\draw [color=black] (-2,0)-- ++(-2.5pt,-2.5pt) -- ++(5pt,5pt) ++(-5pt,0) -- ++(5pt,-5pt);
\draw [color=black] (2,0)-- ++(-2.5pt,-2.5pt) -- ++(5pt,5pt) ++(-5pt,0) -- ++(5pt,-5pt);
\draw [color=black] (4,2)-- ++(-2.5pt,-2.5pt) -- ++(5pt,5pt) ++(-5pt,0) -- ++(5pt,-5pt);
\draw [color=black] (-2.636559574636737,-1.782297314914103)-- ++(-2.5pt,-2.5pt) -- ++(5pt,5pt) ++(-5pt,0) -- ++(5pt,-5pt);
\draw [color=black] (4.693284374828831,2.9441348300901393)-- ++(-2.5pt,-2.5pt) -- ++(5pt,5pt) ++(-5pt,0) -- ++(5pt,-5pt);
\draw [color=black] (5.329843949465568,4.726432145004242)-- ++(-2.5pt,-2.5pt) -- ++(5pt,5pt) ++(-5pt,0) -- ++(5pt,-5pt);
\draw [fill=qqwuqq] (0.21479621120187709,0) ++(-2.5pt,0 pt) -- ++(2.5pt,2.5pt)--++(2.5pt,-2.5pt)--++(-2.5pt,-2.5pt)--++(-2.5pt,2.5pt);
\draw [fill=qqwuqq] (4.88993510998971,3.494735419914166) circle (2.5pt);
\draw [fill=qqwuqq] (-2.439908839475858,-1.2316967250900763) circle (2.5pt);
\end{scriptsize}
\end{tikzpicture}
\caption{Illustration of the proof of Lemma \ref{lem:case_even_adjacent} (b).2.}
\label{fig:argument_caseii}
\end{figure}
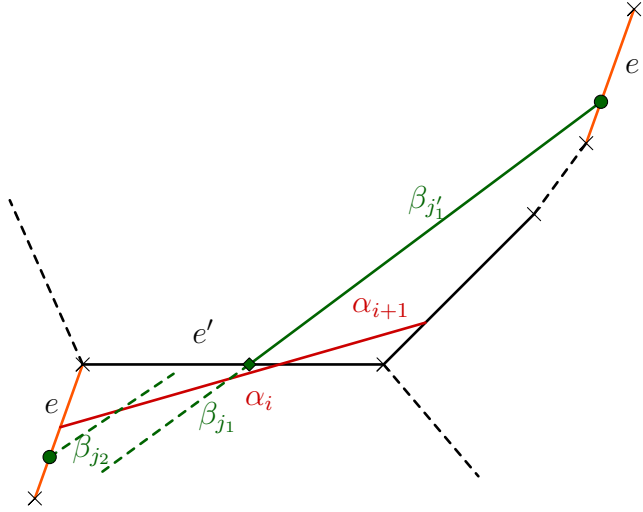

\begin{Rema}\label{rk:intersections_with_signs}
In fact, we could have avoided the use of Corollary \ref{cor:3_segments} by considering intersections with signs instead. Namely, if $\beta_j$ is as in case $2.$, $\alpha_{i-1}$ is adjacent and $\beta_{j'}$ lies in the same polygon as $\alpha_{i+1}$ (and hence $\alpha_{i+1}$ and $\alpha_{i-1}$ lie in the same polygon, which contradict Corollary \ref{cor:3_segments}), but one also notices that $\alpha_{i+1}$ and $\beta_{j'}$ can only intersect with sign opposite to the intersection of $\alpha_i$ and $\beta_j$, and thus we can choose not to count the two intersections. \newline
Although it does not matter for translation surfaces (for which Corollary \ref{cor:3_segments} holds), we claim here that the proof of Lemma \ref{lem:intersection_adjacent_sequence}, and hence of Proposition \ref{prop:total_intersections}, is only combinatorial and only uses the assumption (P2) as well as Remark \ref{rk:alternates}, but not the fact that the curves have a well defined direction.
\end{Rema}


\paragraph{Case (iii).} Let us now deal with the case $q=3$. As stated in Corollary \ref{cor:3_segments}, this means the sequence of adjacent segments is of the form $\alpha_{i} \cup \alpha_{i+1} \cup \alpha_{i+2}$ and the segments lie in three different polygons. Let $e, e', e''$ and $e'''$ be respectively the sides of the polygons containing $\alpha_{i}^-$, $\alpha_{i+1}^-$, $\alpha_{i+2}^-$ and $\alpha_{i+2}^+$. Since $q=3$ is odd, the four sides are distinct by Lemma \ref{lem:maximal_even}. 

A segment $\beta_{j}$ intersecting $\alpha_i \cup \alpha_{i+1} \cup \alpha_{i+2}$, must have at least one of its endpoints on $e, e', e''$ or $e'''$, which is not a vertex if $j \neq 1,l$. Given $j \neq 1,l$, we then define an index $j'$ which will be paired to $j$ as follows:
\begin{itemize}
\item[1.] If $\beta_j$ has an endpoint on $e'$, then we choose $j'$ such that $\beta_{j}$ and $\beta_{j'}$ share their common endpoint on $e'$.
\item[2.] If $\beta_j$ has an endpoint on $e''$ (but no endpoint on $e'$), then we choose $j'$ such that $\beta_{j}$ and $\beta_{j'}$ share their common endpoint on $e''$.
\item[3.] Else, $\beta_j$ has no endpoint on $e'$ or $e''$ but an endpoint on $e$ (resp. $e'''$). Since by assumption $\alpha_{i-1}$ (resp. $\alpha_{i+3}$) is a non-adjacent segment, that means $(i,j) \in I_2^{(1)}$ (resp. $(i+2,j) \in I_2^{(1)}$) and we do not count the intersection.
\end{itemize} 

With this construction, we have:
\begin{Lem}\label{lem:case_q=3}
\begin{enumerate}
\item[(a)] If $j \in \{ 1, \cdots, l \}$ is such that $\beta_j$ intersects $\alpha_{i} \cup \alpha_{i+1} \cup \alpha_{i+2}$ and such that the intersection belongs to $I_2^{(2)}$, then the segment $\beta_{j'}$ constructed as above does not intersect $\alpha_i \cup \alpha_{i+1} \cup \alpha_{i+2}$.
\item[(b)] Given $j_1$ and $j_2$ two distinct indices such that $\beta_{j_1}$ (resp. $\beta_{j_2}$) intersects $\alpha_{i} \cup \alpha_{i+1} \cup \alpha_{i+2}$ and such that the intersection belongs to $I_2^{(2)}$, the pairs $\{j_1, j_1' \}$ and $\{j_2, j_2' \}$ constructed as above satisfy:
 \[ \{ j_1, j_1' \} \cap \{ j_2, j_2' \} = \varnothing. \]
\end{enumerate}
\end{Lem}

\begin{proof}
Since we assume that the intersections belong to $I_2^{(2)}$, we are either in the setting of case $1.$ or case $2.$
\begin{enumerate}
    \item[(a)] If $\beta_j$ has an endpoint on the side $e'$. Then $\beta_{j}$ intersects either $\alpha_{i}$ or $\alpha_{i+1}$, so that we have either $(i,j) \xrightarrow[]{\bigstar} (i+1,j')$ or $(i+1,j) \xrightarrow[]{\bigstar} (i,j')$. In particular, $\beta_{j'}$ lies in the same polygon as $\alpha_{i+1}$ or $\alpha_{i}$ but these segments do not intersect. The other segments cannot intersect $\beta_{j'}$ since they do not lie in the same polygon.

    By symmetry, the same argument holds if $\beta_j$ has an endpoint on the side $e''$.
    \item[(b)] We first deduce from (a) that $j_1 \neq j_2'$ and $j_1' \neq j_2$, because $\beta_{j_1}$ (resp. $\beta_{j_2}$) intersects $\alpha_i \cup \alpha_{i+1} \cup \alpha_{i+2}$ while $\beta_{j_2'}$ (resp. $\beta_{j_1'}$) does not, hence we only have to show that $j_1' \neq j_2'$. Now, since $q=3$ for every $\beta_{j}$ intersecting $\alpha_{i} \cup \alpha_{i+1} \cup \alpha_{i+2}$ and such that the intersection belongs to $I_2^{(2)}$, the chosen index $j'$ to be paired with $j$ is such that either $(i,j) \xrightarrow[]{\bigstar} (i+1,j')$, $(i+1,j) \xrightarrow[]{\bigstar} (i,j')$, $(i+1,j) \xrightarrow[]{\bigstar} (i+2,j')$ or $(i+2,j) \xrightarrow[]{\bigstar} (i+1,j')$ and since $\alpha_{i}, \alpha_{i+1}$ and $\alpha_{i+2}$ do not belong to the same polygon, the data of $j'$ determines uniquely the corresponding index $i'$ in the configuration $\bigstar$, and hence the configuration $\bigstar$ by 2. of Lemma \ref{lem:configuration_star}. This exactly means that if $j_1'=j_2'$, then $j_1=j_2$, and hence the pairs do not overlap.
\end{enumerate}
\end{proof}

As a conclusion, we get that among the intersections of $\beta$ with $\alpha_i \cup \alpha_{i+1} \cup \alpha_{i+2}$, at most $\left\lfloor \frac{l}{2} +1 \right\rfloor$ contribute to $I_2^{(2)} \cup I_2^{(0)}$, as required.

\paragraph{Case (iv).} The remaining case is when \textbf{$q$ is odd, $q \geq 3$}. In this case, we make one triple with the first three adjacent segment, and then we group the remaining adjacent segment by pairs. By the arguments of cases (ii) and (iii) and using to the deformation argument as in case (ii), we can directly conclude:

\begin{Lem}
For each pair of adjacent segment, and any segment $\beta_{j}$ intersecting this pair which is not as in configuration 3. of case (ii), we can choose $j' = j+1$ such that:
\begin{itemize}
\item $\beta{j'}$ does not intersect the pair
\item The pairs $\{j,j'\}$ constructed this way do not overlap.
\end{itemize}
\end{Lem}
\begin{proof}
The construction of $j'$ and the proof is exactly the same as in Lemma \ref{lem:case_even_adjacent}.
\end{proof}

\begin{Lem}\label{lem:triple}
For the triple $\alpha_{i_0+1} \cup \alpha_{i_0+2} \cup \alpha_{i_0+3}$, and for any segment $\beta_{j}$ intersecting this triple which is not in the configuration 3. of case (iii), we can choose $j' = j+1$ such that:
\begin{itemize}
\item $\beta{j'}$ does not intersect the pair
\item The pairs $\{j,j'\}$ constructed this way do not overlap.
\end{itemize}
\end{Lem}
The proof is exactly the same as the proof of Lemma \ref{lem:case_q=3}. However, there is in fact a small difference for the reason why we do not have to consider intersections in case 3. With the notations of case (iii), we still have that if $\beta_j$ intersects $\alpha_{i_0+1}$ and has not endpoint on $e'$ but an endpoint of $e$, then the intersection belongs to $I_2^{(1)}$ so that we do not have to count this intersection. However, if $\beta_j$ intersects the triple and has an endpoint on $e'''$, the intersection is not in $I_2^{(1)}$ anymore if $q > 3$. In this case, we can again perform a small deformation of $\beta$ in order to move the intersection on the next pair $\alpha_{i_0+3} \cup \alpha_{i_0+4}$.\newline

As a conclusion, we get as required that among all intersections of $\beta$ with the sequence of adjacent segment, at most $\left\lfloor \frac{q}{2} \right\rfloor \left\lfloor \frac{l}{2}+1 \right\rfloor$ account for $I_2^{(2)} \cup I_2^{(0)}$.
\end{proof}

\subsubsection{End of the proof of Proposition~\ref{prop:total_intersections}}
The count of the intersections made in the last section allows us to conclude the proof of Proposition~\ref{prop:total_intersections}. We will start with the case where $\beta$ is not an odd saddle connection, as when $\beta$ is an odd saddle connection we need an additional argument.

\begin{Lem}
Assume $\beta$ is not an odd saddle connection. Then 
\[ |\alpha \cap \beta| < (p_{\alpha} + q_{\alpha})(p_{\beta} + q_{\beta}). \]
\end{Lem}
\begin{proof}
By Equation~\eqref{eq:sum_intersections}, we have
\begin{align*}
|\alpha \cap \beta| &\leq \# \tilde{I}_1 + \# I_2^{(0)} + \# I_2^{(2)} \\
& \leq \# \tilde{I}_1 +  \# I_2^{(0,i)} +  \# I_2^{(0,c)}  + \# I_2^{(2)} \\
\end{align*}
Using Equation~\eqref{eq:I_1_tilde}, Lemma \ref{lem:lonely_adjacent} and Corollary \ref{cor:consecutive_adjacent}, we have
\begin{align*}
|\alpha \cap \beta| &\leq p_{\alpha} \left\lceil \frac{l}{2} \right\rceil + \# \{i, \alpha_i \text{ is an isolated adjacent segment}\}+ q_{\alpha}  \left\lfloor \frac{l}{2}+1 \right\rfloor \\
&\leq p_{\alpha} \left\lceil \frac{l}{2} \right\rceil + p_{\alpha}-1+ q_{\alpha} \left\lfloor \frac{l}{2}+1 \right\rfloor
\end{align*}
where we used that $\# \{i, \alpha_i \text{ is an isolated adjacent segment}\} \leq p_{\alpha}-1$, which holds because the isolated adjacent segments are separated by the $p_\alpha$ non-adjacent ones. Using Lemma \ref{lem:integer_part}, we deduce that if $\beta$ is not an odd saddle connection, then $\left\lceil \frac{l}{2} \right\rceil \leq p_{\beta} + q_{\beta}-1$ and $\left\lfloor \frac{l}{2} +1 \right\rfloor \leq p_{\beta} + q_{\beta}$ and hence
\begin{align*}
|\alpha \cap \beta| & \leq p_{\alpha}(p_{\beta} + q_{\beta}-1) + p_{\alpha}-1 + q_{\alpha}(p_{\beta} + q_{\beta})\\
& \leq (p_{\alpha} + q_{\alpha}) (p_{\beta} + q_{\beta})- 1\\
& < (p_{\alpha} + q_{\alpha}) (p_{\beta} + q_{\beta})
\end{align*}
as required.
\end{proof}
By symmetry, the same result holds if $\alpha$ is not an odd saddle connection. Thus, we are left to prove:

\begin{Lem}\label{lem:intersection_odd_sc}
Assume that both $\alpha$ and $\beta$ are odd saddle connections, then:
\[ |\alpha \cap \beta| \leq (p_{\alpha} + q_{\alpha})(p_{\beta} + q_{\beta}). \]
\end{Lem}
\begin{proof}
In this case, we need to make an additional remark in order to show the required result:

\begin{center}
Assume the (isolated adjacent) segment $\alpha_i$ intersects $\beta_1$ (resp. $\beta_l$), then the (non-adjacent) segments $\alpha_{i-1}$ and $\alpha_{i+1}$ do not lie in the same polygon as $\beta_1$ (resp. $\beta_l$). In particular, there are at most $\left\lceil \frac{l-1}{2} \right\rceil$ segments of $\beta$ in the same polygon as $\alpha_{i-1}$, because $\alpha_{i-1}$ can intersect at most half of the $l-1$ segments $\beta_2, \dots, \beta_l$. Similarly, there are at most $\left\lceil \frac{l-1}{2} \right\rceil$ segments of $\beta$ in the same polygon as $\alpha_{i+1}$.
\end{center}

If $\beta$ is an odd saddle connection, then $l$ is an odd integer and $\left\lceil \frac{l-1}{2} \right\rceil = \left\lceil \frac{l}{2} \right\rceil - 1$. In particular, for each intersection in  $I_2^{(0,i)}$ (corresponding to the intersection of an isolated adjacent segment $\alpha_i$ with either $\beta_1$ or $\beta_l$) we can remove one in the count of $\# \tilde{I}_1$ (and more precisely in the count of intersections of $\alpha_{i-1}$ with $\beta$). Further, for the index $i_{max} = \max \{ i, \alpha_i$ is an isolated adjacent segment intersecting either $\beta_1$ or $\beta_l \}$, we can remove one more intersection in the count of $\# \tilde{I}_1$ (and more precisely in the count of intersections of $\alpha_{i_{max}+1}$ with $\beta$). Hence, we have shown:

\begin{Lem}\label{lem:3.35}
Assume $\beta$ is odd and $\# I_2^{(0,i)} \neq 0$, then:
\[ \# \tilde{I}_1 \leq p_{\alpha} \left\lceil \frac{l}{2} \right\rceil - \# I_2^{(0,i)}-1 \]
\end{Lem}

As a consequence of Equation \eqref{eq:I_1_tilde}, Lemma \ref{lem:3.35} and Corollary \ref{cor:consecutive_adjacent}, we get that:
\begin{align*}
|\alpha \cap \beta| & \leq \# \tilde{I}_1 +  \# I_2^{(0,i)} +  \# I_2^{(0,c)} + \# I_2^{(2)} \\
& \leq p_{\alpha} \left\lceil \frac{l}{2} \right\rceil + q_{\alpha} (p_{\beta} + q_{\beta}) \\
& \leq (p_{\alpha} + q_{\alpha})(p_{\beta} + q_{\beta})
\end{align*}
with equality only if $\# I_2^{(0,i)} = 0$.
\end{proof}
\begin{Rema}
Notice that having an equality above thus requires that $\# \tilde{I}_1 = p_{\alpha} \left\lceil \frac{l}{2} \right\rceil$, and that any element of $\tilde{I}_1$ appears either as an intersection or as pair $(i',j')$ of a configuration $\bigstar$.
\end{Rema}
This concludes the proof of Proposition~\ref{prop:total_intersections}.

\subsection{Proof of Theorem~\ref{theo:genSC}} \label{sec:proofgenSC}
Having studied both the length of the segments and the intersections of saddle connections according to their polygonal decomposition, we can now prove Theorem \ref{theo:genSC}, which will follow from Equation~\eqref{eq:length_alpha}, Lemma \ref{lem:length_odd_sc} and Proposition~\ref{prop:total_intersections}. We will deal with the cases where one of the saddle connections is either a side or a diagonal separately. In fact, an additional argument is required in the case where one of the saddle connections is a diagonal.

\subsubsection{No sides or diagonals}
\begin{Prop}\label{prop:non_diag_side_case}
Assume that neither $\alpha$ nor $\beta$ is a side or a diagonal of the polygons. Then:
\[ \frac{|\alpha \cap \beta|+1}{l(\alpha)l(\beta)} < \frac{1}{l_0^2}. \]
\end{Prop}
\begin{proof}
\begin{enumerate}
\item If either $\alpha$ or $\beta$ is not an odd saddle connection, we have by Proposition~\ref{prop:total_intersections},  
\[ |\alpha \cap \beta| < (p_{\alpha} + q_{\alpha})(p_{\beta} + q_{\beta}) \]
and by Equation~\eqref{eq:length_alpha},
\begin{equation}\label{eq:product_lengths}
l(\alpha) l(\beta) \geq (p_{\alpha} + q_{\alpha})(p_{\beta} + q_{\beta})l_0^2 
\end{equation}
Hence we have:
\begin{equation}\label{eq:ratiook}
\frac{|\alpha \cap \beta|+1}{l(\alpha)l(\beta)} \leq \frac{1}{l_0^2}. 
\end{equation}

Let us show that the above inequality is strict. This is because:
\begin{itemize}
    \item If there is at least one adjacent segment, then the inequality of Equation~\eqref{eq:product_lengths} is strict, and hence it is also the case for Equation~\eqref{eq:ratiook}
    \item Else, there are no adjacent segments, and $k = p_{\alpha}$ and $l = p_{\beta}$. We easily deduce from hypothesis (P2) that as soon as $k,l \geq 2$, we have
    \[ |\alpha \cap \beta| \leq \left\lceil \frac{kl}{2} \right\rceil \leq kl - 2 \leq p_{\alpha} p_{\beta}-2, \]
    and this allows to conclude that the inequality of equation~\eqref{eq:ratiook} is strict.
\end{itemize}

\item Else, both $\alpha$ and $\beta$ are odd, but neither sides nor diagonals of the polygons. From Lemma~\ref{lem:length_odd_sc}, we know that:
\[ l(\alpha) \geq (p_{\alpha} + q_{\alpha} + \sqrt{2}-1)l_0\]
and similarly
\[ l(\beta) \geq (p_{\beta} + q_{\beta} + \sqrt{2}-1)l_0\]
Thus
\[ l(\alpha)l(\beta) \geq [(p_{\alpha} + q_{\alpha})(p_{\beta} + q_{\beta})+ (\sqrt{2}-1) (p_{\alpha} + q_{\alpha} + p_{\beta} + q_{\beta}) + (\sqrt{2}-1)^2]l_0^2 \]
But $p_{\alpha} \geq 2$ as well as $p_{\beta}$, so that
\begin{align*}
 l(\alpha)l(\beta) & \geq [(p_{\alpha} + q_{\alpha})(p_{\beta} + q_{\beta}) + 4 (\sqrt{2}-1) + (\sqrt{2}-1)^2]l_0^2 \\
& > [(p_{\alpha} + q_{\alpha})(p_{\beta} + q_{\beta}) + 1]l_0^2
\end{align*}
This gives the required inequality.
\end{enumerate}
\end{proof}

\subsubsection{The case of sides}
We now turn to the case where at least one of the saddle connections is a side of a polygon. We can assume up to permutation that it is $\alpha$. We show:
\begin{Lem}\label{lem:case_alpha_side}
Assume $\alpha$ is a side of a polygon. Then
\[ \frac{|\alpha \cap \beta|+1}{l(\alpha)l(\beta)} \leq \frac{1}{l_0^2}. \]
Further, equality holds if and only if $l(\alpha) = l_0$ and:
\begin{enumerate}
\item either $\beta$ has length $l_0$. In particular it is either is a side or a diagonal and $|\alpha \cap \beta| = 0$.
\item or $|\alpha \cap \beta| = 1$ and $\beta$ is the union of two (non adjacent) segments, of total length $2l_0$, and intersecting $\alpha$ once on its interior.
\end{enumerate}

\end{Lem}
\begin{proof}
Each non-singular intersection between $\alpha$ and $\beta$ corresponds to the union of two consecutive segments $\beta_j \cup \beta_{j+1}$ which share their common endpoint on the side $\alpha$, and since by (P2) pairs of segments cannot overlap, we directly obtain:
\begin{equation}\label{eq:intersection_4.2.sides}
|\alpha \cap \beta| \leq \left\lfloor \frac{l}{2} \right\rfloor \leq (p_{\beta}+q_{\beta})-1,
\end{equation}
Further, $l(\beta) \geq (p_{\beta}+q_{\beta})l_0$ by Lemma \ref{lem:length_adjacent_na}. Hence, we conclude that
\begin{equation}\label{eq:K_4.2sides}
\frac{|\alpha \cap \beta|+1}{l(\alpha)l(\beta)} \leq \frac{1}{l_0^2}.
\end{equation}
Notice that a necessary condition to have an equality above is that $l(\alpha) = l_0$. Further,
\begin{enumerate}
\item If $\beta$ is either a side or a diagonal, then $|\alpha \cap \beta| = 0$ and we have equality in \eqref{eq:K_4.2sides} if and only if $l(\beta)=l_0$.
\item Else, $\beta$ is neither a side nor a diagonal then:
\begin{itemize}
    \item either there is an adjacent segment in the polygonal decomposition of $\beta$ and, since the length of two consecutive adjacent segments is greater than $l_0$ by Lemma \ref{lem:length_adjacent_na}, and since we do not take into account the presence of isolated adjacent segments in the estimation of Equation \eqref{eq:length_alpha}, we obtain $l(\beta) > (p_{\beta}+q_{\beta})l_0$ and hence we cannot have equality in \eqref{eq:K_4.2sides},
    \item or there are no adjacent segments in the polygonal decomposition of $\beta$ and $l = p_{\beta}$. In particular, as soon as $l \geq 3$, we have $\left\lfloor \frac{l}{2} \right\rfloor \leq p_{\beta}-2$ and hence 
    $$l(\beta)\geq p_{\beta}l_0 > \left( \left\lfloor \frac{l}{2} \right\rfloor +1 \right) l_0 $$ 
    so that by Equation~\eqref{eq:intersection_4.2.sides} we conclude that we cannot have equality in \eqref{eq:K_4.2sides}.
\end{itemize}
In particular, we cannot have equality in \eqref{eq:K_4.2sides} unless $\beta$ is the union of two non-adjacent segments and $|\alpha \cap \beta| = 1$. In that case the length of $\beta$ must be $2l_0$.
\end{enumerate}

\end{proof}

\subsubsection{The case of diagonals}\label{sec:diagANDsides}
It remains to deal with the case where either $\alpha$ or $\beta$ is a diagonal of a polygon and neither of them is a side. Up to permutation we will assume $\alpha$ is a diagonal. Namely, we show

\begin{Lem}\label{lem:cas_alpha_diagonal}
Assume $\alpha$ is a diagonal of a polygon (and $\beta$ is not a side of a polygon). Then
\[ \frac{|\alpha \cap \beta|+1}{l(\alpha)l(\beta)} \leq \frac{1}{l_0^2}. \]
Further, equality holds if and only if:
\begin{enumerate} 
\item either $\alpha$ and $\beta$ are two diagonals, both of length\footnote{This case happens for example when one of the polygons is a rectangle of size $l_0 \times 3l_0$ but with intermediate vertices along the long sides of the rectangle.} $l_0$, and $|\alpha \cap \beta|=0$. 
\item or $\alpha$ and $\beta$ are two diagonals, of length $\sqrt{2} l_0$, and intersecting once on their interior.
\item or $l(\alpha) = l_0$, $|\alpha \cap \beta| = 1$ and $\beta$ is the union of two (non adjacent) segments, of total length $2l_0$, and intersecting $\alpha$ once in its interior. 
\end{enumerate}

\end{Lem}
We will proceed to the proof as follows. First, we investigate the case where $\beta$ is not an odd saddle connection and not a diagonal.
Recall that by definition sides and diagonals are not odd saddle connections. 
Then we continue with the case where $\beta$ is also a diagonal, and finally we deal with the case where $\beta$ is an odd saddle connection.

\paragraph{A diagonal and a non-odd saddle connection.} 
Given a saddle connection $\beta$, we have that
\[ |\alpha \cap \beta| \leq \left\lceil \frac{l}{2} \right\rceil, \]
since $\alpha$ is all contained in one polygon and consecutive pieces of $\beta$ are in different polygons. 
In fact, when $\beta$ is not a diagonal and not an odd saddle connection, the inequality is strict,
with equality only if $\beta$ is either a diagonal or an odd saddle connection. In particular, we directly deduce from Equation \eqref{eq:length_alpha} that:
\[ \frac{|\alpha \cap \beta|+1}{l(\alpha)l(\beta)} \leq \frac{1}{l_0^2}, \]
and, as before, the inequality is strict unless $l(\alpha) =l_0$ and $\beta$ is the union of two non-adjacent segments such that $|\alpha \cap \beta| = 1$ and $l(\beta) = 2l_0$ (recall that we assumed $\beta$ not to be a side or a diagonal).

\paragraph{Intersections of diagonals.}
We now investigate the case where both $\alpha$ and $\beta$ are diagonals. Namely, we show:
\begin{Lem}\label{lem:intersection_diagonals}
Let $\alpha$ and $\beta$ be two distinct diagonals of a convex polygon $P$ with obtuse or right angles. Let $l_0$ be the length of the shortest side of the polygon. Then, if $\alpha$ and $\beta$ intersect in their interior, we have $l(\alpha)l(\beta) \geq 2 l_0^2$.
Further, equality holds if and only if both diagonals have length $\sqrt{2}l_0$.
\end{Lem}
\begin{proof}
First, if $\min(l(\alpha),l(\beta)) \geq \sqrt{2} l_0$, then we obviously have $l(\alpha)l(\beta) \geq 2 l_0^2$. In the rest of the proof, we will assume, up to permuting $\alpha$ and $\beta$, that $l(\alpha) \leq \sqrt{2} l_0$. Let $P$ be the polygon containing $\alpha$ and let $\theta_{min}$ be the minimum of the angles between $\alpha$ and the four sides of $P$ having a common endpoint with $\alpha$. Since the polygon has obtuse or right angles, we know from Lemma \ref{lem:convex_length} that the distance from any vertex of $P$ to a point in a side which does not have this vertex as an endpoint is at least $l_0$. In particular, we must have $\sin \theta_{min} \geq \frac{l_0}{l(\alpha)}$, otherwise by convexity there would be an endpoint of $\alpha$ and a point of $P$ in a segment which does not contain this endpoint at a distance less than $l_0$, as in Figure \ref{fig:cas_diagonales_1}.

\begin{figure}[h]
\center
\definecolor{uuuuuu}{rgb}{0.26666666666666666,0.26666666666666666,0.26666666666666666}
\definecolor{qqwuqq}{rgb}{0,0.39215686274509803,0}
\definecolor{ccqqqq}{rgb}{0.8,0,0}
\begin{tikzpicture}[line cap=round,line join=round,>=triangle 45,x=1cm,y=1cm, scale=1.3]
\clip(-1.5,-0.5) rectangle (4.2,3.3);
\draw [shift={(0,3)},line width=1pt,color=qqwuqq,fill=qqwuqq,fill opacity=0.10000000149011612] (0,0) -- (-90:0.2666666666666666) arc (-90:-31.541315391385044:0.2666666666666666) -- cycle;
\draw[line width=1pt,color=qqwuqq,fill=qqwuqq,fill opacity=0.10000000149011612] (1.1767862142196355,2.2776967374789807) -- (1.0781470321887279,2.116992452147727) -- (1.2388513175199816,2.0183532701168194) -- (1.3374904995508892,2.179057555448073) -- cycle; 
\draw [line width=1pt,color=ccqqqq] (0,0)-- (0,3);
\draw [line width=1pt] (1.9333333333333307,1.8133333333333321)-- (0,3);
\draw [line width=1pt,dash pattern=on 3pt off 3pt] (0,3)-- (-1.2666666666666686,2.8);
\draw [line width=1pt,dash pattern=on 3pt off 3pt] (0,0)-- (-1.213333333333354,0.7066666666666661);
\draw [line width=1pt,dash pattern=on 3pt off 3pt] (0,0)-- (1.4,-0.28);
\draw [line width=1pt] (0,0)-- (1.3374904995508892,2.179057555448073);
\draw [line width=1pt, to-to] (0.333333333333331,0.06666666666666662)-- (1.4933333333333307,1.9733333333333318);
\draw (0.1555555555555535,2.631111111111109) node[anchor=north west] {$\theta_{min}$};
\draw [color=ccqqqq](-0.4666666666666686,1.84) node[anchor=north west] {$\alpha$};
\draw (1.0977777777777755,1.04) node[anchor=north west] {$l = l(\alpha) \sin \theta_{min} \geq l_0$};
\begin{scriptsize}
\draw [fill=ccqqqq] (0,0) circle (2.5pt);
\draw [fill=ccqqqq] (0,3) circle (2.5pt);
\draw [color=uuuuuu] (1.3374904995508892,2.179057555448073)-- ++(-2pt,-2pt) -- ++(4pt,4pt) ++(-4pt,0) -- ++(4pt,-4pt);
\end{scriptsize}
\end{tikzpicture}
\caption{The distance from a vertex of $\alpha$ to a side of the polygon not adjacent to the vertex is at least $l_0$.}
\label{fig:cas_diagonales_1}
\end{figure}
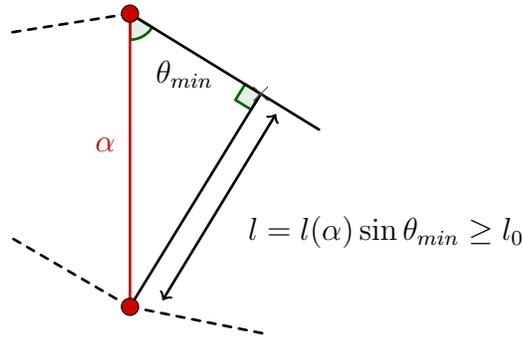

In particular, no vertex of $P$ lies inside the hexagon having as sides the four segments making an angle $\theta_{min}$ with $\alpha$ at one of its vertices and having length $l_0$ (this hexagon degenerates to a square when $l(\alpha) = \sqrt{2}l_0$). Adding this to the fact that no side of $P$ lies at a distance less than $l_0$ from the two endpoints of $\alpha$ (except the sides adjacent to the endpoints of $\alpha$), we get that no vertex of $P$ which is not a vertex of $\alpha$ lies inside the gray zone of Figure \ref{fig:argument_diagonales_2}. In particular, the length of $\beta$ has to be at least $2 l_0 \sin\theta_{min}$, and we get 
\[ l(\alpha) l(\beta) \geq l(\alpha) 2 l_0 \sin \theta_{min} \geq 2l_0^2. \]

\begin{figure}
\center
\definecolor{wwwwww}{rgb}{0.4,0.4,0.4}
\definecolor{ffvvqq}{rgb}{1,0.3333333333333333,0}
\definecolor{qqwuqq}{rgb}{0,0.39215686274509803,0}
\definecolor{ccqqqq}{rgb}{0.8,0,0}
\begin{tikzpicture}[line cap=round,line join=round,>=triangle 45,x=1cm,y=1cm, scale=1.3]
\clip(-5.386666666666671,-2.4133333333333327) rectangle (6.32,5.346666666666666);
\draw [shift={(0,3)},line width=1pt,color=qqwuqq,fill=qqwuqq,fill opacity=0.10000000149011612] (0,0) -- (-90:0.4) arc (-90:-31.541315391385044:0.4) -- cycle;
\fill[line width=1pt,color=ffvvqq,fill=ffvvqq,fill opacity=0.1] (-1.9333333333333305,1.8133333333333324) -- (0,3) -- (1.9333333333333307,1.8133333333333321) -- (1.9333333333333307,1.1866666666666683) -- (0,0) -- (-1.9333333333333305,1.1866666666666685) -- cycle;
\draw [line width=1pt,color=ccqqqq] (0,0)-- (0,3);
\draw [line width=1pt, color=qqwuqq] (-1.9,1.5)--(1.9,1.5);
\draw [line width=1pt, to-to] (-1.9,1.4)--(1.9,1.4);
\draw (0.1,1.4) node[anchor=north west] {$2l_0 \sin \theta_{min}$};
\draw [line width=1pt, color=qqwuqq, dash pattern = on 3pt off 3pt] (-1.9,1.5)--(-2.9,1.5);
\draw [line width=1pt, color=qqwuqq, dash pattern = on 3pt off 3pt] (1.9,1.5)--(2.9,1.5);
\draw [color=ccqqqq](-0.4666666666666712,1) node[anchor=north west] {$\alpha$};
\draw [color=qqwuqq](-1.5,2) node[anchor=north west] {$\beta$};
\draw [line width=1pt] (1.9333333333333307,1.8133333333333321)-- (0,3);
\draw [line width=1pt] (-1.9333333333333305,1.8133333333333324)-- (0,3);
\draw [line width=1pt] (-1.9333333333333305,1.1866666666666685)-- (0,0);
\draw [line width=1pt] (0,0)-- (1.9333333333333307,1.1866666666666683);
\draw [line width=1pt] (1.9333333333333307,1.8133333333333321)-- (1.9333333333333307,1.1866666666666683);
\draw [line width=1pt] (-1.9333333333333305,1.8133333333333324)-- (-1.9333333333333305,1.1866666666666685);
\draw [shift={(0,3)},line width=1pt,color=wwwwww,fill=wwwwww,fill opacity=0.05]  (0,0) --  plot[domain=-0.5504998039896325:3.692092457579426,variable=\t]({1*2.2684698709825417*cos(\t r)+0*2.2684698709825417*sin(\t r)},{0*2.2684698709825417*cos(\t r)+1*2.2684698709825417*sin(\t r)}) -- cycle ;
\draw [shift={(0,0)},line width=1pt,color=wwwwww,fill=wwwwww,fill opacity=0.05]  (0,0) --  plot[domain=-3.692092457579426:0.550499803989633,variable=\t]({1*2.2684698709825417*cos(\t r)+0*2.2684698709825417*sin(\t r)},{0*2.2684698709825417*cos(\t r)+1*2.2684698709825417*sin(\t r)}) -- cycle ;
\draw [line width=1pt, to-to] (1.92,1)-- (0.2,-0.06666666666666665);
\draw (0.1,2.706666666666666) node[anchor=north west] {$\theta_{min}$};
\draw (1,0.5) node[anchor=north west] {$l_0$};
\begin{scriptsize}
\draw [fill=ccqqqq] (0,0) circle (2.5pt);
\draw [fill=ccqqqq] (0,3) circle (2.5pt);
\end{scriptsize}
\end{tikzpicture}
\caption{No point on a side of the polygon which is not adjacent to one of the two endpoints of $\alpha$ lies inside the hexagon or inside the disks of radius $l_0$ centered at the endpoints of $\alpha$. In particular, the length of $\beta$ is at least $2 \sin \theta_{min} l_0$.}
\label{fig:argument_diagonales_2}
\end{figure}
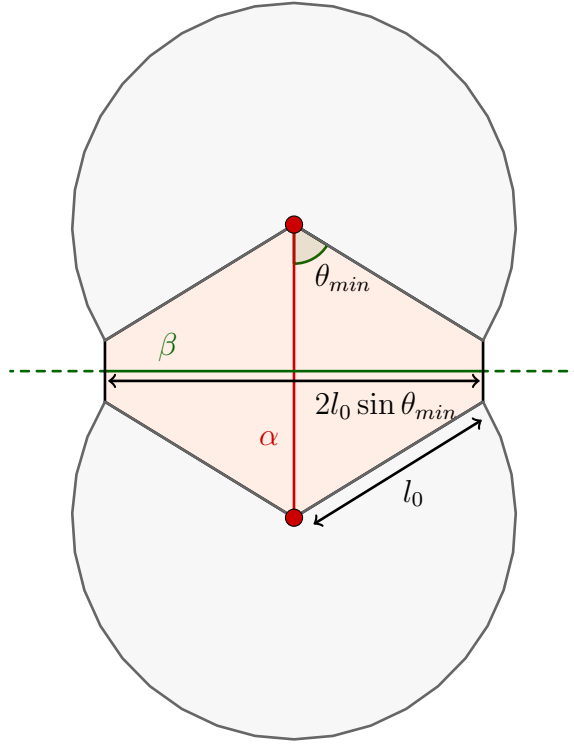

In fact, one can easily show that $l(\beta) > 2 \sin \theta_{min} l_0$ unless $l(\alpha) = l(\beta) = \sqrt{2} l_0$.
\begin{itemize}
\item If $\min (l(\alpha),l(\beta)) > \sqrt{2} l_0$, then we directly have that $l(\alpha)l(\beta) > 2 l_0^2$.
\item Else, we can assume that $l(\alpha) \leq \sqrt{2} l_0$. Of course, if $\sin \theta_{min} > \frac{l_0}{l(\alpha)}$ then we have $l(\alpha) l(\beta) > 2 l_0^2$. Otherwise, $\sin \theta_{min} = \frac{l_0}{l(\alpha)}$, and one can check that the two angles between $\alpha$ and the sides of $P$ at a vertex of $\alpha$ cannot be simultaneously equal to $\theta_{min}$ (unless $l(\alpha) = \sqrt{2}$), and hence we have $l(\beta) > 2 \frac{l_0^2}{l(\alpha)}$.
\end{itemize}
\end{proof}

As a direct corollary of Lemma \ref{lem:intersection_diagonals}, we get:
\begin{Cor}
If $\alpha$ and $\beta$ are both diagonals, then:
\[ \frac{|\alpha \cap \beta|+1}{l(\alpha)l(\beta)} \leq \frac{1}{l_0^2}. \]
Further, the inequality is strict unless $l(\alpha) = l(\beta) = \sqrt{2} l_0$ and $|\alpha \cap \beta| = 1$.
\end{Cor}
\begin{proof}
Either $\alpha$ and $\beta$ do not intersect on their interior and the result comes from the fact that $l(\alpha)l(\beta) \geq l_0^2$, or they do intersect once in their interior and the result comes from Lemma \ref{lem:intersection_diagonals}.
\end{proof}

\paragraph{A diagonal and an odd saddle connection.}
In fact, the above argument can be easily generalised to the case where $\alpha$ is a diagonal but $\beta$ is an odd saddle connection which is not a diagonal. In this case, recall that:
\[ |\alpha \cap \beta| \leq \left\lfloor \frac{l}{2} \right\rfloor \leq (p_{\beta}+q_{\beta}), \]
and 
\[ l(\beta) > (p_{\alpha} + q_{\alpha})l_0.\]
(Notice that from Lemma \ref{lem:length_odd_sc} the inequality is strict.). Hence:
\begin{enumerate}
\item if $|\alpha \cap \beta| < p_{\beta} +q_{\beta}$, we directly deduce that
\[ \frac{|\alpha \cap \beta|+1}{l(\alpha)l(\beta)} < \frac{1}{l_0^2}. \]
\item Else, $|\alpha \cap \beta| = p_{\beta} +q_{\beta}$ and in particular $\beta_1$ must intersect $\alpha$. Then similarly to the proof of Lemma \ref{lem:intersection_diagonals}, we construct the region of Figure \ref{fig:argument_diagonales_2}, and the endpoints $\beta_1^-$ and $\beta_2^+$ cannot lie inside this region, giving:
\[ l(\beta_{1} \cup \beta_{2}) l(\alpha) > 2 l_0^2. \] 
Since $\beta$ is odd, we can group the other adjacent segments after $\beta_2$ by pairs to obtain
\[ l(\alpha) l(\beta) > (p_{\beta}+q_{\beta}+1)l_0^2. \]
Using Proposition \ref{prop:total_intersections}, we conclude that
\[ \frac{|\alpha \cap \beta|+1}{l(\alpha)l(\beta)} < \frac{1}{l_0^2}, \]
as required.
\end{enumerate}

\begin{Rema}\label{rk:refinement_lengths_2_odd_sc_w/_max_intersections}
In fact, it is also possible to perform a similar argument if both $\alpha$ and $\beta$ are odd saddle connections (this means we do not need to use Lemma \ref{lem:length_odd_sc} and Lemma \ref{lem:better_lengths}), but there are a few complications and the advantage of Lemmas \ref{lem:length_odd_sc} and \ref{lem:better_lengths} (in addition to the fact that the proof is simpler to write) is that the proof generalises to the case where we assume $(P1')$ instead of $(P1)$ (up to changing $l_0$ by $l_0 \sin \theta_0$ and $\sqrt{2}$ by $2 \sin \theta_0 /2$, where $\theta_0 \leq \pi /2$ denotes the smallest angle of the polygons - or $\pi/2$ if all the angles are obtuse - see the next paragraph for a discussion on this case).
\end{Rema}

This completes the proof of Lemma \ref{lem:cas_alpha_diagonal}.

\paragraph{Conclusion.}
Combining Proposition \ref{prop:non_diag_side_case}, Lemma \ref{lem:case_alpha_side} and Lemma \ref{lem:cas_alpha_diagonal}, we finally obtain Theorem \ref{theo:genSC}, and thus Theorem \ref{theo:mainCONVEX}.

\subsubsection{Additional remarks}\label{sec:additional_remarks}
Before moving to the specific case of Bouw-M\"oller surfaces, let us make a few remarks on the proof of Theorem \ref{theo:mainCONVEX}, and in particular discuss its assumptions.

\paragraph{Surfaces made of polygons with acute angles.}
As already said in the introduction, Theorem \ref{theo:mainCONVEX} does not hold if the angles are no longer assumed to be obtuse. The main issue in this case is that two consecutive adjacent segment may have very small length, as in the example of Figure \ref{fig:acute_angles}. However, it is possible to avoid this issue by replacing (P1) with the weaker assumption (P1') from Remark \ref{rk:hypothesis_P1'_rk1}.
This property is satisfied for example by covers of flat tori ramified over a single point, which are tiled by parallelograms (although (P2) is not always satisfied). Recall from Remark \ref{rk:hypothesis_P1'_rk2} that Proposition \ref{prop:total_intersections} still holds under the assumptions (P1') and (P2), but in this case the estimations of the lengths fail. In fact, as hinted in Remark \ref{rk:refinement_lengths_2_odd_sc_w/_max_intersections} one could hope to generalize the estimates on the lengths of closed curves to obtain that, for any closed curves $\gamma$ and $\delta$, we have
\[ \frac{\Int(\gamma, \delta)}{l(\gamma) l(\delta)} \leq \frac{1}{(l_0 \sin \theta_0)^2},\]
where $\theta_0$ is the smallest angle of the polygons. 

This is discussed in more details in the first author's Thesis \cite{these_boulanger}.
\begin{figure}[h]
\center
\definecolor{ccqqqq}{rgb}{0.8,0,0}
\definecolor{qqwuqq}{rgb}{0,0.33,0}
\begin{tikzpicture}[line cap=round,line join=round,>=triangle 45,x=1cm,y=1cm, scale=0.6]
\clip(-4.2,-3.5) rectangle (4.2,3.5);
\draw [line width=1pt] (-2,0)-- (2,0);
\draw [line width=1pt] (2,0)-- (4,3);
\draw [line width=1pt] (-2,0)-- (-1,-3);
\draw [line width=1pt,color=ccqqqq] (-1.5,-1.5)--(-1.5,1.5);
\draw [line width=1pt] (-2,0)-- (-1,3);
\draw [line width=1pt,dash pattern=on 3pt off 3pt] (2,0)-- (3.8828052146241174,-1.5622840240938458);
\draw (1.5,2) node[anchor=north east] {$P$};
\draw (1.5,-2) node[anchor=south east] {$P'$};
\begin{scriptsize}
\draw [color=black] (-1,3)-- ++(-2.5pt,-2.5pt) -- ++(5pt,5pt) ++(-5pt,0) -- ++(5pt,-5pt);
\draw [color=black] (-2,0)-- ++(-2.5pt,-2.5pt) -- ++(5pt,5pt) ++(-5pt,0) -- ++(5pt,-5pt);
\draw [color=black] (2,0)-- ++(-2.5pt,-2.5pt) -- ++(5pt,5pt) ++(-5pt,0) -- ++(5pt,-5pt);
\draw [color=black] (4,3)-- ++(-2.5pt,-2.5pt) -- ++(5pt,5pt) ++(-5pt,0) -- ++(5pt,-5pt);
\draw [color=black] (-1,-3)-- ++(-2.5pt,-2.5pt) -- ++(5pt,5pt) ++(-5pt,0) -- ++(5pt,-5pt);
\end{scriptsize}
\end{tikzpicture}
\caption{If we do not assume the angles of the polygons to be obtuse or right, pairs of adjacent segments can have arbitrarily small length.}
\label{fig:acute_angles}
\end{figure}
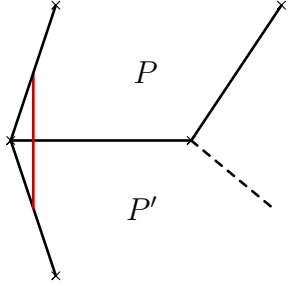

\paragraph{The Non-self-identification condition.}
Finally, one may wonder what happens if we now remove assumption (P2) instead. Although our intersection counting from Section \ref{sec:study_intersections} heavily rely on this assumption, we believe that the same result holds without (P2). However, the polygonal decomposition is not adapted anymore as pairs of adjacent segments could then intersect twice a non-adjacent segment, giving two intersection for a product of length $l_0^2$. This means we need to refine the estimates on the length in this case. This is for example done in the case of the regular $n$-gon for even $n$ in \cite{Bou23}, but setting up a general argument would rather be very technical.

\paragraph{General flat surfaces.}
Finally, although in the previous sections we dealt with translation surfaces, our count of the intersections can be generalised to any flat surface constructed from a collection of polygons by gluing pairs of sides, without assuming the identified sides to be parallel and of the same length (more precisely, we can assume that the transition maps are affine instead of only translations, and the resulting surface is still closed, orientable, and inherits a flat metric with finitely many conical singularities). 
In this generalised setting, we should point out that it is not true anymore that any closed curve is homologous to a union of vertex-to-vertex trajectory with at most the same length, and hence it is necessary to work with all simple closed geodesics instead (which may or may not go through a singularity). 
Further, the direction of a geodesic is not well defined anymore, and in particular Corollary \ref{cor:3_segments} does not hold anymore. However, as hinted in Remark \ref{rk:intersections_with_signs}, one can modify the proof of Proposition \ref{prop:total_intersections} to take into account these changes by counting intersections with signs instead (notice that the property of Remark \ref{rk:alternates} holds for simple closed geodesics on such surfaces). Further, although our lower bound on the length of a saddle connection given in Equation \eqref{eq:length_alpha} is still valid the proof of part 2 of Lemma \ref{lem:better_lengths} is not true anymore (except for half-translation surfaces, where curves do have a well defined direction) and one needs to refine the estimates on the length product of a pair of odd saddle connections achieving the maximum possible non-singular intersections in Proposition \ref{prop:total_intersections}. An additional discussion on this topic can be found in \cite{these_boulanger}.

\section{The case of Bouw-M\"oller surfaces}\label{sec:Bouw-M\"oller}
As mentioned in the introduction, one of the main motivations for Theorem \ref{theo:mainCONVEX} was to apply it to Bouw-M\"oller surfaces. In this section we conclude the proof of Theorem \ref{thm:mainBM}. If $n \geq 4$, the Bouw-M\"oller surface $S_{m,n}$ satisfies the hypotheses (P1) and (P2), and so the conclusion of Theorem \ref{theo:mainCONVEX} holds. Further:
\begin{itemize}
\item On Bouw-M\"oller surfaces for $n=2$, the hypothesis (P1) holds but not (P2). It is still possible to show that Theorem \ref{theo:mainCONVEX} holds using cylinder decompositions, as done in \cite{BLM22} for the staircase model of the double $m$-gon, $m$ odd. The case where $m$ is even works in the same way.
\item On Bouw-M\"oller surfaces for $n=3$, the hypothesis (P2) holds but (P1) does not, because there are two triangles among the polygons defining $S_{m,3}$. However, (P1') holds (if $m\neq 2$) and it is possible to adapt the proof of Theorem \ref{theo:mainCONVEX} to this case by refining our estimations of the lengths of the segments. Namely:
\begin{itemize}
\item Although Lemma \ref{lem:length_adjacent_na} does not hold anymore, we show in Section \ref{sec:pf_lbm} that:
\begin{Lem}\label{lem:lengths_bouw_moller}
Let $m \geq 3$ and let $\alpha$ be a saddle connection on $S_{m,3}$, then
\[ l(\alpha) \geq (p_{\alpha} + q_{\alpha})l_0. \]
Further, if $\alpha$ is odd but it is not a side or a diagonal of a polygon, then 
\[ l(\alpha) \geq (p_{\alpha} + q_{\alpha}+ \sqrt{2}-1)l_0. \]
\end{Lem}
\item Proposition \ref{prop:total_intersections} still holds by Remark \ref{rk:hypothesis_P1'_rk2},
\item The case of sides and diagonals does not change (as there are no diagonals on a triangle).
\end{itemize}
\end{itemize}

This will conclude the proof of Theorem \ref{thm:mainBM}. Namely, we have:
\begin{Cor}
Let $\gamma$ and $\delta$ be two closed curves on $S_{m,n}$, then:
\[ \frac{\Int(\gamma,\delta)}{l(\gamma)l(\delta)} \leq \frac{1}{l_0^2}. \]
Further, equality holds if and only $m$ and $n$ are coprime and  $\gamma$ and $\delta$ are sides of length $l_0$ intersecting at the singularity, or diagonals of length $\sqrt{2}l_0$ intersecting once at the singularity, and once outside the singularity with the same sign.
\end{Cor}

Before proving Lemma \ref{lem:lengths_bouw_moller}, let us explain the equality case for Bouw-M\"oller surfaces. First, we can deduce from Equation~\eqref{eq:intersection_kl_singularities} that equality may hold only if $\gamma$ and $\delta$ are both closed saddle connections. If $m$ and $n$ are not coprime, there are several distinct singularities, and in fact there are no closed saddle connections of length $l_0$, as the sides are never closed curves in this case. Further, there are closed diagonals of length $\sqrt{2} l_0$ only if $n=4$ and $m \equiv 2 \mod 4$, but in this case the two diagonals of $P(0)$ intersect only once (outside the singularities), giving a ratio of $\frac{1}{2 l_0^2}$ instead of $\frac{1}{l_0^2}$.

Next, if $m$ and $n$ are coprime then saddle connections are closed curves and from Theorem \ref{theo:genSC} we deduce that equality holds only if $\alpha$ and $\beta$ are sides of length $l_0$ intersecting at the singularity (diagonals of the polygons defining Bouw-M\"oller surfaces have length greater than $l_0$), or if $\alpha$ and $\beta$ are diagonals of length $\sqrt{2} l_0$ and intersecting twice (as there are no geodesics of length $2l_0$, contained the union of exactly two polygons, and intersecting a systole twice). By Proposition \ref{prop:intersection_diagonals}, this latter case can only arise if $n=4$ and $m \equiv 3 \mod 4$.

\subsection{Proof of Lemma \ref{lem:lengths_bouw_moller}}\label{sec:pf_lbm}

The Bouw-M\"oller surface $S_{m,3}$ is made of two equilateral triangles of side $l_0$ (namely $P(0)$ and $P(m-1)$) and $m-2$ hexagons which are convex with obtuse angles. In particular, non-adjacent segments which do not lie inside $P(0)$ or $P(m-1)$ have length least $l_0$. Furthermore, only the initial and terminal segments $\alpha_1$ and $\alpha_k$ can be non-adjacent on one of the triangles $P(0)$ and $P(m-1)$. Similarly, the length of two adjacent segments can be smaller than $l_0$, but this can only happen if one of the adjacent segments is contained in $P(0)$ or in $P(m-1)$. One way to compensate is then to take into account the next non-adjacent segment. More precisely, we have:
\begin{Lem}\label{lem:compensate_lengths}
Assume $\alpha$ is neither a side nor a diagonal of the polygons defining $S_{m,3}$. 
Then
\begin{enumerate}
    \item If $\alpha_1$ (resp. $\alpha_k$) lies in either $P(0)$ or $P(m-1)$, then $\alpha_2$ (resp. $\alpha_{k-1}$) is a non-adjacent segment and $l(\alpha_1 \cup \alpha_2) > 2 l_0$ (resp. $l(\alpha_{k-1} \cup \alpha_k) > 2 l_0$).
    \item Assume $\alpha_{i}$ and $\alpha_{i+1}$ are two adjacent segments, being respectively contained in $P(0)$ and $P(1)$. Then the segment $\alpha_{i+2}$ is non-adjacent and the total length of $\alpha_{i} \cup \alpha_{i+1} \cup \alpha_{i+2}$ is at least $2l_0$. 
\end{enumerate}
\end{Lem}
\begin{proof}
\begin{enumerate}
    \item By symmetry, we can consider the case of $\alpha_1$ and assume it lies inside $P(0)$. Then we have a configuration like the left of Figure \ref{fig:case_n=3_p=2} and the length of $\alpha_1 \cup \alpha_2$ is greater than $L$ (because $H_0 > L$). Recall that, by definition, the length of the sides of $P(0)$ and $P(1)$ is at least $l_0$ and the internal angles of $P(1)$ are $\frac{2\pi}{3}$. In particular, recalling that the sides of $P(1)$ have lengths $l_0$ and $2 \cos(\pi/m)l_0$, we have $H_0 =(\sqrt{3}+\sqrt{3} \cos(\pi/m))l_0) > 2 l_0$ and $L = (1 + 2 \cos(\pi/m))l_0 \geq 2 l_0$, with equality when $m=3$. Hence we deduce that the length of $\alpha_1 \cup \alpha_2$ must be greater than $2l_0$.
    \item The segments $\alpha_i, \alpha_{i+1}$ and $\alpha_{i+2}$ can be represented as in the right of Figure \ref{fig:case_n=3_p=2}. The segment $\alpha_{i+2}$ is non-adjacent because for $\alpha_{i}$ and $\alpha_{i+1}$ to be adjacent, the angle between $\alpha$ and the vertical must be less than $\frac{\pi}{6}$. In particular, the total length of $\alpha_i \cup \alpha_{i+1} \cup \alpha_{i+2}$ is at least $\min(H_0,L) \geq 2 l_0$. Further, as in part 1., the bound $2l_0$ cannot be achieved.
\end{enumerate}
\end{proof}
Now, the first part of Lemma \ref{lem:lengths_bouw_moller} is a direct corollary of Lemma \ref{lem:compensate_lengths}.

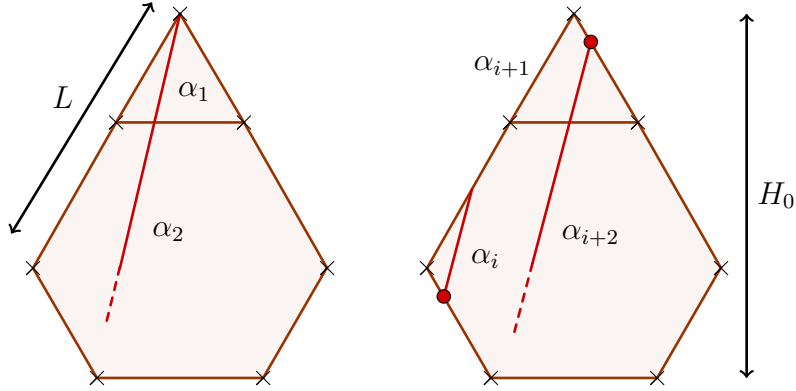
\begin{figure}[h]
\center
\definecolor{ccqqqq}{rgb}{0.8,0,0}
\definecolor{qqqqff}{rgb}{0,0,1}
\definecolor{zzttqq}{rgb}{0.6,0.2,0}
\begin{tikzpicture}[line cap=round,line join=round,>=triangle 45,x=1cm,y=1cm, scale = 1.7]
\clip(-0.5,-0.4) rectangle (3,3.2);
\fill[line width=2pt,color=zzttqq,fill=zzttqq,fill opacity=0.05] (1.15,2) -- (2.15,2) -- (1.65,2.85) -- cycle;
\fill[line width=2pt,color=zzttqq,fill=zzttqq,fill opacity=0.05] (0.5,0.86) -- (1.15,2) -- (2.15,2) -- (2.8,0.86) -- (2.3,0) -- (1,0) -- cycle;
\draw [line width=1pt,color=zzttqq] (1.15,2)-- (2.15,2);
\draw [line width=1pt,color=zzttqq] (2.15,2)-- (1.65,2.85);
\draw [line width=1pt,color=zzttqq] (1.65,2.85)-- (1.15,2);
\draw [line width=1pt,color=zzttqq] (0.5,0.86)-- (1.15,2);
\draw [line width=1pt,color=zzttqq] (2.15,2)-- (2.8,0.86);
\draw [line width=1pt,color=zzttqq] (2.800059040139864,0.8660254037844386)-- (2.3,0);
\draw [line width=1pt,color=zzttqq] (2.3,0)-- (1,0);
\draw [line width=1pt,color=zzttqq] (1,0)-- (0.5,0.86);
\draw [line width=1pt, to-to] (1.43,2.94)-- (0.33,1.13);
\draw (0.9, 2.2) node[left] {$L$};
\draw [line width=1pt,color=ccqqqq] (1.65,2.85)-- (1.18,0.86);
\draw [line width=1pt,dash pattern=on 3pt off 3pt,color=ccqqqq] (1.18,0.87)-- (1.07,0.42);
\draw (1.55,2.4) node[anchor=north west] {$\alpha_1$};
\draw (1.35,1.3) node[anchor=north west] {$\alpha_2$};
\begin{scriptsize}
\draw [color=black] (1,0)-- ++(-1.5pt,-1.5pt) -- ++(3pt,3pt) ++(-3pt,0) -- ++(3pt,-3pt);
\draw [color=black] (0.5,0.86)-- ++(-1.5pt,-1.5pt) -- ++(3pt,3pt) ++(-3pt,0) -- ++(3pt,-3pt);
\draw [color=black] (1.15,2)--++(-1.5pt,-1.5pt) -- ++(3pt,3pt) ++(-3pt,0) -- ++(3pt,-3pt);
\draw [color=black] (2.15,2)-- ++(-1.5pt,-1.5pt) -- ++(3pt,3pt) ++(-3pt,0) -- ++(3pt,-3pt);
\draw [color=black] (2.3,0)-- ++(-1.5pt,-1.5pt) -- ++(3pt,3pt) ++(-3pt,0) -- ++(3pt,-3pt);
\draw [color=black] (2.8,0.86)-- ++(-1.5pt,-1.5pt) -- ++(3pt,3pt) ++(-3pt,0) -- ++(3pt,-3pt);
\draw [color=black] (1.65,2.85)-- ++(-1.5pt,-1.5pt) -- ++(3pt,3pt) ++(-3pt,0) -- ++(3pt,-3pt);
\end{scriptsize}
\end{tikzpicture}
\begin{tikzpicture}[line cap=round,line join=round,>=triangle 45,x=1cm,y=1cm, scale=1.7]
\clip(0,-0.4) rectangle (4.5,3.2);
\fill[line width=2pt,color=zzttqq,fill=zzttqq,fill opacity=0.05] (1.15,2) -- (2.15,2) -- (1.65,2.85) -- cycle;
\fill[line width=2pt,color=zzttqq,fill=zzttqq,fill opacity=0.05] (0.5,0.86) -- (1.15,2) -- (2.15,2) -- (2.8,0.86) -- (2.3,0) -- (1,0) -- cycle;
\draw [line width=1pt,color=zzttqq] (1.15,2)-- (2.15,2);
\draw [line width=1pt,color=zzttqq] (2.15,2)-- (1.65,2.85);
\draw [line width=1pt,color=zzttqq] (1.65,2.85)-- (1.15,2);
\draw [line width=1pt,color=zzttqq] (0.5,0.86)-- (1.15,2);
\draw [line width=1pt,color=zzttqq] (2.15,2)-- (2.8,0.86);
\draw [line width=1pt,color=zzttqq] (2.800059040139864,0.8660254037844386)-- (2.3,0);
\draw [line width=1pt,color=zzttqq] (2.3,0)-- (1,0);
\draw [line width=1pt,color=zzttqq] (1,0)-- (0.5,0.86);
\draw [line width=1pt, to-to] (3,0)-- (3,2.85);
\draw (1.4,2.6) node[anchor=north east] {$\alpha_{i+1}$};
\draw (1.47,1.28) node[anchor=north west] {$\alpha_{i+2}$};
\draw (3, 1.43) node[right] {$H_0$};
\draw [line width=1pt,color=ccqqqq] (0.8521168646006797,1.4759097034746662)-- (0.6314904571969925,0.6382772512087873);
\draw [line width=1pt,color=ccqqqq] (1.7815199772669246,2.6301868101739614)-- (1.3254307791833124,0.8985939426722377);
\draw [line width=1pt,dash pattern=on 3pt off 3pt,color=ccqqqq] (1.3254307791833124,0.8985939426722377)-- (1.1838096531821927,0.36091374173455776);
\draw (0.76,1.09) node[anchor=north west] {$\alpha_i$};
\begin{scriptsize}
\draw [color=black] (1,0)-- ++(-1.5pt,-1.5pt) -- ++(3pt,3pt) ++(-3pt,0) -- ++(3pt,-3pt);
\draw [color=black] (0.5,0.86)-- ++(-1.5pt,-1.5pt) -- ++(3pt,3pt) ++(-3pt,0) -- ++(3pt,-3pt);
\draw [color=black] (1.15,2)--++(-1.5pt,-1.5pt) -- ++(3pt,3pt) ++(-3pt,0) -- ++(3pt,-3pt);
\draw [color=black] (2.15,2)-- ++(-1.5pt,-1.5pt) -- ++(3pt,3pt) ++(-3pt,0) -- ++(3pt,-3pt);
\draw [color=black] (2.3,0)-- ++(-1.5pt,-1.5pt) -- ++(3pt,3pt) ++(-3pt,0) -- ++(3pt,-3pt);
\draw [color=black] (2.8,0.86)-- ++(-1.5pt,-1.5pt) -- ++(3pt,3pt) ++(-3pt,0) -- ++(3pt,-3pt);
\draw [color=black] (1.65,2.85)-- ++(-1.5pt,-1.5pt) -- ++(3pt,3pt) ++(-3pt,0) -- ++(3pt,-3pt);
\draw [fill=ccqqqq] (0.6314904571969925,0.6382772512087873) circle (1.5pt);
\draw [fill=ccqqqq] (1.7815199772669246,2.6301868101739614) circle (1.5pt);
\end{scriptsize}
\end{tikzpicture}
\caption{On the left, the case where $\alpha_1$ belongs to the triangle $P(0)$. On the right, the case where $\alpha_i$ and $\alpha_{i+1}$ are two consecutive adjacent segments and one of them lies in $P(0)$ (here, $\alpha_{i+1}$). Further, we have $L = \left(1+ 2 \cos \frac{\pi}{m}\right)l_0$ and $H_0 = \left(2 \frac{\sqrt{3}}{2} + 2 \cos \frac{\pi}{m}\right) l_0$.}
\label{fig:case_n=3_p=2}
\end{figure}

Concerning the length of odd saddle connections, we have: 
\begin{Lem}\label{lem:3_consecutive_BM}
\begin{enumerate}[label=(\roman*)]
\item The total length of three consecutive adjacent segments is at least $\sqrt{2} l_0$.
\item Given three consecutive segments in the polygonal decomposition of $\alpha$ such that the first and the third segments are non-adjacent while the second segment is adjacent, the total length of the three segments is at least $(1 + \sqrt{2}) l_0$.
\end{enumerate}
\end{Lem}
\begin{proof}
Let us first notice that if none of the adjacent segment are contained in $P(0)$ or $P(m-1)$, then the result is a direct corollary of Lemma \ref{lem:better_lengths}. Hence, we can assume by symmetry that at least one of the adjacent segments is contained in $P(0)$.
\begin{enumerate}[label=(\roman*)]
\item Given three consecutive adjacent segments such that one is contained in $P(0)$, the geometry of Bouw-M\"oller surfaces imply that the other two segments must be contained respectively in $P(1)$ and $P(2)$. Then, we are in the setting of Figure \ref{fig:adjacents_BM} and hence the total length of the three adjacent segments is at least $H_1 = \frac{\sqrt{3}}{2} \left(1+2\cos(\pi/m)\right) l_0 \geq \sqrt{3} l_0$, which is greater than $\sqrt{2} l_0$. 
\item As in $(i)$, we can assume by symmetry that the adjacent segment is contained in $P(0)$. The preceding and following non-adjacent segment are then both contained in $P(1)$. We can see from Figure \ref{fig:na_a_na_BM} that the total length of the three segments must be at least 
$$\min\left( \left(1+ 4 \cos \frac{\pi}{m}\right)l_0, \left(2+4\cos \frac{\pi}{m}\right) \frac{\sqrt{3}}{2} l_0\right),$$
which is greater than $(1+\sqrt{2}) l_0$, as required.
\end{enumerate}
\end{proof}

Putting together Lemma \ref{lem:compensate_lengths} and Lemma \ref{lem:3_consecutive_BM} allows to conclude that the length of an odd saddle connection $\alpha$ which is not a side or a diagonal of a polygon is at least $(p_{\alpha}+q_{\alpha}+\sqrt{2}-1)l_0$. This concludes the proof of Lemma \ref{lem:lengths_bouw_moller}, and hence of Theorem \ref{thm:mainBM}.

\begin{figure}[h]
\center
\definecolor{ccqqqq}{rgb}{0.8,0,0}
\definecolor{qqqqff}{rgb}{0,0,1}
\definecolor{zzttqq}{rgb}{0.6,0.2,0}
\begin{tikzpicture}[line cap=round,line join=round,>=triangle 45,x=1cm,y=1cm]
\clip(-2,-0.6) rectangle (10,5.5);
\fill[line width=1pt,color=zzttqq,fill=zzttqq,fill opacity=0.05] (-1,0) -- (1,0) -- (0,1.7320508075688776) -- cycle;
\fill[line width=1pt,color=zzttqq,fill=zzttqq,fill opacity=0.05] (0,1.7320508075688776) -- (1.6158776665906764,4.530833024919771) -- (3.615877666590676,4.530833024919771) -- (5.2317553331813516,1.732050807568878) -- (4.2317553331813516,0) -- (1,0) -- cycle;
\draw [line width=1pt,color=zzttqq] (-1,0)-- (1,0);
\draw [line width=1pt,dash pattern=on 3pt off 3pt,color=zzttqq] (1,0)-- (0,1.7320508075688776);
\draw [line width=1pt,color=zzttqq] (0,1.7320508075688776)-- (-1,0);
\draw [line width=1pt,color=zzttqq] (0,1.7320508075688776)-- (1.6158776665906764,4.530833024919771);
\draw [line width=1pt,color=zzttqq] (1.6158776665906764,4.530833024919771)-- (3.615877666590676,4.530833024919771);
\draw [line width=1pt,color=zzttqq] (3.615877666590676,4.530833024919771)-- (5.2317553331813516,1.732050807568878);
\draw [line width=1pt,dash pattern=on 3pt off 3pt, color=zzttqq] (0,1.73)-- (-2,1.73);
\draw [line width=1pt,dash pattern=on 3pt off 3pt, color=zzttqq] (1.6158776665906764,4.530833024919771)-- (0,8);
\fill[line width=1pt,color=zzttqq,fill=zzttqq,fill opacity=0.01] (0,1.73)-- (-2,1.73) -- (1.6158776665906764,4.530833024919771)-- (0,6) -- cycle;
\draw [line width=1pt,color=zzttqq] (5.2317553331813516,1.732050807568878)-- (4.2317553331813516,0);
\draw [line width=1pt,color=zzttqq] (4.2317553331813516,0)-- (1,0);
\draw [line width=1pt, to-to] (5.5,0) -- (5.5,4.53);
\draw (5.5,2.75) node[anchor=north west] {$H_1 = \left(\frac{\sqrt{3}}{2} + \sqrt{3} \cos \frac{\pi}{m}\right) l_0$};
\draw (0.5, 2.6) node[anchor=north west] {$\alpha_{i+1}$};
\draw (-0.6,1) node[anchor=north west] {$\alpha_{i}$};
\draw (-0.6,4) node[anchor=north west] {$\alpha_{i+2}$};
\draw (-0.6,0) node[anchor=north west] {$P(0)$};
\draw (2,2.5) node[anchor=north west] {$P(1)$};
\draw (-2,4) node[anchor=north west] {$P(2)$};
\draw [line width=1pt,color=ccqqqq] (0,0)-- (0.5,4.5);
\draw [line width=1pt,dash pattern=on 3pt off 3pt, color=ccqqqq] (0.5,4.5)-- (1,9);
\end{tikzpicture}
\caption{The total length of three consecutive adjacent segments, one being in $P(0)$, is at least $H_1$. Note that in this configuration $\alpha_{i-1}$ cannot be an adjacent segment.}
\label{fig:adjacents_BM}
\end{figure}
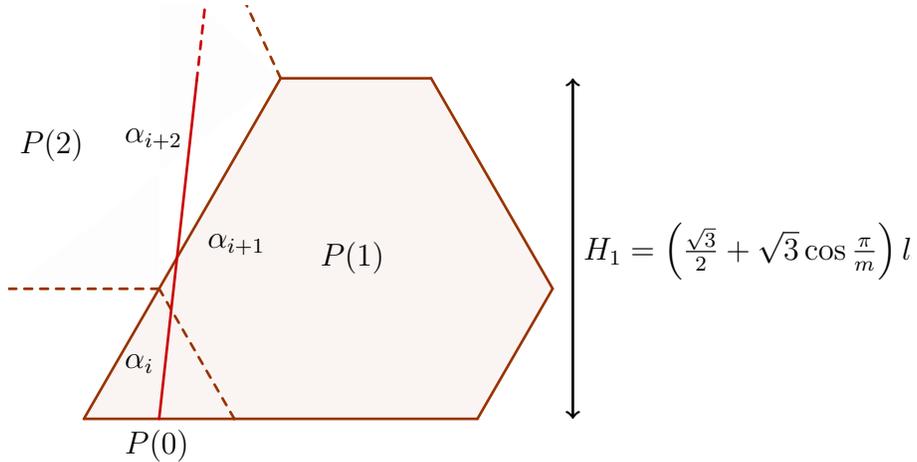

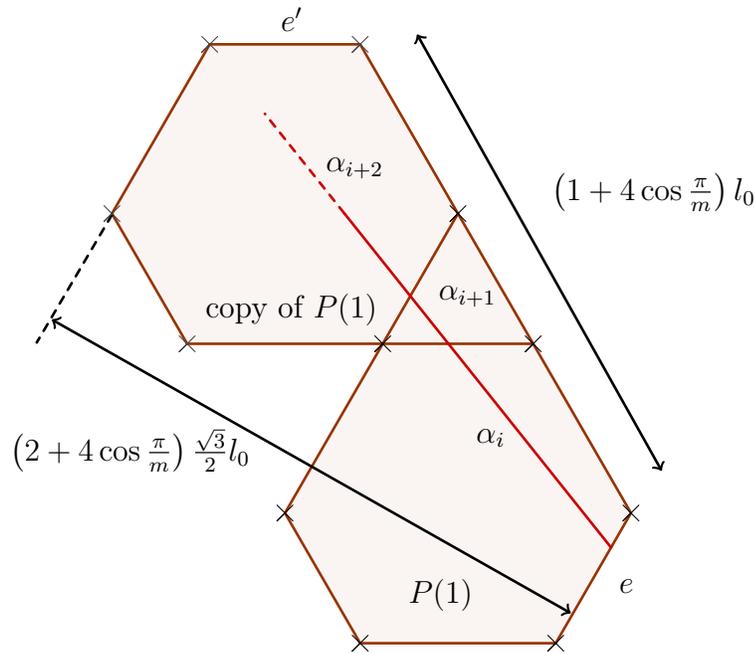
\begin{figure}
\center
\definecolor{ccqqqq}{rgb}{0.8,0,0}
\definecolor{uuuuuu}{rgb}{0.26666666666666666,0.26666666666666666,0.26666666666666666}
\definecolor{zzttqq}{rgb}{0.6,0.2,0}
\begin{tikzpicture}[line cap=round,line join=round,>=triangle 45,x=1cm,y=1cm, scale=2]
\clip(-1.5,-0.36262643384448245) rectangle (4,4.5);
\fill[line width=1pt,color=zzttqq,fill=zzttqq,fill opacity=0.05] (1.1500295200699324,1.9919095589651739) -- (2.1500295200699324,1.9919095589651739) -- (1.6500295200699324,2.8579349627496127) -- cycle;
\fill[line width=1pt,color=zzttqq,fill=zzttqq,fill opacity=0.05] (0.5,0.8660254037844386) -- (1.1500295200699324,1.9919095589651739) -- (2.1500295200699324,1.9919095589651739) -- (2.800059040139864,0.8660254037844386) -- (2.300059040139864,0) -- (1,0) -- cycle;
\fill[line width=1pt,color=zzttqq,fill=zzttqq,fill opacity=0.05] (1.6500295200699324,2.8579349627496127) -- (1,3.9838191179303477) -- (0,3.9838191179303477) -- (-0.6500295200699313,2.8579349627496127) -- (-0.15002952006993153,1.991909558965174) -- (1.1500295200699324,1.9919095589651739) -- cycle;
\draw [line width=1pt,color=zzttqq] (1.1500295200699324,1.9919095589651739)-- (2.1500295200699324,1.9919095589651739);
\draw [line width=1pt,color=zzttqq] (2.1500295200699324,1.9919095589651739)-- (1.6500295200699324,2.8579349627496127);
\draw [line width=1pt,color=zzttqq] (1.6500295200699324,2.8579349627496127)-- (1.1500295200699324,1.9919095589651739);
\draw [line width=1pt,color=zzttqq] (0.5,0.8660254037844386)-- (1.1500295200699324,1.9919095589651739);
\draw [line width=1pt,color=zzttqq] (2.1500295200699324,1.9919095589651739)-- (2.800059040139864,0.8660254037844386);
\draw [line width=1pt,color=zzttqq] (2.800059040139864,0.8660254037844386)-- (2.300059040139864,0);
\draw [line width=1pt,color=zzttqq] (2.300059040139864,0)-- (1,0);
\draw [line width=1pt,color=zzttqq] (1,0)-- (0.5,0.8660254037844386);
\draw (1.45,2.45) node[anchor=north west] {$\alpha_{i+1}$};
\draw (0.7,3.3) node[anchor=north west] {$\alpha_{i+2}$};
\draw (1.7,1.5) node[anchor=north west] {$\alpha_i$};
\draw [line width=1pt,color=ccqqqq] (2.6676932490932663,0.6367611285076861)-- (0.8945910745429955,2.8595648343772058);
\draw [line width=1pt, to-to] (1.3784035772789822,4.049743591107738)-- (3.0040135864718973,1.1468685746918046);
\draw [line width=1pt, to-to] (-1.050335186455671,2.1531985803826617)-- (2.4,0.2);
\draw [line width=1pt,dash pattern=on 3pt off 3pt,color=ccqqqq] (0.8945910745429955,2.8595648343772058)-- (0.3661392612626404,3.522044789448592);
\draw [line width=1pt,color=zzttqq] (1.6500295200699324,2.8579349627496127)-- (1,3.9838191179303477);
\draw [line width=1pt,color=zzttqq] (1,3.9838191179303477)-- (0,3.9838191179303477);
\draw [line width=1pt,color=zzttqq] (0,3.9838191179303477)-- (-0.6500295200699313,2.8579349627496127);
\draw [line width=1pt,color=zzttqq] (-0.6500295200699313,2.8579349627496127)-- (-0.15002952006993153,1.991909558965174);
\draw [line width=1pt,color=zzttqq] (-0.15002952006993153,1.991909558965174)-- (1.1500295200699324,1.9919095589651739);
\draw [line width=1pt, dash pattern=on 3pt off 3pt] (-0.65,2.85) -- (-1.15, 2);
\draw (2.2,3.2) node[anchor=north west] {$\left(1+ 4 \cos \frac{\pi}{m}\right)l_0$};
\draw (-1.4,1.5) node[anchor=north west] {$\left(2+4\cos \frac{\pi}{m}\right) \frac{\sqrt{3}}{2} l_0$};
\draw (-0.1,2.4) node[anchor=north west] {$\text{copy of } P(1)$};
\draw (1.25,0.5) node[anchor=north west] {$P(1)$};
\draw (2.65,0.5) node[anchor=north west] {$e$};
\draw (0.4,4.3) node[anchor=north west] {$e'$};
\begin{scriptsize}
\draw [color=black] (1,0)-- ++(-1.5pt,-1.5pt) -- ++(3pt,3pt) ++(-3pt,0) -- ++(3pt,-3pt);
\draw [color=black] (0.5,0.8660254037844386)-- ++(-1.5pt,-1.5pt) -- ++(3pt,3pt) ++(-3pt,0) -- ++(3pt,-3pt);
\draw [color=black] (1.1500295200699324,1.9919095589651739)-- ++(-1.5pt,-1.5pt) -- ++(3pt,3pt) ++(-3pt,0) -- ++(3pt,-3pt);
\draw [color=black] (2.1500295200699324,1.9919095589651739)-- ++(-1.5pt,-1.5pt) -- ++(3pt,3pt) ++(-3pt,0) -- ++(3pt,-3pt);
\draw [color=black] (2.300059040139864,0)-- ++(-1.5pt,-1.5pt) -- ++(3pt,3pt) ++(-3pt,0) -- ++(3pt,-3pt);
\draw [color=black] (2.800059040139864,0.8660254037844386)-- ++(-1.5pt,-1.5pt) -- ++(3pt,3pt) ++(-3pt,0) -- ++(3pt,-3pt);
\draw [color=black] (1.6500295200699324,2.8579349627496127)-- ++(-1.5pt,-1.5pt) -- ++(3pt,3pt) ++(-3pt,0) -- ++(3pt,-3pt);
\draw [color=uuuuuu] (1,3.9838191179303477)-- ++(-1.5pt,-1.5pt) -- ++(3pt,3pt) ++(-3pt,0) -- ++(3pt,-3pt);
\draw [color=uuuuuu] (0,3.9838191179303477)-- ++(-1.5pt,-1.5pt) -- ++(3pt,3pt) ++(-3pt,0) -- ++(3pt,-3pt);
\draw [color=uuuuuu] (-0.6500295200699313,2.8579349627496127)-- ++(-1.5pt,-1.5pt) -- ++(3pt,3pt) ++(-3pt,0) -- ++(3pt,-3pt);
\draw [color=uuuuuu] (-0.15002952006993153,1.991909558965174)-- ++(-1.5pt,-1.5pt) -- ++(3pt,3pt) ++(-3pt,0) -- ++(3pt,-3pt);
\end{scriptsize}
\end{tikzpicture}
\caption{Example of three consecutive segments, the first and the third ($\alpha_{i}$ and $\alpha_{i+2}$) being non-adjacent, while the second ($\alpha_{i+1}$) is adjacent and contained in $P(0)$. (Since either $\alpha_{i}$ has an endpoint on $e$ or $\alpha_{i+1}$ as an endpoint on $e'$, we assumed by symmetry that $\alpha_{i}$ has an endpoint on $e$.) The total length of the three segments is at least $\min\left( \left(1+ 4 \cos \frac{\pi}{m}\right)l_0, \left(2+4\cos \frac{\pi}{m}\right) \frac{\sqrt{3}}{2} l_0\right) > (1+\sqrt{2}) l_0$.}
\label{fig:na_a_na_BM}
\end{figure}

\section{Intersection of horizontal saddle connections}\label{sec:intersection_horizontal}

The purpose of this section is to prove the following:

\begin{Prop}\label{prop:intersection_horizontal_sc_Bouw_Moller}
Horizontal closed curves on Bouw-M\"oller surfaces are pairwise non-intersecting.
\end{Prop}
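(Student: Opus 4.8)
The plan is to work in the standard polygonal model of $\Tmn$. In that model the horizontal direction is completely periodic (it is preserved by a parabolic element of the Veech group), so $\Tmn$ is the disjoint union of finitely many open horizontal cylinders $C_{1},\dots,C_{r}$ and the finite graph $H$ formed by all horizontal saddle connections, whose vertices are the cone points. Every horizontal closed curve is then either the core curve of some $C_{i}$, contained in the open set $C_{i}$, or a closed curve built out of horizontal saddle connections, i.e. a closed path in $H$. I would first dispatch the pairs in which at least one curve is a core curve: a core curve of $C_{i}$ lies in $C_{i}$, which is disjoint both from every other cylinder and from $H$, so such a curve is disjoint from any other horizontal closed curve, except a parallel core curve of the same $C_{i}$, to which it is freely homotopic inside the cylinder; in either case the pair is non-intersecting.

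It then remains to handle two horizontal closed curves $\gamma,\gamma'$ that both lie in $H$. Since all horizontal saddle connections are parallel --- they are segments of leaves of one and the same foliation --- any two of them are disjoint or share only an endpoint; hence $\gamma$ and $\gamma'$ can meet only at cone points, and they can fail to be non-intersecting only if they genuinely cross at some common cone point $p$. To exclude this I would, for each cone point of $\Tmn$, read off from the polygon identifications the cyclic order of the horizontal prongs around $p$, and prove that this order is \emph{layered by cylinder}: the prongs lying on the boundary of a fixed cylinder occur along consecutive arcs, and these arcs are arranged around $p$ in a pattern independent of $p$. Granting this, the two pairs of prongs used by $\gamma$ and by $\gamma'$ at $p$ cannot interleave in the cyclic order, so $\gamma$ and $\gamma'$ do not cross at $p$; combined with the previous step, this gives the proposition.

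The main obstacle is precisely this local analysis: extracting the cyclic arrangement of horizontal prongs at each singularity from the Bouw-M\"oller edge identifications and verifying the layering property uniformly in $m$ and $n$. I expect this to be a finite but somewhat delicate bookkeeping --- one has to track which edges of which polygons are horizontal, the orientation-reversing behaviour of the gluings on the polygon corners, and the way several corners are amalgamated into a single cone point --- and I would anticipate that the degenerate end polygons and the smallest values of $m$ and $n$ have to be checked by hand.
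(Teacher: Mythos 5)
There is a genuine gap: the step you yourself identify as ``the main obstacle'' is the entire content of the proposition, and moreover the reduction you propose would not suffice even if the layering claim were verified. Your first two reductions are fine and agree with the paper's: core curves of cylinders are disjoint from everything, and two closed curves built from horizontal saddle connections can only cross at a cone point, so everything is governed by the cyclic order of the horizontal prongs around each singularity (this is exactly the \emph{separatrix diagram} viewpoint of Kontsevich--Zorich that the paper uses). But whether two such closed curves actually cross is not a purely local question at one cone point: a closed curve enters a singularity along a prong lying on the boundary of one cylinder and typically leaves along a prong lying on the boundary of a \emph{different} cylinder (each horizontal saddle connection is simultaneously on the top of one cylinder and the bottom of another). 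So even if the prongs belonging to a fixed cylinder occupied a consecutive arc of the cyclic order at $p$, the two prongs used by a single curve would in general sit in two different arcs, and nothing prevents the arc-pairs of $\gamma$ and $\gamma'$ from interleaving. What is really needed is the \emph{planarity of the ribbon graph} of horizontal saddle connections (with its cyclic orders at the vertices), which is a global condition; a per-vertex ``layering'' statement cannot substitute for it.

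The paper closes this gap by passing to the dual ribbon graph, described directly in terms of the cylinder decomposition: two vertices $\mathcal{C}^+,\mathcal{C}^-$ for each horizontal cylinder $\mathcal{C}$, one edge per horizontal saddle connection joining the top of one cylinder to the bottom of another, with the left-to-right (resp.\ right-to-left) order on the top (resp.\ bottom) giving the cyclic structure. The decisive structural input about Bouw-M\"oller surfaces is then a one-line lemma: every horizontal cylinder of $S_{m,n}$ has at most two top and at most two bottom saddle connections (each cylinder meets only two of the defining polygons, each contributing at most one top and one bottom horizontal side). Hence every vertex of the dual graph has degree at most $2$, the graph is a disjoint union of paths and cycles, hence planar, hence the separatrix diagram is planar and no two horizontal closed curves cross. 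If you want to salvage your approach, you should replace the local layering claim by a proof of planarity of the ribbon graph, and the cleanest route to that is precisely this degree-two observation on the dual.
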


This proposition will be used in Section \ref{sec:extention_teichmuller}. 
Moreover, an interesting consequence of Proposition \ref{prop:intersection_horizontal_sc_Bouw_Moller} as well as Theorem 1.5 of \cite{Bou23} stated below, is that we directly obtain the boundedness of KVol on the Teichm\"uller disk of every Bouw-M\"oller surface, independantly of the number of singularities. Namely,

\begin{Cor}\label{cor:boundedness}
KVol is bounded on the Teichm\"uller disk of Bouw-M\"oller surfaces.
\end{Cor}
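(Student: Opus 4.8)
The plan is to read the Teichm\"uller disk of a Bouw-M\"oller surface $X$ through its Veech group and to use Theorem~1.5 of \cite{Bou23} to control KVol near each cusp, providing that theorem with the hypothesis supplied by Proposition~\ref{prop:intersection_horizontal_sc_Bouw_Moller}.

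Recall that $X$ is a Veech surface, so its Veech group $\Gamma$ is a lattice in $\mathrm{SL}(2,\mathbb{R})$ and the image of its Teichm\"uller disk in moduli space is the finite-volume hyperbolic orbifold $\mathbb{H}^2/\Gamma$. Since $\mathrm{Vol}$ is $\mathrm{SL}(2,\mathbb{R})$-invariant while the intersection-to-length ratios defining KVol are invariant under $\mathrm{SO}(2)$ and under rescaling, and since elements of $\Gamma$ act by affine equivalences of $X$ (preserving algebraic intersection numbers and scaling every length by the same factor), KVol descends to a continuous function on $\mathbb{H}^2/\Gamma$. Such a function is automatically bounded on the compact complement of a fixed family of cusp neighbourhoods, so it suffices to bound KVol near each of the finitely many cusps.

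Each cusp is the class of a parabolic fixed point of $\Gamma$ on $\partial\mathbb{H}^2$, which corresponds to a completely periodic direction of $X$ (Veech dichotomy). After a rotation we may assume this direction is horizontal, so the cusp is approached along the Teichm\"uller geodesic $g_t=\mathrm{diag}(e^{-t},e^{t})$, $t\to+\infty$, which pinches the horizontal cylinders. Proposition~\ref{prop:intersection_horizontal_sc_Bouw_Moller} now applies: the horizontal closed curves of $X$ have pairwise vanishing algebraic intersection. Heuristically this is exactly what keeps KVol bounded along $g_t$: if both $\alpha$ and $\beta$ are horizontal their lengths tend to $0$ but $\hat\imath(\alpha,\beta)=0$; and if $\hat\imath(\alpha,\beta)\neq 0$ then one of them, say $\beta$, is transverse to the horizontal direction, hence crosses the pinched cylinders at least $\hat\imath(\alpha,\beta)$ times and so has length at least $\hat\imath(\alpha,\beta)$ times a cylinder height, which makes $\hat\imath(\alpha,\beta)/\big(l(\alpha)\,l(\beta)\big)$ bounded in terms of the cylinder areas. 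Turning this heuristic into an actual estimate near the cusp is precisely the content of Theorem~1.5 of \cite{Bou23}, which we invoke.

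Combining the resulting bounds at the finitely many cusps with the boundedness on the compact part yields a uniform bound for KVol on the whole Teichm\"uller disk of $X$, independently of its number of singularities. The point that requires care is that Proposition~\ref{prop:intersection_horizontal_sc_Bouw_Moller}, stated for the horizontal direction of a Bouw-M\"oller surface, really does cover \emph{every} cusp: since the Veech group acts on the completely periodic directions with finitely many orbits, one must check that each orbit contains the horizontal direction of a Bouw-M\"oller surface --- immediate when $\mathbb{H}^2/\Gamma$ has a single cusp, and in general following from the description of the cylinder decompositions of the standard presentation $\mathcal{T}_{m,n}$ of $X$ and of its transpose $\mathcal{T}_{n,m}$, to which the proposition applies just as well.
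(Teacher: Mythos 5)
Your proposal is correct and follows essentially the same route as the paper: the corollary is obtained by feeding Proposition~\ref{prop:intersection_horizontal_sc_Bouw_Moller} into Theorem~1.5 of \cite{Bou23}, using (as you rightly flag) that every periodic direction of a Bouw-M\"oller surface has the same cylinder combinatorics as the horizontal one, so the non-intersection hypothesis holds in every direction supporting parallel closed curves. The cusp-by-cusp discussion in your write-up merely re-derives the mechanism behind the cited theorem and could be omitted, since you invoke that theorem as a black box anyway.
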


For convenience we state here Theorem 1.5 of \cite{Bou23}:
\begin{Theo}\cite[Theorem 1.5]{Bou23}
KVol is bounded on the Teichm\"uller disk of a Veech surface $X$ if and only if there are no intersecting closed curves $\eta$ and $\xi$ on $X$ such that $\eta = \eta_1 \cup \cdots \cup \eta_k$ and $\xi = \xi_1 \cup \cdots \cup \xi_l$ are union of parallel saddle connections (that is all saddle connections $\eta_1, \dots, \eta_k,\xi_1, \dots,\xi_l$ have the same direction).
\end{Theo}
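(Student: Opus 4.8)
The plan is to use the explicit horizontal cylinder decomposition of the Bouw--M\"oller surface $M_{m,n}$ together with the fact that everything in sight is horizontal. First I would recall, from the polygonal (or Hooper grid-graph) model, that in the horizontal direction $M_{m,n}$ splits as a finite union of maximal horizontal cylinders with pairwise disjoint interiors, whose boundary circles are concatenations of horizontal saddle connections. Writing $\Gamma$ for the union of all horizontal saddle connections, the complement of $\Gamma$ is precisely the disjoint union of the open cylinders, so every closed curve that is a union of horizontal saddle connections is a cycle supported on the $1$-complex $\Gamma$. The remaining horizontal closed curves are core curves of cylinders, which lie in the cylinder interiors and are therefore disjoint from $\Gamma$ and from one another; these contribute no intersections and may be set aside. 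Thus it suffices to treat two cycles $\eta,\xi$ on $\Gamma$.

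The first genuine step is a reduction to the cone points. Each leaf of the horizontal foliation is either regular, hence contained in the interior of a single cylinder, or singular, hence contained in $\Gamma$; consequently two horizontal arcs are either parallel or meet only at a common singularity, so $\eta$ and $\xi$ can cross transversally only at the cone points of $M_{m,n}$. Fix such a cone point $p$, of cone angle $2\pi N$. Turning once around $p$, the horizontal direction is attained $2N$ times, so $p$ carries $2N$ horizontal prongs, which I label $0,1,\dots,2N-1$ in cyclic order and which alternate between rightward- and leftward-pointing rays. A cycle passing through $p$ uses a pair of these prongs, and two cycles cross at $p$ exactly when their two prong-pairs link, i.e. when each of the two arcs of the prong-circle determined by one pair contains exactly one prong of the other. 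The proposition therefore reduces to: at every cone point of $M_{m,n}$, the prong-pairs arising from horizontal saddle-connection cycles are pairwise non-linking.

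To prove non-linking I would read off the local picture at $p$ from the gluing combinatorics of the model. The $2N$ prongs cut a neighbourhood of $p$ into $2N$ sectors of angle $\pi$, each lying in a single horizontal cylinder, so a cycle that runs along a cylinder boundary turns through one sector and hence uses two adjacent prongs; adjacent prong-pairs never link, which already settles the cylinder-boundary curves. The structural statement I would then verify is that every horizontal saddle-connection cycle traverses each of its singularities through a single such sector --- equivalently, that the prong-pairs used at $p$ respect the cyclic (rotational) order and form a non-crossing (laminar) family rather than an interleaved one. I expect this to be the crux and the main obstacle: it requires tracking the horizontal-edge identifications across all the polygons of $M_{m,n}$ simultaneously and checking the pattern uniformly in $m$ and $n$, including the mild case distinctions coming from the two singularity types and from the extra symmetry present when $m$ or $n$ is even. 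Granting this, the prong-pairs are non-linking at every cone point, and therefore any two horizontal closed curves on $M_{m,n}$ are non-intersecting.
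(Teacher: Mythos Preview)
Your proposal does not address the stated theorem at all. The statement you were asked to prove is Theorem~1.5 of \cite{Bou23}: a general criterion, valid for \emph{any} Veech surface $X$, characterising boundedness of KVol on the Teichm\"uller disk in terms of the (non)existence of a pair of intersecting parallel closed curves. In the present paper this theorem is merely quoted from \cite{Bou23}; it is not proved here, and no argument restricted to Bouw--M\"oller surfaces can establish it. Your write-up never mentions KVol, never mentions a general Veech surface, and its conclusion is ``any two horizontal closed curves on $M_{m,n}$ are non-intersecting'', which is Proposition~\ref{prop:intersection_horizontal_sc_Bouw_Moller}, not the theorem in question.

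Read as an attempted proof of Proposition~\ref{prop:intersection_horizontal_sc_Bouw_Moller}, your outline is on the right track but incomplete in exactly the place where the paper's argument supplies the missing idea. Your reduction to prong-pairs at the cone points, together with the requirement that these pairs be pairwise non-linking, is precisely the statement that the horizontal separatrix diagram (as a ribbon graph) is planar; the paper makes this explicit as Proposition~\ref{prop:planar_separatrix_diagram}. You then acknowledge that verifying the non-linking ``is the crux and the main obstacle'' and propose to do it by direct bookkeeping of the edge identifications, uniformly in $m,n$. The paper avoids this case analysis entirely: it passes to the \emph{dual} separatrix diagram, observes (Lemma~\ref{lem:top_bottom_cyl_Smn}) that every horizontal cylinder of $S_{m,n}$ has at most two top and at most two bottom saddle connections, and hence every vertex of the dual graph has degree at most two. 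A ribbon graph all of whose vertices have degree $\le 2$ is a disjoint union of paths and cycles, hence planar; by duality the separatrix diagram is planar as well, and the proposition follows. This one-line degree bound is the ingredient your proposal lacks.
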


To prove Proposition \ref{prop:intersection_horizontal_sc_Bouw_Moller}, we use the notion of (horizontal) \emph{separatrix diagram} defined by Kontsevich-Zorich \cite[Section 4.1]{KZ03} for any translation surface $X$ which is completely periodic in the horizontal direction (i.e. every leaf of the horizontal foliation is closed). The separatrix diagram is a \emph{ribbon graph} (that is a graph with a cyclic ordering of the edges incident to each vertex) which allows to understand the intersection of horizontal closed curves on $X$, that is, closed curves which are union of horizontal saddle connections. More precisely, the next section will show:

\begin{Prop}\label{prop:planar_separatrix_diagram}
Let $X$ be a translation surface which is completely periodic in the horizontal direction. Then horizontal closed curves are pairwise non-intersecting if and only if the horizontal separatrix diagram is planar.
\end{Prop}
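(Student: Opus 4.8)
The plan is to pass from $X$ to the compact oriented surface with boundary $\Sigma$ obtained by thickening the separatrix diagram as a ribbon graph, and to reduce the intersection of horizontal closed curves to the intersection form of $\Sigma$, which turns out to be trivial exactly in the planar case. Concretely: since $X$ is completely periodic in the horizontal direction it decomposes as a finite union of open horizontal cylinders glued along the graph $\Gamma$ formed by all horizontal saddle connections; the separatrix diagram is the ribbon graph $\mathcal G$ with vertices the singularities, edges the horizontal saddle connections, and cyclic order at a vertex given by the angular order of the horizontal separatrices around that singularity. Thickening $\mathcal G$ according to its ribbon structure produces $\Sigma$, which deformation retracts onto $\Gamma$ and whose boundary circles are exactly the top and bottom boundary curves of the cylinders of $X$; moreover $X$ is recovered from $\Sigma$ by gluing each cylinder (an annulus) along the corresponding pair of boundary circles of $\Sigma$. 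By definition the separatrix diagram is planar precisely when every connected component of $\Sigma$ has genus $0$, i.e.\ is a sphere with holes (if $\Gamma$ is disconnected one argues componentwise, noting that classes carried by different components sit in disjoint subsurfaces of $X$).

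\textbf{Localising the intersection number.} A horizontal closed curve is, by definition, a $1$-cycle carried by $\Gamma$, so its homology class lies in the image of $H_1(\Gamma)=H_1(\Sigma)$; conversely every class of $H_1(\Sigma)$ is realised by an edge-cycle of $\Gamma$, hence by a horizontal closed curve. Given two horizontal closed curves one perturbs them to transverse $1$-cycles supported in the interior of $\Sigma$; the signed count of their intersection points then computes at the same time the homological intersection number in $X$ and the homological intersection number in $\Sigma$ (the standard fact that the signed count of transverse $1$-cycles on an oriented surface equals the homology intersection pairing, applied once in $X$ and once in $\Sigma$, with the same intersection points). Therefore, restricted to homology classes of horizontal closed curves, the intersection form of $X$ coincides with the intersection form of $\Sigma$, so all horizontal closed curves on $X$ are pairwise non-intersecting if and only if the intersection form of $\Sigma$ vanishes identically.

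\textbf{The genus criterion.} It remains to invoke the elementary fact that the intersection form on $H_1$ of a compact oriented surface with boundary is identically zero if and only if every component has genus $0$: in genus $0$ the group $H_1$ is generated by the boundary circles, which can be pushed into disjoint collars, so the form vanishes; in positive genus there is a handle giving classes $a,b$ with $i(a,b)=1$, and both $a$ and $b$ can be represented by edge-cycles of the spine $\Gamma$, i.e.\ by horizontal closed curves. Combining the three steps yields: horizontal closed curves on $X$ are pairwise non-intersecting $\iff$ the intersection form of $\Sigma$ is identically zero $\iff$ every component of $\Sigma$ has genus $0$ $\iff$ the separatrix diagram is planar.

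I expect the only real difficulty to be expository rather than conceptual: writing down carefully the correspondence between the separatrix diagram and $\Sigma$ — in particular that the boundary circles of $\Sigma$ are exactly the cylinder boundary curves and that $X$ is obtained from $\Sigma$ by annulus gluings — and then stating cleanly the localisation ``the $X$-intersection form restricted to horizontal classes equals the $\Sigma$-intersection form''. One should also fix the convention that here ``non-intersecting'' means ``of zero algebraic intersection number'', which is the quantity relevant to KVol; two horizontal closed curves may well overlap as subsets of $X$ (sharing saddle connections) without contributing to this number, so I would add a short remark clarifying this.
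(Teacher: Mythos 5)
Your proof is correct, and it reaches the statement by a more homological route than the paper. The paper works directly on the ribbon graph: it observes that two horizontal closed curves can only intersect at singularities, so their algebraic intersection is read off from the cyclic order of edges at the vertices of the separatrix diagram; planarity then kills all intersections ``because the plane is simply connected'', and conversely a non-planar ribbon graph is asserted to contain an interleaved configuration of edges at a vertex (Figure \ref{fig:non_planar_graph}) from which two curves of intersection one are built by hand. You instead thicken the diagram to the bordered surface $\Sigma$, identify the restriction of the intersection form of $X$ to horizontal classes with the intersection form of $\Sigma$, and invoke the genus criterion; your converse direction replaces the paper's explicit Figure-\ref{fig:non_planar_graph} configuration by the existence of a symplectic pair $a,b$ in positive genus, which is arguably cleaner and fills in a step the paper leaves implicit (that non-planarity of a ribbon graph really does force such an interleaved configuration). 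Two small points to make explicit if you write this up: (i) realising an arbitrary class of $H_1(\Sigma)\cong H_1(\Gamma)$ by a closed walk on $\Gamma$ may require traversing edges with multiplicity or backtracking, so you should note that such walks still qualify as horizontal closed curves (unions of horizontal saddle connections), and that the algebraic intersection number only depends on the homology classes, so any such representatives of $a$ and $b$ intersect algebraically once; (ii) your closing remark that ``non-intersecting'' means vanishing \emph{algebraic} intersection is exactly the convention the paper uses and is worth keeping.
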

\begin{Rema}
Let us stress that the separatrix diagram comes with a cylic orientation of the edges at each vertex. As such, a separatrix diagram is planar if there exist a planar representation of the separatrix diagram respecting the cyclic orientation at each vertex. 
An introduction to graphs on surfaces and ribbon graphs is presented in \cite{GraphsOS}.
\end{Rema}

It turns out that for Bouw-M\"oller surfaces it is easier to study the \emph{dual separatrix diagram}, which is planar if and only if the separatrix diagram is. 
In Section \ref{sec:separatrix_diagram} we first give the standard definition of the dual separatrix diagram as the dual of the separatrix diagram seen as a cell complex. Then we give an alternative definition using the horizontal cylinder decomposition, which is easily understood in the case of Bouw-M\"oller surfaces. This allows to show Proposition \ref{prop:intersection_horizontal_sc_Bouw_Moller}.

\subsection{The horizontal separatrix diagram}\label{sec:separatrix_diagram}
Following \cite{KZ03}, we define the horizontal separatrix diagram associated to a translation surface which is completely periodic in the horizontal direction, as the graph whose edges are the horizontal saddle connections and the vertices are the singularities. The edge corresponding to a saddle connection $A$ connects the vertices corresponding to the endpoint singularities of $A$. The orientation on the edges at a vertex comes from the circular orientation at the corresponding singularity, choosing a trigonometric order (e.g. counter-clockwise).

In particular, a horizontal closed curve corresponds to a closed path on the horizontal separatrix diagram. Further, the algebraic intersection of such closed curves on $X$ is exactly the intersection of the paths on the horizontal separatrix diagram, since the intersection can only happen at the singularity, i.e. at a vertex of the graph and so it is only determined by the cyclic order of the edges around a vertex. In particular, if the horizontal separatrix diagram is planar then closed paths on the horizontal separatrix diagram are pairwise non-intersecting since the plane is simply connected.

Conversely, if the horizontal separatrix diagram is not planar, we can construct two closed paths on the separatrix diagram having non-zero intersection. Indeed, the fact that the graph is not planar implies that we have a configuration like Figure \ref{fig:non_planar_graph}. We then choose the curves $\alpha$ and $\beta$, corresponding to horizontal closed curves on the surface $X$ and intersecting once. We have proven Proposition \ref{prop:planar_separatrix_diagram}.

\begin{figure}[h]
\center
\definecolor{qqwuqq}{rgb}{0,0.39215686274509803,0}
\definecolor{zzttqq}{rgb}{0.6,0.2,0}
\definecolor{uuuuuu}{rgb}{0.26666666666666666,0.26666666666666666,0.26666666666666666}
\begin{tikzpicture}[line cap=round,line join=round,>=triangle 45,x=1.7cm,y=1.7cm]
\clip(-1.7,-2) rectangle (1.7,1);
\draw (-1,0) .. controls (-0.5,0.3) and (0.5,0.3) .. (1,0);
\draw (-1,0) .. controls (-3.5,-2.5) and (3.5,-2.5) .. (1,0);
\draw (-1,0) .. controls (-1,-1) and (0,-1) .. (0,0.1);
\draw (1,0) .. controls (1,1) and (0,1) .. (0,0.3);
\draw [-to] (0.82,-0.1) .. controls (0.9,-0.3) and (1.3,-0.2) .. (1.12,0.1);
\draw [-to] (-0.82,-0.1) .. controls (-0.7,0.3) and (-1.3,0.3) .. (-1.18,-0.1);
\draw [fill=black] (-1,0) circle (2pt);
\draw [fill=black] (1,0) circle (2pt);
\end{tikzpicture}
\begin{tikzpicture}[line cap=round,line join=round,>=triangle 45,x=1.7cm,y=1.7cm]
\clip(-1.7,-2) rectangle (1.7,1);
\draw [color = qqwuqq] (-1,0) .. controls (-0.5,0.3) and (0.5,0.3) .. (1,0);
\draw [color = qqwuqq] (-1,0) .. controls (-3.5,-2.5) and (3.5,-2.5) .. (1,0);
\draw [color = zzttqq] (-1,0) .. controls (-1,-1) and (0,-1) .. (0,0);
\draw [color = zzttqq] (1,0) .. controls (1,1) and (0,1) .. (0,0.3);
\draw [color = zzttqq] (-1,0) .. controls (-0.5,0.2) and (0.5,0.2) .. (1,0);
\draw [-to] (-0.82,-0.1) .. controls (-0.7,0.3) and (-1.3,0.3) .. (-1.18,-0.1);
\draw [-to] (0.82,-0.1) .. controls (0.9,-0.3) and (1.3,-0.2) .. (1.12,0.1);
\draw [color=qqwuqq] (0,-1.8) node[above] {$\beta$};
\draw [color=zzttqq] (1,0.5) node[right] {$\alpha$};
\draw [fill=black] (-1,0) circle (2pt);
\draw [fill=black] (1,0) circle (2pt);
\end{tikzpicture}
\caption{A configuration in a non-planar ribbon graph (the two vertices could also be collapsed as one). From this configuration, one constructs curves $\alpha$ and $\beta$ intersecting once.}
\label{fig:non_planar_graph}
\end{figure}
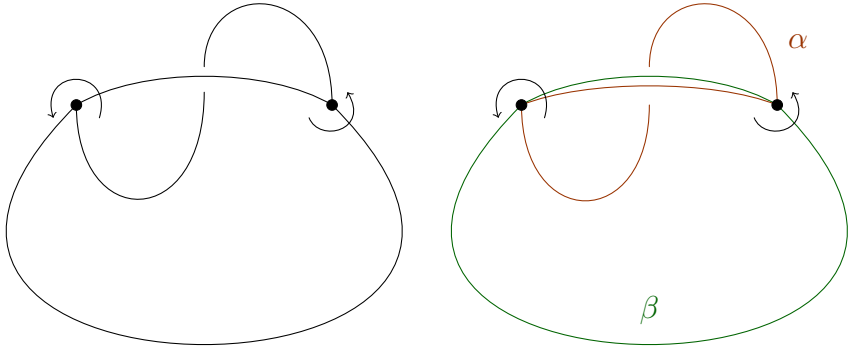

\subsection{The dual separatrix diagram}
\paragraph{The dual graph.}
Now, a ribbon graph is equivalent to a cell complex embedded on a surface (see \cite{GraphsOS}; in fact, one can also see that a ribbon graph is planar if and only if it can be embedded in a sphere).
In particular, there is a well defined notion of faces and it is possible to define the dual of a ribbon graph: replace each face by a vertex, and each edge would then become an edge connecting the two faces it separates (it could be the same face, and in this case we just get a loop around the vertex) and the orientation of the vertex at each edge is just the circular orientation of the edges of the corresponding face. Finally, the vertices of the ribbon graph give the faces of the dual ribbon graph. The dual ribbon graph is then just the dual cell complex and it can be embedded on the same surface. In particular, the dual graph of a planar ribbon graph is planar, and conversely. Hence, we have:

\begin{Prop}\label{prop:dual_planar}
Horizontal closed curves are pairwise non-intersecting if and only if the dual separatrix diagram is planar. 
\end{Prop}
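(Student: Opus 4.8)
The plan is to deduce this directly from Proposition \ref{prop:planar_separatrix_diagram} together with the duality discussion above. By Proposition \ref{prop:planar_separatrix_diagram}, horizontal closed curves on $X$ are pairwise non-intersecting if and only if the horizontal separatrix diagram is planar. Hence it suffices to prove that the horizontal separatrix diagram is planar if and only if the dual separatrix diagram is planar, and then chain the two equivalences.

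For the duality step I would proceed as follows. First recall (from \cite{GraphsOS}) that a ribbon graph canonically determines a cell complex structure on a closed oriented surface $\Sigma$, and conversely, and that by definition a ribbon graph is \emph{planar} precisely when this surface may be taken to be the sphere, i.e. when $\Sigma$ has genus zero. The key point is that the dual ribbon graph, as constructed above (faces becoming vertices, edges separating the same pair of faces, with the cyclic orientation at each new vertex read off from the cyclic order of edges around the corresponding face), is realized as the dual cell complex on the \emph{same} surface $\Sigma$; this is exactly the compatibility assertion stated in the paragraph preceding the proposition. Since passing to the dual cell complex leaves the underlying surface unchanged, the dual ribbon graph is planar if and only if $\Sigma$ is the sphere, i.e. if and only if the original ribbon graph is planar. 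Applying this with the ribbon graph equal to the horizontal separatrix diagram gives the desired equivalence, and combining with Proposition \ref{prop:planar_separatrix_diagram} proves Proposition \ref{prop:dual_planar}.

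The only delicate point — and the one I expect to be the main obstacle — is making the correspondence ``ribbon graph $\leftrightarrow$ cell complex on a surface'' precise enough that the duality statement is unambiguous, in particular verifying that the cyclic orientation we put on the dual graph by hand coincides with the one induced by the dual cell complex on $\Sigma$. This is entirely standard, so I would cite \cite{GraphsOS} for it rather than reprove it; once that identification is granted, the rest of the argument is purely formal.
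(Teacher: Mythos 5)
Your proposal is correct and follows essentially the same route as the paper: the paper likewise combines Proposition \ref{prop:planar_separatrix_diagram} with the observation that the dual ribbon graph is the dual cell complex on the same underlying surface, so that planarity is preserved under dualization. Your explicit attention to checking that the hand-defined cyclic orientation on the dual agrees with the one induced by the dual cell complex is a reasonable point of care, and the appeal to \cite{GraphsOS} for it matches the paper's treatment.
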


In the next paragraph we give an alternative definition of the dual separatrix diagram which comes directly from the cylinder decomposition and is easier to work with in the case of Bouw-M\"oller surfaces.

\paragraph{An alternative definition.}
Let $X$ be a translation surface which is completely periodic in the horizontal direction. As such, it is decomposed into cylinders: let $\mathcal{C}_1, \dots , \mathcal{C}_n$ be the horizontal cylinders. We define the dual separatrix diagram $\GG (X)$ having $2n$ vertices as follows: 
\begin{itemize}
\item \textbf{Vertices.} For every horizontal cylinder $\mathcal{C}$, we define two vertices of the graph, denoted $\mathcal{C}^+$ and $\mathcal{C}^-$. 
One can think of them as denoting the top and the bottom of each cylinder.
\item \textbf{Edges.} For a horizontal saddle connection $A$ connecting the top of a cylinder $\mathcal{C}_i$ to the bottom of a cylinder $\mathcal{C}_{i'}$, we define an edge between the vertices of label $\mathcal{C}_i^+$ and $\mathcal{C}_{i'}^-$.

\item \textbf{Cyclic orientation on the vertices.}
Given a vertex $\mathcal{C}^+$, corresponding to the top of the cylinder $\mathcal{C}$, define the cyclic orientation on the edges from $\mathcal{C}^+$ as the left to right order on the horizontal saddle connections on the top of $\mathcal{C}^+$.\newline
Given a vertex $\mathcal{C}^-$, corresponding to the bottom of the cylinder $\mathcal{C}$, define the cyclic orientation on the edges from $\mathcal{C}^-$ as the right to left order on the horizontal saddle connections on the bottom of $\mathcal{C}^-$. See Figure \ref{fig:construction_graph}.
\end{itemize}
An example of such graph is depicted in Figure \ref{fig:example_graph}. We explain in the next section why it is the dual graph of the horizontal separatrix diagram.

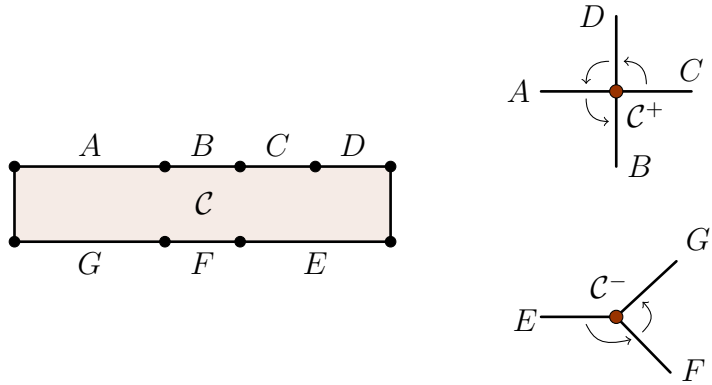
\begin{figure}[h]
\center
\definecolor{zzttqq}{rgb}{0.6,0.2,0}
\begin{tikzpicture}[line cap=round,line join=round,>=triangle 45,x=1cm,y=1cm]
\clip(-5.5,-2.5) rectangle (5,3.5);
\fill[line width=2pt,color=zzttqq,fill=zzttqq,fill opacity=0.1] (-5,1) -- (0,1) -- (0,0) -- (-5,0) -- cycle;
\draw [line width=1pt] (-5,0)-- (-5,1);
\draw [line width=1pt] (-5,1)-- (-3,1);
\draw [line width=1pt] (-3,1)-- (-2,1);
\draw [line width=1pt] (-2,1)-- (-1,1);
\draw [line width=1pt] (-1,1)-- (0,1);
\draw [line width=1pt] (0,1)-- (0,0);
\draw [line width=1pt] (0,0)-- (-2,0);
\draw [line width=1pt] (-2,0)-- (-3,0);
\draw [line width=1pt] (-3,0)-- (-5,0);
\draw [line width=1pt] (3,2)-- (2,2);
\draw [line width=1pt] (3,2)-- (3,3);
\draw [line width=1pt] (3,2)-- (4,2);
\draw [line width=1pt] (3,2)-- (3,1);
\draw [line width=1pt] (3,-1)-- (2,-1);
\draw [line width=1pt] (3,-1)-- (3.8,-0.26);
\draw [line width=1pt] (3,-1)-- (3.72,-1.74);
\draw (-4,1) node[above] {$A$};
\draw (-2.5,1) node[above] {$B$};
\draw (-1.5,1) node[above] {$C$};
\draw (-0.5,1) node[above] {$D$};
\draw (-4,0) node[below] {$G$};
\draw (-2.5,0) node[below] {$F$};
\draw (-1,0) node[below] {$E$};
\draw (2,2) node[left] {$A$};
\draw (3,3) node[left] {$D$};
\draw (4,2) node[above] {$C$};
\draw (3,1) node[right] {$B$};
\draw [-to] (3.4,2.1) .. controls (3.4,2.3) and (3.3,2.4) .. (3.1,2.4);
\draw [-to] (2.9,2.4) .. controls (2.7,2.4) and (2.6,2.3) .. (2.6,2.1);
\draw [-to] (2.6, 1.9) .. controls (2.6, 1.7) and (2.7,1.6) .. (2.9,1.6);
\draw (1.48,-0.76) node[anchor=north west] {$E$};
\draw (3.78,0.32) node[anchor=north west] {$G$};
\draw (3.72,-1.42) node[anchor=north west] {$F$};
\draw (-2.74,0.8) node[anchor=north west] {$\mathcal{C}$};
\draw (3,2) node[anchor=north west] {$\mathcal{C}^+$};
\draw (3.3,-0.9) node[anchor=south east] {$\mathcal{C}^-$};
\draw [-to] (2.6,-1.1) .. controls (2.7,-1.3) and (2.8,-1.4) .. (3.2,-1.3);
\draw [-to] (3.35,-1.2) .. controls (3.5,-1.05) and (3.5,-0.95) .. (3.35,-0.8);
\begin{scriptsize}
\draw [fill=black] (-5,0) circle (2pt);
\draw [fill=black] (-5,1) circle (2pt);
\draw [fill=black] (-3,1) circle (2pt);
\draw [fill=black] (-2,1) circle (2pt);
\draw [fill=black] (-1,1) circle (2pt);
\draw [fill=black] (0,1) circle (2pt);
\draw [fill=black] (0,0) circle (2pt);
\draw [fill=black] (-2,0) circle (2pt);
\draw [fill=black] (-3,0) circle (2pt);
\draw [fill=zzttqq] (3,2) circle (2.5pt);
\draw [fill=zzttqq] (3,-1) circle (2.5pt);
\end{scriptsize}
\end{tikzpicture}
\caption{The two vertices $\mathcal{C}^+$ and $\mathcal{C}^-$ from a cylinder $\mathcal{C}$. The circular order of the edges is determined by the order of the horizontal saddle connections.}
\label{fig:construction_graph}
\end{figure}

\begin{figure}[h]
\center
\definecolor{qqwuqq}{rgb}{0,0.39215686274509803,0}
\definecolor{zzttqq}{rgb}{0.6,0.2,0}
\definecolor{uuuuuu}{rgb}{0.26666666666666666,0.26666666666666666,0.26666666666666666}
\begin{tikzpicture}[line cap=round,line join=round,>=triangle 45,x=1cm,y=1cm]
\clip(-3.1,-2.1) rectangle (10.1,2.1);
\fill[line width=2pt,color=zzttqq,fill=zzttqq,fill opacity=0.10000000149011612] (-3,1) -- (0,1) -- (0,0) -- (-3,0) -- cycle;
\fill[line width=2pt,color=qqwuqq,fill=qqwuqq,fill opacity=0.1] (-1,0) -- (1,0) -- (1,-1) -- (-1,-1) -- cycle;
\draw [line width=1pt] (0,0)-- (-1,0);
\draw [line width=1pt] (-1,0)-- (-2,0);
\draw [line width=1pt] (-2,0)-- (-3,0);
\draw [line width=1pt] (-3,0)-- (-3,1);
\draw [line width=1pt] (-3,1)-- (-2,1);
\draw [line width=1pt] (-2,1)-- (-1,1);
\draw [line width=1pt] (-1,1)-- (0,1);
\draw [line width=1pt] (0,1)-- (0,0);
\draw [line width=1pt] (-1,0)-- (-1,-1);
\draw [line width=1pt] (-1,-1)-- (0,-1);
\draw [line width=1pt] (0,-1)-- (1,-1);
\draw [line width=1pt] (1,0)-- (0,0);
\draw [line width=1pt] (1,0)--(1,-1);
\begin{scriptsize}
\draw [fill=uuuuuu] (0,0) circle (2pt);
\draw [fill=black] (-1,0) circle (2pt);
\draw [fill=black] (-2,0) circle (2pt);
\draw [fill=black] (-3,0) circle (2pt);
\draw [fill=black] (-3,1) circle (2pt);
\draw [fill=black] (-2,1) circle (2pt);
\draw [fill=black] (-1,1) circle (2pt);
\draw [fill=black] (0,1) circle (2pt);
\draw [fill=black] (-1,-1) circle (2pt);
\draw [fill=black] (0,-1) circle (2pt);
\draw [fill=black] (1,-1) circle (2pt);
\draw [fill=black] (1,0) circle (2pt);
\draw [fill=zzttqq] (5,1) circle (2.5pt);
\draw [fill=zzttqq] (5,-1) circle (2.5pt);
\draw [fill=qqwuqq] (8,1) circle (2.5pt);
\draw [fill=qqwuqq] (8,-1) circle (2.5pt);
\end{scriptsize}
\draw (5,1) node[anchor=south west] {$\mathcal{C}_1^+$};
\draw (5,-1) node[anchor=north west] {$\mathcal{C}_1^-$};
\draw (8,1) node[anchor=south east] {$\mathcal{C}_2^+$};
\draw (8,-1) node[anchor=north east] {$\mathcal{C}_2^-$};
\draw (-1.5,0.5) node[anchor=center] {$\mathcal{C}_1$};
\draw (0,-0.5) node[anchor=center] {$\mathcal{C}_2$};
\draw (-2.5,1) node[above] {$A$};
\draw (-1.5,1) node[above] {$B$};
\draw (-0.5,1) node[above] {$C$};
\draw (-0.5,0) node[below] {$A'$};
\draw (-1.5,0) node[below] {$B$};
\draw (-2.5,0) node[below] {$C$};
\draw (0.5,0) node[above] {$D$};
\draw (0.5,-1) node[below] {$D$};
\draw (-0.5,-1) node[below] {$A$};
\draw (5,1) -- (6.4,0.07);
\draw (6.6,-0.07) -- (8, -1);
\draw (5,-1) .. controls (6.5,0) .. (8, 1);
\draw (5,1) .. controls (3,2) .. (4.3,0.1);
\draw (4.4,-0.05) -- (5,-1);
\draw (5,1) .. controls (3,-2) .. (5, -1);
\draw (8,1) .. controls (10,2) and (10,-2) .. (8, -1);
\draw (6,0.5) node[above] {$A$};
\draw (7,0.5) node[above] {$A'$};
\draw (3.4,1.5) node[left] {$B$};
\draw (3.5,-1.5) node[left] {$C$};
\draw (9.1,-1.2) node[right] {$D$};
\end{tikzpicture}
\caption{Example of decomposition into cylinders and its associated graph.}
\label{fig:example_graph}
\end{figure}
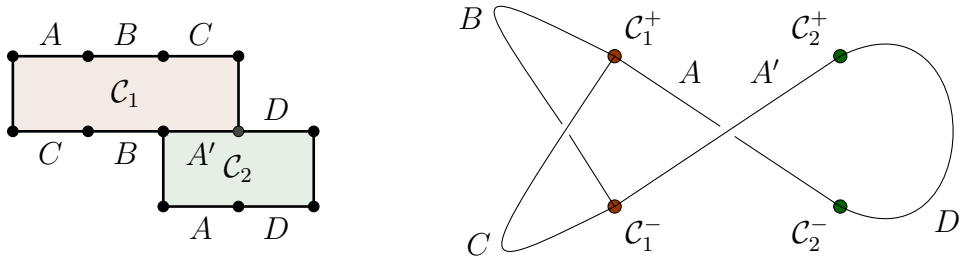

\subsection{Turning aroung the singularity}
Let us clarify how to tell the cyclic sequence of horizontal saddle connections crossed while turning around the singularity, as it will explain how to recover the separatrix diagram from the dual separatrix diagram and hence prove that the two definitions of dual separatrix diagram coincide.

Assume we turn around the singularity in the trigonometric order, as in, counter-clockwise starting from a horizontal. Then, see Figure \ref{fig:explication_turning_around_sing},
\begin{itemize}
\item after crossing a side $A$ on the top of a cylinder $\mathcal{C}$, we cross a side $B$ which is the top side of $\mathcal{C}$ directly on the right of $A$ (up to cyclic identification);
\item after crossing a side $C$ on the bottom of a cylinder $\mathcal{C}$, we cross a side $D$ which is the bottom side of $\mathcal{C}$ directly on the left of $C$ (up to cyclic identification);
\end{itemize}

\begin{figure}[h]
\center
\definecolor{qqwuqq}{rgb}{0,0.39215686274509803,0}
\begin{tikzpicture}[line cap=round,line join=round,>=triangle 45,x=1cm,y=1cm,scale=0.8]
\clip(-0.5,-0.6) rectangle (8,2.7);
\fill[line width=1pt,color=qqwuqq,fill=qqwuqq,fill opacity=0.1] (0,0) -- (4,0) -- (4,2) -- (0,2) -- cycle;
\draw [line width=1pt] (0,2)-- (2,2);
\draw [line width=1pt] (2,2)-- (4,2);
\draw [line width=1pt] (0,0)-- (2,0);
\draw [line width=1pt] (2,0)-- (4,0);
\draw [shift={(2,2)},line width=1pt, -to]  plot[domain=3.141592653589793:4.71238898038469,variable=\t]({1*0.4*cos(\t r)+0*0.4*sin(\t r)},{0*0.4*cos(\t r)+1*0.4*sin(\t r)});
\draw [shift={(2,2)},line width=1pt]  plot[domain=-1.5707963267948966:0,variable=\t]({1*0.4*cos(\t r)+0*0.4*sin(\t r)},{0*0.4*cos(\t r)+1*0.4*sin(\t r)});
\draw [shift={(2,0)},line width=1pt, -to]  plot[domain=0:1.5707963267948966,variable=\t]({1*0.4*cos(\t r)+0*0.4*sin(\t r)},{0*0.4*cos(\t r)+1*0.4*sin(\t r)});
\draw [shift={(2,0)},line width=1pt]  plot[domain=1.5707963267948966:3.141592653589793,variable=\t]({1*0.4*cos(\t r)+0*0.4*sin(\t r)},{0*0.4*cos(\t r)+1*0.4*sin(\t r)});
\draw (1,2) node[above] {$A$};
\draw (3,2) node[above] {$B$};
\draw (3,0) node[below] {$C$};
\draw (1,0) node[below] {$D$};
\draw (1,1) node[right] {$\mathcal{C}$};
\draw [line width=1pt] (5.5,2)-- (6.5,2);
\draw [line width=1pt] (6.5,2)-- (7.5,2);
\draw [line width=1pt] (6.5,0)-- (5.5,0);
\draw [line width=1pt] (6.5,0)-- (7.5,0);
\draw [shift={(6.5,0)},line width=1pt,-to]  plot[domain=0:1.5707963267948966,variable=\t]({1*0.5*cos(\t r)+0*0.5*sin(\t r)},{0*0.5*cos(\t r)+1*0.5*sin(\t r)});
\draw [shift={(6.5,0)},line width=1pt]  plot[domain=1.5707963267948966:3.141592653589793,variable=\t]({1*0.5*cos(\t r)+0*0.5*sin(\t r)},{0*0.5*cos(\t r)+1*0.5*sin(\t r)});
\draw [shift={(6.5,2)},line width=1pt,-to]  plot[domain=0:1.5707963267948966,variable=\t]({1*0.5*cos(\t r)+0*0.5*sin(\t r)},{0*0.5*cos(\t r)+1*0.5*sin(\t r)});
\draw [shift={(6.5,2)},line width=1pt]  plot[domain=1.5707963267948966:3.141592653589793,variable=\t]({1*0.5*cos(\t r)+0*0.5*sin(\t r)},{0*0.5*cos(\t r)+1*0.5*sin(\t r)});
\draw (7.5,2) node[below] {$A$};
\draw (5.5,2) node[below] {$B$};
\draw (5.5,0) node[above] {$D$};
\draw (7.5,0) node[below] {$C$};
\draw (6.37563118052241,2.0813394257655298) node[anchor=north west] {$\mathcal{C}^+$};
\draw (6.403251722021421,0.037419354838711005) node[anchor=north west] {$\mathcal{C}^-$};
\begin{scriptsize}
\draw [color=black] (2,2)-- ++(-2.5pt,-2.5pt) -- ++(5pt,5pt) ++(-5pt,0) -- ++(5pt,-5pt);
\draw [fill=black] (2,0) circle (2pt);
\draw [color=qqwuqq] (6.5,2)-- ++(-2.5pt,-2.5pt) -- ++(5pt,5pt) ++(-5pt,0) -- ++(5pt,-5pt);
\draw [fill=qqwuqq] (6.5,0)-- ++(-2.5pt,-2.5pt) -- ++(5pt,5pt) ++(-5pt,0) -- ++(5pt,-5pt);
\end{scriptsize}
\end{tikzpicture}
\caption{Turning around the singularity on the surface $X$ and on the graph $\GG (X)$.}
\label{fig:explication_turning_around_sing}
\end{figure}
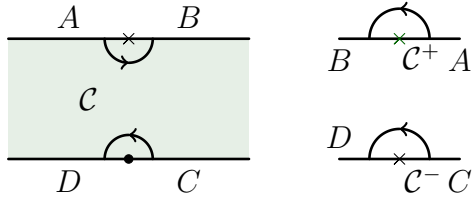

In particular, turning around a singularity on the surface may be described combinatorially as following a path in the graph $\mathfrak{G}(X)$, or more precisely in the directed double cover $\mathfrak{G}^{2,\to}(X)$ obtained by replacing each edge of $\mathfrak{G}(X)$ with two oppositely oriented copies. At each vertex, the path continues along the edge immediately following the incoming edge in the counterclockwise cyclic order around that vertex, see Figure \ref{fig:example_turning_around_sing}. Note that vertices may be visited multiple times, and that an edge of $\mathfrak{G}(X)$ may be traversed twice, once in each direction.

The number of pairwise disjoint paths\footnote{Viewed as collections of oriented edges in the directed double copy $\mathfrak{G}^{2,\to}(X)$.} obtained in this manner coincides with the number of singularities of $X$. In particular, if $X$ has a unique singularity, then $\mathfrak{G}(X)$ is connected.

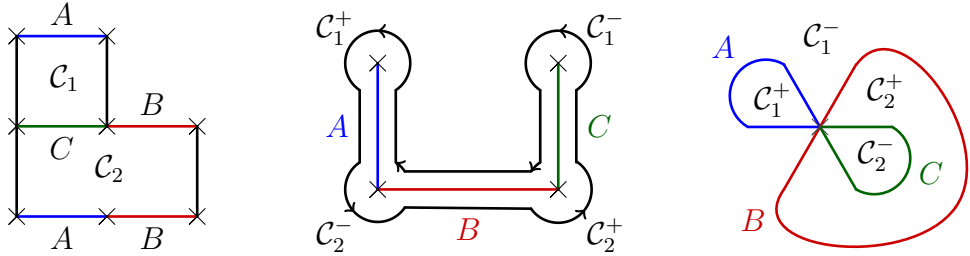
\begin{figure}[h]
\center
\definecolor{uuuuuu}{rgb}{0.26666666666666666,0.26666666666666666,0.26666666666666666}
\definecolor{qqwuqq}{rgb}{0,0.39215686274509803,0}
\definecolor{ccqqqq}{rgb}{0.8,0,0}
\definecolor{qqqqff}{rgb}{0,0,1}
\begin{tikzpicture}[line cap=round,line join=round,>=triangle 45,x=1cm,y=1cm, scale=1.2]
\clip(-4,-0.5) rectangle (8,2.5);
\draw [line width=1pt,color=qqqqff] (-3,0)-- (-2,0);
\draw [line width=1pt,color=ccqqqq] (-2,0)-- (-1,0);
\draw [line width=1pt,color=ccqqqq] (-2,1)-- (-1,1);
\draw [line width=1pt] (-1,1)-- (-1,0);
\draw [line width=1pt,color=qqwuqq] (-2,1)-- (-3,1);
\draw [line width=1pt] (-3,1)-- (-3,0);
\draw [line width=1pt,color=qqqqff] (-3,2)-- (-2,2);
\draw [line width=1pt] (-2,2)-- (-2,1);
\draw [line width=1pt] (-3,2)-- (-3,1);
\draw [line width=1pt,color=ccqqqq] (5.9,0.9928203230275512)-- (5.5,0.3);
\draw [line width=1pt,color=qqqqff] (5.9,0.9928203230275512)-- (5.1,0.9928203230275519);
\draw [line width=1pt,color=qqqqff] (5.9,0.9928203230275512)-- (5.5,1.6856406460551026);
\draw [line width=1pt,color=ccqqqq] (5.9,0.9928203230275512)-- (6.3,1.6856406460551019);
\draw [line width=1pt,color=qqwuqq] (5.9,0.9928203230275512)-- (6.7,0.9928203230275505);
\draw [line width=1pt,color=ccqqqq] (5.5,0.3) .. controls (5,-0.4) and (6.9,-0.6) .. (7.4,0.1);
\draw [line width=1pt,color=ccqqqq] (7.4,0.1) .. controls (7.9,0.8) and (6.8,2.38) .. (6.3,1.68);
\draw [line width=1pt,color=qqwuqq] (5.9,0.9928203230275512)-- (6.3,0.3);
\draw [shift={(5.3,1.3392304845413272)},line width=1pt,color=qqqqff]  plot[domain=1.0471975511965963:4.18879020478639,variable=\t]({1*0.4*cos(\t r)+0*0.4*sin(\t r)},{0*0.4*cos(\t r)+1*0.4*sin(\t r)});
\draw [shift={(6.5,0.6464101615137753)},line width=1pt,color=qqwuqq]  plot[domain=-2.0943951023931966:1.047197551196597,variable=\t]({1*0.4*cos(\t r)+0*0.4*sin(\t r)},{0*0.4*cos(\t r)+1*0.4*sin(\t r)});
\draw [line width=1pt,color=qqwuqq] (3,0.3)-- (3,1.7);
\draw [line width=1pt,color=qqqqff] (1,1.7)-- (1,0.3);
\draw (-2.5,0) node[below] {$A$};
\draw (-1.5,1) node[above] {$B$};
\draw (-1.5,0) node[below] {$B$};
\draw (-2.5,1) node[below] {$C$};
\draw (-2.5,2) node[above] {$A$};
\draw [color=qqqqff](4.576353735052029,2.118313933624504) node[anchor=north west] {$A$};
\draw [color=qqwuqq](6.874090763751652,0.7165498343549337) node[anchor=north west] {$C$};
\draw [color=ccqqqq](4.9,0.2) node[anchor=north west] {$B$};
\draw (-2.75,1.8) node[anchor=north west] {$\mathcal{C}_1$};
\draw (-2.25,0.8) node[anchor=north west] {$\mathcal{C}_2$};
\draw (0.2,2.4) node[anchor=north west] {$\mathcal{C}_1^+$};
\draw (3.2,2.4) node[anchor=north west] {$\mathcal{C}_1^-$};
\draw (3.2,0.1) node[anchor=north west] {$\mathcal{C}_2^+$};
\draw (0.2,0.1) node[anchor=north west] {$\mathcal{C}_2^-$};
\draw [color=qqqqff](0.8,1) node[left] {$A$};
\draw [color=ccqqqq](2,0.1) node[below] {$B$};
\draw [color=qqwuqq](3.2,1) node[right] {$C$};
\draw [line width=1pt,color=ccqqqq] (1,0.3)-- (3,0.3);
\draw [line width=1pt] (0.8,1.4)-- (0.8,0.6);
\draw [line width=1pt] (1.2,1.4)-- (1.2,0.6);
\draw [line width=1pt] (1.3,0.5)-- (2.7,0.5);
\draw [line width=1pt] (1.3,0.1)-- (2.6980140264410246,0.08665695808140177);
\draw [line width=1pt] (3.2,0.6)-- (3.2,1.4);
\draw [line width=1pt] (2.8,0.6)-- (2.8,1.4);
\draw [shift={(1,1.7)},line width=1pt,-to]  plot[domain=-0.9827937232473296:1.5707963267948966,variable=\t]({1*0.3605551275463989*cos(\t r)+0*0.3605551275463989*sin(\t r)},{0*0.3605551275463989*cos(\t r)+1*0.3605551275463989*sin(\t r)});
\draw [shift={(1,1.7)},line width=1pt]  plot[domain=1.5707963267948966:4.124386376837123,variable=\t]({1*0.36055512754639873*cos(\t r)+0*0.36055512754639873*sin(\t r)},{0*0.36055512754639873*cos(\t r)+1*0.36055512754639873*sin(\t r)});
\draw [shift={(1,0.3)},line width=1pt,-to]  plot[domain=2.158798930342464:3.911439915453955,variable=\t]({1*0.36055512754639885*cos(\t r)+0*0.36055512754639885*sin(\t r)},{0*0.36055512754639885*cos(\t r)+1*0.36055512754639885*sin(\t r)});
\draw [shift={(1,0.3)},line width=1pt]  plot[domain=3.911439915453955:5.695182703632019,variable=\t]({1*0.3605551275463988*cos(\t r)+0*0.3605551275463988*sin(\t r)},{0*0.3605551275463988*cos(\t r)+1*0.3605551275463988*sin(\t r)});
\draw [shift={(3,0.3)},line width=1pt,-to]  plot[domain=3.7566455297784405:5.641372015605702,variable=\t]({1*0.3697442112615733*cos(\t r)+0*0.3697442112615733*sin(\t r)},{0*0.3697442112615733*cos(\t r)+1*0.3697442112615733*sin(\t r)});
\draw [shift={(3,0.3)},line width=1pt]  plot[domain=-0.6418132915738841:0.9827937232473286,variable=\t]({1*0.36974421126157336*cos(\t r)+0*0.36974421126157336*sin(\t r)},{0*0.36974421126157336*cos(\t r)+1*0.36974421126157336*sin(\t r)});
\draw [shift={(3,1.7)},line width=1pt,-to]  plot[domain=-0.9827937232473287:1.558735750079066,variable=\t]({1*0.36055512754639907*cos(\t r)+0*0.36055512754639907*sin(\t r)},{0*0.36055512754639907*cos(\t r)+1*0.36055512754639907*sin(\t r)});
\draw [shift={(3,1.7)},line width=1pt]  plot[domain=1.558735750079066:4.124386376837122,variable=\t]({1*0.3605551275463991*cos(\t r)+0*0.3605551275463991*sin(\t r)},{0*0.3605551275463991*cos(\t r)+1*0.3605551275463991*sin(\t r)});
\draw [shift={(1,0.3)},line width=1pt,-to]  plot[domain=0.5880026035475675:0.9827937232473292,variable=\t]({1*0.360555127546399*cos(\t r)+0*0.360555127546399*sin(\t r)},{0*0.360555127546399*cos(\t r)+1*0.360555127546399*sin(\t r)});
\draw [shift={(3,0.3)},line width=1pt,-to]  plot[domain=2.1587989303424644:2.5535900500422253,variable=\t]({1*0.36055512754639907*cos(\t r)+0*0.36055512754639907*sin(\t r)},{0*0.36055512754639907*cos(\t r)+1*0.36055512754639907*sin(\t r)});
\draw (5.05,1.6) node[anchor=north west] {$\mathcal{C}_1^+$};
\draw (5.6,2.3) node[anchor=north west] {$\mathcal{C}_1^-$};
\draw (6.3,1.7) node[anchor=north west] {$\mathcal{C}_2^+$};
\draw (6.2,1) node[anchor=north west] {$\mathcal{C}_2^-$};
\begin{scriptsize}
\draw [color=black] (-3,0)-- ++(-2.5pt,-2.5pt) -- ++(5pt,5pt) ++(-5pt,0) -- ++(5pt,-5pt);
\draw [color=black] (-3,1)-- ++(-2.5pt,-2.5pt) -- ++(5pt,5pt) ++(-5pt,0) -- ++(5pt,-5pt);
\draw [color=black] (-3,2)-- ++(-2.5pt,-2.5pt) -- ++(5pt,5pt) ++(-5pt,0) -- ++(5pt,-5pt);
\draw [color=black] (-2,2)-- ++(-2.5pt,-2.5pt) -- ++(5pt,5pt) ++(-5pt,0) -- ++(5pt,-5pt);
\draw [color=black] (-2,1)-- ++(-2.5pt,-2.5pt) -- ++(5pt,5pt) ++(-5pt,0) -- ++(5pt,-5pt);
\draw [color=black] (-1,1)-- ++(-2.5pt,-2.5pt) -- ++(5pt,5pt) ++(-5pt,0) -- ++(5pt,-5pt);
\draw [color=black] (-1,0)-- ++(-2.5pt,-2.5pt) -- ++(5pt,5pt) ++(-5pt,0) -- ++(5pt,-5pt);
\draw [color=black] (-2,0)-- ++(-2.5pt,-2.5pt) -- ++(5pt,5pt) ++(-5pt,0) -- ++(5pt,-5pt);
\draw [color=uuuuuu] (5.9,0.9928203230275512)-- ++(-2.5pt,-2.5pt) -- ++(5pt,5pt) ++(-5pt,0) -- ++(5pt,-5pt);
\draw [color=black] (1,0.3)-- ++(-2.5pt,-2.5pt) -- ++(5pt,5pt) ++(-5pt,0) -- ++(5pt,-5pt);
\draw [color=black] (1,1.7)-- ++(-2.5pt,-2.5pt) -- ++(5pt,5pt) ++(-5pt,0) -- ++(5pt,-5pt);
\draw [color=black] (3,1.7)-- ++(-2.5pt,-2.5pt) -- ++(5pt,5pt) ++(-5pt,0) -- ++(5pt,-5pt);
\draw [color=black] (3,0.3)-- ++(-2.5pt,-2.5pt) -- ++(5pt,5pt) ++(-5pt,0) -- ++(5pt,-5pt);
\end{scriptsize}
\end{tikzpicture}
\caption{From the dual separatrix diagram to the separatrix diagram.}
\label{fig:example_turning_around_sing}
\end{figure}

\subsection{Saddle connections on Bouw-M\"oller surfaces}
In this paragraph we use the criterion of Proposition \ref{prop:dual_planar} to show that horizontal closed curves are pairwise non-intersecting on Bouw-M\"oller surfaces, hence proving Proposition \ref{prop:intersection_horizontal_sc_Bouw_Moller}.\newline

Let $m,n \geq 2$, $mn \geq 6$. We recall that the Bouw-M\"oller surface $S_{m,n}$ has $gcd(m,n)$ singularities. 
Moreover, $S_{m,n}$ is a Veech surface and as such it is completely periodic in every saddle connection direction. Furthermore, all the cylinder decomposition in every periodic direction have the same combinatoric structure and we have:
\begin{Lem}\label{lem:top_bottom_cyl_Smn}
The Bouw-M\"oller surface $S_{m,n}$ is decomposed into $\frac{(m-1) \times (n-1)}{2}$ cylinders, and every cylinder has either one or two top saddle connections and one or two bottom saddle connections.
\end{Lem}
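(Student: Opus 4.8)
The plan is to prove both assertions by a direct inspection of the explicit combinatorial presentation of $S_{m,n}$ in a convenient periodic direction: since all cylinder decompositions of $S_{m,n}$ in periodic directions share the same combinatorial structure, computing the data in one such direction suffices, and in particular it is enough to use the presentation in which $S_{m,n}$ is built out of $m-1$ semiregular polygons (equivalently, Hooper's grid–graph model). In that model the horizontal direction is completely periodic, the surface is cut into rectangles organised by a grid graph $\Gamma_{m,n}$ whose vertex set carries a bipartition coming from the underlying square lattice, the two colour classes indexing the horizontal cylinders and the transverse (vertical) cylinders respectively, and each cylinder $\mathcal{C}_v$ is glued from the rectangles $R_e$ indexed by the edges $e$ of $\Gamma_{m,n}$ incident to $v$, arranged cyclically left to right around $v$.

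The count is then the size of one colour class of $\Gamma_{m,n}$, which one reads off the model as $\frac{(m-1)(n-1)}{2}$ (in the grid picture, one of the two colour classes of the $(m-1)\times(n-1)$ lattice of vertices; the normalisation of the model must be tracked so the count comes out exactly as stated). For the top/bottom statement, fix a horizontal cylinder $\mathcal{C}_v$. All the rectangles $R_e$ composing it have the same height, namely the height of $\mathcal{C}_v$, so the top of $\mathcal{C}_v$ is the concatenation of the top edges of the $R_e$, and a horizontal saddle connection on top of $\mathcal{C}_v$ is exactly a maximal run of these top edges between two consecutive singularities of $X$; likewise for the bottom. In the model the singularities occurring along the top of $\mathcal{C}_v$ correspond to the unit squares (faces) of $\Gamma_{m,n}$ sitting above $v$, of which there are at most two, and symmetrically at most two below; since the top and bottom boundary of a cylinder are always non-empty, this forces one or two saddle connections on the top and one or two on the bottom. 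A short case distinction according to the position of $v$ in $\Gamma_{m,n}$ (interior, boundary, or corner vertex), and according to the degenerate shape of $\Gamma_{m,n}$ when $m=2$ or $n=2$, makes this dictionary explicit and pins down which cases give one and which give two.

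The main obstacle is bookkeeping rather than conceptual content: one has to set up precisely, in the chosen presentation and normalisation, the correspondence between the edges/faces of $\Gamma_{m,n}$ at $v$ and the top versus bottom of $\mathcal{C}_v$ — i.e.\ match the cyclic order of the rectangles $R_e$ with the top/bottom splitting — and to verify that the boundary vertices of $\Gamma_{m,n}$ are exactly the ones producing a single saddle connection on the relevant side, with no spurious singularities created or genuine ones hidden at the junctions. Once this dictionary between $\Gamma_{m,n}$ and the horizontal cylinder boundaries is fixed, both the count $\frac{(m-1)(n-1)}{2}$ and the ``one or two'' statement follow immediately; with Lemma \ref{lem:top_bottom_cyl_Smn} available, the dual separatrix diagram $\GG(S_{m,n})$ can then be described vertex by vertex and shown to be planar, which via Proposition \ref{prop:dual_planar} yields Proposition \ref{prop:intersection_horizontal_sc_Bouw_Moller}.
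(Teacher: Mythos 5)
Your overall strategy --- reading both assertions off an explicit combinatorial presentation of $S_{m,n}$ in a single periodic direction --- is the same in spirit as the paper's, but the paper's actual argument is much more direct and bypasses the grid graph entirely: by construction $S_{m,n}$ is a union of semiregular polygons glued in a row, every horizontal cylinder is contained in the union of two consecutive polygons, and each polygon contains at most one horizontal top (resp.\ bottom) saddle connection; the bound ``one or two'' on each side of each cylinder is then immediate. Your identification of the horizontal cylinders with one colour class of Hooper's grid graph $\Gamma_{m,n}$ is a legitimate alternative viewpoint (and arguably more explicit for the cylinder count, which the paper's proof does not address at all), but note that the two colour classes of an $(m-1)\times(n-1)$ grid need not have equal cardinality, so even the count $\frac{(m-1)(n-1)}{2}$ requires you to pin down the conventions rather than ``read it off.''

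The genuine gap is in the top/bottom bound, which is the actual content of the lemma. You derive it from the claim that the singularity occurrences along the top of $\mathcal{C}_v$ correspond to the faces of $\Gamma_{m,n}$ lying above $v$, of which there are at most two --- but this dictionary is asserted, not established, and you yourself defer its verification (``no spurious singularities created or genuine ones hidden at the junctions'') to unspecified bookkeeping. It cannot be taken literally as stated: a vertex on the top row of $\Gamma_{m,n}$ has no bounded face above it, yet the corresponding cylinder boundary must carry at least one singularity, so the unbounded face has to enter the accounting; and since $S_{m,n}$ has $\gcd(m,n)$ singularities while $\Gamma_{m,n}$ has $(m-2)(n-2)$ bounded faces, the correspondence is necessarily between singularity \emph{occurrences} (corners on cylinder boundaries) and vertex--face incidences, which is precisely the nontrivial verification you postpone. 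Until that is done, the ``one or two'' statement has been translated into grid-graph language rather than proved. The one-line polygonal observation used in the paper (each cylinder meets two polygons, each polygon contributes at most one horizontal side to the top and at most one to the bottom) closes exactly this gap and is the argument you should substitute or reprove in your model.
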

\begin{proof}[Proof of Lemma \ref{lem:top_bottom_cyl_Smn}]
By construction of $S_{m,n}$, all cylinders are contained in the union of two polygons, and in each polygon there is at most one horizontal top (resp. bottom) saddle connection. 
\end{proof}
From this lemma we deduce that each vertex of the dual separatrix diagram $\GG(S_{m,n})$ of $S_{m,n}$ has either one or two incident edges. Thus, every connected component of $\GG(S_{m,n})$ is either a tree or a cycle, and $\GG(S_{m,n})$ is planar. Hence, Proposition \ref{prop:intersection_horizontal_sc_Bouw_Moller} follows from Proposition \ref{prop:dual_planar}.

\begin{Rema}
For a translation surface with only one singularity, computing the Euler characteristic of the separatrix diagram in a periodic direction, as well as the Euler characteristic of its cylinder decomposition, shows that having a planar separatrix diagram is equivalent to having exactly $g$ cylinders, with $g$ being the genus of the surface. This is the case, for example, for any periodic direction in any algebraically primitive Veech surface which has a single singularity (see \cite{Hubert_Lanneau_Parabolic}). In particular Proposition \ref{prop:dual_planar} and \cite[Theorem 1.5]{Bou23} imply that KVol is always bounded on the Teichm\"uller disk of an algebraically primitive Veech surface having a single singularity. 


\end{Rema}

\section{Extension to the Teichm\"uller disk}\label{sec:extention_teichmuller}

In the rest of the paper we study KVol as a function on the Teichm\"uller disk of $S_{m,n}$, with $m,n$ coprime. 
In this section, we follow Sections 4 and 5 of \cite{BLM22} and give another convenient expression for KVol on the Teichm\"uller disk of $S_{m,n}$. We will then study the cylinder decompositions of $S_{m,n}$ in Section \ref{sec:study_cylinder} in order to derive estimates which allow to compute KVol on the Teichm\"uller disk.

\subsection{Preliminaries : Direction decomposition}\label{sec:direction_decomposition}
Recall that we have a parametrization of the Teichm\"uller disk given by Definition \ref{def:teichmuller}. We start with a few definitions.

\begin{Def}[The \coslo of a saddle connection on $X \in \Tmn$]\label{def:directions}

\begin{enumerate}[label=(\roman*)]
\item Given a saddle connection $\alpha$ on $S_{m,n}$, of holonomy vector $\Vec{\alpha} = \begin{pmatrix} \alpha_x \\ \alpha_y \end{pmatrix}$, we define the \emph{\coslo} of $\alpha$ as $d_{\alpha} := -\cfrac{\alpha_x}{\alpha_y} \in \RR \cup \{ \infty \}$, that is minus the co-slope of the holonomy vector $\Vec{\alpha}$. If $\alpha$ is horizontal, the co-slope is defined as $\infty$. \newline
Using the classical identification $\RR \cup \{ \infty \} \equiv \partial \HH$,
this means that we are associating a point in $\partial \HH$ to each saddle connection in $S_{m,n}$. \newline
\item Further, for any $M \in GL^+_2(\RR)$, the \coslo of a saddle connection $\alpha \subset M \cdot S_{m,n}$ is defined as the \coslo of the preimage $M^{-1} \cdot \alpha$, which is a saddle connection on the surface $S_{m,n}$.
\end{enumerate}
\end{Def}

Note that for saddle connections in $S_{m,n}$ the \coslo is just the direction of the saddle connection, so we will use the two names interchangeably. In particular, the set of admissible consistent slopes corresponds to the periodic directions on the surface $S_{m,n}$. We will denote this set $\mathcal{P} \subset \partial \HH$.\newline

The above definition allows to compute the angle between two saddle connections on any surface of the $SL_2(\RR)$-orbit of $S_{m,n}$. More precisely, we will denote by $B_\theta (d,d')$ the set of elements $M \cdot S_{m,n} \in \Tmn$ such that the holonomy vectors of $M \cdot \alpha$ and $M \cdot \alpha'$ form an angle $\theta$, with $\alpha$ and $\alpha'$ two saddle connections in $S_{m,n}$ with consistent slopes  $d$ and $d'$.
Note that this is not the same as the angle between the directions $d$ and $d'$.
Recalling Definition \ref{def:teichmuller}, the set $B_\theta (d,d')$ is identified (via $\Psi$) to a subset of $\HH$, and we have:


\begin{Prop}\cite[\S 4]{BLM22}\label{prop:directions}
The set $\Psi(B_\theta(d,d')) \subset \HH$ is the banana neighbourhood
\[
\gamma_{d,d',r}= \{ z \in \HH: \mathrm{dist}_{\HH}(z, \gamma_{d,d'} ) \leq r \}
\]
where $\cosh r = \cfrac{1}{\sin \theta}$, and $\gamma_{d,d'}$ is the hyperbolic geodesic having endpoints $d,d' \in \partial \HH$ (see Figure \ref{fig:banana}).\newline

In particular, the locus of surfaces in $\Tmn$ where the saddle connections of respective consistent slopes $d$ and $d'$ are orthogonal is the hyperbolic geodesic $\gamma_{d,d'}$. \newline
\end{Prop}

\begin{figure}[h]
\begin{minipage}[l]{0.1\linewidth}
\begin{tikzpicture}[line cap=round,line join=round,>=triangle 45,x=1cm,y=1cm, scale=1.25]
\clip(-3,0) rectangle (2.6,4);
\draw [line width=2pt,color=gray,domain=-3.0190928292046935:2.895707170795306] plot(\x,{(-0-0*\x)/4});
\draw [shift={(0,0)},line width=1pt,color=black]  plot[domain=0:3.141592653589793,variable=\t]({1*2*cos(\t r)+0*2*sin(\t r)},{0*2*cos(\t r)+1*2*sin(\t r)});

\draw [->,line width=1pt,color=red] (0,0) -- (1,1);
\draw [line width=1pt,color=red] (0,0) -- (5,5);

\draw [->,line width=1pt,color=black] (0,0) -- (0,1);
\draw [line width=1pt,color=black] (0,0) -- (0,5);

\draw [->,line width=1pt,color=red] (0,0) -- (-1,1);
\draw [line width=1pt,color=red] (0,0) -- (-5,5);

\draw (-2.001161147327249,0) node[anchor=north west] {$\bar{u}$};
\draw (1.9857379400260755,0) node[anchor=north west] {$\bar{v}$};
\draw (0,3) node[anchor=north west] {$\gamma_{d,d'}=\gamma_{0,\infty}$};
\draw (-2.4,3) node[anchor=north west] {$\gamma_{d,d',r}$};
\draw (1,2.5) node[anchor=north west] {$\gamma_{d,d',r}$};

\draw (1,2.5) node[anchor=north west] {$\gamma_{d,d',r}$};

\node[draw,circle,inner sep=1.5pt,fill] at (0,2) {};
\node[draw,circle,inner sep=1.5pt,fill] at (1.4,1.4) {};
\draw[left] (0,2.2) node {\begin{scriptsize} $i$ \end{scriptsize}};
\draw[right] (1.4,1.4) node {\begin{scriptsize} $z=e^{i\theta}$ \end{scriptsize}};

\draw [shift={(0,0)},line width=1pt,fill=red,fill opacity=0.10000000149011612] (0,0) -- (45.13010235415601:0.23134810951760104) arc (45.13010235415601:0:0.23134810951760104) -- cycle;

\draw [shift={(0,0)},line width=1pt,fill=red,fill opacity=0.10000000149011612] (0,0) -- (-45.13010235415601:-0.23134810951760104) arc (-45.13010235415601:0:-0.23134810951760104) -- cycle;

\begin{scriptsize}
\draw[color=black] (-0.3,0.15) node {$\theta$};
\draw[color=black] (0.3,0.15) node {$\theta$};
\end{scriptsize}
\end{tikzpicture}
\end{minipage}
\hskip 60mm
\begin{minipage}[l]{0.1\linewidth}
\begin{tikzpicture}[line cap=round,line join=round,>=triangle 45,x=1cm,y=1cm, scale=1.25]
\clip(-3,0) rectangle (2.6,4);
\draw [shift={(0,0)},line width=1pt,color=black]  plot[domain=0:3.141592653589793,variable=\t]({1*2*cos(\t r)+0*2*sin(\t r)},{0*2*cos(\t r)+1*2*sin(\t r)});
\draw [shift={(0,1.5)},line width=1pt,color=red]  plot[domain=-0.6435011087932843:3.7850937623830774,variable=\t]({1*2.5*cos(\t r)+0*2.5*sin(\t r)},{0*2.5*cos(\t r)+1*2.5*sin(\t r)});
\draw [shift={(0,-1.5)},line width=1pt,color=red]  plot[domain=0.6435011087932844:2.498091544796509,variable=\t]({1*2.5*cos(\t r)+0*2.5*sin(\t r)},{0*2.5*cos(\t r)+1*2.5*sin(\t r)});
\draw [->,line width=1pt,color=red] (-2,0) -- (-1.4084909621903532,0.7886787170795292);
\draw [->,line width=1pt,color=black] (-2,0) -- (-2,1);
\draw [->,line width=1pt,color=red] (-2,0) -- (-2.6164052672750966,0.8218736897001288);
\draw [line width=2pt,color=gray,domain=-3.0190928292046935:2.895707170795306] plot(\x,{(-0-0*\x)/4});
\draw (-2.001161147327249,0) node[anchor=north west] {$\bar{u}$};
\draw (1.9857379400260755,0) node[anchor=north west] {$\bar{v}$};
\draw (-0.4973984354628422,2.5) node[anchor=north west] {$\gamma_{d,d'}$};
\draw (-0.3971475880052151,1.0268959582790067) node[anchor=north west] {$\gamma_{d,d',r}$};
\draw (-0.28918513689700126,3.9727285528031264) node[anchor=north west] {$\gamma_{d,d',r}$};

\draw [shift={(-2,0)},line width=1pt,fill=red,fill opacity=0.10000000149011612] (0,0) -- (53.13010235415601:0.23134810951760104) arc (53.13010235415601:0:0.23134810951760104) -- cycle;

\draw [shift={(-2,0)},line width=1pt,fill=red,fill opacity=0.10000000149011612] (0,0) -- (-53.13010235415601:-0.23134810951760104) arc (-53.13010235415601:0:-0.23134810951760104) -- cycle;

\begin{scriptsize}
\draw[color=black] (-2.3,0.15) node {$\theta$};
\draw[color=black] (-1.7,0.15) node {$\theta$};
\end{scriptsize}
\end{tikzpicture}
\end{minipage}
\caption{The set $\gamma_{d,d',r}$ for $\cosh r = \frac{1}{\sin \theta}$.}
\label{fig:banana}
\end{figure}
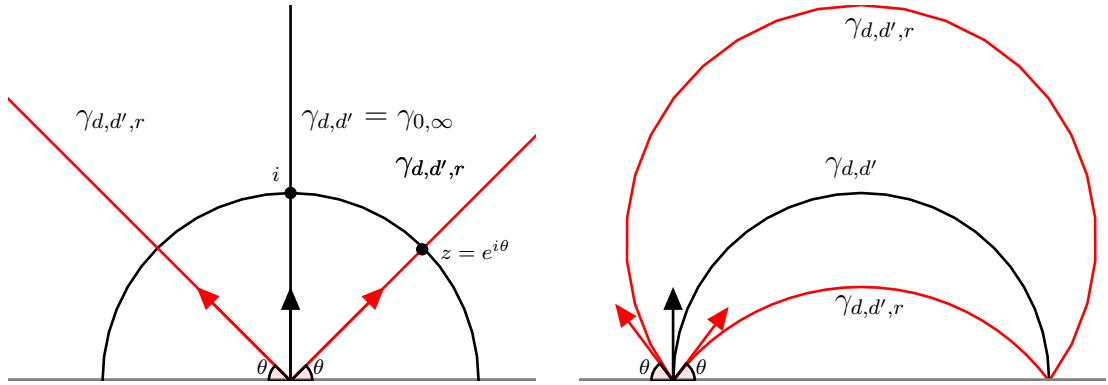

In the rest of the paper, we use the following notation.

\begin{Nota}\label{nota:sinus}
Given $X = M \cdot S_{m,n} \in \Tmn$, and $d,d' \in \partial \HH$ distinct, we define
$\theta(X,d,d') \in ]0, \frac{\pi}{2}]$ as the (unoriented) angle between saddle connections of respective consistent slopes $d$ and $d'$ in the surface $X$.
\end{Nota}

With this notation, we have by Proposition \ref{prop:directions}:
\begin{equation}\label{eq:lien_sin_cosh}
\sin \theta(X,d,d') = \cfrac{1}{\cosh(d_{\HH}(X ,\gamma_{d,d'}))}.
\end{equation}

\subsection{Another look at KVol}
The above geometric interpretation allows to rewrite KVol as a supremum over pairs of directions instead of saddle connections, by grouping together all pairs of saddle connections $(\alpha,\beta)$ having \coslos $(d,d')$. More precisely, we have:
\begin{Prop}\cite[Proposition 5.1]{BLM22}\label{prop:sup_directions}
Let $m,n \geq 2$ coprime, and let $\mathcal{P}$ be the set of consistent slopes of periodic saddle connections of $S_{m,n}$. Then for any surface $X \in \Tmn$ we have:
\begin{equation}\label{eq:reformulation}
\KVol(X) = \mathrm{Vol}(X) \cdot \sup_{
 \begin{scriptsize}
 \begin{array}{c}
d, d' \in \mathcal{P} \\
d \neq d'
\end{array}
\end{scriptsize}} K(d,d')\cdot \sin \theta(X,d,d'),
\end{equation}
where
$K(d,d') = \sup
\left\{
\frac{\mathrm{Int} (\alpha,\beta)}{\alpha\wedge \beta}, \textit{ with }
 \begin{array}{c}
\alpha\subset S_{m,n} \ {\mathrm s.\ c.\ with\ \coslo } d \\
\beta\subset S_{m,n} \ {\mathrm s.\ c.\ with\ \coslo } d'
\end{array}
\right\}
$.

\end{Prop}

The proof in \cite{BLM22} is for the case of the double regular $n$-gon, but it only uses the fact that they are dealing with Veech surfaces having a single singularity and no pairs of intersecting saddle connections in the same direction, which is also the case for $S_{m,n}$ for coprime $m,n$, as seen in Section \ref{sec:intersection_horizontal}.

\begin{Rema}\label{rk:Kdd'}
\begin{itemize}
\item The affine group of $S_{m,n}$ acts on $S_{m,n}$ preserving the intersection form, and the Veech group $\Gamma$ acts linearly on $\RR^2$, hence preserving the wedge product. In particular, $K(d,d') = K(g \cdot d, g \cdot d')$ for any element $g \in \Gamma$.
\item Since any periodic direction in $S_{m,n}$ is the image of the horizontal direction by an element of the Veech group $\Gamma$, we deduce that we can assume $d = \infty$.
\end{itemize}
\end{Rema}

From Equation~\eqref{eq:lien_sin_cosh}, the statement of Theorem \ref{theo:extension_teich} can be reformulated as follows:

\[ \forall X \in \Tmn,\text{ } \KVol(X) = K_{m,n} \cdot \sin \theta \left(X, \infty, \pm\cot \frac{\pi}{n}\right). \]
where $\sin \theta \left(X, \infty, \pm\cot \frac{\pi}{n}\right) := \max \left(\sin \theta \left(X, \infty, +\cot \frac{\pi}{n}\right), \sin \theta \left(X, \infty, -\cot \frac{\pi}{n}\right)\right).$
In fact, we will see that the constant $K_{m,n}$ of Theorem \ref{theo:extension_teich} is $\mathrm{Vol}(X) \cdot K(\infty,\pm \cot \frac{\pi}{n})$. More precisely, we can directly obtain Theorem~\ref{theo:extension_teich} from Proposition \ref{prop:sup_directions} and the following result: 

\begin{Theo}\label{theo:reformulation}
For every $X \in \Tmn$ and for every pair of distinct periodic directions $(d,d') \in (\partial \HH)^2$, we have:
\[ K(d,d') \sin \theta(X,d,d') \leq K\left(\infty,\pm \cot \frac{\pi}{n}\right) \sin \theta \left(X, \infty, \pm\cot \frac{\pi}{n}\right).\]
\end{Theo}

Before going on any further, let us remark that for $X = S_{m,n}$ (with $m,n \geq 3$ coprime), we can combine Theorem \ref{thm:mainBM} and Proposition \ref{prop:intersection_systoles} to obtain that the supremum in the definition of $\KVol(S_{m,n})$ is achieved by a horizontal curve and a curve having direction $\pm \cot(\frac{\pi}{n})$. Combining this result with Proposition \ref{prop:sup_directions}, we obtain:
\begin{Cor}\label{cor:Smn_K}
For every pair of distinct periodic directions $(d,d')$
, we have:
\[ K(d,d') \sin \theta(S_{m,n},d,d') \leq K\left(\infty,\pm \cot \frac{\pi}{n}\right)\sin \theta \left(S_{m,n}, \infty, \pm \cot \frac{\pi}{n}\right).\]
\end{Cor}

Notice that in this case $\theta\left(S_{m,n}, \infty,\cot \frac{\pi}{n} \right) = \theta\left(S_{m,n}, \infty, -\cot \frac{\pi}{n}\right)= \frac{\pi}{n}$. We will use Corollary \ref{cor:Smn_K} in the proof of Theorem~\ref{theo:reformulation}.

\subsection{Strategy for the proof of Theorem \ref{theo:reformulation}}
In order to show Theorem \ref{theo:reformulation}, we use the following result, proven in Section \ref{sec:sinus_comparison}, 

\begin{Theo}\label{theo:big_statement}
Let $a,b,c \in \RR$ with $0 < |b| \leq |c|$. Let $\mathcal{D}$ be the hyperbolic domain delimited by the geodesics $\gamma_{a,\infty}$, $\gamma_{a+b, \infty}$ and $\gamma_{a-c, a+c}$. Let also $X_0$ be the intersection point of $\gamma_{a+b,\infty}$ and $\gamma_{a-c, a+c}$, which exist by the assumption $|b| \leq |c|$. Assume that the angle between the geodesics $\gamma_{a+b,\infty}$ and $\gamma_{a+c,a-c}$ is of the form $\frac{\pi}{n}$, for $n \geq 2$ so that the group generated by the reflections along the geodesics $\gamma_{a+b,\infty}$ and $\gamma_{a-c, a+c}$ is a dihedral group. Let $\mathcal{P} \subset \partial \HH$ containing at least $\infty,a$, and $a+2b$, and $K(\cdot,\cdot): \mathcal{P}\times \mathcal{P} \backslash \Delta \to \RR$ be a map which is symmetric with respect to its two coordinates ($\Delta$ is the diagonal $\{(x,x), x \in \mathcal{P}\}$.) and invariant under the diagonal action of the dihedral group generated by the reflections along the geodesics $\gamma_{a+b,\infty}$ and $\gamma_{a-c, a+c}$. \newline

Assume that 
\begin{enumerate}
\item[(H1)] For any distinct $d,d' \in \mathcal{P}$,
 \[ K(d,d') \sin \theta(X_0, d,d') \leq K(\infty, a) \sin \theta(X_0,\infty,a) \]
\item[(H2)] For any distinct $d,d' \in \mathcal{P}$, 
\[ K(d,d') \leq K(\infty,a) \]
\item[(H3)] For any distinct $d,d' \in \mathcal{P}$ such that $\gamma_{d,d'}$ intersects $\mathcal{D}$ and $(d,d') \neq (\infty,a)$, 
\[ K(d,d') \leq \min \left( \sin \left(\frac{\pi}{4} \right), \frac{\sin \theta(X_0, \infty, a)}{\sin \theta(X_0, a, a+2b)} \right) K(\infty, a) \]
\end{enumerate}
Then, for any $X \in \mathcal{D}$ and any distinct $d,d' \in \mathcal{P}$
 \[ K(d,d') \sin \theta (X, d,d') \leq K(\infty, a) \sin \theta(X,\infty,a). \]
\end{Theo}

\begin{Nota}
Although it is more compact to state (H3) as in Theorem \ref{theo:big_statement}, it will be convenient for the proof to separate it into the following two conditions
\begin{equation}\label{H31}
K(d,d') \leq \sin \left(\frac{\pi}{4} \right) K(\infty, a)  \tag{$H3^+$}
\end{equation}
and 
\begin{equation}\label{H32}
K(d,d') \leq \frac{\sin \theta(X_0, \infty, a)}{\sin \theta(X_0, a, a+2b)} K(\infty, a) \tag{$H3^-$}
\end{equation}
\end{Nota}

In particular, it suffices to show that our function $K(\cdot,\cdot)$ from Proposition \ref{prop:sup_directions} defined over the set $\mathcal{P}$ of periodic directions, satisfies the hypotheses of Theorem \ref{theo:big_statement} in the fundamental domain described in Section \ref{sec:Teichmuller_disk_BM}. 
We will split the fundamental domain in four subdomains $\mathcal{D}_1$, $\mathcal{D}_2$, $\mathcal{D}_3$, separated by the vertical lines at 0 and at $\pm \cot \frac{\pi}{n}$ and ordered from left to right, as in Figure \ref{fig:fond_domain}. We will then use Theorem \ref{theo:big_statement} on each of these four domains.
\begin{Rema}\label{rk:parameters_Di}
On the domain $\mathcal{D}_1$, the corresponding parameters $a,b$ and $c$ are given by 
\begin{align*}
    a &= - \cot(\pi/n), &b&=-\frac{\cos(\pi/m)}{\sin(\pi/n)} &&\text{ and } &c&=\sin(\pi/n).
\end{align*}

Notice that the point $X_0$ represents the surface $S_{n,m}$.\newline
Similarly, on the domains $\mathcal{D}_2, \mathcal{D}_3$ and $\mathcal{D}_4$ the parameters are given by
\begin{itemize}
\item $(a,b,c) = \left(- \cot(\pi/n),+\cot(\pi/n),+\sin(\pi/n)\right)$ on $\mathcal{D}_2$
\item $(a,b,c) = \left(+ \cot(\pi/n),-\cot(\pi/n),+\sin(\pi/n)\right)$ on $\mathcal{D}_3$
\item $(a,b,c) = \left(+ \cot(\pi/n),+\frac{\cos(\pi/m)}{\sin(\pi/n)},+\sin(\pi/n)\right)$ on $\mathcal{D}_4$.
\end{itemize}
\end{Rema}

Notice that $(H1)$ comes directly from Corollary \ref{cor:Smn_K}. 
We show $(H2)$, $(H3)$ in the next section.

\begin{Rema}\label{rk:modified_big_statement}
In some cases, we will be able to simplify our argument by substituting conditions $(H1)$ and \HTD with the following. 
\begin{enumerate}
\item[$(H\star)$] $\theta(X_0, \infty, a) \geq \frac{\pi}{4}$.
\end{enumerate}
This is because $(H\star)$ and \HTU together imply $(H1)$ and \HTD. 
In general, $(H\star)$ does not hold but we can see from \S \ref{sec:Teichmuller_disk_BM} that it is the case on $\mathcal{D}_2$ and $\mathcal{D}_3$ if $n \leq 4$ and (by symmetry) on $\mathcal{D}_1$ and $\mathcal{D}_4$ if $m \leq 4$. 
We will use this simplification for $m,n \leq 4$. 
\end{Rema}

\begin{figure}
\center
\definecolor{qqqqff}{rgb}{0,0,1}
\definecolor{qqwuqq}{rgb}{0,0.39215686274509803,0}
\definecolor{ccqqqq}{rgb}{0.8,0,0}
\definecolor{uuuuuu}{rgb}{0.26666666666666666,0.26666666666666666,0.26666666666666666}
\begin{tikzpicture}[line cap=round,line join=round,>=triangle 45,x=1cm,y=1cm]
\clip(-6,-1) rectangle (6,8);
\fill[line width=0.5pt,color=uuuuuu,fill=uuuuuu,fill opacity=0.10000000149011612] (-4.528276847984907,0) -- (4.528276847984907,0) -- (4.528276847984907,9) -- (-4.528276847984907,9) -- cycle;
\draw [shift={(2.414213562373095,0)},line width=1pt,color=white,fill=white,fill opacity=1]  (0,0) --  plot[domain=0:3.141592653589793,variable=\t]({1*2.613125929752753*cos(\t r)+0*2.613125929752753*sin(\t r)},{0*2.613125929752753*cos(\t r)+1*2.613125929752753*sin(\t r)}) -- cycle ;
\draw [shift={(-2.414213562373095,0)},line width=1pt,color=white,fill=white,fill opacity=1]  (0,0) --  plot[domain=0:3.141592653589793,variable=\t]({1*2.613125929752753*cos(\t r)+0*2.613125929752753*sin(\t r)},{0*2.613125929752753*cos(\t r)+1*2.613125929752753*sin(\t r)}) -- cycle ;
\draw [line width=1pt] (4.528276847984907,0) -- (4.528276847984907,9.78);
\draw [line width=1pt, dash pattern=on 3pt off 3pt] (0,0) -- (0,10);
\draw [line width=1pt] (-4.528276847984907,0) -- (-4.528276847984907,9.78);
\draw [shift={(-2.414213562373095,0)},line width=1pt]  plot[domain=0.3926990816987242:2.5132741228718345,variable=\t]({1*2.613125929752753*cos(\t r)+0*2.613125929752753*sin(\t r)},{0*2.613125929752753*cos(\t r)+1*2.613125929752753*sin(\t r)});
\draw [shift={(2.414213562373095,0)},line width=1pt]  plot[domain=0.6283185307179587:2.748893571891069,variable=\t]({1*2.613125929752753*cos(\t r)+0*2.613125929752753*sin(\t r)},{0*2.613125929752753*cos(\t r)+1*2.613125929752753*sin(\t r)});
\draw [line width=1pt,domain=-6.411111111111112:11.148888888888898] plot(\x,{(-0-0*\x)/6.942490410358002});
\draw (0,0) node[below] {$0$};
\draw (-0.05,1) node[anchor=north west] {$S_{m,n}$};
\draw (4.6,1.5) node[anchor=north west] {$S_{n,m}$};
\draw (-5.7,1.5) node[anchor=north west] {$S_{n,m}$};
\draw [line width=1pt, dash pattern= on 3pt off 3pt] (-2.414213562373095,0) -- (-2.414213562373095,9.78);
\draw [line width=1pt, dash pattern= on 3pt off 3pt] (2.414213562373095,0) -- (2.414213562373095,9.78);
\draw (2.414213562373095,0.0) node[below] {$\cot(\frac{\pi}{n})$};
\draw (-2.414213562373095,0.0) node[below] {$-\cot(\frac{\pi}{n})$};
\draw (4.528276847984907,0) node[below] {$\frac{\cos(\frac{\pi}{n}) +\cos(\frac{\pi}{m})}{\sin(\frac{\pi}{n})}$};
\draw (-3.5,5) node[above] {$\mathcal{D}_1$};
\draw (-1.25,5) node[above] {$\mathcal{D}_2$};
\draw (1.25,5) node[above] {$\mathcal{D}_3$};
\draw (3.5,5) node[above] {$\mathcal{D}_4$};
\begin{scriptsize}
\draw [fill=black] (0,1) circle (2.5pt);
\draw [fill=black] (4.528276847984907,1.5359568838917255) circle (2.5pt);
\draw [fill=black] (-4.528276847984907,1.5359568838917255) circle (2.5pt);
\draw [color=black] (0,0)-- ++(-2.5pt,0 pt) -- ++(5pt,0 pt) ++(-2.5pt,-2.5pt) -- ++(0 pt,5pt);
\end{scriptsize}
\end{tikzpicture}
\caption{The fundamental domain $\Tmn$ of the Teichm\"uller disk of $S_{m,n}$ and the four domains $\mathcal{D}_1$, $\mathcal{D}_2$, $\mathcal{D}_3$ and $\mathcal{D}_4$.}
\label{fig:fond_domain}
\end{figure}
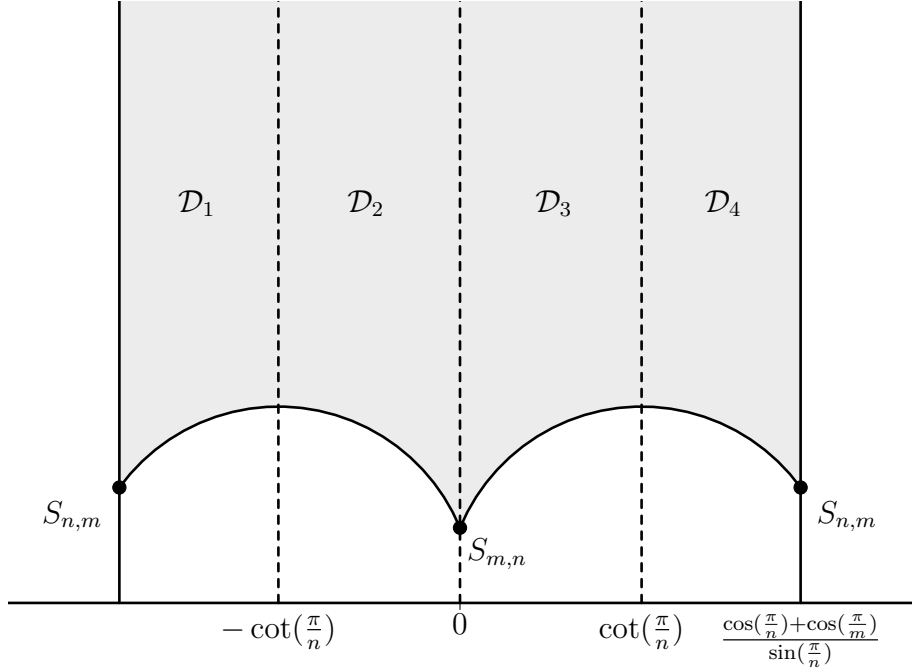


\section{Study of \texorpdfstring{$K(d,d')$}{Kdd}}\label{sec:study_cylinder}
In this section we give estimates on $K(d,d')$ which ensures that assumptions $(H2)$ and \HTU are satisfied on each of the domains $\mathcal{D}_1$ to $\mathcal{D}_4$, as well as \HTD when $n, m \geq 5$. 
More precisely:

\begin{Prop}\label{prop:etude_K}
Let $m,n$ coprime, with $3 \leq n < m$. Then,
\begin{itemize}
    \item[(i)] $K(\infty, \pm \cot(\pi/n)) = \frac{1}{\sin(\pi / n) l_0^2}$. 
    \item[(ii)] $K(\infty, \pm \cot(2\pi/n)) \leq \frac{1}{2\cos(\pi/n) \sin(\pi / n) l_0^2}$. 
\end{itemize}
Moreover, for every pair of distinct periodic directions $(d,d')$ which is not the image of one of the preceding pairs of directions by the diagonal action of the Veech group,
\begin{equation} \label{eq:boundKdd'}
    K(d,d') \leq \frac{1}{2\cos(\pi/m) \sin(\pi / n) l_0^2}.
\end{equation}  
In particular, we have
\begin{itemize}
    \item[(iii)] $K\left(\infty, \pm \cfrac{1+2 \cos(\pi/n)\cos(\pi/m)}{2\sin(\pi/n) \cos(\pi/m)}\right) \leq \frac{1}{2\cos(\pi/m) \sin(\pi / n) l_0^2}$. 
\end{itemize}
\end{Prop}

\begin{Rema}\label{rema:cas_particuliers_etude_Kdd}
\begin{itemize}
\item For $n=3$ we have $\cot(2\pi/3) = -\cot(\pi/3)$ and $2 \cos(\pi/3) = 1$: cases $(i)$ and $(ii)$ are merged.
\item For $n=4$, notice that $\cot(2\pi/n) = 0$. Interestingly, $K(\infty, \cot(2\pi/n))$ is realised by horizontal and vertical sides of $P(0)$ if $m \equiv 1 \mod 4$, while such sides do not intersect if $m \equiv 3 \mod 4$. In the latter case, it is the diagonals of $P(m-1)$, which are intersecting twice by Proposition \ref{prop:intersection_diagonals}, that achieve the best ratio for $K(\infty, \cot(2\pi/n))$. 
\item In cases $(ii)$ and $(iii)$, instead of having an equality we only give an upper bound. This is because the equality does not hold in general. More precisely, equality holds in case $(ii)$ if and only if a horizontal side of $P(0)$ intersects a side of $P(0)$ (or $P(m-1)$) in direction $d' = \pm \cot(2\pi/n)$. Similarly, equality holds in case $(iii)$ if and only if a horizontal side of $P(0)$ intersects a small diagonal of $P(1)$ (or $P(m-2)$) in direction $d' =\cfrac{1+2 \cos(\pi/n)\cos(\pi/m)}{2\sin(\pi/n) \cos(\pi/m)}$. However, the upper bound is sufficient to show that the hypotheses of Theorem \ref{theo:big_statement} are satisfied.
\end{itemize}
\end{Rema}

Before proving the proposition, we start with a few preliminaries about the horizontal cylinder decomposition of $S_{m,n}$.
\begin{Lem}[Length of horizontal saddle connections]\label{lem:lengths}
The three shortest lengths of horizontal saddle connections of $S_{m,n}$ (with $3 \leq n < m$) are 
\begin{align*}
l_0 = \sin \left(\frac{\pi}{m}\right), & &  l_1 := 2 \cos\left(\frac{\pi}{n}\right) l_0 & &\text{ and } & & l_2 := 2 \cos\left(\frac{\pi}{m}\right) l_0.
\end{align*}
Further,
\begin{itemize}
\item[-] $l_0$ is the length of the systole of $S_{m,n}$ which is realised by sides of $P(0)$ and $P(m-1)$. If $n$ is even, then there are two horizontal saddle connections of length $l_0$, and both lie in $P(0)\cap P(1)$. If $n$ is odd, there is one such saddle connection in $P(0)\cap P(1)$ and one in $P(m-1)\cap P(m-2)$.
\item[-] $l_1$ is the length of the smallest diagonal of $P(0)$ and $P(m-1)$, except when $n=3$, in which case there are no such diagonals, but $l_0=l_1$. If $n=4$ (then $m$ is odd and) there is only one horizontal saddle connection of length $l_1$, the diagonal of the square $P(m-1)$. 
In all other cases, there are two horizontal saddle connections of length $l_1$: if $n$ is even, both lie in $P(m-1)$, while if $n$ is odd, then one lie inside $P(0)$ and the other in $P(m-1)$.
\item[-] $l_2$ is the length of the long sides of $P(1)$ and $P(m-2)$. There are two such horizontal sides: if $n$ is even, then both are sides of $P(m-2)$, while if $n$ is odd, then there is one such side in $P(m-2)$ and another in $P(1)$.
\end{itemize}
\end{Lem}
\begin{proof}
First, the horizontal saddle connections contained in $P(0)$ or $P(m-1)$ correspond by construction to sides or diagonals of a regular $n$-gon of side length $\sin(\frac{\pi}{m})$, and hence have length $\sin(\frac{\pi}{m})$, $2 \cos(\frac{\pi}{n}) \sin(\frac{\pi}{m})$ (if $n > 3$), $\left(1 + 2 \cos(\frac{2\pi}{n})\right) \sin(\frac{\pi}{m})$ (if $n \geq 6$), and so on. Notice that $\left(1 + 2 \cos(\frac{2\pi}{n})\right) \sin(\frac{\pi}{m}) \geq 2 \sin \left( \frac{\pi}{m} \right)$ for $n \geq 6$, which is already greater than $l_2$.\newline
Next, 
the smallest horizontal saddle connections contained in $P(1)$ or $P(m-2)$ are the long sides having length $l_2 = \sin(\frac{2\pi}{m}) = 2 \cos(\frac{\pi}{m})\sin(\frac{\pi}{m})$. \newline
Finally, all other horizontal saddle connections (which are not contained in $P(0)$, $P(1)$, $P(m-2)$ or $P(m-1)$) are longer.
\end{proof}

\begin{figure}
\hskip -25mm
\definecolor{ccqqqq}{rgb}{0.8,0,0}
\begin{tikzpicture}[line cap=round,line join=round,>=triangle 45,x=1cm,y=1cm]
\clip(-9.538558698508323,-5) rectangle (8.865749847247395,3);
\draw [line width=2pt,color=ccqqqq] (-5,0)-- (-4,0);
\draw [line width=2pt] (-4,0)-- (-4,1);
\draw [line width=2pt,color=ccqqqq] (-4,1)-- (-5,1);
\draw [line width=2pt] (-5,1)-- (-5,0);
\draw [line width=2pt] (-4,0)-- (-2.8558771943646315,-1.144122805635369);
\draw [line width=2pt] (-2.8558771943646315,-1.144122805635369)-- (-1.8558771943646313,-1.144122805635369);
\draw [line width=2pt] (-1.8558771943646313,-1.144122805635369)-- (-0.7117543887292628,0);
\draw [line width=2pt] (-0.7117543887292628,0)-- (-0.7117543887292628,1);
\draw [line width=2pt] (-0.7117543887292628,1)-- (-1.8558771943646315,2.1441228056353685);
\draw [line width=2pt] (-1.8558771943646315,2.1441228056353685)-- (-2.8558771943646315,2.1441228056353685);
\draw [line width=2pt] (-2.8558771943646315,2.1441228056353685)-- (-4,1);
\draw [line width=2pt] (-0.7117543887292628,0)-- (-1.8558771943646313,-1.144122805635369);
\draw [line width=2pt] (-1.8558771943646313,-1.144122805635369)-- (-1.855877194364631,-2.7621567943852634);
\draw [line width=2pt] (-1.855877194364631,-2.7621567943852634)-- (-0.7117543887292628,-3.906279600020632);
\draw [line width=2pt,color=ccqqqq] (-0.7117543887292628,-3.906279600020632)-- (0.9062796000206319,-3.906279600020632);
\draw [line width=2pt] (0.9062796000206319,-3.906279600020632)-- (2.0504024056560004,-2.7621567943852634);
\draw [line width=2pt] (2.0504024056560004,-2.7621567943852634)-- (2.0504024056560004,-1.144122805635369);
\draw [line width=2pt] (2.0504024056560004,-1.144122805635369)-- (0.9062796000206325,0);
\draw [line width=2pt,color=ccqqqq] (0.9062796000206325,0)-- (-0.7117543887292628,0);
\draw [line width=2pt] (2.0504024056560004,-1.144122805635369)-- (2.757509186842548,-0.4370160244488215);
\draw [line width=2pt,color=ccqqqq] (2.757509186842548,-0.4370160244488215)-- (4.375543175592443,-0.4370160244488215);
\draw [line width=2pt] (4.375543175592443,-0.4370160244488215)-- (5.08264995677899,-1.144122805635369);
\draw [line width=2pt] (5.08264995677899,-1.144122805635369)-- (5.08264995677899,-2.7621567943852634);
\draw [line width=2pt] (5.08264995677899,-2.7621567943852634)-- (4.375543175592442,-3.469263575571811);
\draw [line width=2pt,color=ccqqqq] (4.375543175592442,-3.469263575571811)-- (2.7575091868425474,-3.469263575571811);
\draw [line width=2pt] (2.7575091868425474,-3.469263575571811)-- (2.0504024056560004,-2.7621567943852634);
\draw [line width=2pt] (5.08264995677899,-1.144122805635369)-- (5.789756737965539,-0.43701602444882215);
\draw [line width=2pt] (5.789756737965539,-0.43701602444882215)-- (5.082649956778991,0.27009075673772615);
\draw [line width=2pt] (5.082649956778991,0.27009075673772615)-- (4.375543175592443,-0.4370160244488215);
\draw [line width=2pt,color=ccqqqq] (-4,1)-- (-0.7117543887292628,1);
\draw [line width=2pt,color=ccqqqq] (-4,0)-- (-0.7117543887292628,0);
\draw [line width=2pt,color=ccqqqq] (-1.8558771943646313,-1.144122805635369)-- (2.0504024056560004,-1.144122805635369);
\draw [line width=2pt,color=ccqqqq] (-1.855877194364631,-2.7621567943852634)-- (2.0504024056560004,-2.7621567943852634);
\draw [line width=2pt,color=ccqqqq] (2.0504024056560004,-2.7621567943852634)-- (5.08264995677899,-2.7621567943852634);
\draw [line width=2pt,color=ccqqqq] (2.0504024056560004,-1.144122805635369)-- (5.08264995677899,-1.144122805635369);
\draw [line width=2pt,color=ccqqqq] (4.375543175592443,-0.4370160244488215)-- (5.789756737965539,-0.43701602444882215);
\draw (-5.2,2.2) node[anchor=north west] {$\sin(\frac{\pi}{m})$};
\draw (2.1,-3.8) node[anchor=north west] {$2 \cos(\frac{\pi}{m}) \sin(\frac{\pi}{m})$};
\draw (3.7,1.4) node[anchor=north west] {$2 \cos(\frac{\pi}{n}) \sin(\frac{\pi}{m})$};
\draw (-9.333610941874072,0.9) node[anchor=north west] {$2 \cos\left(\frac{\pi}{n}\right) \sin(\frac{\pi}{m}) \sin(\frac{\pi}{n})$};
\draw (6.1,-0.37) node[anchor=north west] {$\sin(\frac{\pi}{m}) \sin(\frac{\pi}{n})$};
\draw (-0.3364044256304639,2) node[anchor=north west] {$2 \cos(\frac{\pi}{m}) \sin(\frac{\pi}{m}) \sin(\frac{\pi}{n})$};
\draw [to-to,line width=1.2pt] (-0.500362630937868,2.1923693915856215)-- (-0.479867855274443,1.0036724031069697);
\draw [to-to,line width=1.2pt] (-5.3,1)-- (-5.3,0);
\draw [to-to,line width=1.2pt] (5.996481254367881,-0.4719514446596325)-- (5.996481254367881,-1.0458051632355334);
\draw [to-to,line width=1.2pt] (4.479867855274425,0.47080823585791887)-- (5.66856484375308,0.47080823585791887);
\draw [to-to,line width=1.2pt] (-3.9844744937201306,1.4135679163754702)-- (-4.988718501227959,1.4135679163754702);
\draw [to-to,line width=1.2pt] (2.778801475210144,-3.710125999480787)-- (4.3773939769573,-3.710125999480787);
\end{tikzpicture}
\caption{Lengths of horizontal saddle connections and height of the cylinders for $n=4$ and $m=5$.}
\end{figure}
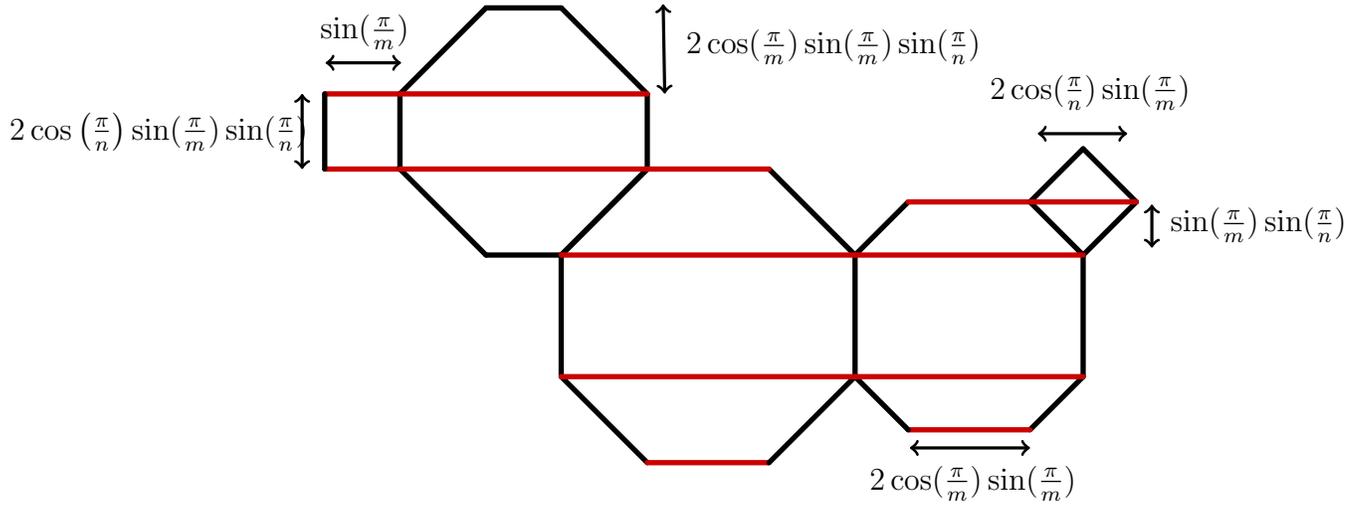

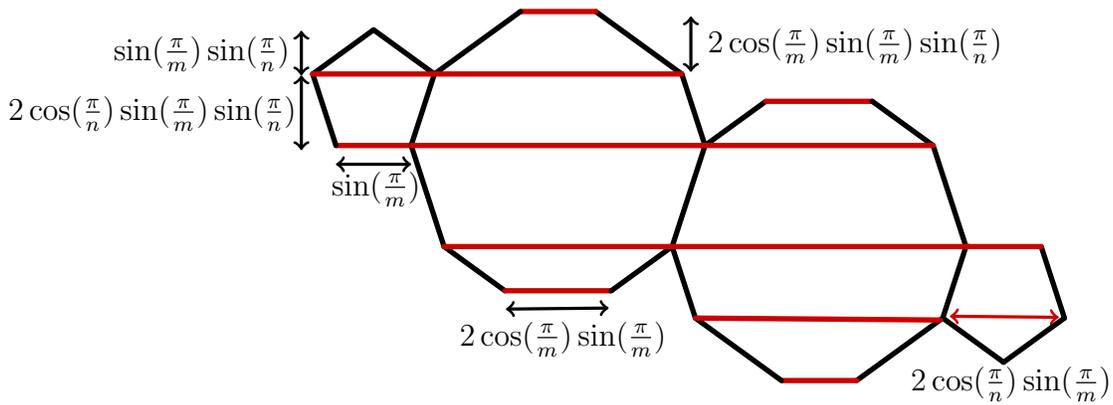
\begin{figure}
\hskip -15mm
\definecolor{ccqqqq}{rgb}{0.8,0,0}
\begin{tikzpicture}[line cap=round,line join=round,>=triangle 45,x=1cm,y=1cm]
\clip(-9.4,-4) rectangle (5.5,2);
\draw (-5.2,-0.2) node[anchor=north west] {$\sin(\frac{\pi}{m})$};
\draw (-3.5,-2.2) node[anchor=north west] {$2 \cos(\frac{\pi}{m}) \sin(\frac{\pi}{m})$};
\draw (2.5,-2.8) node[anchor=north west] {$2 \cos(\frac{\pi}{n}) \sin(\frac{\pi}{m})$};
\draw (-9.5,0.8) node[anchor=north west] {$2 \cos(\frac{\pi}{n}) \sin(\frac{\pi}{m}) \sin(\frac{\pi}{n})$};
\draw (-8.1,1.6) node[anchor=north west] {$\sin(\frac{\pi}{m}) \sin(\frac{\pi}{n})$};
\draw (-0.2,1.7) node[anchor=north west] {$2 \cos(\frac{\pi}{m}) \sin(\frac{\pi}{m}) \sin(\frac{\pi}{n})$};
\draw [to-to,line width=1.2pt] (-5.460098341486729,0.9319406882849823)-- (-5.460098341486736,-0.06205593139112908);
\draw [to-to,line width=1.2pt] (-0.2817516905279928,1.7175737553829395)-- (-0.2817516905279928,0.9797618314996394);
\draw [to-to,line width=1.2pt,color=ccqqqq] (3.1613706209274195,-2.272075907097128)-- (4.609668101142792,-2.285739090872745);
\draw [to-to,line width=1.2pt] (-4,-0.25)-- (-5,-0.25);
\draw [to-to,line width=1.2pt] (-2.7411247701390025,-2.1764336206678117)-- (-1.3748063925773304,-2.162770436892195);
\draw [line width=2pt,color=ccqqqq] (-5,0)-- (-4,0);
\draw [line width=2pt] (-4,0)-- (-3.6909830056250525,0.9510565162951532);
\draw [line width=2pt] (-3.6909830056250525,0.9510565162951532)-- (-4.5,1.5388417685876266);
\draw [line width=2pt] (-4.5,1.5388417685876266)-- (-5.3090169943749475,0.9510565162951536);
\draw [line width=2pt] (-5.3090169943749475,0.9510565162951536)-- (-5,0);
\draw [line width=2pt] (-4,0)-- (-3.562983975551179,-1.3449970239279145);
\draw [line width=2pt] (-3.562983975551179,-1.3449970239279145)-- (-2.7539669811762315,-1.932782276220388);
\draw [line width=2pt,color=ccqqqq] (-2.7539669811762315,-1.932782276220388)-- (-1.3397534188031364,-1.932782276220388);
\draw [line width=2pt] (-1.3397534188031364,-1.932782276220388)-- (-0.5307364244281889,-1.344997023927915);
\draw [line width=2pt] (-0.5307364244281889,-1.344997023927915)-- (-0.0937203999793681,0);
\draw [line width=2pt] (-0.0937203999793681,0)-- (-0.40273739435431555,0.951056516295153);
\draw [line width=2pt] (-0.40273739435431555,0.951056516295153)-- (-1.546860199989684,1.7823103918500598);
\draw [line width=2pt,color=ccqqqq] (-1.546860199989684,1.7823103918500598)-- (-2.546860199989684,1.7823103918500598);
\draw [line width=2pt] (-2.546860199989684,1.7823103918500598)-- (-3.6909830056250525,0.9510565162951532);
\draw [line width=2pt] (-0.0937203999793681,0)-- (0.7152965943955794,0.5877852522924722);
\draw [line width=2pt,color=ccqqqq] (0.7152965943955794,0.5877852522924722)-- (2.1295101567686743,0.5877852522924722);
\draw [line width=2pt] (2.1295101567686743,0.5877852522924722)-- (2.9385271511436217,0);
\draw [line width=2pt] (2.9385271511436217,0)-- (3.3755431755924428,-1.3449970239279156);
\draw [line width=2pt] (3.3755431755924428,-1.3449970239279156)-- (3.0665261812174953,-2.2960535402230686);
\draw [line width=2pt] (3.0665261812174953,-2.2960535402230686)-- (1.9224033755821268,-3.127307415777975);
\draw [line width=2pt,color=ccqqqq] (1.9224033755821268,-3.127307415777975)-- (0.9224033755821268,-3.127307415777975);
\draw [line width=2pt] (0.9224033755821268,-3.127307415777975)-- (-0.22171943005324168,-2.2960535402230686);
\draw [line width=2pt] (-0.22171943005324168,-2.2960535402230686)-- (-0.5307364244281889,-1.344997023927915);
\draw [line width=2pt] (3.0665261812174953,-2.2960535402230686)-- (3.875543175592442,-2.8838387925155415);
\draw [line width=2pt] (3.875543175592442,-2.8838387925155415)-- (4.684560169967389,-2.296053540223069);
\draw [line width=2pt] (4.684560169967389,-2.296053540223069)-- (4.375543175592442,-1.3449970239279159);
\draw [line width=2pt,color=ccqqqq] (4.375543175592442,-1.3449970239279159)-- (3.3755431755924428,-1.3449970239279156);
\draw [to-to,line width=1.2pt] (-5.460098341486729,1.5262891825243075)-- (-5.460098341486729,0.9319406882849823);
\draw [line width=2pt,color=ccqqqq] (-5.309016994374948,0.9510565162951541)-- (-3.6909830056250525,0.9510565162951535);
\draw [line width=2pt,color=ccqqqq] (-3.6909830056250525,0.9510565162951535)-- (-0.40273739435431555,0.951056516295153);
\draw [line width=2pt,color=ccqqqq] (-4,0)-- (-0.09372039997936808,0);
\draw [line width=2pt,color=ccqqqq] (-3.562983975551179,-1.3449970239279143)-- (-0.5307364244281888,-1.3449970239279145);
\draw [line width=2pt,color=ccqqqq] (-0.5307364244281888,-1.3449970239279145)-- (3.420971112664135,-1.3449970239279163);
\draw [line width=2pt,color=ccqqqq] (3.0310863262910455,-2.3218021020134745)-- (-0.22171943005324196,-2.296053540223068);
\draw [line width=2pt,color=ccqqqq] (-0.09372039997936808,0)-- (2.9385271511436217,0);
\end{tikzpicture}
\caption{Lengths of horizontal saddle connections and height of the cylinders for $n=5$ and $m=4$.}
\end{figure}

Similarly, we can compute the height of the smallest horizontal cylinders:

\begin{Lem}[Height of horizontal cylinders] \label{lem:heights}
The three smallest height of horizontal cylinders of $S_{m,n}$ (with $3 \leq n < m$) are
\begin{align*}
   h_0:= \sin(\pi / n) l_0,  & & h_1 := 2 \cos(\pi / n) h_0 & & \text{and} && h_2 := 2 \cos(\pi / m) h_0.
\end{align*}
Further,
\begin{itemize}
\item[-] There are two horizontal cylinders of height $h_0$. If $m$ is odd and $n$ is even then both are contained in $P(m-1) \cup P(m-2)$, otherwise there is one such cylinder inside $P(m-1) \cup (m-2)$ and another contained in $P(0) \cup P(1)$.
\item[-] If $n \neq 4$, then there are two horizontal cylinders of height $h_1$: if $m$ is odd and $n$ is even then both are contained in $P(0) \cup P(1)$, otherwise there is one such cylinder inside $P(m-1) \cup P(m-2)$ and another contained in $P(0) \cup P(1)$. Moreover, if $n=3$, then $h_0=h_1$. If $n=4$, then there is a single such horizontal cylinder, contained in $P(0) \cup P(1)$.
\item[-] $h_2$ is the height of the smallest horizontal cylinders which do not not intersect $P(0) \cup P(m-1)$. There are two such cylinders: if $n$ is even then both cylinders are contained in $P(1) \cup P(2)$, otherwise if $n$ is odd, then there is one such cylinder inside $P(1) \cup P(2)$ and another inside $P(m-2) \cup P(m-3)$.
\end{itemize}
\end{Lem}
\begin{proof}
First, the horizontal cylinders intersecting $P(0)$ or $P(m-1)$ have height $\sin\left(\frac{ k \pi}{n}\right) \sin\left(\frac{\pi}{m}\right)$ for some $1 \leq k \leq n-1$. For $k \geq 3$, this is already higher than $h_2$.\newline
Next, the horizontal cylinders intersecting $P(1)$ (resp. $P(m-2)$) but not $P(0)$ (resp. $P(m-1)$) have height 
$\sin\left(\frac{k \pi}{n}\right) \sin\left(\frac{2 \pi}{m}\right)$ for some $k$ as above. 
The smallest such cylinder $(k=1)$ has height $h_2$, and all other cylinders are bigger.\newline
Finally, all other horizontal cylinders, which are not intersecting $P(0)$, $P(1)$, $P(m-2)$ or $P(m-1)$, have height greater than $\sin\left(\frac{k \pi}{n}\right) \sin\left(\frac{3\pi}{m}\right)$ and hence greater than $h_2$.
\end{proof}

We are now able to prove Proposition \ref{prop:etude_K}.

\begin{proof}[Proof of Proposition \ref{prop:etude_K}.] 
The strategy here is to look at $K(d,d')$ and show that \eqref{eq:boundKdd'} holds, unless $(d,d')$ is as in cases $(i)$ and $(ii)$, in which case we can do a little better. 

First, recall from Remark \ref{rk:Kdd'} that to study $K(d,d')$ we can assume $d=\infty$ and work on $S_{m,n}$ directly. 
Let $d' \neq \infty$ and let $\alpha$ and $\beta$ be two saddle connections on $S_{m,n}$ in respective directions $d=\infty$ and $d'$.\newline

{\color{red}}

\textbf{Case 1.} \underline{If there are no non singular intersection between $\alpha$ and $\beta$}. In this case $\Int(\alpha,\beta) \leq 1$ with at most a single singular intersection. In particular:
\[ \frac{\Int(\alpha, \beta)}{\alpha \wedge \beta} \leq \frac{1}{l_v(\beta) l(\alpha)} \]
where $l_v(\beta)$ denotes the vertical length of $\beta$. This length is minimal if $\beta$ crosses once a horizontal cylinder with small height. From Lemma \ref{lem:heights}, we deduce:
\begin{itemize}
\item[a.]\label{case1.h0} If $\beta$ is contained in a cylinder of height $h_0$, then 
up to a horizontal twist, $\beta$ is a saddle connection in direction $d'=\pm \cot(\pi/n)$. Further,
\[ \frac{\Int(\alpha, \beta)}{\alpha \wedge \beta} \leq \frac{1}{l_0h_0} = \frac{1}{\sin(\pi/n)l_0^2}\]
with equality if $\Int(\alpha,\beta)=1$ and $\alpha$ is a systole. As seen in Proposition \ref{prop:intersection_systoles}, this maximal ratio can be achieved by choosing $\alpha$ a horizontal side of $P(0)$ (hence of minimal possible length, $l_0$) and $\beta$ a suitable side of either $P(m-1)$ or $P(0)$ in direction $d'= \cot(\pi/n)$  (resp. $d'=-\cot(\pi/n)$).

\item[b.]\label{case1.h1} If $\beta$ is contained in a cylinder of height $h_1$, and $n \neq 3$, then up to an horizontal twist, $\beta$ is a side of $P(0)$ (or $P(m-1)$ if $n$ is odd) and has direction either $\pm \cot(\pi/n)$ or $\pm \cot(2\pi/n)$, and we have
\begin{equation}\label{eq:Kforcot2pin}
    \frac{\Int(\alpha, \beta)}{\alpha \wedge \beta} \leq \frac{1}{l_0h_1} = \frac{1}{2 \cos(\pi/n)\sin(\pi/n)l_0^2}.
\end{equation} 
with equality if $\Int(\alpha,\beta)=1$ and $\alpha$ is a systole. For $d'=\pm \cot(\pi/n)$, we have already found a pair $(\alpha,\beta)$ in directions $(\infty,d')$ achieving a better ratio. Hence the only remaining possibility is $d'= \pm \cot(2\pi/n)$. 
In this case, the equality is achieved when there exists such a saddle connection $\beta$ intersecting singularly $\alpha$.

\item[c.]\label{case1.h2} If $\beta$ is contained in a cylinder of height $h_2$, then
\[ \frac{\Int(\alpha, \beta)}{\alpha \wedge \beta} \leq \frac{1}{l_0h_2} = \frac{1}{2 \cos(\pi/m)\sin(\pi/n)l_0^2}\]
with equality if $\Int(\alpha,\beta)=1$ and $\alpha$ is a systole. Outside the case $d'=\pm \cot(\pi/n)$, the only such $\beta$ are saddle connections in direction $d' = \pm \cfrac{1+2 \cos(\pi/n)\cos(\pi/m)}{2\sin(\pi/n) \cos(\pi/m)}$.
\end{itemize} 

In all other cases, the vertical length of $\beta$ is greater than $h_2$\footnote{Notice that crossing at least two horizontal cylinders gives a vertical length at least $2 \sin(\pi/n) l_0$, which is greater than $h_2 = 2 \cos(\pi/m)\sin(\pi/n)l_0$.} so that we have:
\[ \frac{\Int(\alpha, \beta)}{\alpha \wedge \beta} \leq \frac{1}{l_0h_2} = \frac{1}{2 \cos(\pi/m)\sin(\pi/n)l_0^2}.\]

\textbf{Case 2.} \underline{If there is at least one non-singular intersection and $\alpha$ has length $l_0$}, we can assume by symmetry that $\alpha$ is a horizontal side of $P(0)$. Then,

\begin{enumerate}[label=(\roman*)]
\item for every non singular intersection with $\alpha$, $\beta$ has to cross both cylinders $C_0$ and $C_1$ adjacent to $\alpha$, see Figure \ref{fig:cylinders_around_alpha}, hence a non-singular intersection with $\alpha$ accounts for a vertical length at least $h_1 + h_2 = (2 \cos(\pi/m) + 2 \cos(\pi/n))\sin(\pi/n) l_0$,
\item moreover, there is no saddle connection of vertical height $h_1 + h_2$ which intersects $\alpha$ only once outside of the singularity. This is because such a saddle connection would have to cross vertically $C_0$ and $C_1$ exactly once, and no other cylinder. Such a saddle connection must have an endpoint on the bottom of $C_1$, and because $C_1$ has at most two bottom saddle connections (Lemma \ref{lem:top_bottom_cyl_Smn}) we can assume up to an horizontal twist that $\beta$ has an endpoint on either $F$ or $G$ (with the notations of Figure \ref{fig:cylinders_around_alpha}) - say on $F$ by symmetry - and that the direction of $\beta$ lies between the directions of $(FA)$ and $(FB)$ (whose respective co-slopes are given by $\cot(\pi/n)$ and $\cot(\pi/n) + 1/(2\cos(\pi/m)\sin(\pi/n))$). In particular, since the points F,A and C are by construction aligned, $\beta$ must then intersect the segment $[BC]$ on its interior. Finally, since $[BC]$ is identified to the segment $[HF]$ of $P(1)$, directly below the piece of $C_1$ contained in $P(1)$, and since the co-slope of $(HG)$ (given by $\cot(\pi/n) + 2\cos(\pi/m)/\sin(\pi/n)$) is greater than the co-slope of $(FB)$, we conclude that $\beta$ has to cross the segment $(FG)$ on its interior, which means that $\beta$ crosses vertically $C_1$ at least twice. Contradiction.

As a conclusion, if $\beta$ intersects $\alpha$ exactly once in its interior, then $\beta$ has to cross vertically $C_0, C_1$, plus another cylinder, and the vertical length of $\beta$ must be at least $h_1 + h_2 + h_0 = (2 \cos(\pi/m) + 2 \cos(\pi/n)+1)\sin(\pi/n)l_0$. 
\end{enumerate}

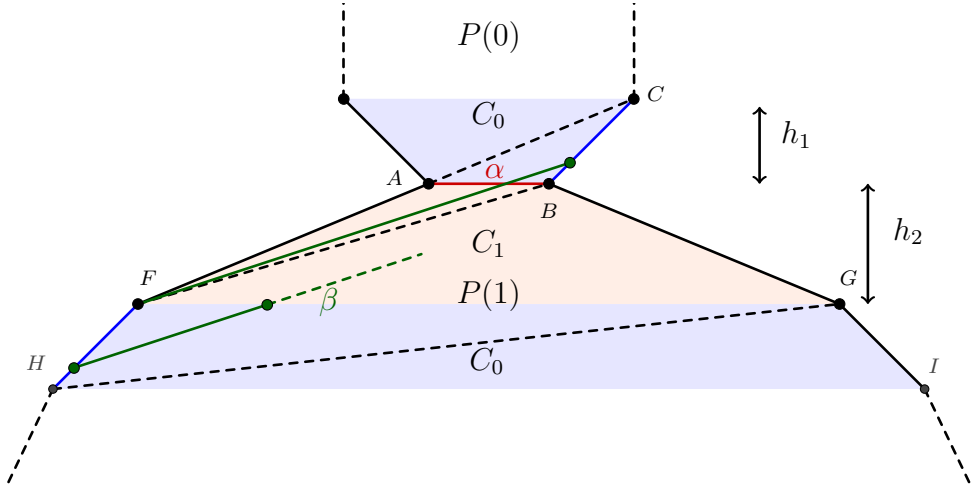
\begin{figure}[h]
\centering
\definecolor{qqwuqq}{rgb}{0,0.39215686274509803,0}
\definecolor{ccqqqq}{rgb}{0.8,0,0}
\definecolor{ffvvqq}{rgb}{1,0.3333333333333333,0}
\definecolor{qqqqff}{rgb}{0,0,1}
\definecolor{uuuuuu}{rgb}{0.26666666666666666,0.26666666666666666,0.26666666666666666}
\begin{tikzpicture}[line cap=round,line join=round,>=triangle 45,x=1cm,y=1cm, scale = 0.8]
\clip(-7,-5) rectangle (9,3);
\fill[line width=1pt,color=qqqqff,fill=qqqqff,fill opacity=0.1] (0,0) -- (-1.414213562373095,1.414213562373095) -- (3.414213562373095,1.414213562373095) -- (2,0) -- cycle;
\fill[line width=1pt,color=ffvvqq,fill=ffvvqq,fill opacity=0.1] (0,0) -- (-4.828427124746191,-2) -- (6.828427124746191,-2) -- (2,0) -- cycle;
\fill[color=qqqqff,fill=qqqqff,fill opacity=0.1] (-4.828427124746191,-2) -- (6.828427124746191,-2) -- (8.242640687119286,-3.414213562373095) -- (-6.242640687119286,-3.414213562373095) -- cycle;
\draw [line width=1pt] (0,0)-- (-1.414213562373095,1.414213562373095);
\draw [line width=1pt,color=qqqqff] (3.414213562373095,1.414213562373095)-- (2,0);
\draw [line width=1pt] (0,0)-- (-4.828427124746191,-2);
\draw [line width=1pt] (6.828427124746191,-2)-- (2,0);
\draw [line width=1pt,color=qqqqff] (-4.828427124746191,-2)-- (-6.242640687119286,-3.414213562373095);
\draw [line width=1pt] (6.828427124746191,-2)-- (8.242640687119286,-3.414213562373095);
\draw [line width=1pt,dash pattern=on 3pt off 3pt] (3.414213562373095,1.414213562373095)-- (3.414213562373095,3.414213562373095);
\draw [line width=1pt,dash pattern=on 3pt off 3pt] (-1.414213562373095,1.414213562373095)-- (-1.414213562373095,3.414213562373095);
\draw [line width=1pt,dash pattern=on 3pt off 3pt] (0,0)-- (3.414213562373095,1.414213562373095);
\draw [line width=1pt,color=ccqqqq] (0,0)-- (2,0);
\draw (1,1.54) node[below] {$C_0$};
\draw (1,-0.6) node[below] {$C_1$};
\draw (1,-1.4) node[below] {$P(1)$};
\draw (1,2.88) node[below] {$P(0)$};
\draw [color=ccqqqq](1.1,0.5) node[below] {$\alpha$};
\draw [line width=1pt, to-to] (5.5,1.28)-- (5.5,0);
\draw [line width=1pt, to-to] (7.3,0)-- (7.3,-2);
\draw (5.68,1.22) node[anchor=north west] {$h_1$};
\draw (7.54,-0.38) node[anchor=north west] {$h_2$};
\draw [line width=1pt,dash pattern=on 3pt off 3pt] (-6.242640687119286,-3.414213562373095)-- (-7,-5);
\draw [line width=1pt,dash pattern=on 3pt off 3pt] (8.242640687119286,-3.414213562373095)-- (9,-5);
\draw [line width=1pt,dash pattern=on 3pt off 3pt] (-4.828427124746191,-2)-- (2,0);
\draw [line width=1pt,dash pattern=on 3pt off 3pt] (-6.242640687119286,-3.414213562373095)-- (6.828427124746191,-2);
\draw (0.5,-2.55) node[anchor=north west] {$C_0$};
\draw [line width=1pt,color=qqwuqq] (-4.82,-2)-- (2.35,0.35);
\draw [line width=1pt,color=qqwuqq] (-5.89,-3.06)-- (-2.68,-2.01);
\draw [line width=1pt,dash pattern=on 3pt off 3pt,color=qqwuqq] (-2.68,-2.01)-- (-0.12,-1.17);
\draw [color=qqwuqq](-2,-1.55) node[anchor=north west] {$\beta$};
\begin{scriptsize}
\draw [fill=black] (0,0) circle (2.5pt);
\draw[color=black] (-0.3,0.1) node[left] {$A$};
\draw[color=black] (2,-0.2) node[below] {$B$};
\draw[color=black] (3.5,1.5) node[right] {$C$};
\draw[color=black] (-4.66,-1.57) node {$F$};
\draw[color=black] (6.98,-1.57) node {$G$};
\draw[color=uuuuuu] (-6.52,-2.97) node {$H$};
\draw[color=uuuuuu] (8.4,-3.03) node {$I$};
\draw [fill=black] (2,0) circle (2.5pt);
\draw [fill=black] (3.41,1.41) circle (2.5pt);
\draw [fill=black] (-1.41,1.41) circle (2.5pt);
\draw [fill=black] (-4.83,-2) circle (2.5pt);
\draw [fill=black] (6.83,-2) circle (2.5pt);
\draw [fill=uuuuuu] (-6.242640687119286,-3.414213562373095) circle (2pt);
\draw [fill=uuuuuu] (8.242640687119286,-3.414213562373095) circle (2pt);
\draw [fill=qqwuqq] (2.35,0.35) circle (2.5pt);
\draw [fill=qqwuqq] (-5.892640687119286,-3.064213562373095) circle (2.5pt);
\draw [fill=qqwuqq] (-2.6815735776267875,-2.013007277740906) circle (2.5pt);
\end{scriptsize}
\end{tikzpicture}
\caption{The horizontal systole $\alpha$ and the two cylinders $C_0$ and $C_1$ adjacent to $\alpha$ (of respective height $h_1$ and $h_2$). Any saddle connection $\beta$ with an endpoint on $F$ and intersecting $\alpha$ on its interior must cross $C_1$ at least twice. }
\label{fig:cylinders_around_alpha}
\end{figure}

From the first remark we deduce that if there are $p \geq 2$ non singular intersections, then $\Int(\alpha,\beta) \leq p+1$ and 

\begin{align*}
\frac{\Int(\alpha, \beta)}{\alpha \wedge \beta} & \leq \frac{p+1}{p(2 \cos(\pi/m) + 2 \cos(\pi/n))\sin(\pi / n) l_0^2}  \\
& \leq \frac{3}{2(2 \cos(\pi/m) + 2 \cos(\pi/n))\sin(\pi / n) l_0^2},
\end{align*}
which is easily shown to be less than $\cfrac{1}{2\cos(\pi/m) \sin(\pi/n) l_0^2}$ for $3 \leq n < m$.\newline

From the second remark we deduce that if there is a single non singular intersection, then $\Int(\alpha,\beta) \leq 2$ and
\[ \frac{\Int(\alpha, \beta)}{\alpha \wedge \beta} \leq \frac{2}{(2 \cos(\pi/m) + 2 \cos(\pi/n)+1)\sin(\pi / n) l_0^2}. \]
which is again less than $\cfrac{1}{2\cos(\pi/m) \sin(\pi/n) l_0^2}$.\newline

\textbf{Case 3.} \underline{If there is at least one non-singular intersection and $\alpha$ is not a systole}, we have by Lemma \ref{lem:lengths}
\begin{itemize}
\item If $n=3$, $l(\alpha) \geq l_2 = 2 \cos(\pi/m) l_0$.
\item If $n \neq 3$, $l(\alpha) \geq l_1 = 2 \cos(\pi/n) l_0$
\end{itemize}

Further, similarly to Case 2, $\beta$ has to cross vertically a horizontal cylinder $C$ before any non singular intersection with $\alpha$, and another horizontal cylinder $C'$ after any non-singular intersection with $\alpha$ (the two cylinders are not the same as no cylinder is glued to itself on $S_{m,n}$). In particular, if there are $p$ non-singular intersections, then
\[  l_v(\beta) \geq p \cdot (h(C) + h(C')). \]

Now, $h(C) + h(C') \geq 2 h_0$, and equality can occur only if the two cylinders of height $h_0$ are adjacent, which is possible if and only if either $n=4$ or by symmetry $m=4$ (and hence $n=3$). Further,
\begin{enumerate}
\item[(a)] If $n=3$, given that $\alpha$ is not a systole we have $l(\alpha) \geq l_2$ and the inequality $h(C)+h(C')\geq 2h_0$ is sufficient to obtain
\[ 
\frac{\Int(\alpha,\beta)}{\alpha \wedge \beta} \leq \frac{p+1}{2p h_0 l_2} \leq \frac{2}{2 h_0l_2} = \frac{1}{2 \cos(\pi/m) \sin(\pi/n) l_0^2}
\]
as required.
\item[(b)] If $n=4$, then $m$ is odd and the two cylinders of height $h_0$ are the two cylinders intersecting $P(m-1)$, so that we have equality above only when $\alpha$ and $\beta$ are the two diagonals of $P(m-1)$ (up to a horizontal twist). As we have seen in Proposition \ref{prop:intersection_diagonals}, these diagonals intersect twice if and only if $m \equiv 3 \mod 4$, giving
\[\frac{\Int(\alpha, \beta)}{\alpha \wedge \beta} = \frac{2}{2 h_0 l(\alpha)} = \frac{1}{2 \cos(\pi/n) \sin(\pi/n) l_0^2}. \]
This ratio appears in part $(ii)$ of Proposition \ref{prop:etude_K}, which is not surprising as the two diagonals of $P(m-1)$ have respective directions $\infty$ and $0 = \cot(2\pi/n)$ (see Remark \ref{rema:cas_particuliers_etude_Kdd}).
\end{enumerate}

In all the other cases, we have $h(C)+h(C') = h_0 + h_1$, and hence:
\begin{align*}
\frac{\Int(\alpha,\beta)}{\alpha \wedge \beta} &\leq \frac{p+1}{p(h_0 + h_1) l_1} \\
& \leq \frac{2}{(h_0+h_1)l_1} =\frac{2}{(2 \cos(\pi/n) +1)\cdot 2\cos(\pi/n)\sin(\pi/n) l_0^2}
\end{align*}
For $n \geq 5$, this last quantity is easily shown to be less than $\cfrac{1}{2 \cos(\pi/m) \sin(\pi/n) l_0^2}$. However, this is not the case for $n=4$, and one has to further notice that if $4 = n < m$ and if we are not in the setting of $(b)$, then the two cylinders of height $h_0$ (which are contained in $P(m-1) \cup P(m-2)$) are not adjacent to the (single) cylinder of height $h_1$ which is contained in $P(0) \cup P(1)$, so that $h(C) + h(C') \geq h_0 + h_2$ and we get:
\begin{align*}
\frac{\Int(\alpha,\beta)}{\alpha \wedge \beta} & \leq \frac{p+1}{p(h_0 + h_1) l_1}\\
&\leq \frac{2}{(h_0+h_2)l_1} =\frac{2}{(2 \cos(\pi/m) +1)\cdot 2\cos(\pi/n)\sin(\pi/n) l_0^2} \\
& < \frac{1}{2 \cos(\pi/m) \sin(\pi/n) l_0^2}
\end{align*}
as required.
   
\textbf{Conclusion.}
According to the above study, we have
\[ 
\frac{\Int(\alpha,\beta)}{\alpha \wedge \beta} \leq \frac{1}{2 \cos(\pi/m)\sin(\pi/n) l_0^2}
\]
unless:
\begin{itemize}
\item $\alpha$ is a systole and $\beta$ is, up to a horizontal twist, a side of $P(m-1)$ (or $P(0)$), having direction $\pm \cot(\pi/n)$ and intersecting $\alpha$ once (see Case 1 part (a) and (b)), and then 

\[ 
\frac{\Int(\alpha,\beta)}{\alpha \wedge \beta} \leq \frac{1}{\sin(\pi/n) l_0^2}.
\]

\item $\alpha$ is a systole and, up to a horizontal twist, $\beta$ has direction either $\pm \cot(\frac{2\pi}{n})$ (see Case 1 part (b) and Case 3 part (b)), and then:
\[ 
\frac{\Int(\alpha,\beta)}{\alpha \wedge \beta} \leq \frac{1}{2 \cos(\pi /n) \sin(\pi/n) l_0^2}.
\]
\end{itemize}
This concludes the proof of Proposition \ref{prop:etude_K}
\end{proof}

Using Proposition \ref{prop:etude_K}, we directly deduce that $(H2)$ is satisfied on each of the domains $\mathcal{D}_i$ (recall from Remark \ref{rk:parameters_Di} that on $\mathcal{D}_1$ and $\mathcal{D}_2$ the parameter $a$ is $-\cot \left( \frac{\pi}{n} \right)$ while on $\mathcal{D}_3$ and $\mathcal{D}_4$ we have $a=+\cot \left( \frac{\pi}{n} \right)$). Further, it is easily deduced from Proposition \ref{prop:etude_K} that \HTU is also satisfied, as the only pairs of consistent slopes $(d,d')$ such that $K(d,d')=K(\infty,a)$ are the images of $(\infty,a)$ by the diagonal action of the Veech group on $\partial \HH$, and among them only $(\infty,a)$ intersects $D_i$. For all other pairs of consistent slopes $(d,d')$, we have 
\[ K(d,d') \leq \frac{1}{2\cos(\pi/n)}K(\infty,a) < \sin\left(\frac{\pi}{4}\right) K(\infty,a).\]

Next, for $m,n \leq 4$ we already know from Remark \ref{rk:modified_big_statement} that \HTD holds. In the other cases, we can see that \HTD holds as well by looking at the images of the geodesics $\gamma_{\infty, \pm \cot(\pi/n)}$, $\gamma_{\infty, \pm \cot(2\pi/n)}$ and $\gamma_{\infty, \pm \frac{1+2 \cos(\pi/n)\cos(\pi/m)}{2\sin(\pi/n) \cos(\pi/m)})}$ under the action the Veech group which intersect the fundamental domain $\Tmn$. Those geodesics are represented in Figure \ref{fig:geodesics_fondamental_domain}.

\begin{figure}
\definecolor{qqqqff}{rgb}{0,0,1}
\definecolor{qqwuqq}{rgb}{0,0.39215686274509803,0}
\definecolor{ccqqqq}{rgb}{0.8,0,0}
\definecolor{uuuuuu}{rgb}{0.26666666666666666,0.26666666666666666,0.26666666666666666}
\begin{tikzpicture}[line cap=round,line join=round,>=triangle 45,x=1cm,y=1cm]
\clip(-6,-0.7) rectangle (9.5,7);
\draw [line width=1pt] (4.921489531140712,0) -- (4.921489531140712,9.093313002462567);
\draw [line width=1pt] (-4.921489531140712,0) -- (-4.921489531140712,9.093313002462567);
\draw [shift={(-2.414213562373095,0)},line width=1pt]  plot[domain=0.3926990816987242:2.855993321445266,variable=\t]({1*2.613125929752753*cos(\t r)+0*2.613125929752753*sin(\t r)},{0*2.613125929752753*cos(\t r)+1*2.613125929752753*sin(\t r)});
\draw [shift={(2.414213562373095,0)},line width=1pt]  plot[domain=0.28559933214452704:2.748893571891069,variable=\t]({1*2.613125929752753*cos(\t r)+0*2.613125929752753*sin(\t r)},{0*2.613125929752753*cos(\t r)+1*2.613125929752753*sin(\t r)});
\draw [line width=1pt,domain=-6.37650148292727:9.658601777524105] plot(\x,{(-0-0*\x)/7.335703093513807});
\draw (5.25,6) node[anchor=north west] {$d_{max} = \cot\left(\frac{\pi}{n}\right)$};
\draw (5.25,3) node[anchor=north west] {$s = 2 \frac{\cos(\frac{\pi}{n}) +\cos(\frac{\pi}{m})}{\sin(\frac{\pi}{n})}$};
\draw (-0.18,0.0) node[anchor=north west] {$0$};
\draw (-0.1,1) node[anchor=north west] {$S_{m,n}$};
\draw (5,1.3) node[anchor=north west] {$S_{n,m}$};
\draw (-6,1.3) node[anchor=north west] {$S_{n,m}$};
\draw [line width=1pt,color=ccqqqq] (-2.414213562373095,0) -- (-2.414213562373095,9.093313002462567);
\draw [line width=1pt,color=ccqqqq] (2.414213562373095,0) -- (2.414213562373095,9.093313002462567);
\draw [shift={(0,0)},line width=1pt,color=qqwuqq]  plot[domain=0:3.141592653589793,variable=\t]({1*2.414213562373095*cos(\t r)+0*2.414213562373095*sin(\t r)},{0*2.414213562373095*cos(\t r)+1*2.414213562373095*sin(\t r)});
\draw [shift={(4.921489531140712,0)},line width=1pt,color=qqqqff]  plot[domain=1.57:3.141592653589793,variable=\t]({1*2.507275968767617*cos(\t r)+0*2.507275968767617*sin(\t r)},{0*2.507275968767617*cos(\t r)+1*2.507275968767617*sin(\t r)});
\draw [shift={(-4.921489531140712,0)},line width=1pt,color=qqqqff]  plot[domain=0:1.57,variable=\t]({1*2.507275968767617*cos(\t r)+0*2.507275968767617*sin(\t r)},{0*2.507275968767617*cos(\t r)+1*2.507275968767617*sin(\t r)});
\draw [line width=1pt,color=qqwuqq] (1,0) -- (1,9.093313002462567);
\draw [line width=1pt,color=qqwuqq] (-1,0) -- (-1,9.093313002462567);
\draw [line width=1pt,color=qqqqff] (3.775935847794774,0) -- (3.775935847794774,9.093313002462567);
\draw [line width=1pt,color=qqqqff] (-3.775935847794774,0) -- (-3.775935847794774,9.093313002462567);
\draw (5.25,5) node[anchor=north west] {$d_1 = \cot\left(\frac{2\pi}{n}\right)$};
\draw (5.25,4) node[anchor=north west] {$d_2 = \frac{1+2\cos(\pi/n) \cos(\pi/n)}{2 \cos(\pi/m) \sin(\pi/n)}$};
\draw (0.7781746355252499,0) node[anchor=north west] {$d_1$};
\draw (2.054511330023061,0) node[anchor=north west] {$d_{max}$};
\draw (3.5465669024641646,0) node[anchor=north west] {$d_2$};
\draw (4.768973877476153,0) node[anchor=north west] {$\frac{s}{2}$};
\draw (-1.3610375707457296,0) node[anchor=north west] {$-d_1$};
\draw (-2.996905728482361,0) node[anchor=north west] {$-d_{max}$};
\draw (-4.183359557170467,0) node[anchor=north west] {$-d_2$};
\draw (-5.387789959020515,0) node[anchor=north west] {$-\frac{s}{2}$};
\begin{scriptsize}
\draw [fill=black] (0,1) circle (2.5pt);
\draw [color=uuuuuu] (2.414213562373095,0)-- ++(-2.5pt,0 pt) -- ++(5pt,0 pt) ++(-2.5pt,-2.5pt) -- ++(0 pt,5pt);
\draw [fill=black] (4.921489531140712,0.7362026495378823) circle (2.5pt);
\draw [color=uuuuuu] (-2.414213562373095,0)-- ++(-2.5pt,0 pt) -- ++(5pt,0 pt) ++(-2.5pt,-2.5pt) -- ++(0 pt,5pt);
\draw [fill=black] (-4.921489531140712,0.7362026495378823) circle (2.5pt);
\draw [color=black] (0,0)-- ++(-2.5pt,0 pt) -- ++(5pt,0 pt) ++(-2.5pt,-2.5pt) -- ++(0 pt,5pt);
\draw [color=uuuuuu] (1,0)-- ++(-2.5pt,0 pt) -- ++(5pt,0 pt) ++(-2.5pt,-2.5pt) -- ++(0 pt,5pt);
\draw [color=uuuuuu] (-1,0)-- ++(-2.5pt,0 pt) -- ++(5pt,0 pt) ++(-2.5pt,-2.5pt) -- ++(0 pt,5pt);
\draw [color=uuuuuu] (3.775935847794774,0)-- ++(-2.5pt,0 pt) -- ++(5pt,0 pt) ++(-2.5pt,-2.5pt) -- ++(0 pt,5pt);
\draw [color=uuuuuu] (-3.775935847794774,0)-- ++(-2.5pt,0 pt) -- ++(5pt,0 pt) ++(-2.5pt,-2.5pt) -- ++(0 pt,5pt);
\end{scriptsize}
\end{tikzpicture}
\caption{The geodesics $\gamma_{\infty,\pm \cot(\pi/n)}$, $\gamma_{\infty,\pm \cot(2\pi/n)}$ and $\gamma_{\infty,\pm \frac{1+2 \cos(\pi/n)\cos(\pi/m)}{2\sin(\pi/n) \cos(\pi/m)}}$ and their images by the Veech group intersecting the fundamental domain.}
\label{fig:geodesics_fondamental_domain}
\end{figure}
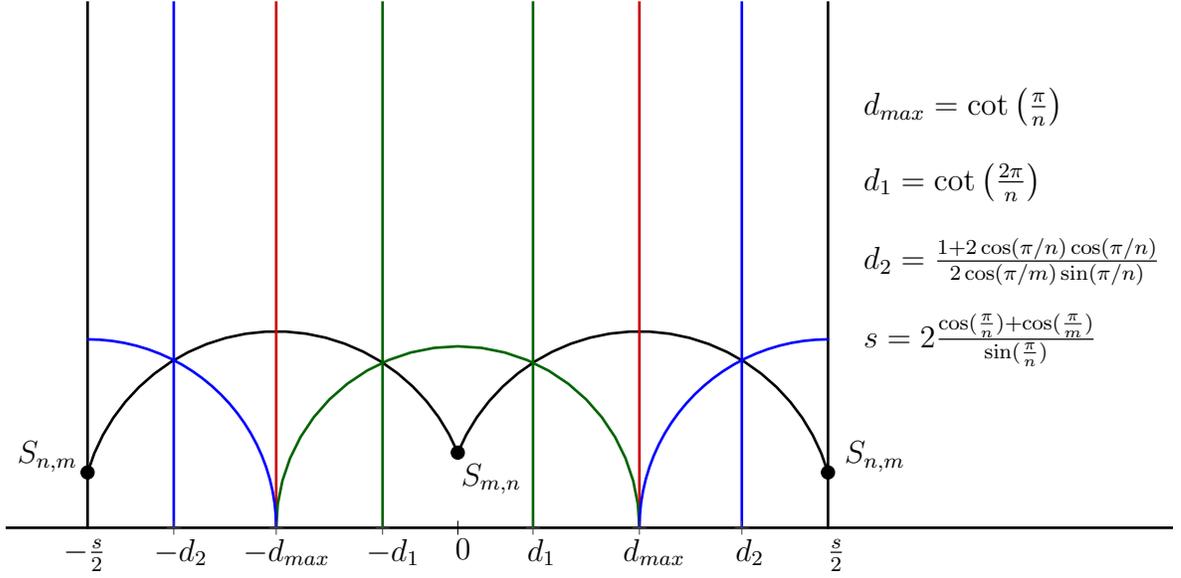

Now,
\begin{enumerate}
\item On the domain $\mathcal{D}_1$, the corresponding parameters $a,b$ and $c$ of Theorem \ref{theo:big_statement} are $a = - \cot(\pi/n)$, $b=-\frac{\cos(\pi/m)}{\sin(\pi/n)}$, $c=\sin(\pi/n)$ and $X_0 = S_{n,m}$ so that 
\begin{itemize}
    \item $\theta(X_0, \infty, a) = \theta(X_0, \infty, - \cot(\pi/n)) = \frac{\pi}{n}$ 
    \item $\theta(X_0, a, a+2b) = \theta(X_0, -\cot(\pi/n), -\cot(\pi/n) -2\sin(\pi/n)) = \frac{2 \pi}{n}$
\end{itemize}
And hence 
\[ 
\frac{\sin \theta(X_0, \infty, a)}{\sin \theta(X_0, a, a+2b)} = \frac{\sin( \pi /n)}{\sin( 2\pi/n)} = \frac{1}{2 \cos(\pi/n)}.
\]
But then by Proposition \ref{prop:etude_K}, for any $(d,d')$ which is not the image of the pair $(\infty, \pm \cot(\pi/n))$,
\[ 
K(d,d') \leq \frac{1}{2 \cos(\pi/n)} K(\infty, \pm \cot(\pi/n))
\]
hence we directly have that \HTD is satisfied.
\item On the domain $\mathcal{D}_2$, the parameters $a,b$ and $c$ are given by $a = - \cot(\pi/n)$, $b=\frac{\cos(\pi/m)}{\sin(\pi/n)}$, $c=\sin(\pi/n)$ and $X_0 = S_{m,n}$ so that 
\begin{itemize}
    \item $\theta(X_0, \infty, a) = \theta(X_0, \infty, - \cot(\pi/n)) = \frac{\pi}{m}$ 
    \item $\theta(X_0, a, a+2b) = \theta(X_0, -\cot(\pi/n), -\cot(\pi/n) -2\sin(\pi/n)) = \frac{2 \pi}{m}$
\end{itemize}
And hence 
\[ 
\frac{\sin \theta(X_0, \infty, a)}{\sin \theta(X_0, a, a+2b)} = \frac{\sin( \pi /m)}{\sin( 2\pi/m)} = \frac{1}{2 \cos(\pi/m)}.
\]
But since by Proposition \ref{prop:etude_K}, for any $(d,d')$ which is not the image of the pairs $(\infty, \pm \cot(\pi/n))$, and $(\infty, \pm \cot(2\pi/n))$
\[ 
K(d,d') \leq \frac{1}{2 \cos(\pi/m)} K(\infty, \pm \cot(\pi/n))
\]
we directly have that \HTD is satisfied.
\end{enumerate}
By symmetry, we also deduce that \HTD holds in the domains $\mathcal{D}_3$ and $\mathcal{D}_4$.

\section{Proof of Theorem \ref{theo:big_statement}}\label{sec:sinus_comparison}
This last section is devoted to prove Theorem \ref{theo:big_statement}. Given $X \in \mathcal{D}$ and $(d,d')$ an admissible pair of directions. Recall that we want to show
\begin{equation}\label{eq:trefle}
\tag{\ding{168}}
K(d,d') \sin \theta(X,d,d') \leq K(\infty, a) \sin \theta(X,\infty, a) 
\end{equation}
By convenience we will assume that $b > 0$, which is possible by symmetry. 
We start with the following proposition, which is a direct generalisation of Proposition 7.8 of \cite{BLM22} and is proven in the same way (see also Proposition 5.8 of \cite{Bou23}).

\begin{Prop}\label{prop:sinus_comparison}
Let $a \in \RR$, and $0 < b < c$. Let $\mathcal{D} \subset \HH$ be the domain delimited by the geodesics $\gamma_{a, \infty}$, $\gamma_{a + b, \infty}$ and $\gamma_{a-c, a+c}$.
Let also $X_0$ be the intersection point of $\gamma_{a + b, \infty}$ and $\gamma_{a-c, a+c}$. Then for any $(d,d')$ with $a \leq d \leq a + b \leq d'$, and such that $\gamma_{d,d'}$ intersect the domain $\mathcal{D}$, the function
\[
F_{(d,d')} : X \in \mathcal{D} \mapsto \frac{\sin \theta(X,\infty,a)}{\sin \theta(X,d,d')}
\]
is minimal at $X_0$ on the domain $\mathcal{D}$.
\end{Prop}

We now show Equation~\eqref{eq:trefle} by distinguishing the case where $\gamma_{d,d'}$ intersects the domain $\mathcal{D}$ and the case where it does not.

\paragraph{\underline{Case 1: $\gamma_{d,d'} \cap \mathcal{D} \neq \varnothing$.}}
In this case, let us first remark that if $X \in \mathcal{R}:=\{ X \in \mathcal{D}, \theta(X,a,\infty) \geq \frac{\pi}{4}\}$, then we can directly deduce \eqref{eq:trefle} from \HTU. In the following we will then assume that $X \notin \mathcal{R}$, see Figure \ref{fig:domain_R}.\newline

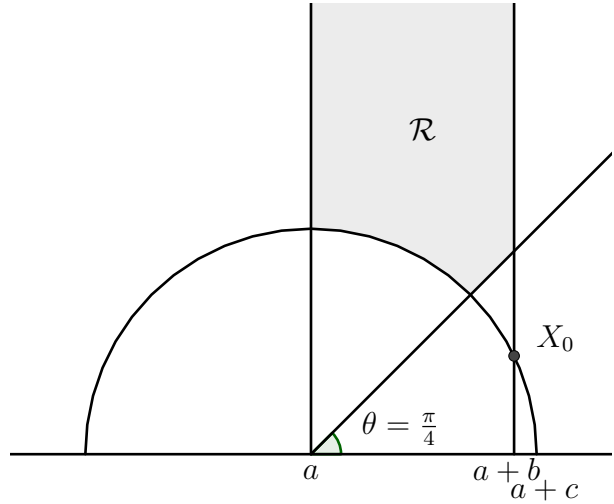
\begin{figure}[h]
\center
\definecolor{qqwuqq}{rgb}{0,0.39215686274509803,0}
\definecolor{uuuuuu}{rgb}{0.26666666666666666,0.26666666666666666,0.26666666666666666}
\begin{tikzpicture}[line cap=round,line join=round,>=triangle 45,x=1cm,y=1cm]
\clip(-4,-0.7) rectangle (4,6);
\fill [color = uuuuuu, opacity=0.1] (0,0) rectangle (2.7,6);
\fill [color = white] (0,0) -- (0:3) arc (0:90:3) -- cycle;
\fill [color = white] (0,0) -- (3,3) -- (3,0) -- cycle; 
\draw [shift={(0,0)},line width=1pt,color=qqwuqq,fill=qqwuqq,fill opacity=0.10000000149011612] (0,0) -- (0:0.4) arc (0:45:0.4) -- cycle;
\draw [line width=1pt] (-4,0)-- (4,0);
\draw [shift={(0,0)},line width=1pt]  plot[domain=0:3.141592653589793,variable=\t]({1*3*cos(\t r)+0*3*sin(\t r)},{0*3*cos(\t r)+1*3*sin(\t r)});
\draw [line width=1pt] (2.7,0) -- (2.7,6.357777777777776);
\draw [line width=1pt] (0,0) -- (0,6.357777777777776);
\draw [line width=1pt,domain=0:6.066666666666663] plot(\x,{(-0--1*\x)/1});
\draw (2.8533333333333304,1.864444444444444) node[anchor=north west] {$X_0$};
\draw (1.16,4.61111111111111) node[anchor=north west] {$\mathcal{R}$};
\draw (0.533333333333331,0.7) node[anchor=north west] {$\theta = \frac{\pi}{4}$};
\draw (0,0) node[below] {$a$};
\draw (2.6,0.1) node[below] {$a+b$};
\draw (3.1,-0.2) node[below] {$a+c$};
\begin{scriptsize}
\draw [fill=uuuuuu] (2.7,1.3076696830622017) circle (2pt);
\end{scriptsize}
\end{tikzpicture}
\caption{The domain $\mathcal{R}$.}
\label{fig:domain_R}
\end{figure}

Then, notice that we are in one of the four following cases:
\begin{enumerate}[label=(\roman*)]
\item $a \leq d < a+b < d'$
\item $d < a < a+2b < d'$
\item $d < a < a+b < d' \leq a+2b$
\item $d < a < d' \leq a+b$
\end{enumerate}

\textbf{Case 1.i:} If $a \leq d < a+b < d'$, we deduce from $(H1)$ and Proposition \ref{prop:sinus_comparison} that
\begin{align*}
K(d,d') \sin \theta(X,d,d') & \leq K(d,d') \frac{\sin \theta(X,d,d')}{\sin \theta(X, \infty, a)} \cdot \sin \theta(X, \infty, a) & \\
& \leq  K(d,d') \frac{\sin \theta(X_0,d,d')}{\sin \theta(X_0, \infty, a)} \cdot \sin \theta(X, \infty, a) & \text{by Proposition \ref{prop:sinus_comparison}}\\
& \leq K(d,d') \sin \theta(X_0,d,d') \cdot \frac{\sin \theta(X, \infty, a)}{\sin \theta(X_0, \infty, a)} & \\
& \leq K(a,\infty) \sin \theta(X_0,a,\infty) \cdot \frac{\sin \theta(X, \infty, a)}{\sin \theta(X_0, \infty, a)} & \text{by }(H1)\\
& \leq K(a,\infty) \sin \theta(X,a,\infty). &
\end{align*}

\textbf{Case 1.ii:} If $d < a < a+2b<d'$, then for all $X \notin \mathcal{R}$, we have 
\[ \sin \theta(X,d,d') \leq \sin \theta(X,a,a+2b) \]
so that by \HTD, we have
\begin{align*}
K(d,d') \sin \theta(X,d,d') &\leq K(d,d') \sin \theta(X,a,a+2b) &\\
& \leq K(\infty,a)  \frac{\sin \theta(X_0,\infty,a)}{\sin \theta(X_0,a,a+2b)} \cdot \sin \theta(X,a,a+2b)  &\text{ by \HTD}\\
& \leq K(\infty,a)  \sin \theta(X_0,\infty,a) \cdot \frac{\sin \theta(X,a,a+2b)}{\sin \theta(X_0,a,a+2b)} &\\
& \leq K(\infty,a)  \sin \theta(X,\infty,a) &\text{ by Proposition \ref{prop:sinus_comparison}} \\
\end{align*}

\textbf{Case 1.iii:} If $d < a < a+b < d' \leq a+2b$, let $g_b$ be the reflection along the geodesic $\gamma_{a+b,\infty}$ and $(e,e') = (g_b \cdot d, g_b \cdot d')$. 
The assumptions on $d$ and $d'$ imply that $0 \leq e' < a+b$ and $e \geq a+2b > a+b$, and since $\gamma_{d,d'} \cap \mathcal{D} \neq \varnothing$ we still have $\gamma_{e,e'} \cap \mathcal{D} \neq \varnothing$, and hence the pair $(e',e)$ corresponds to a pair of direction in Case 1.$i$. In particular for any $X \in \mathcal{D}$
\begin{equation}\label{eq:trefle_casiii} 
K(e',e) \sin \theta(X,e',e) \leq K(\infty, a) \sin \theta(X,\infty, a)
\end{equation}

Further,
\begin{itemize}
\item For any $X$ inside the hyperbolic triangle delimited by the geodesics $\gamma_{a+c,a-c}$, $\gamma_{\infty, a+b}$ and $\gamma_{e,e'}$, we have
\[ \sin \theta(X,d,d') \leq \sin \theta(X,e,e').\]
By \eqref{eq:trefle_casiii} and since $K(d,d')=K(e',e)$, we directly deduce \eqref{eq:trefle}.
\item If $X$ lies outside this domain, we can find $X^* \in \gamma_{e,e'} \cap \mathcal{D}$ such that:
\[ \theta(X^*,\infty,a) \leq \theta(X,\infty,a). \]
This is going to be either $X_1^*=\gamma_{e,e'} \cap \gamma_{a+b, \infty}$ or $X_2^* = \gamma_{e,e'} \cap \gamma_{a-c,a+c}$ (see Figure \ref{fig:argument_sinus_cas1iii}).
Using \eqref{eq:trefle_casiii} and the fact that $X^* \in \gamma_{e,e'}$, we deduce:
\begin{align*}
\underbrace{K(d,d')}_{=K(e,e')} \sin \theta(X,d,d') &\leq K(e,e') \underbrace{\sin \theta(X^*,e,e')}_{=1} \\
& \leq K(\infty, a) \sin \theta(X^*, \infty, a) \\
& \leq K(\infty, a) \sin \theta(X, \infty, a)
\end{align*}
as required.
\end{itemize}

\begin{figure}[h]
\center
\definecolor{ududff}{rgb}{0.30196078431372547,0.30196078431372547,1}
\definecolor{uuuuuu}{rgb}{0.26666666666666666,0.26666666666666666,0.26666666666666666}
\begin{tikzpicture}[line cap=round,line join=round,>=triangle 45,x=1cm,y=1cm]
\clip(-5,-1) rectangle (7,6);
\draw [line width=1pt] (-10,0)-- (10,0);
\draw [shift={(0,0)},line width=1pt]  plot[domain=0:3.141592653589793,variable=\t]({1*3*cos(\t r)+0*3*sin(\t r)},{0*3*cos(\t r)+1*3*sin(\t r)});
\draw [line width=1pt] (2.7,0) -- (2.7,6.357777777777776);
\draw [line width=1pt] (0,0) -- (0,6.357777777777776);
\draw (2.853333333333332,1.864444444444444) node[anchor=north west] {$X_0$};
\draw [shift={(-.75,0)},line width=1pt]  plot[domain=0:3.141592653589793,variable=\t]({1*5*cos(\t r)+0*5*sin(\t r)},{0*5*cos(\t r)+1*5*sin(\t r)});
\draw [shift={(6.18,0)},line width=1pt]  plot[domain=0:3.141592653589793,variable=\t]({1*5*cos(\t r)+0*5*sin(\t r)},{0*5*cos(\t r)+1*5*sin(\t r)});
\draw (0.88,4.557777777777777) node[anchor=north west] {$X$};
\draw (2.7866666666666653,3.9) node[anchor=north west] {$X^*_1$};
\draw (1.75,2.55) node[anchor=north east] {$X^*_2$};
\draw (1.25,0) node[below] {$e'$};
\draw (-5.2,0.07777777777777752) node[anchor=north west] {$\longleftarrow d$};
\draw (-3,0.03) node[below] {$a-c$};
\draw (6.5,0.03) node[below] {$e \longrightarrow$};
\draw (4.2,0) node[below] {$d'$};
\draw (-0.2,0.04) node[anchor=north west] {$a$};
\draw (2.2,0.1577777777777775) node[anchor=north west] {$a+b$};
\draw (2.6,-0.21555555555555578) node[anchor=north west] {$a+c$};
\begin{scriptsize}
\draw [fill=uuuuuu] (2.7,1.3076696830622017) circle (2pt);
\draw [fill=ududff] (-10,0) circle (2pt);
\draw [fill=black] (0.8609237502980095,3.906252717909174) circle (2.5pt);
\draw [fill=uuuuuu] (2.7,3.6) circle (2.5pt);
\draw [fill=uuuuuu] (1.8,2.4) circle (2.5pt);
\end{scriptsize}
\end{tikzpicture}
\caption{Illustration of case 1.$iii$. The geodesic $\gamma_{e,e'}$ is obtained from $\gamma_{d,d'}$ by a reflection along $\gamma_{a+b,\infty}$.For any $X \in \mathcal{D}$ outside the hyperbolic triangle delimited by the geodesics $\gamma_{a+c,a-c}$, $\gamma_{\infty, a+b}$ and $\gamma_{e,e'}$, we have $\min(\theta(X^{\star}_1,a, \infty),\theta(X^{\star}_2,a;\infty)) \leq \theta(X,a, \infty)$.}
\label{fig:argument_sinus_cas1iii}
\end{figure}

\textbf{Case 1.iv:} Finally, if $d < a < d' \leq a+b$, let $g_c$ be the reflection along the geodesic $\gamma_{a+c,a-c}$ and $(e,e') = (g_c \cdot d, g_c \cdot d')$. 
The assumptions on $d$ and $d'$ imply that $e < a$ and $e' > a+c > a+b$, and since $\gamma_{d,d'} \cap \mathcal{D} \neq \varnothing$ we still have $\gamma_{e,e'} \cap \mathcal{D} \neq \varnothing$, in particular $(e,e')$ corresponds to a pair of direction in Case 1.ii or Case 1.iii, and hence for any $X \in \mathcal{D}$
\begin{equation}\label{eq:trefle_ee'} 
K(e,e') \sin \theta(X,e,e') \leq K(\infty, a) \sin \theta(X,\infty, a)
\end{equation}

Further,
\begin{itemize}
\item For any $X$ inside the hyperbolic triangle delimited by the geodesics $\gamma_{a+c,a-c}$, $\gamma_{\infty, a+b}$ and $\gamma_{e,e'}$, we have
\[ \sin \theta(X,d,d') \leq \sin \theta(X,e,e').\]
By \eqref{eq:trefle_ee'} and since $K(d,d')=K(e,e')$, we directly deduce \eqref{eq:trefle}.
\item If $X$ lies outside this domain, we let $X^* = X_1^*=\gamma_{e,e'} \cap \gamma_{a+b, \infty}$, and we obtain:
\[ \theta(X^*,\infty,a) \leq \theta(X,\infty,a). \]
(see Figure \ref{fig:argument_sinus_cas1iv}).
Note that this case is similar to the previous one, but now it is always $X_1^*$ for which the angle is minimal.
Using \eqref{eq:trefle_ee'} and the fact that $X^* \in \gamma_{e,e'}$, we deduce:
\begin{align*}
\underbrace{K(d,d')}_{=K(e,e')} \sin \theta(X,d,d') &\leq K(e,e') \underbrace{\sin \theta(X^*,e,e')}_{=1} \\
& \leq K(\infty, a) \sin \theta(X^*, \infty, a) \\
& \leq K(\infty, a) \sin \theta(X, \infty, a)
\end{align*}
as required.
\end{itemize}

\begin{figure}[h]
\center
\definecolor{ududff}{rgb}{0.30196078431372547,0.30196078431372547,1}
\definecolor{uuuuuu}{rgb}{0.26666666666666666,0.26666666666666666,0.26666666666666666}
\begin{tikzpicture}[line cap=round,line join=round,>=triangle 45,x=1cm,y=1cm]
\clip(-5,-1) rectangle (7,6);
\draw [line width=1pt] (-10,0)-- (10,0);
\draw [shift={(0,0)},line width=1pt]  plot[domain=0:3.141592653589793,variable=\t]({1*3*cos(\t r)+0*3*sin(\t r)},{0*3*cos(\t r)+1*3*sin(\t r)});
\draw [line width=1pt] (2.7,0) -- (2.7,6.357777777777776);
\draw [line width=1pt] (0,0) -- (0,6.357777777777776);
\draw [line width=0.5pt, dash pattern=on 3pt off 3pt] (0,0)--(2.7,3.4467375879228173);
\draw (2.853333333333332,1.864444444444444) node[anchor=north west] {$X_0$};
\draw [shift={(-4.25,0)},line width=1pt]  plot[domain=0:3.141592653589793,variable=\t]({1*5.75*cos(\t r)+0*5.75*sin(\t r)},{0*5.75*cos(\t r)+1*5.75*sin(\t r)});
\draw [shift={(2.55,0)},line width=1pt]  plot[domain=0:3.141592653589793,variable=\t]({1*3.45*cos(\t r)+0*3.45*sin(\t r)},{0*3.45*cos(\t r)+1*3.45*sin(\t r)});
\draw (0.88,4.557777777777777) node[anchor=north west] {$X$};
\draw (2.7866666666666653,4.104444444444443) node[anchor=north west] {$X^*_1$};
\draw (0.2,2.7311111111111104) node[anchor=north west] {$X^*_2$};
\draw (-1.15,0) node[anchor=north west] {$e$};
\draw (-5.2,0.07777777777777752) node[anchor=north west] {$\longleftarrow d$};
\draw (-3,0.03) node[below] {$a-c$};
\draw (5.65,0.03) node[anchor=north west] {$e'$};
\draw (1.27,0.05111111111111085) node[anchor=north west] {$d'$};
\draw (-0.2,0.04) node[anchor=north west] {$a$};
\draw (2.2,0.1577777777777775) node[anchor=north west] {$a+b$};
\draw (2.6,-0.21555555555555578) node[anchor=north west] {$a+c$};
\begin{scriptsize}
\draw [fill=uuuuuu] (2.7,1.3076696830622017) circle (2pt);
\draw [color=black] (1.5,0)-- ++(-2.5pt,-2.5pt) -- ++(5pt,5pt) ++(-5pt,0) -- ++(5pt,-5pt);
\draw [fill=ududff] (-10,0) circle (2pt);
\draw [color=black] (-0.9,0)-- ++(-2.5pt,-2.5pt) -- ++(5pt,5pt) ++(-5pt,0) -- ++(5pt,-5pt);
\draw [fill=uuuuuu] (6,0) circle (2pt);
\draw [fill=black] (0.8609237502980095,3.906252717909174) circle (2.5pt);
\draw [fill=uuuuuu] (2.7,3.4467375879228173) circle (2.5pt);
\draw [fill=uuuuuu] (0.7058823529411765,2.9157726426808774) circle (2.5pt);
\end{scriptsize}
\end{tikzpicture}
\caption{Illustration of case 1.iv. The geodesic $\gamma_{e,e'}$ is obtained from $\gamma_{d,d'}$ by a reflection along $\gamma_{a-c,a+c}$. For any $X \in \mathcal{D}$ outside the hyperbolic triangle delimited by the geodesics $\gamma_{a+c,a-c}$, $\gamma_{\infty, a+b}$ and $\gamma_{e,e'}$, we have $\theta(X^{\star}_1,a, \infty) \leq \theta(X,a, \infty)$.}
\label{fig:argument_sinus_cas1iv}
\end{figure}
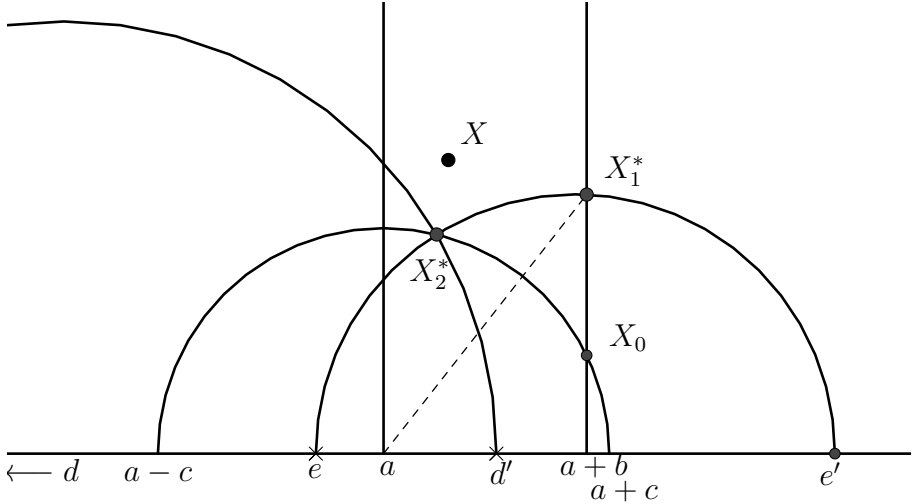

\paragraph{\underline{Case 2: $\gamma_{d,d'} \cap \mathcal{D} = \varnothing$.}} Let us distinguish two cases. \newline

\textbf{Case 2.1:} If $\gamma_{d,d'}$ does not intersect one of images of the domain $\mathcal{D}$ adjacent to $X_0$, then for all $X \in \mathcal{D}$,
\[ \sin \theta(X,d,d') \leq \sin \theta(X,a, \infty) \]
so that \eqref{eq:trefle} holds by $(H2)$.\newline

\textbf{Case 2.2:} Else, for every $X \in \mathcal{D}$, we can always find an element $g$ of the dihedral group generated by the reflections along $\gamma_{\infty,a+b}$ and $\gamma_{a+c,a-c}$ \footnote{which may depend on $X$.} such that:
\begin{itemize}
    \item $\gamma_{g\cdot d, g \cdot d'} \cap \mathcal{D} \neq \varnothing$
    \item $\sin \theta(X,d,d') \leq \sin \theta(X,g\cdot d, g \cdot d').$
\end{itemize}
Hence, we deduce from Case 1 that
\begin{align*}
K(d,d') \sin \theta(X,d,d') & \leq K(g \cdot d,g \cdot d') \sin \theta(X,g \cdot d,g \cdot d')\\
& \leq K(\infty, a) \sin \theta(X,\infty, a)
\end{align*}
as required.

\bibliographystyle{alpha}
\bibliography{KVolBM_bibli}

@article {CKMcras,
    AUTHOR = {Cheboui, Smail and Kessi, Arezki and Massart, Daniel},
     TITLE = {Algebraic intersection for translation surfaces in the stratum
              {$H(2)$}},
   JOURNAL = {C. R. Math. Acad. Sci. Paris},
  FJOURNAL = {Comptes Rendus Math\'{e}matique. Acad\'{e}mie des Sciences. Paris},
    VOLUME = {359},
      YEAR = {2021},
     PAGES = {65--70},
      ISSN = {1631-073X},
   MRCLASS = {30F60 (53A05)},
  MRNUMBER = {4229038},
MRREVIEWER = {Subhojoy Gupta},
       DOI = {10.5802/crmath.153},
       URL = {https://doi.org/10.5802/crmath.153},
}

@article {CKM,
    AUTHOR = {Cheboui, Smail and Kessi, Arezki and Massart, Daniel},
     TITLE = {Algebraic intersection for translation surfaces in a family of
              {T}eichm\"{u}ller disks},
   JOURNAL = {Bull. Soc. Math. France},
  FJOURNAL = {Bulletin de la Soci\'{e}t\'{e} Math\'{e}matique de France},
    VOLUME = {149},
      YEAR = {2021},
    NUMBER = {4},
     PAGES = {613--640},
      ISSN = {0037-9484},
   MRCLASS = {53C22 (30F60 32G15 37D40)},
  MRNUMBER = {4383613},
       DOI = {10.24033/bsmf.283},
       URL = {https://doi.org/10.24033/bsmf.283},
}

@Article{MM,
 Author = {Daniel {Massart} and Bjoern {Muetzel}},
 Title = {{On the intersection form of surfaces}},
 FJournal = {{Manuscripta Mathematica}},
 Journal = {{Manuscr. Math.}},
 ISSN = {0025-2611; 1432-1785/e},
 Volume = {143},
 Number = {1-2},
 Pages = {19--49},
 Year = {2014},
 Publisher = {Springer, Berlin/Heidelberg},
 MSC2010 = {53C20 30F30 32G15 53C22},
 Zbl = {1296.53071}
}

@article {survey_Wright,
    AUTHOR = {Wright, Alex},
     TITLE = {From rational billiards to dynamics on moduli spaces},
   JOURNAL = {Bull. Amer. Math. Soc. (N.S.)},
  FJOURNAL = {American Mathematical Society. Bulletin. New Series},
    VOLUME = {53},
      YEAR = {2016},
    NUMBER = {1},
     PAGES = {41--56},
      ISSN = {0273-0979},
   MRCLASS = {37D50 (14D20 22E40 32G15 57M50)},
  MRNUMBER = {3403080},
MRREVIEWER = {Jon Chaika},
       DOI = {10.1090/bull/1513},
       URL = {https://doi.org/10.1090/bull/1513},
}

@unpublished{BLM22,
AUTHOR = {Boulanger, Julien AND Lanneau, Erwan AND Massart, Daniel},
TITLE = {Algebraic intersection for a family of {V}eech surfaces},
YEAR = {2022},
URL = {https://hal.archives-ouvertes.fr/hal-03909001},
}

@article{Hooper,
  title={Grid graphs and lattice surfaces},
  author={W. Patrick Hooper},
  journal={International mathematics research notices},
  year={2013},
  volume={12},
  pages={2657-2698}
}

@article{Bou23,
Author = {Boulanger, Julien},
Title = {Algebraic intersection, lengths, and {V}eech groups},
journal = {arXiv:2309.17165},
Year = {2023}}

@incollection{Masur,
    AUTHOR = {Masur, Howard},
     TITLE = {Ergodic theory of translation surfaces},
 BOOKTITLE = {Handbook of dynamical systems. {V}ol. 1{B}},
     PAGES = {527--547},
 PUBLISHER = {Elsevier B. V., Amsterdam},
      YEAR = {2006},
 }

@article{KZ03,
author = {Kontsevich, Maxim and Zorich, Anton},
year = {2002},
month = {01},
pages = {},
title = {Connected components of the moduli space of {A}belian differentials with prescribed singularities},
volume = {153},
journal = {Inventiones Mathematicae},
doi = {10.1007/s00222-003-0303-x}
}

@inproceedings{GraphsOS,
  title={Graphs on {S}urfaces - {D}ualities, {P}olynomials, and {K}nots},
  author={Joanna A. Ellis-Monaghan and Iain Moffatt},
  booktitle={Springer Briefs in Mathematics},
  year={2013}
}

@Article{Veech,
 Author = {W. A. {Veech}},
 Title = {{Teichm\"{u}ller curves in moduli space, {E}isenstein series and an application to triangular billiards}},
 FJournal = {{Inventiones Mathematicae}},
 Journal = {{Invent. Math.}},
 ISSN = {0020-9910; 1432-1297/e},
 Volume = {97},
 Number = {3},
 Pages = {553--583},
 Year = {1989},
 Publisher = {Springer, Berlin/Heidelberg},
 MSC2010 = {32G15 30F30},
 Zbl = {0676.32006}
}

@incollection{HS_survey,
title = {An Introduction to {V}eech Surfaces},
series = {Handbook of Dynamical Systems},
volume = {1},
pages = {501-526},
year = {2006},
author = {Pascal Hubert and Thomas Schmidt}}

@article{BM10,
author = {Bouw, Irene AND Möller, Martin},
title = {Teichmuller curves, triangle groups, and {L}yapunov exponents},
journal = {Annals of Mathematics},
volume = {172},
number = {1},
pages = {85–139},
year = {2010}}

@article{DPU,
author = {Davis, Diana AND Pasquinelli, Irene AND Ulcigrai, Corinna},
title = {Cutting sequences on {B}ouw-{M}\"oller surfaces :
an S-adic characterization},
journal = {Annales scientifiques de l'ENS},
year = {2019}}

@incollection {survey_Massart,
    AUTHOR = {Massart, Daniel},
     TITLE = {A short introduction to translation surfaces, {V}eech
              surfaces, and {T}eichm\"{u}ller dynamics},
 BOOKTITLE = {Surveys in geometry {I}},
     PAGES = {343--388},
 PUBLISHER = {Springer, Cham},
      YEAR = {[2022] \copyright 2022},
   MRCLASS = {37D40 (30F60 32G15)},
  MRNUMBER = {4404400},
       DOI = {10.1007/978-3-030-86695-2\_9},
       URL = {https://doi.org/10.1007/978-3-030-86695-2_9},
}

@article{Hubert_Lanneau_Parabolic,
    title = {Veech groups without parabolic elements},
    author = {Hubert, Pascal AND Lanneau, Erwan},
    journal = {Duke Mathematical journal},
    year = {2006},
volume = {133}
}

@phdthesis{these_massart,
  author  = {Massart, Daniel},
  title   = {Normes stables des surfaces},
  school  = {Ecole Normale Supérieure de Lyon},
  year    = {1996}
}

@article{Bou23b,
author = {Boulanger, Julien},
title ={Lower bound for {KV}ol on the minimal stratum of translation surfaces},
journal = {arXiv:2310.00130},
year = {2023}}

@unpublished{these_boulanger,
author = {Julien Boulanger},
title = {Quelques problèmes géométriques autour des surfaces de translation},
note = {Thèse de Doctorat},
year = {2023}}

@article{Lelievre_Weiss_convex,
author = {Lelièvre, Samuel AND Weiss, Barak},
title = {Translation surfaces with no convex presentation},
journal = {Geom. Funct. Anal.},
Volume = {25},
pages = {1902–1936},
year = {2015},
doi = {https://doi.org/10.1007/s00039-015-0349-0}
}

@article{Jiang_Pan,
author = {Jiang, Manman AND Pan, Huiping},
title = {Algebraic intersections for hyperbolic surfaces},
journal = {arxiv:2404.15921},
year = {2024}
}

@article{Torkaman,
author = {Torkaman, Tina},
title = {Intersection number, length, and systole on compact hyperbolic surfaces},
journal = {arXiv:2306.09249},
year = {2023}
}

@article{BKP21,
  title={The minimal length product over homology bases of manifolds},
  author={Florent Balacheff and Steve Karam and Hugo Parlier},
  journal={Mathematische Annalen},
  year={2021},
  volume={380(1-2)},
  pages={825-854},
}
\end{document}